\RequirePackage{fix-cm} % fix some latex issues see: http://texdoc.net/texmf-dist/doc/latex/base/fixltx2e.pdf
\documentclass[ oneside,titlepage,numbers=noenddot,headinclude,%1headlines,
                footinclude=true,abstract=false,
                BCOR=0mm,paper=letter,fontsize=12pt,% Stanford: US letter, 10-12pt
                american,%
                ]{scrreprt}

\usepackage{silence}
\usepackage[left=1.5in,right=1in,top=1in,bottom=1in,headheight=15.3pt,headsep=0.5in,footskip=0.5in]{geometry}
\PassOptionsToPackage{dvipsnames}{xcolor}
\usepackage{hyperref}
\usepackage{amsmath}
\usepackage{tikz}
\usetikzlibrary{calc,positioning, arrows.meta}
\usepackage{tabularx}
\usepackage{array}
\usepackage{nicefrac}
\usepackage{caption}
\usepackage{float}
\usepackage{siunitx}
\usepackage{mathtools}
\usepackage{wrapfig}
\usepackage{appendix}
\usepackage{mdframed} % for boxed
\usepackage{makecell}
\usepackage{booktabs}
\usepackage{enumitem} % for description list formatting

\usepackage{algorithm}
\usepackage{algorithmicx,tikz}
\usepackage{algcompatible}
\usepackage[noend]{algpseudocode}

\newcolumntype{Y}{>{\centering\arraybackslash$}X<{$}}

\PassOptionsToPackage{utf8}{inputenc}
	\usepackage{inputenc}

\PassOptionsToPackage{eulerchapternumbers,listings,%
					 pdfspacing,%floatperchapter,%linedheaders,%
					 subfig,beramono,eulermath,parts}{xgthesis}                                        
\newcommand{\myTitle}{Unifying Optimization and Dynamics to Parallelize Sequential Computation: A Guide to Parallel Newton Methods for Breaking Sequential Bottlenecks \xspace}
\newcommand{\myName}{Xavier Gonzalez\xspace}
\newcommand{\myUni}{Stanford University\xspace}

\newcounter{dummy} % necessary for correct hyperlinks (to index, bib, etc.)
\providecommand{\mLyX}{L\kern-.1667em\lower.25em\hbox{Y}\kern-.125emX\@}

	\usepackage{babel}

\usepackage[threshold=0]{csquotes}  % threshold=0 ensures blockquote always displays as block

\usepackage{cleveref}
\crefname{chapter}{chapter}{chapters}
\Crefname{chapter}{Chapter}{Chapters}
\crefname{section}{section}{sections}
\Crefname{section}{Section}{Sections}
\crefname{subsection}{subsection}{subsections}
\Crefname{subsection}{Subsection}{Subsections}
\crefname{figure}{figure}{figures}
\Crefname{figure}{Figure}{Figures}
\crefname{table}{table}{tables}
\Crefname{table}{Table}{Tables}
\crefname{equation}{equation}{equations}
\Crefname{equation}{Equation}{Equations}
\crefname{theorem}{theorem}{theorems}
\Crefname{theorem}{Theorem}{Theorems}
\crefname{lemma}{lemma}{lemmas}
\Crefname{lemma}{Lemma}{Lemmas}
\crefname{proposition}{proposition}{propositions}
\Crefname{proposition}{Proposition}{Propositions}
\crefname{corollary}{corollary}{corollaries}
\Crefname{corollary}{Corollary}{Corollaries}
\crefname{definition}{definition}{definitions}
\Crefname{definition}{Definition}{Definitions}
\crefname{appendix}{appendix}{appendices}
\Crefname{appendix}{Appendix}{Appendices}
\crefname{part}{part}{parts}
\Crefname{part}{Part}{Parts}
\PassOptionsToPackage{%
    backend=biber,       % <--- UNCOMMENT THIS (Modern engine)
    language=auto,%
    style=numeric-comp,%
    sorting=nyt, 
    maxbibnames=10, 
    natbib=true 
}{biblatex}
\usepackage{biblatex}
\PassOptionsToPackage{fleqn}{amsmath}       % math environments and more by the AMS
    \usepackage{amsmath}
\usepackage{amsthm}  % theorem environments
\usepackage{amssymb} % additional math symbols

\theoremstyle{plain}
\newtheorem{theorem}{Theorem}[chapter]
\newtheorem*{theorem*}{Theorem} 
\newtheorem{lemma}[theorem]{Lemma}

\usepackage{aliascnt} % Make sure this is in your preamble

\newaliascnt{example}{theorem}
\newaliascnt{proposition}{theorem}

\newtheorem{example}[example]{Example}
\newtheorem{proposition}[proposition]{Proposition}

\aliascntresetthe{example}
\aliascntresetthe{proposition}
\newaliascnt{definition}{theorem}
\theoremstyle{definition}
\newtheorem{definition}[definition]{Definition}
\aliascntresetthe{definition}
\crefname{definition}{definition}{definitions}
\Crefname{definition}{Definition}{Definitions}
\crefname{proposition}{proposition}{propositions}
\Crefname{proposition}{Proposition}{Propositions}
\crefname{example}{example}{examples}
\Crefname{example}{Example}{Examples}
\crefname{appendix}{appendix}{appendices}
\Crefname{appendix}{Appendix}{Appendices}

\newaliascnt{remark}{theorem}
\theoremstyle{remark}

\aliascntresetthe{remark}
\crefname{remark}{remark}{remarks}
\Crefname{remark}{Remark}{Remarks}

\PassOptionsToPackage{T1}{fontenc} % T2A for cyrillics
    \usepackage{fontenc}
\usepackage{textcomp} % fix warning with missing font shapes

\usepackage{mathptmx}  % Times New Roman for text and math (Stanford approved)
\usepackage{setspace}  % Line spacing control
\usepackage{scrhack} % fix warnings when using KOMA with listings package          
\usepackage{xspace} % to get the spacing after macros right  
\usepackage{mparhack} % get marginpar right
\PassOptionsToPackage{printonlyused,smaller}{acronym} 
    \usepackage{acronym} % nice macros for handling all acronyms in the thesis
\usepackage{tabularx} % better tables
\usepackage{caption}
\usepackage{subfig}  
\usepackage{listings} 
\PassOptionsToPackage{pdftex,hyperfootnotes=false,pdfpagelabels}{hyperref}
    \usepackage{hyperref}  % backref linktocpage pagebackref
\PassOptionsToPackage{pdftex}{graphicx}
    \usepackage{graphicx}

\hypersetup{%
    colorlinks=true, linktocpage=true, pdfstartpage=3, pdfstartview=FitV,%
    breaklinks=true, pdfpagemode=UseNone, pageanchor=true, pdfpagemode=UseOutlines,%
    plainpages=false, bookmarksnumbered, bookmarksopen=true, bookmarksopenlevel=1,%
    hypertexnames=true, pdfhighlight=/O,%nesting=true,%frenchlinks,%
    urlcolor=webbrown, linkcolor=RoyalBlue, citecolor=webgreen, %pagecolor=RoyalBlue,%
    pdftitle={\myTitle},%
    pdfauthor={\textcopyright\ \myName, \myUni},%
    pdfsubject={},%
    pdfkeywords={},%
    pdfcreator={pdfLaTeX},%
    pdfproducer={LaTeX}%
}

\makeatletter
\@ifpackageloaded{babel}%
    {%
       \addto\extrasamerican{%
                }%
       \addto\extrasngerman{% 
                }%  
    }{\relax}
\makeatother

\usepackage{xgthesis}
\newcommand{\StateSpaceModelDiagram}[1][0]{ % Default to 0 observed nodes
  \begin{tikzpicture}[
    node distance=1cm,
    observed/.style={
        circle, draw, fill=gray!30,
        minimum size=1cm,
        text width=1cm,
        align=center,
        inner sep=0pt,
        font=\footnotesize
    },
    latent/.style={
        circle, draw,
        minimum size=1cm,
        text width=1cm,
        align=center,
        inner sep=0pt,
        font=\footnotesize
    },
    arrow/.style={->, thick},
    shorten >=1pt,
    shorten <=1pt
  ]

  \node[\ifnum#1>0 observed\else latent\fi] (s0) {$s_0$};
  \node[right=of s0, \ifnum#1>1 observed\else latent\fi] (s1) {$s_1$};
  \node[right=of s1, \ifnum#1>2 observed\else latent\fi] (s2) {$s_2$};
  \node[right=1cm of s2, \ifnum#1>3 observed\else latent\fi] (sT1) {$s_{T-1}$};
  \node[right=of sT1, \ifnum#1>4 observed\else latent\fi] (sT) {$s_T$};

  \draw[arrow] (s0) -- (s1) node[midway, above] {$f_1$};
  \draw[arrow] (s1) -- (s2) node[midway, above] {$f_2$};
  \draw[arrow] (sT1) -- (sT) node[midway, above] {$f_T$};

  \node at ($(s2)!0.5!(sT1)$) {\large $\cdots$};

  \end{tikzpicture}
}

\newcommand{\A}{A} % used for dynamics Jacobian
\newcommand{\J}{\mathbf{J}} % used for residual Jacobian
\newcommand{\E}{\mathbb{E}}
\newcommand{\diag}{\mathrm{diag}}
\renewcommand{\b}[1]{\mathbf{#1}}
\newcommand{\osw}{\chi_w}

\begin{document}
\frenchspacing
\raggedbottom
\selectlanguage{american} % american ngerman
%\renewcommand*{\bibname}{new name}
%\setbibpreamble{}

%********************************************************************
% Frontmatter - Stanford Format
% Title page counts as "i" but is unnumbered
% Copyright (ii) and Signature (iii) pages are auto-inserted by Stanford
% Numbering starts at iv with Abstract
%*******************************************************
\pagestyle{empty}
%*******************************************************
% Stanford PhD Dissertation Title Page
% Format: Uppercase, centered, no bold, no page number
%*******************************************************
\thispagestyle{empty}
\begin{titlepage}
\begin{center}

\vspace*{\fill}

{\large
UNIFYING OPTIMIZATION AND DYNAMICS TO \\
PARALLELIZE SEQUENTIAL COMPUTATION: \\
\vspace{0.1cm}
A GUIDE TO PARALLEL NEWTON METHODS FOR \\
BREAKING SEQUENTIAL BOTTLENECKS
}

\vspace{2cm}

A DISSERTATION

\vspace{0.3cm}

SUBMITTED TO THE DEPARTMENT OF STATISTICS

\vspace{0.3cm}

AND THE COMMITTEE ON GRADUATE STUDIES

\vspace{0.3cm}

OF STANFORD UNIVERSITY

\vspace{0.3cm}

IN PARTIAL FULFILLMENT OF THE REQUIREMENTS

\vspace{0.3cm}

FOR THE DEGREE OF

\vspace{0.3cm}

DOCTOR OF PHILOSOPHY

\vspace{2cm}

Xavier Gonzalez

\vspace{0.3cm}

March 2026

\vspace*{\fill}

\end{center}
\end{titlepage}

% Start Roman numeral page numbering at iv (after auto-inserted copyright & signature)
\pagenumbering{roman}
\setcounter{page}{4}
\pagestyle{plain}

% Stanford order: Abstract, then optional preface/acknowledgments/dedication
%*******************************************************
% Abstract - Stanford Format
%*******************************************************
\pdfbookmark[1]{Abstract}{Abstract}

\begin{center}
{\large\bfseries Abstract}
\end{center}

\vspace{1em}

Recurrent neural networks (RNNs) were widely regarded as "inherently sequential" because each hidden state depends on the previous one. This sequential dependency creates a computational bottleneck: evaluating an RNN on a sequence of length $T$ seems to require $O(T)$ time steps, even with unlimited parallel processors. This dissertation challenges the conventional wisdom and develops methods that enable parallel evaluation of nonlinear RNNs with $O((\log T)^2)$ computational depth. Moreover, the methods I have developed and studied are very general, and can parallelize the broad class of computations falling under the heading of \emph{state space models} (SSMs). SSMs include not only nonlinear RNNs but also Markov chain Monte Carlo (MCMC), sampling from diffusion models, and explicit differential equation solvers, among many other applications.

In my PhD, I built on an approach \citep{deeppcr, deer2024} that reformulates RNN evaluation as a fixed-point problem and applies Newton's method to leverage the parallel scan algorithm. However, when I began work on this subject, the community's understanding of this \emph{parallel Newton method} was hindered by certain limitations. The Newton iterations suffered from a lack of scalability in the state dimension $D$, a lack of stability in certain applications, and a general lack of understanding of its convergence properties and rates. In this thesis, I address these limitations with methodological and theoretical contributions.

The methodological contributions of this thesis include developing scalable and stable parallel Newton methods, based on quasi-Newton and trust-region approaches.
The quasi-Newton methods further accelerate the training of RNNs and use a factor of $D$ less memory.
The trust-region approaches are parallelized over the sequence length using a parallel Kalman filter and are significantly more stable than their undamped counterparts.
These methods have inspired follow-up work both in nonlinear sequence modeling and in parallelizing MCMC. 

The theoretical contributions of this thesis include establishing the convergence rates of parallel Newton methods. 
Both the Newton and quasi-Newton methods enjoy global convergence in at most $T$ iterations. Moreover, we show that the conditioning of the optimization landscape, as quantified by its Polyak-Łojasiewicz (PL) constant, is determined by the stability of the dynamical system, as quantified by its Largest Lyapunov Exponent (LLE).
By doing so, we show that stable (i.e. $\text{LLE}<0$) dynamics enjoy convergence in $\mathcal{O}(\log T)$ iterations, while unstable dynamics converge too slowly for parallelization to work.

In sum, this thesis unlocks scalable and stable methods for parallelizing sequential computation, and provides a firm theoretical basis for when such techniques will and will not be effective.
This thesis also serves as a guide to parallel Newton methods for researchers who want to write the next chapter in this ongoing story.

\vfill

\clearpage%*******************************************************
% Acknowledgments - Stanford Format
%*******************************************************
\pdfbookmark[1]{Acknowledgments}{acknowledgments}

\begin{center}
{\large\bfseries Acknowledgments}
\end{center}

\vspace{1em}

Thank you to everyone who has made this PhD dissertation possible; and all who have taught and mentored me over the years.

Thank you to my advisor Scott Linderman for teaching me so much about research, collaboration, mentoring, software engineering, math, statistics, neuroscience, and more. You have created one of the kindest and happiest labs at Stanford, and you are the driving force behind this social and collaborative culture.

Speaking of the Linderman lab, thank you to all the amazing mentors and friends I've met in Scott's lab. I am grateful for the opportunity to work with and learn from such talented researchers.

Thank you to all my collaborators: I have learned so much and had so much fun working with you. Thank you in particular to the four postdocs who especially mentored me on these projects: Andy Warrington, Leo Kozachkov, Kelly Buchanan, and David Zoltowski. Each of you taught me so much in different ways, and I am grateful for your generosity of time and wisdom.

Thank you to my family, friends, and mentors for their support and encouragement. Thank you especially to my parents Javier and Natalya, my sister Natasha, and my girlfriend Tiffany. Your nurturing and support has made everything possible---and your love has brightened my days. 

Ultimately, thank you to God---you are the source of all good things.
Thank you for the many blessings of my PhD.

\vfill

% \clearpage\include{FrontBackmatter/Publications}
\clearpage%*******************************************************
% Table of Contents
%*******************************************************
%\phantomsection
\refstepcounter{dummy}
\pdfbookmark[1]{\contentsname}{tableofcontents}
\setcounter{tocdepth}{2} % <-- 2 includes up to subsections in the ToC
\setcounter{secnumdepth}{3} % <-- 3 numbers up to subsubsections
\manualmark
\markboth{\spacedlowsmallcaps{\contentsname}}{\spacedlowsmallcaps{\contentsname}}
\tableofcontents 
\automark[section]{chapter}
\renewcommand{\chaptermark}[1]{\markboth{\spacedlowsmallcaps{#1}}{\spacedlowsmallcaps{#1}}}
\renewcommand{\sectionmark}[1]{\markright{\thesection\enspace\spacedlowsmallcaps{#1}}}
\clearpage\pagenumbering{arabic}
\pagestyle{scrheadings}
%\setcounter{page}{90}
% use \clearpage here to avoid problems with pdfbookmark
\clearpage
\ctparttext{The first part of this thesis provides the motivation and background for parallelizing
dynamical systems. We introduce the fundamental problem of sequential
computation in deep learning and review the mathematical foundations that enable
parallel evaluation of these models.
\par
\begin{minipage}{\linewidth}
    \centering
    \captionsetup{hypcap=false}
    \includegraphics[width=0.8\linewidth]{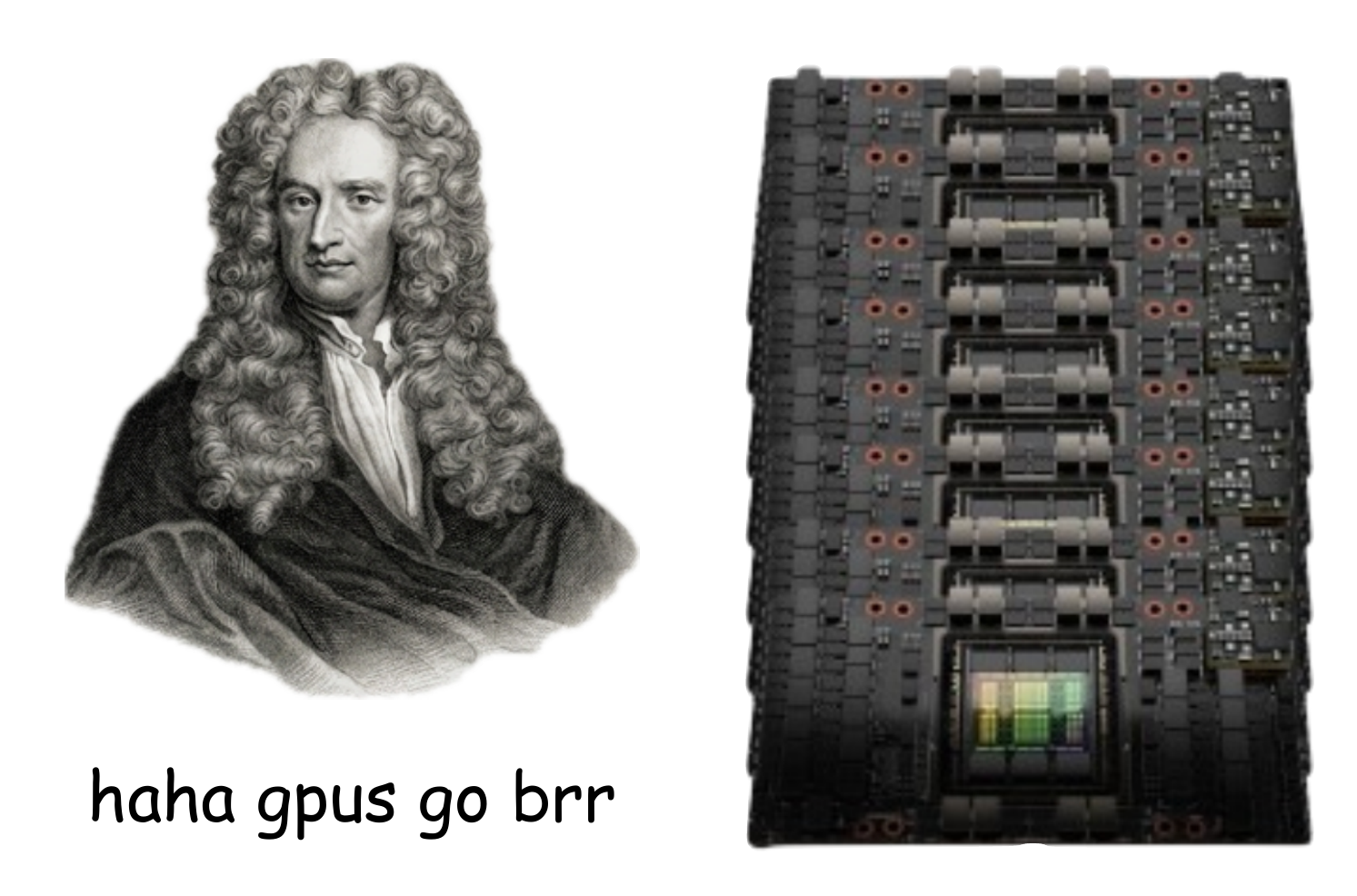}
    \captionof{figure}{\textbf{Parallel Newton methods.} With a clever connection between Newton's method and the parallel scan, we can use GPUs to parallelize and therefore accelerate dynamical systems.}
    \label{fig:newton_brr}
\end{minipage}
}
\part{Introduction and Background}
%************************************************
\chapter{Introduction}\label{ch:introduction}
%************************************************

Sequential processes are ubiquitous in statistics and machine learning.
Evaluating a recurrent neural network~\citep{goodfellow2016deep}, sampling from a diffusion model~\citep{sohl2015deep, ho2020denoising, song2021score} or with Markov chain Monte Carlo (MCMC)~\citep{Diaconis2009MCMC, geyer2011introduction}, generating from a deep state space model~\citep{gu2022s4, smith2023s5, orvieto-resurrecting, mamba}, and unrolling layers of a deep neural network~\citep{he2016deep,vaswani2017attention} all involve sequential computations.
Naively, these computations require time proportional to the length of the input or the depth of the architecture, and in some cases, they may not take full advantage of massively parallel modern hardware like %NVIDIA 
graphics processing units (GPUs). For example, if the computational graph is a very long chain where each individual step in the chain is not too computationally intensive, but the chain is very long, this computational graph will not fully utilize the approximately 10,000 cores of a modern GPU.

This mismatch between the requirements of sequential computation and the design of modern parallel hardware has led to sequential models losing the "hardware lottery" \citep{hooker2021hardware} and being replaced by more easily parallelized architectures.
This broad story is most clearly exemplified in the transition from recurrent neural networks (RNNs), the dominant sequence modeling architecture prior to 2018, towards attention and the transformer architecture, an embarrassingly parallel approach that powers most of modern AI, including the "generative pretrained transformers" behind ChatGPT and other modern large language models (LLMs) \citep{brown2020language}.
In fact, as \citet{vaswani2017attention} write in the introduction to their landmark paper that introduced the transformer (emphasis added):

\blockquote{\textbf{Recurrent neural networks have been firmly established as state of the art approaches} in sequence modeling and
transduction problems such as language modeling and machine translation. However, \textbf{their inherently
sequential nature precludes parallelization within training examples}, which becomes critical at longer
sequence lengths, as memory constraints limit batching across examples. Despite significant improvements in RNN computational efficiency, \textbf{the fundamental
constraint of sequential computation remains.}}

In the era of massively parallel hardware like GPUs, and ever longer sequences of data to process, the "inherently sequential" \citep{vaswani2017attention, martin2018parallelizing, linear_attention, smith_thesis, liu2025serial} nature of RNNs was viewed as a disqualifying disadvantage in many applications.

Incredibly, however, as introduced in the seminal papers of \citet{deeppcr} and \citet{deer2024}, nonlinear RNNs and many other types of "inherently sequential" computations \emph{can} be parallelized over the sequence length.
This parallelization is achieved by recasting the problem of sequential evaluation as a high-dimensional nonlinear equation that can be solved using Newton iterations that are parallelized over the sequence length.
However, when these \emph{parallel Newton methods} were first published, limitations blocked their wider use and adoption. These were standard limitations for Newton's method in general \citep{ortega1970iterative, NocedalWright, boyd2004convex}, namely
\begin{itemize}
    \item A lack of \emph{scalability} of the method, especially as the state size increased;
    \item A lack of \emph{stability} of the convergence of the method in certain applications; and 
    \item A lack of understanding of under what conditions the Newton iterations would \emph{converge}, and at what rates.
\end{itemize}
This dissertation helps to resolve these limitations.
Methodologically, we introduce quasi-Newton methods to provide scalable parallelization and trust-region methods to provide stable parallelization. Theoretically, we provide detailed convergence analyses of these methods, including proofs of global convergence and the identification of the stability of the underlying dynamical system as a critical decider of whether or not efficient parallelization is possible. In doing so, we have unlocked scalable parallelization of nonlinear RNNs and a robust theoretical understanding of under what conditions such parallelization is desirable. Moreover, these methods parallelize not only nonlinear RNNs \citep{deer2024, gonzalez2024scalable, farsang2025scaling, danieli2025pararnn} but can also parallelize a wide range of models called \emph{state space models} (SSMs).
In this thesis, an SSM is a discrete-time dynamical system with state $s_t \in \mathbb{R}^D$ that evolves over time by a transition function $s_t = f_t(s_{t-1})$ (see \Cref{sec:ssm}).
Examples of chain-like computational graphs involving SSMs include sampling from MCMC \citep{pmcmc, grazzi2025parallel} or diffusion models \citep{deeppcr, shih2023parallel, tang2024accelerating, selvam2024selfrefining, parasolver25, han2025chords}, solving differential equations with explicit methods \citep{iacob2025parallel}, and many other diverse applications \citep{gonzalez2025unifying}.

Taken together, this thesis lays the foundation for exciting future work in parallelizing a broad range of important primitives, while also more clearly delineating which processes are---and are not---"inherently sequential."
This thesis serves as an introduction to parallel Newton methods for researchers eager to contribute to this exciting, new field.

\section{Extended History}\label{sec:longer_lit}

The modern development of massively parallel hardware like GPUs and TPUs has created new urgency around the parallelization of sequential processes, contributing to the modern development of these parallel Newton methods \citep{deeppcr, deer2024}.
However, this thesis and the parallel Newton methods it surveys build on a long tradition of parallel-in-time computing \citep{gander201550}. In short, as long as there have been parallel computers and long sequences, there has been important work in parallelizing these sequential processes---and the modern massive increase in scale has led to a renaissance of these methods.

While there were of course many earlier efforts at parallel computers, the ILLIAC IV is widely credited as being the first massively parallel computer \citep{bell1971computer}. The ILLIAC IV was developed in the 1960s and 1970s at the University of Illinois, and was designed to have 256 processors that could carry out computation in parallel.
Almost immediately, this novel development in hardware led to novel developments in algorithms.
For example, in 1973, \citet{Stone1973} explicitly cited the ILLIAC as motivation for his development of a technique to solve tridiagonal systems of equations\footnote{Which is extremely similar to parallel Newton methods, which in their simplest form solve bidiagonal systems of equations: see \Cref{sec:deer}} in parallel. \citet{Stone1973} called this method \emph{recursive doubling}, and it is known today as \emph{parallel cyclic reduction} or the \emph{parallel associative scan}.
We provide more background on the parallel scan as a general and fundamental primitive in \Cref{sec:pscan}.
However, to give a specific example, a canonical application of the parallel scan is as a technique to use $T$ processors to multiply $T$ matrices together in $\mathcal{O}(\log T)$ computational depth---thus enabling exponential speedups on large parallel machines.

In addition to the development of parallel methodology, the development of the ILLIAC also spurred fundamental work in the theory of what computations could and could not be parallelized. 
For example, in 1975, \citet{hyafil1975bounds} and \citet{kung1976new} explicitly cited the ILLIAC as motivation for their study of which algorithms and models could be \emph{efficiently parallelized}.
They showed that linear recursions enjoy speedups from parallel processors while nonlinear recursions of rational functions with degree larger than one in general cannot. These prescient works set the stage for the more general findings of this thesis presented in \Cref{part:theory}, where we explicitly link the dynamical properties of the recursion to its parallelizability.

The desire to solve differential equations over long time windows also led to the development of parallel-in-time methods for continuous-time initial value problems (IVPs) \citep{nievergelt1964parallel, gear1988parallel}. While this dissertation primarily focuses on discrete-time SSMs, there are intimate links between discrete and continuous time, just as there are intimate links between difference and differential equations. In fact, the numerical solving of differential equations almost always eventually reduces to solving some discretization of the ordinary differential equation (ODE). Consequently, it is unsurprising that the ODE parallel-in-time, multiple shooting, and multigrid literature has many of the ingredients of modern parallel Newton methods. For example, in 1989 \citet{Bellen1989} suggested a quasi-Newton method for solving differential equations that has almost all the core components of parallel Newton methods---except for the parallel scan.

Indeed, the core ingredients of parallel Newton methods remained scattered throughout the parallel-in-time literature. For example, \citet{horton1995algorithm} proposed parallel-in-time solvers for differential equations using the parallel scan---but applied this technique to other fixed-point iterations like Gauss-Seidel and Jacobi, not Newton. 

This preference for Gauss-Seidel and Jacobi iterations persisted in many strands of the literature. For example, \citet{deshpande1995rigorous} provided a theoretical analysis of convergence rates for these parallel-in-time methods.
In discrete time, \citet{naumov2017parallel} showed how evaluating Markov chains could be cast as a system of nonlinear equations and discussed many techniques from numerical analysis for solving them, again focusing on Jacobi and Gauss-Seidel; \citet{song2021accelerating} extended this program with many deep learning experiments.
A possible explanation for this preference for Jacobi and Gauss-Seidel iterations is the heavier computational cost of a single Newton iteration, especially on less massively parallelized machines. On the other hand, when Newton iterations were suggested---as by \citet{GanderVandewalle2007} as an interpretation of parareal iterations \citep{Lions2001}---the link to parallelization via the parallel scan was omitted.

Therefore, to the best of my knowledge, the full marriage between parallel scans and Newton iterations had to wait until 2023 and the seminal papers of \citet{deeppcr} and \citet{deer2024}---even though all the necessary ingredients had existed in the literature for at least thirty years. A likely factor in the delay was the fracturing of knowledge and motivations across different communities in dynamics, parallel computation, numerical analysis, and machine learning.
\Cref{ch:background} brings together the necessary background from all of these disciplines to close this gap and facilitate communication between these different communities.

Undoubtedly the hardware and software ecosystems also played a role in the eventual development of parallel Newton's method. In software, the standardization of autodifferentiation \citep{maclaurin_thesis, paszke2019pytorch, jax2018github} made the computation of Jacobians less burdensome. In hardware, the development of GPUs with thousands of processors and gigabytes of on-device memory made the cost of Newton iterations far less burdensome than they were on the ILLIAC and other earlier parallel machines. The importance of these software and hardware lotteries \citep{hooker2021hardware} in the development of algorithms cannot be overstated. While this thesis provides an introduction to techniques that let us take "inherently sequential" processes and reduce their latency on parallel hardware, we must remain open to the possibility that further developments in hardware, software, and algorithm may lead to yet more radically different approaches in the future.

\section{Outline}

In this introduction, we provided a brief survey of the history of parallel computation and parallel-in-time algorithms. In particular, we discussed how the recent rise of massively parallel processors like GPUs has further stimulated the advancement of parallel algorithms for "inherently sequential" computation. Building on this work, my thesis has developed scalable methods for parallel-in-time computation and a firm theoretical understanding of under what conditions such parallel-in-time computation makes sense. The rest of this thesis is organized as follows:
\begin{itemize}
    \item \Cref{ch:background} provides fundamental background for understanding parallel Newton methods, tying together dynamics, parallel computing, and numerical analysis.
    \item \Cref{ch:scalable} introduces our first method, a quasi-Newton method for scalable parallelization.
    \item \Cref{ch:elk} introduces our second method, a trust-region method for stable parallelization.
    \item \Cref{ch:predictability} establishes our theoretical analysis of convergence rates for the Gauss-Newton optimization method for parallelizing SSMs.
    \item \Cref{ch:quasi_convergence} studies the convergence rates of a wide class of quasi-Newton methods for parallelizing SSMs.
    \item \Cref{ch:conclusion} concludes by summarizing the contributions of this thesis and discussing promising directions for future work.
\end{itemize}

The following publications form the basis of this dissertation:

\vspace{1em}

\noindent\textbf{Chapters 3 and 4} are based on:
\begin{quote}
\textbf{Xavier Gonzalez}, Andrew Warrington, Jimmy T.H. Smith, and Scott W. Linderman. "Towards Scalable and Stable Parallelization of Nonlinear RNNs." In \emph{Advances in Neural Information Processing Systems (NeurIPS)}, 2024.
\end{quote}

\noindent\textbf{Chapter~5} is based on:
\begin{quote}
\textbf{Xavier Gonzalez*}, Leo Kozachkov*, David M. Zoltowski, Kenneth L. Clarkson, and Scott W. Linderman. "Predictability Enables Parallelization of Nonlinear State Space Models." In \emph{Advances in Neural Information Processing Systems (NeurIPS)}, 2025.
\end{quote}

\newpage

\noindent\textbf{Chapter~6} is based on:
\begin{quote}
\textbf{Xavier Gonzalez*}, E. Kelly Buchanan*, Hyun Dong Lee, Jerry Weihong Liu, Ke Alexander Wang, David M. Zoltowski, Leo Kozachkov, Christopher R\'{e}, and Scott W. Linderman. "A Unifying Framework for Parallelizing Sequential Models with Linear Dynamical Systems." In \emph{Transactions on Machine Learning Research (TMLR)}, 2026.
\end{quote}

\noindent Throughout this thesis, we also include material from
\begin{quote}
David M. Zoltowski*, Skyler Wu*, \textbf{Xavier Gonzalez}, Leo Kozachkov, and Scott W. Linderman. "Parallelizing MCMC Across the Sequence Length." In \emph{Advances in Neural Information Processing Systems (NeurIPS)}, 2025.
\end{quote}
This last paper extends and develops quasi-Newton methods to parallelize Markov chain Monte Carlo across the sequence length.

\chapter{Background}\label{ch:background}

This thesis uses techniques from applied math to parallelize a class of sequential processes known as \emph{state space models (SSMs)}.
Therefore, in this chapter, we provide background on the three diverse foundational areas---dynamics, parallel computing, and optimization---so that we can bring them together as \emph{parallel Newton methods}. 

\section{Dynamics: State Space Models}\label{sec:ssm}

The first topic this thesis brings into play is dynamics. In particular, we study a class of sequential processes called state space models \cite{murphy2023ssm}. In this background section, we define state space models, survey their broad use in statistics and machine learning, and discuss why their evaluation was deemed to be ``inherently sequential.''

\subsection{State Space Models (SSMs)}

A state space model is a discrete-time dynamical system with a fixed state size. We denote this state by $s_t \in \mathbb{R}^D$, where the subscript $t$ denotes the time of the state, and the dimension $D$ denotes the state size.

The state evolves according to a \emph{dynamics} or \emph{transition} function as
\begin{equation}\label{eq:ssm}
    s_{t+1} = f_t(s_t).
\end{equation}
Importantly, state space models satisfy the \emph{Markov property}, where the state at time $t+1$ depends only on the state at time $t$, and not any of the previous states. Informally, the Markov property means that once we know the present, we can forget the past.

\textbf{Our primary consideration in this thesis is how to evaluate (equivalently ``simulate'', ``unroll'' or ``roll-out'') an SSM} from an initial condition $s_0$. We make this goal precise in the following problem statement:

\paragraph{Problem Statement (Unrolling an SSM):} Evaluate the sequence $\mathbf{s}_{1:T}=(s_1, s_2, \ldots, s_T)$ starting from $s_0$, where $s_t$ follows the SSM dynamics in \cref{eq:ssm}.
\begin{figure}
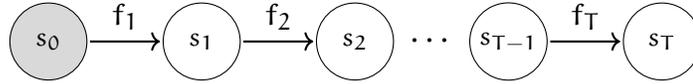

    \centering
    \StateSpaceModelDiagram[1]
    \caption{Unrolling an SSM. We shade the initial state $s_0$ to indicate that we know the initial condition.}
    \label{fig:ssm}
\end{figure}
\Cref{fig:ssm} indicates graphically how when we unroll the dynamics of an SSM from known initial condition $s_0$, we obtain a computational graph that is a chain of sequential dependencies. Throughout this thesis, we will use $T$ to denote the sequence length.

Often, state space models also take an \emph{input} $u_t \in \mathbb{R}^D$ at each time step. Thus, the dynamics become
\begin{equation}\label{eq:ssm_driven}
    s_{t+1} = f(s_t, u_t).
\end{equation}
However, as illustrated in \Cref{fig:ssm_curry}, we can always \emph{curry} the input into the dynamics function to obtain an equivalent SSM without inputs. Specifically, we define the curried dynamics functions as
\begin{equation*}
    f_{\textcolor{blue}{t}}(s_t) := f(s_t, \textcolor{blue}{u_t}).
\end{equation*}
Thus, we can rewrite the SSM with inputs as an SSM without inputs, as in \cref{eq:ssm}.
While almost all the SSMs we consider in this thesis take inputs, we will often omit them from the notation for simplicity, relying on the fact that we can always curry them into the dynamics functions.

\begin{figure}
    \centering
    \begin{tikzpicture}[
    node distance=1cm and 1.5cm, 
    observed/.style={
        circle, draw, fill=gray!30,
        minimum size=1cm,
        text width=1cm,
        align=center,
        inner sep=0pt,
        font=\footnotesize
    },
    latent/.style={
        circle, draw,
        minimum size=1cm,
        text width=1cm,
        align=center,
        inner sep=0pt,
        font=\footnotesize
    },
    input/.style={ 
        circle, draw, fill=gray!50, 
        minimum size=1cm,
        text width=1cm,
        align=center,
        inner sep=0pt,
        font=\footnotesize
    },
    arrow/.style={->, thick},
    shorten >=1pt,
    shorten <=1pt
  ]

  % First Layer Nodes (Latent Variables s)
  \node[observed] (s0_top) {$s_0$};
  \node[latent, right=of s0_top] (s1_top) {$s_1$};
  \node[latent, right=of s1_top] (s2_top) {$s_2$};
  \node[latent, right=1.5cm of s2_top] (sT1_top) {$s_{T-1}$};
  \node[latent, right=of sT1_top] (sT_top) {$s_T$};

  % Second Layer Nodes (Inputs u)
  \node[input, below=of s1_top] (u1) {$\textcolor{blue}{u_1}$};
  \node[input, below=of s2_top] (u2) {$\textcolor{blue}{u_2}$};
  \node[input, below=of sT1_top] (uT1) {$\textcolor{blue}{u_{T-1}}$};
  \node[input, below=of sT_top] (uT) {$\textcolor{blue}{u_T}$};

  % First Layer Arrows (s_0 -> s_1 -> s_2 -> ... -> s_T)
  \draw[arrow] (s0_top) -- (s1_top) node[midway, above] {$f$};
  \draw[arrow] (s1_top) -- (s2_top) node[midway, above] {$f$};
  \draw[arrow] (sT1_top) -- (sT_top) node[midway, above] {$f$};

  % Arrows from Inputs to Latent Variables
  \draw[arrow] (u1) -- (s1_top);
  \draw[arrow] (u2) -- (s2_top);
  \draw[arrow] (uT1) -- (sT1_top);
  \draw[arrow] (uT) -- (sT_top);

  % Ellipsis in the Middle
  \node at ($(s2_top)!0.5!(sT1_top)$) {\large $\cdots$};

\end{tikzpicture}

% Second TikZ picture: Double Arrow
\centering
\begin{tikzpicture}
    \node at (0,0) {\huge$\updownarrow$}; % Double-headed arrow
\end{tikzpicture}

% Third TikZ picture: Bottom diagram
\begin{tikzpicture}[
    node distance=1cm and 1.5cm,
    observed/.style={
        circle, draw, fill=gray!30,
        minimum size=1cm,
        text width=1cm,
        align=center,
        inner sep=0pt,
        font=\footnotesize
    },
    latent/.style={
        circle, draw,
        minimum size=1cm,
        text width=1cm,
        align=center,
        inner sep=0pt,
        font=\footnotesize
    },
    input/.style={ 
        circle, draw, fill=gray!50, 
        minimum size=1cm,
        text width=1cm,
        align=center,
        inner sep=0pt,
        font=\footnotesize
    },
    arrow/.style={->, thick},
    shorten >=1pt,
    shorten <=1pt
  ]

  % Bottom Layer Nodes (Corresponding Latent Variables s)
  \node[observed] (s0_bottom) {$s_0$};
  \node[latent, right=of s0_bottom] (s1_bottom) {$s_1$};
  \node[latent, right=of s1_bottom] (s2_bottom) {$s_2$};
  \node[latent, right=1.5cm of s2_bottom] (sT1_bottom) {$s_{T-1}$};
  \node[latent, right=of sT1_bottom] (sT_bottom) {$s_T$};

    % Bottom Layer Arrows (Same as top)
    \draw[arrow] (s0_bottom) -- (s1_bottom) node[midway, above] {$f_{\textcolor{blue}{1}}$};
    \draw[arrow] (s1_bottom) -- (s2_bottom) node[midway, above] {$f_{\textcolor{blue}{2}}$};
    \draw[arrow] (sT1_bottom) -- (sT_bottom) node[midway, above] {$f_{\textcolor{blue}{T}}$};

  % Ellipsis in the Bottom Layer
  \node at ($(s2_bottom)!0.5!(sT1_bottom)$) {\large $\cdots$};

\end{tikzpicture}
    \caption{Graphical diagram showing the equivalence (based on currying) between an SSM driven by inputs and an autonomous system with time-varying transition dynamics. We shade the inputs $u_t$ to indicate that they are known.}
    \label{fig:ssm_curry}
\end{figure}
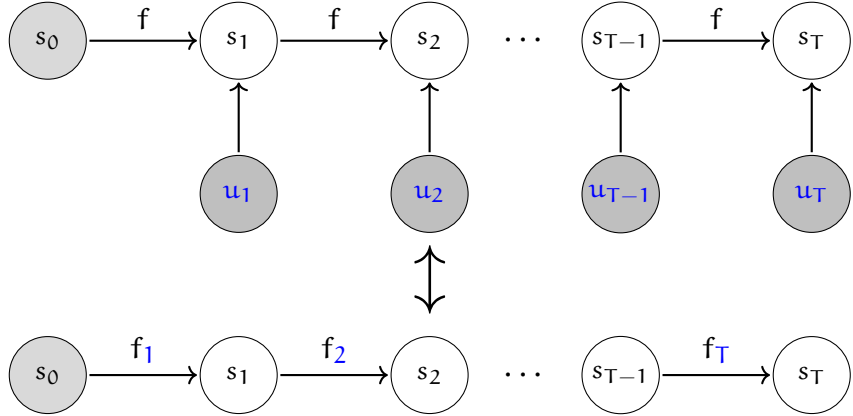

\subsection{Examples of SSMs}

The framework given in \cref{eq:ssm} is extremely general, and many well-known models in statistics and machine learning can be expressed as SSMs. 
We summarize some important examples in \Cref{tab:ssm_examples}.
\begin{table}[H]
\centering
\begin{tabularx}{\textwidth}{X|XXX} % Sets total width to the text margin
\toprule
\textbf{SSM} & \textbf{State ($s_t$)} & \textbf{Input ($u_t$)} & \textbf{Transition ($f$)} \\ 
\midrule
Linear Dynamical Systems (LDS) \citep{luenberger1979introduction} & State & Input & Linear \\ \hline 
Deep SSMs \citep{gu_thesis, goel_thesis, smith_thesis} & Stack of states & Input & Linear \\ \hline 
Recurrent Neural Networks (RNNs) \citep{jordan1986serial, elman1990finding, lstm, sutskever2013training, gru} & Hidden state & Input & RNN cell \\ \hline 
MCMC \citep{Diaconis2009MCMC, geyer2011introduction, pmcmc}  & Current sample & Noise & Transition kernel \\ \hline
Sampling from diffusion models \citep{song_dissertation, shih2023parallel, lai2025principles} & Noisy image & Noise & Denoising function \\ \hline
Explicit differential equation solvers \citep{gander201550, kidger_dissertation, iacob2025parallel} & Current state & N/A & Numerical integrator \\ \hline
"Recurrent depth" for transformers \citep{dehghani2018universal, schone2025implicit, geiping2025scaling, wang2025hierarchical,jolicoeur2025less} & Layer activations & Original input  & Transformer block \\ \hline
State of reinforcement learning (RL) agent \citep{sutton2018reinforcement, psenka2026parallel} & Environment state & Noise & Environment dynamics \\ \hline
Gradient descent \citep{boyd2004convex} & Parameter values & N/A & Gradient step \\ \hline
The human brain \citep{vyas2020computation} & neural activity & sensory input & synapses \\
\bottomrule
\end{tabularx}
\caption{Some illustrative examples of state space models (SSMs).}
\label{tab:ssm_examples}
\end{table}
The first example in this \Cref{tab:ssm_examples} is the \emph{linear dynamical system (LDS)}, which has linear transition dynamics, that is, the state evolves as
\begin{equation}\label{eq:lds}
    s_{t+1} = A_t s_t + B_t u_t.
\end{equation}
Thus, an LDS is a special case of an SSM (\cref{eq:ssm}), but where the dynamics function $f_t$ is affine.
These linear dynamical systems have enjoyed a resurgence in machine learning recently as linear RNNs \citep{martin2018parallelizing,orvieto-resurrecting}  or deep state space models \citep{gu2022s4,smith2023s5,mamba}. 
In these deep learning architectures, the temporal dynamics of each layer are linear, but the output of each layer is passed through a nonlinearity to become the input of the next layer.

\subsubsection{Bayesian inference for linear Gaussian SSMs: Kalman filtering and smoothing}\label{ssc:kalman}

We take a brief aside to discuss Bayesian inference in state space models, as the core primitives of Kalman filtering and smoothing are fundamental to our stable parallelization techniques developed in \Cref{ch:elk}.

We begin by noting that we can include many probabilistic models into the SSM framework by incorporating stochastic inputs into our SSM dynamics equation \cref{eq:ssm_driven}.
A fundamental probabilistic model is the \emph{linear Gaussian state space model} (LGSSM), where the latent variables $s_t$ follow linear dynamics with Gaussian noise, and emit observations $o_t$ with linear readouts with Gaussian noise \citep{murphy2023probabilistic, sarkka2023bayesian}. See \Cref{fig:lgssm}. In particular, note that the LGSSM is a simple way to make an LDS a probabilistic object: the latent variables $s_t$ are modeled as an LDS.

\begin{figure}[t]
    \centering
    \begin{tikzpicture}
        % Latent nodes (s)
        \node[circle, draw=black] (s0) at (0,0) {$s_0$};
        \node[circle, draw=black] (s1) at (2,0) {$s_1$};
        \node[circle, draw=black] (s2) at (4,0) {$s_2$};
        \node[circle, draw=black] (s3) at (6,0) {$s_3$};
        
        % Observed nodes (o) - moved to positive y-coordinate
        \node[circle, draw=black, fill=gray!20] (o1) at (2,1.5) {$o_1$};
        \node[circle, draw=black, fill=gray!20] (o2) at (4,1.5) {$o_2$};
        \node[circle, draw=black, fill=gray!20] (o3) at (6,1.5) {$o_3$};
        
        % Edges for latent chain
        \draw[-to] (s0) -- (s1);
        \draw[-to] (s1) -- (s2);
        \draw[-to] (s2) -- (s3);
        
        % Edges for emissions
        \draw[-to] (s1) -- (o1);
        \draw[-to] (s2) -- (o2);
        \draw[-to] (s3) -- (o3);
    \end{tikzpicture}

    \caption{\textbf{A linear Gaussian state space model (LGSSM):} The LGSSM consists of latent variables $s_t$ and observed variables $o_t$. The generative model of the LGSSM consists of dynamics $s_{t+1} \sim \mathcal{N}\left( A s_t, Q \right)$ and emissions $o_{t+1} \sim \mathcal{N}\left( C s_{t+1}, R \right)$.}
    \label{fig:lgssm}
\end{figure}
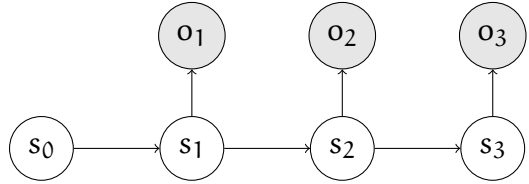

Two canonical inferential targets in the LGSSM are the filtering distributions, $p(s_t \mid o_{1:t})$, and the smoothing distributions, $p(s_t \mid o_{1:T})$.
    The Kalman filter \citep{kalman-filter} and Rauch-Tung-Striebel (RTS) smoother\footnote{Occasionally we will call the RTS smoother a ``Kalman'' smoother for simplicity.} \citep{rauch1965maximum} obtain the filtering and smoothing distributions (respectively) in an LGSSM. The Kalman filter makes a single pass forward in time to get the filtering distributions, while the RTS smoother then makes an additional pass backwards in time to get the smoothing distributions.
Thus, these canonical algorithms for Bayesian inference in LGSSMs would also at first glance seem to be inherently sequential.

\subsection{Limitation of SSMs: "Inherently Sequential"}

Indeed, despite the breadth of use of SSMs across statistics and machine learning, it was widely believed that SSMs were ``inherently sequential'' to evaluate \citep{vaswani2017attention, linear_attention, smith_thesis}.
With longer sequences, this sequential evaluation becomes a computational bottleneck, especially on modern hardware like GPUs and TPUs that thrive on parallelism.
As a result, in keeping with the ``hardware lottery'' \citep{hooker2021hardware}, many researchers began to turn away from SSMs in favor of more parallelizable approaches.

However, it turns out that there is a simple but effective way to parallelize our first and simplest example of SSMs in \Cref{tab:ssm_examples}: \emph{linear} dynamical systems.
Therefore, in our next background section on parallel computing, we review the \emph{parallel associative scan} that allows us to parallelize linear dynamical systems. 
Ultimately, as we will see in \Cref{sec:deer}, a clever use of the parallel scan allows us to parallelize SSMs \emph{in general}, despite their ``inherently sequential'' nature.

\section{Parallel Computing: The Parallel Associative Scan}\label{sec:pscan}

The \emph{parallel scan}~\citep{Stone1973, blelloch1990prefix}, also known as the \textit{associative} scan and, colloquially, \textit{pscan}, is a well-known primitive in the parallel computing literature \citep{hillis1986data, ladner1980parallel, lakshmivarahan1994parallel}. The core idea of the parallel scan is a divide-and-conquer algorithm. We illustrate this point in the simple example of multiplying a series of matrices together.

\subsection{The Parallel Scan: A Gentle Introduction} 

\paragraph{Simple example: multiplying a sequence of matrices}
Consider the following problem: given a series of square matrices $A_1, A_2, \hdots, A_{T-1}, A_T$, compute their product\footnote{Note that we have the matrices act via left-multiplication over the sequence length, because this is the most common way to write matrix-vector products.}, $A_T A_{T-1} \hdots A_2 A_1$. The simplest way to carry out the matrix multiplication is sequentially: first compute $A_1$, then compute $A_2 A_1$, then compute $A_3 A_2 A_1$, and so on. Such an approach takes $\mathcal{O}(T)$ time.

A core insight of the parallel scan is that matrix multiplication is \emph{closed}; that is, if $A_s \in \mathbb{R}^{D \times D}$ and $A_t \in \mathbb{R}^{D \times D}$, then $A_t A_s \in \mathbb{R}^{D \times D}$. Thus, matrix products can be computed recursively in pairs, as illustrated in \Cref{fig:divide_and_conquer}.

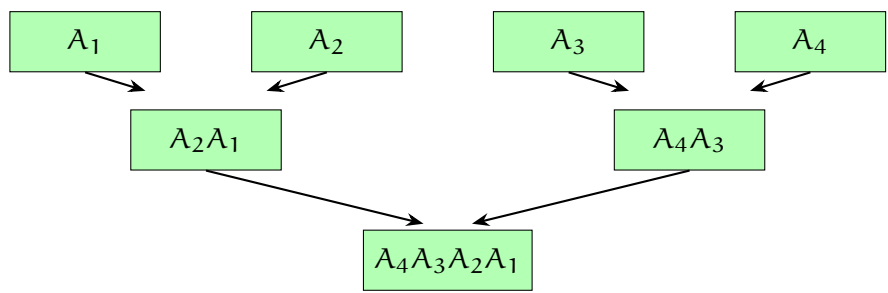
\begin{figure}[ht]
\centering
\begin{tikzpicture}[every node/.style={font=\small}, node distance=1.2cm and 1.2cm]

% Define styles (tikzset is the modern form of \tikzstyle)
\tikzset{
  block/.style = {rectangle, draw, minimum width=2cm, minimum height=0.8cm, fill=green!30},
  arrow/.style = {thick, ->, >=Stealth} % capital S works with arrows.meta
}

% First row
\node[block] (block1) {$A_1$};
\node[block, right=1.2cm of block1] (block2) {$A_2$};
\node[block, right=1.2cm of block2] (block3) {$A_3$};
\node[block, right=1.2cm of block3] (block4) {$A_4$};

% Midpoints (name the calc results so 'of' can use them)
\coordinate (mid12) at ($(block1)!0.5!(block2)$);
\coordinate (mid34) at ($(block3)!0.5!(block4)$);
\coordinate (mid56) at ($(mid12)!0.5!(mid34)$);

% Second row
\node[block, below=0.9cm of mid12] (block5) {$A_2 A_1$};
\node[block, below=0.9cm of mid34] (block6) {$A_4 A_3$};

% Third row
\node[block, below=2.5cm of mid56] (block7) {$A_4 A_3 A_2 A_1$};

% Arrows first->second
\draw[arrow] (block1.south) -- ($(block1.south)!0.5!(block5.north)$);
\draw[arrow] (block2.south) -- ($(block2.south)!0.5!(block5.north)$);
\draw[arrow] (block3.south) -- ($(block3.south)!0.5!(block6.north)$);
\draw[arrow] (block4.south) -- ($(block4.south)!0.5!(block6.north)$);

% Arrows second->third
\draw[arrow] (block5.south) -- ($(block5.south)!0.9!(block7.north)$);
\draw[arrow] (block6.south) -- ($(block6.south)!0.9!(block7.north)$);

\end{tikzpicture}
\caption{\textbf{Parallel Scan for Matrix Multiplication.} We illustrate a divide-and-conquer approach to compute the product $A_4 A_3 A_2 A_1$. Note that this divide-and-conquer approach naturally leads to $\mathcal{O}(\log T)$ depth. }\label{fig:divide_and_conquer}
\end{figure}

Because of the divide-and-conquer (binary-tree-like) nature of this approach to multiplying matrices, with $\mathcal{O}(T)$ processors, the time needed to get the matrix product is only $\mathcal{O}(\log T)$. 
This simple example illustrates the core intuition behind the parallel scan: a closed operation leading to a divide-and-conquer approach that parallelizes a computation so that it takes sublinear time.
However, there are two additional details of the parallel associative scan that we should address: arbitrary binary associative operators and closure; and getting intermediate products.

\paragraph{Detail \#1: Parallel scans for arbitrary binary associative operators}

Matrix multiplication is an associative operator, as $A_3 (A_2 A_1) = (A_3 A_2) A_1$. In general, consider a binary associative operator $\otimes$, which would satisfy $q_3 \otimes (q_2 \otimes q_1) = (q_3 \otimes q_2) \otimes q_1$. Now, let us further assume that this binary associative operator is closed:
\begin{definition}[Closure]\label{def:closure}
	A binary associative operator $\otimes$ is closed over a set $\mathcal{S}$ if it satisfies the property:
	\begin{equation}\label{eq:closure_def}
		q_1 \in \mathcal{S}, q_2 \in \mathcal{S} \Rightarrow q_2 \otimes q_1 \in \mathcal{S}.
	\end{equation}
\end{definition}
If $\otimes$ is closed, then we can again use a parallel scan to compute the cumulative product of the operands.

A wide range of binary associative operators are closed, and can thus be parallelized with the parallel scan. We have already seen that matrix multiplication is such a binary associative operator. An even simpler example of a binary associative operator amenable to the parallel scan is \textbf{scalar addition}. The fact that addition of scalars (and vectors) is closed allows cumulative sums to be computed with the parallel scan algorithm. When the binary associative operator is addition, it is also known as the \textit{prefix sum} algorithm. Clearly, addition is associative and closed, and so summing a series of scalars can be done with a divide-and-conquer approach.

\paragraph{Detail \#2: Obtaining the intermediate terms in the product}

The parallel scan is meant to be a parallelized implementation of the \textsc{Scan} primitive from functional programming \citep{bird1998introduction}.
However, \textsc{Scan} not only returns the final product  $A_T A_{T_1} \hdots A_1$, as we illustrated in \Cref{fig:divide_and_conquer}, but also all the intermediate terms $A_1$, $A_2 A_1$, $A_3 A_2 A_1$, etc.

In fact, the parallel scan provides all the intermediate terms as well. We again illustrate in our motivating example of matrix multiplication, in particular the setting where $T=8$. We will denote the individual matrices as $A_1, A_2, A_3, \hdots A_8$, and their products as $A_{s:t}$, i.e. $A_{5:6} = A_6 A_5$. 

The first phase of the parallel scan is the \emph{up-sweep}, and takes $\log(T)$ iterations and $\mathcal{O}(T)$ memory.
Crucially, note that we are using $\mathcal{O}(T)$ processors in parallel as well.
We start multiplying adjacent pairs of matrices together. Looking, for example, at Position 8 of \Cref{tab:upsweep}, we go from $A_8$ to $A_{7:8}$ to $A_{5:8}$ to $A_{1:8}$. 

Then, in the \emph{down-sweep}, we fill in the missing products to obtain all the cumulative products $A_{1:t}$ for $1 \leq t \leq T$. Intuitively, the down-sweep also takes $\mathcal{O}(\log T)$ iterations, for the same reason that any natural number $T$ can be represented using $1 + \log_2(T)$ digits in binary.  

\begin{table}[ht]
\centering
\begin{tabularx}{\textwidth}{|Y|Y|Y|Y|Y|Y|Y|Y|}
\toprule
\text{Pos. 1} & \text{Pos. 2} & \text{Pos. 3} & \text{Pos. 4} &
\text{Pos. 5} & \text{Pos. 6} & \text{Pos. 7} & \text{Pos. 8} \\
\midrule
{ \color{gray} A_1} & { \color{gray} A_2 }  & { \color{gray} A_3 } & { \color{gray} A_4 } & { \color{gray} A_5} & { \color{gray} A_6 }  & { \color{gray} A_7 } & { \color{gray} A_8 } \\
A_1 & { \color{blue} A_{1:2}} & A_3 & { \color{blue} A_{3:4}} & A_5 & { \color{blue} A_{5:6} } & A_7 & { \color{blue} A_{7:8} } \\
A_1 & A_{1:2} & A_3 & { \color{blue} A_{1:4} } & A_5 & A_{5:6} & A_7 & { \color{blue} A_{5:8} }  \\
A_1 & A_{1:2} & A_3 & A_{1:4} & A_5 & A_{5:6} & A_7 & { \color{blue} A_{1:8} }  \\
\bottomrule
\end{tabularx}
\caption{\textbf{Up-sweep} for multiplying $A_1, A_2, \hdots A_8$.}
\label{tab:upsweep}
\end{table}

\begin{table}[ht]
\centering
\begin{tabularx}{\textwidth}{|Y|Y|Y|Y|Y|Y|Y|Y|}
\toprule
\text{Pos. 1} & \text{Pos. 2} & \text{Pos. 3} & \text{Pos. 4} &
\text{Pos. 5} & \text{Pos. 6} & \text{Pos. 7} & \text{Pos. 8}  \\
\midrule
A_1 & A_{1:2} & A_3 & A_{1:4} & A_5 & { \color{blue} A_{1:6} } & A_7 &  A_{1:8}  \\
A_1 & A_{1:2} & { \color{blue} A_{1:3} }  & A_{1:4} & { \color{blue} A_{1:5} } & A_{1:6}  & { \color{blue} A_{1:7} } &  A_{1:8}  \\
\bottomrule
\end{tabularx}
\caption{\textbf{Down-sweep} for multiplying $A_1, A_2, \hdots A_8$.}
\label{tab:downsweep}
\end{table}

Thus, together, the up-sweep and the down-sweep of the parallel scan run in $\mathcal{O}(\log T)$ time on $\mathcal{O}(T)$ processors, and at the end of this algorithm, we get all of the intermediate products\footnote{See the last row of \Cref{tab:downsweep}.} (the ``prefix sums''). 

\subsection{Parallelizing Linear Dynamical Systems}\label{ssc:pscan_lds}

Having digested the fundamentals of the parallel scan, it becomes apparent that composition of affine functions is also a binary associative operator that is closed. Therefore, it is possible to parallelize over the sequence length the roll-out of an LDS evolving according to \cref{eq:lds}.

In more detail, consider the affine function $f_i(x) = A_i x + b_i$. Notice that the composition of affine functions is also affine, as $f_{j} (f_i(x)) = A_j A_i x + \left( b_j + A_j b_i  \right)$. Thus, if we represent the operands as ordered pairs $(A_i, b_i)$ and $(A_j, b_j)$, we can write the associative operator $\otimes$ for the composition of affine functions as
    \begin{equation}\label{eq:bin_op_affine}
    	(A_i, b_i) \otimes (A_j, b_j) = (A_j A_i, b_j + A_j b_i).
    \end{equation}
    Thus, we observe that in this setting, $\otimes$ is closed. We also should check that $\otimes$ is associative: we can do so with either elementary algebra, or by observing that function composition is associative.

This observation that composition of affine functions can be parallelized with the associative scan is what lets us parallelize LDSs. The insight that LDSs could be parallelized with parallel scans led to a revolution in the deep sequence modeling community based on transformer alternative architectures. Because LDSs could be parallelized, this led to the development of both linear RNNs \citep{martin2018parallelizing, orvieto-resurrecting} and deep SSMs \citep{smith2023s5, mamba}. These approaches boil down to sequence mixing layers that are LDSs (and therefore parallelizable with parallel scan), stacked nonlinearly in depth. As we will see throughout this thesis, decomposing nonlinear SSM dynamics into LDSs that can be parallelized with the parallel scan is our fundamental tool in parallelizing arbitrary SSMs.

\subsection{Parallelizing Kalman Filtering and Smoothing}

In the previous section, we showed how we can parallelize the evaluation of a linear dynamical system. In this section, we discuss how we can also parallelize Bayesian inference---Kalman filtering and smoothing---in probabilistic models based on LDSs, namely linear Gaussian SSMs (which we reviewed in \Cref{ssc:kalman}).

Both the Kalman filter and RTS smoother would seem to be inherently sequential algorithms, requiring $\mathcal{O}(T)$ time. However, \citet{parallel-kalman} demonstrated that the Kalman filter and RTS smoother can also be parallelized over the sequence length via the construction of custom binary associative operators and a parallel scan. While we leave the details of this construction to \citet{parallel-kalman}, we note that it is intuitively plausible to be able to parallelize filtering and smoothing in an LGSSM with a parallel scan because
    \begin{itemize}
        \item the dynamical backbone is an LDS, for which we have a parallel scan (cf. \cref{eq:bin_op_affine});
        \item since everything is linear and Gaussian, all distributions remain Gaussian, hinting at closure; and
        \item we can combine $p(s_{t'} | s_0, o_{1:t'})$ with $p(s_t | s_{t'}, o_{t'+1: t})$ to obtain $p(s_t | s_0, o_{1:t})$, suggesting a divide-and-conquer strategy.
    \end{itemize}
    These parallel filtering and smoothing algorithms are useful in machine learning, allowing for parallelization of structured variational autoencoders \citep{johnson2016composing, zhao2023revisiting}. Similar approaches also work for Hidden Markov Models \citep{hassan2021temporal} and for computing log-normalizing constants \citep{hu2025sing}. 

\subsection{The Difficulties of Parallelizing an SSM in general}\label{ssc:nonlin_pscan}

The astute reader might note that the composition of functions, i.e. $f_1 \circ f_2$, is \emph{always} a binary associative operator. So, why do we have all these special cases of parallel scans, and not simply one parallel scan for the composition $\circ$ of arbitrary functions $f_i$? 

The reason to have many different parallel scans is precisely the importance of having the binary associative operator be \emph{closed}. In all the previous examples, the binary associative operator $\otimes$ satisfies \Cref{def:closure}, letting us easily store combinations of operands $q_i \otimes q_j$ and so employ a divide-and-conquer technique.

While we could consider some gigantic function space $\mathcal{F}$, for which function composition would be closed, the practical question then becomes: how would we store the combinations of operands? If we do not have some compact representation for elements of $\mathcal{F}$, then we cannot use a parallel scan in practice, even though the parallel scan may seem applicable in theory.

Nonetheless, we still have the parallel scan for parallelizing LDSs. When one has a hammer (the parallel scan for LDSs), everything begins to look like a nail. Thus, one might attempt the seemingly hacky approach of taking nonlinear dynamical systems, and iteratively
\begin{itemize}
    \item linearizing the system
    \item evaluating the linearized system in parallel with the parallel scan.
\end{itemize}
Incredibly, this approach, which motivates this thesis, is not a hack but is rather an instantiation of Newton's method! Therefore, in the next section, we review Newton's method in optimization and numerical analysis generally.

\section{Numerical Analysis: Newton's method}\label{sec:math_background}

Newton's method is one of the most fundamental approaches in root-finding, optimization, and numerical analysis generally \citep{ortega1970iterative, NocedalWright, boyd2004convex, hubbard2015vector}. In this background section, we review the fundamentals of Newton's method and other related techniques in root-finding, optimization, and fixed-point methods. 

\subsection{Root-finding}\label{ssc:root_finding}

Consider a high-dimensional nonlinear function $\mathbf{r}(\mathbf{s}): \mathbb{R}^P \mapsto \mathbb{R}^P$.
A standard problem in numerical analysis is to find the root of such a function, i.e. find those $\mathbf{s}^\star$ for which $\mathbf{r}(\mathbf{s}^\star) = \mathbf{0}$. In high dimensions and for a complicated function, it is not immediately obvious how one might find such a zero in an efficient manner.

However, if our function is affine, i.e. $\mathbf{r}(\mathbf{s}) = \mathbf{M} \mathbf{s} + \mathbf{b}$, then provided $\mathbf{M}$ is invertible there is at least a straightforward way to find the root $\mathbf{s}^\star$, as $\mathbf{s}^\star = - \mathbf{M}^{-1} \mathbf{b}$.

Newton's method for root-finding for differentiable functions $\mathbf{r}$ is based on the idea of iteratively
\begin{itemize}
    \item linearizing $\mathbf{r}$ around our current guess $\mathbf{s}^{(i)}$ to form the affine function $\mathbf{\hat{r}}^{(i)}$; and then
    \item finding the root of $\mathbf{\hat{r}}^{(i)}$, and making it our new guess $\mathbf{s}^{(i+1)}$.
\end{itemize}
We show a graphical depiction of Newton's method for a one-dimensional function $\mathbf{r}(\cdot): \mathbb{R} \mapsto \mathbb{R}$ in \Cref{fig:pchord}.
\begin{figure}
    \centering
    \includegraphics[width=0.9\linewidth]{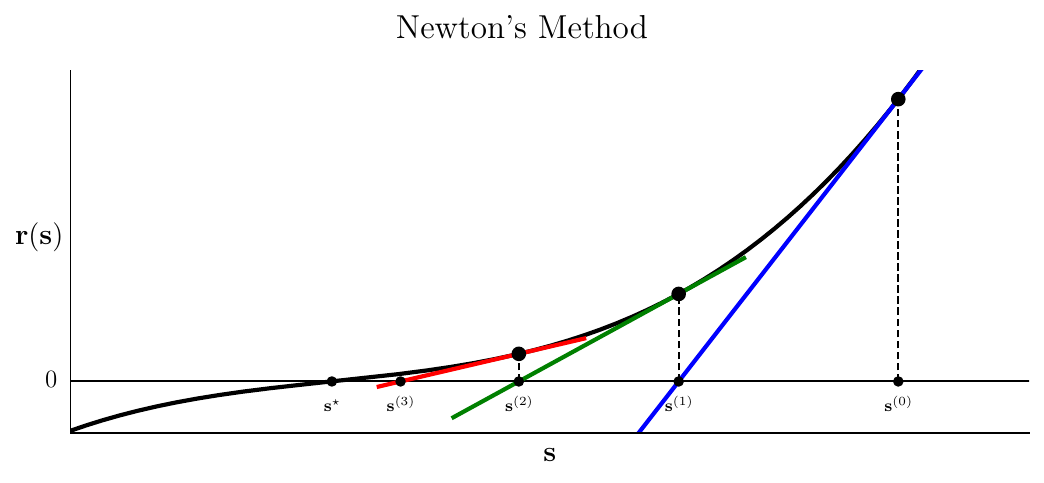}
    \caption{\textbf{Newton's method for root-finding.} Here we illustrate 3 iterations of Newton's method for root-finding on the one-dimensional cubic function $\mathbf{r}(\mathbf{s}) = (\mathbf{s} - 0.4)^3 + 0.45 (\mathbf{s} - 0.4)$. We observe that each iteration of Newton's method involves linearizing the function to obtain $\mathbf{\hat{r}}^{(i)}(\cdot)$ (shown in color) and then finding the zero of this linearization to obtain our next guess.}
    \label{fig:pchord}
\end{figure}
Let us define the notational shorthand
\begin{equation*}
    \mathbf{J}^{(i)} := \dfrac{\partial \mathbf{r}}{\partial \mathbf{s}}(\mathbf{s}^{(i)}),
\end{equation*}
where we choose $\mathbf{J}$ to stand for the Jacobian matrix (i.e. derivative) of $\mathbf{r}$.  With this notation, we see that the first step is given by a first-order Taylor expansion of $\mathbf{r}$ around $\mathbf{s}^{(i)}$, i.e.
\begin{equation*}
    \mathbf{\hat{r}}^{(i)}(\mathbf{s}) := \mathbf{r}(\mathbf{s}^{(i)}) + \mathbf{J}^{(i)} \left( \mathbf{s} - \mathbf{s}^{(i)} \right).
\end{equation*}
Thus, we see that every step of Newton's method for root-finding---where we aim to find the zero of $\mathbf{\hat{r}}^{(i)}(\mathbf{s})$---is given by
\begin{equation}\label{eq:newton_root}
    \mathbf{s}^{(i+1)} = \mathbf{s}^{(i)} - \left( \mathbf{J}^{(i)} \right)^{-1} \mathbf{r}(\mathbf{s}^{(i)}).
\end{equation}
Of course, for \cref{eq:newton_root} to be valid, we must have $\J^{(i)}$ invertible---but for the parallel Newton methods considered in this dissertation, it always will be (see \cref{eq:big_j}).

Another limitation of Newton's method immediately visible from \cref{eq:newton_root} is the need to store and invert $\J^{(i)} \in \mathbb{R}^{P \times P}$.
In particular, the matrix inversion requires $\mathcal{O}(P^3)$ floating point operations (FLOPs).
While the implementation of the numerical linear algebra can be optimized \citep{golub2013matrix}, the overall cost of Newton's method has inspired a broad literature on cheaper, approximate \emph{quasi-Newton} methods \citep{NocedalWright, nocedal1980updating, liu1989limited}. We build on and contribute to this quasi-Newton literature in \Cref{ch:scalable}. 

While \Cref{fig:pchord} shows intuitively why Newton's method can be a powerful technique for root-finding, let us discuss some of its convergence properties further and more formally.

\paragraph{Convergence of Newton's method}

Newton's method is known to enjoy \emph{quadratic convergence} \underline{within a basin} around the solution $\mathbf{s}^\star$ \citep{hubbard2015vector, ortega1970iterative, NocedalWright}.

One way to define quadratic convergence, following the presentation in \citet{NocedalWright}, is via the notion of \emph{Q-convergence}, which is short for \emph{quotient-convergence}:
\begin{definition}[Q-convergence]
    Consider a sequence of iterates $\{ \mathbf{s}^{(i)} \}$ which is converging to a limit $\mathbf{s}^\star$ as $i \to \infty$. Then this sequence Q-converges to $\mathbf{s}^\star$ with \emph{order $q$} and with \emph{rate of convergence $\gamma$} if, for all $(i)$ sufficiently large, 
    \begin{equation}\label{eq:Qconvergence}
        \| \mathbf{e}^{(i+1)} \|\leq \gamma \| \mathbf{e}^{(i)} \|^q,
    \end{equation}
    where the \emph{errors} are defined by
    \begin{equation}\label{eq:error}
        \mathbf{e}^{(i)} := \mathbf{s}^{(i)} - \mathbf{s}^\star,
    \end{equation}
    and $\| \cdot \|$ is any valid vector norm.
\end{definition}
If the order $q=1$, we say that the iterative method enjoys \emph{linear convergence}, while if $q=2$, it enjoys \emph{quadratic convergence}.
In linear convergence, the error
satisfies\footnote{Here, we slightly abuse notation to make $\mathbf{e}^{(0)}$ the first iteration where the inequality in \eqref{eq:Qconvergence} holds.} $\| \mathbf{e}^{(i)} \| \leq \gamma^i \| \mathbf{e}^{(0)} \|$, indicating that the norm of the error decays exponentially in the number of iterations, with base $\gamma$. We must have $\gamma < 1$ for linear convergence to converge to a limit. 
In quadratic convergence, the error satisfies $\| \mathbf{e}^{(i)} \| \leq \nicefrac{(\gamma \| \mathbf{e}_0 \| )^{2^{i}}}{\gamma}$, indicating the norm of the error decays \emph{doubly-exponentially} with base $\gamma \| \mathbf{e}_0 \|$. Again, however, to actually enjoy decrease with quadratic convergence, we must have $\gamma \| \mathbf{e}_0 \| < 1$, giving rise to the \emph{basin of quadratic convergence} $\mathcal{B}_Q$ given by
\begin{equation*}
    \mathcal{B}_Q := \left\{ \mathbf{s}^{(i)} : \| \mathbf{s}^{(i)} - \mathbf{s}^\star \| < \frac{1}{\gamma} \right\}.
\end{equation*}
With this definition, we now provide a simple proof\footnote{following e.g. the proof of Proposition 4 of \citet{parasolver25}.} that Newton's method enjoys quadratic rate in a basin around the solution $\mathbf{s}^\star$.
\begin{proposition}\label{prop:Newton_quadratic}
    Say we are trying to find a root of $\mathbf{r}(\mathbf{s}): \mathbb{R}^P \mapsto \mathbb{R}^P$ with Newton's method as defined in \cref{eq:newton_root}. If we assume that $\mathbf{J}(\mathbf{s})$ is $L$-Lipschitz and is always invertible with $\| \mathbf{J}(\mathbf{s})^{-1} \| \leq \beta$ for all $\mathbf{s}$, then Newton's method converges quadratically in the basin given by $\left\{ \mathbf{s}^{(i)} : \| \mathbf{s}^{(i)} - \mathbf{s}^\star \| \leq \frac{2}{L \beta} \right\}$.
\end{proposition}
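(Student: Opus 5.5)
The plan is the standard error-recursion argument for Newton's method. Write $\mathbf{e}^{(i)} = \mathbf{s}^{(i)} - \mathbf{s}^\star$ as in \cref{eq:error}, and use $\mathbf{r}(\mathbf{s}^\star) = \mathbf{0}$. Subtracting $\mathbf{s}^\star$ from both sides of \cref{eq:newton_root} and factoring out $\left(\mathbf{J}^{(i)}\right)^{-1}$ gives
\begin{equation*}
    \mathbf{e}^{(i+1)} = \left(\mathbf{J}^{(i)}\right)^{-1}\Big[\mathbf{J}^{(i)}\mathbf{e}^{(i)} - \big(\mathbf{r}(\mathbf{s}^{(i)}) - \mathbf{r}(\mathbf{s}^\star)\big)\Big].
\end{equation*}
The goal is to show that the bracket is a second-order Taylor remainder, of size at most $\tfrac{L}{2}\|\mathbf{e}^{(i)}\|^2$.

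To bound the bracket, I apply the fundamental theorem of calculus along the segment $\mathbf{s}^\star + t\,\mathbf{e}^{(i)}$ for $t\in[0,1]$. This gives
\begin{equation*}
    \mathbf{r}(\mathbf{s}^{(i)}) - \mathbf{r}(\mathbf{s}^\star) = \int_0^1 \mathbf{J}(\mathbf{s}^\star + t\,\mathbf{e}^{(i)})\,\mathbf{e}^{(i)}\,dt,
\end{equation*}
so the bracket equals
\begin{equation*}
    \int_0^1 \big[\mathbf{J}(\mathbf{s}^{(i)}) - \mathbf{J}(\mathbf{s}^\star + t\,\mathbf{e}^{(i)})\big]\mathbf{e}^{(i)}\,dt.
\end{equation*}
The Lipschitz assumption bounds the integrand by $L(1-t)\|\mathbf{e}^{(i)}\|^2$, since the two evaluation points differ by $(1-t)\mathbf{e}^{(i)}$. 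Integrating gives $\tfrac{L}{2}\|\mathbf{e}^{(i)}\|^2$. Combining this with $\|\mathbf{J}^{-1}\|\le\beta$ yields
\begin{equation*}
    \|\mathbf{e}^{(i+1)}\| \le \tfrac{L\beta}{2}\,\|\mathbf{e}^{(i)}\|^2.
\end{equation*}
This is \cref{eq:Qconvergence} with $q=2$ and $\gamma = L\beta/2$.

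It remains to check that the region is invariant and that the recursion actually contracts there. I read the basin in the statement as the one matching $\mathcal{B}_Q = \{\|\mathbf{e}\| < 1/\gamma\}$, so I take the open ball $\|\mathbf{e}^{(i)}\| < \tfrac{2}{L\beta}$. Inside it, $\tfrac{L\beta}{2}\|\mathbf{e}^{(i)}\| < 1$, so $\|\mathbf{e}^{(i+1)}\| < \|\mathbf{e}^{(i)}\|$ and the iterates stay in the ball. By induction the quadratic inequality then holds at every step. Unrolling it gives
\begin{equation*}
    \|\mathbf{e}^{(i)}\| \le \big(\gamma\|\mathbf{e}^{(0)}\|\big)^{2^i}/\gamma \to 0,
\end{equation*}
so the iterates converge with order two.

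The main obstacle is the boundary case. If $\|\mathbf{e}^{(0)}\| = \tfrac{2}{L\beta}$ exactly, the bound only gives $\|\mathbf{e}^{(1)}\| \le \|\mathbf{e}^{(0)}\|$, which is not a contraction. I would therefore handle the closed ball as stated by noting that the quadratic inequality still holds everywhere on it, so the iterates stay in the closed ball. Convergence with a genuine doubly-exponential rate is then claimed for starting points in the interior, consistent with $\mathcal{B}_Q$. I would also note that the Lipschitz constant is taken with respect to the operator norm induced by the chosen vector norm; this is what makes the integrand bound valid.
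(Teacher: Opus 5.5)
Your proof is correct and follows essentially the same route as the paper's. Both write $\mathbf{e}^{(i+1)} = (\mathbf{J}^{(i)})^{-1}\mathbf{R}^{(i)}$, where $\mathbf{R}^{(i)}$ is the first-order Taylor remainder; both bound $\|\mathbf{R}^{(i)}\| \le \frac{L}{2}\|\mathbf{e}^{(i)}\|^2$ using the Lipschitzness of $\mathbf{J}$ and then multiply by $\beta$. Your integral form of the remainder and your invariance argument fill in details the paper leaves implicit.

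Your restriction of the convergence claim to the open ball is genuinely necessary, not a technicality. In one dimension, take $\mathbf{s}^\star = 0$ and $\mathbf{r}'(s) = \min(L|s| - 3/\beta,\, -1/\beta)$. Then $\mathbf{r}'$ is $L$-Lipschitz and $|\mathbf{r}'| \ge 1/\beta$, so all hypotheses hold. Yet Newton's method started at $s = 2/(L\beta)$ cycles between $\pm 2/(L\beta)$ and never converges.
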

\begin{proof}
    Subtract $\mathbf{s}^\star$ from both sides of \cref{eq:newton_root} to obtain
    \begin{equation*}
        \mathbf{e}^{(i+1)} = \mathbf{e}^{(i)} - \left( \J(\mathbf{s}^{(i)}) \right)^{-1} \mathbf{r}(\mathbf{s}^{(i)}).
    \end{equation*}
    Taylor expanding $\mathbf{r}(\cdot)$ around $\mathbf{s}^{(i)}$, we get the equality
    \begin{equation*}
        \mathbf{r}(\mathbf{s}^\star) = \mathbf{r}(\mathbf{s}^{(i)}) - \J(\mathbf{s}^{(i)}) \mathbf{e}^{(i)} + \mathbf{R}^{(i)},
    \end{equation*}
    where the remainder $\mathbf{R}^{(i)}$ satisfies $\| \mathbf{R} \| \leq \frac{L}{2} \| \mathbf{e}^{(i)} \|^2$. Since $\mathbf{r}(\mathbf{s}^\star) =0$, it follows that
    \begin{equation*}
        \mathbf{e}^{(i+1)} = \left( \J(\mathbf{s}^{(i)}) \right)^{-1} \mathbf{R}^{(i)}.
    \end{equation*}
    Taking norms on both sides and inputting the assumptions, it follows that
    \begin{equation*}
        \| \mathbf{e}^{(i+1)} \| \leq \frac{L \beta}{2} \| \mathbf{e}^{(i)} \|^2,
    \end{equation*}
    i.e. Newton's method enjoys quadratic convergence in the specified basin.
\end{proof}

However, this quadratic convergence of Newton's method only holds \emph{locally}, i.e. for initial guesses $\mathbf{s}^{(0)}$ that are close to the zero $\mathbf{s}^\star$.
It is stronger and more helpful to have guarantees for \emph{global} convergence, i.e. assurances that an iterative solver will converge (and with a specified rate) \emph{no matter} the initial guess $\mathbf{s}^{(0)}$.

Unfortunately, in general, Newton's method does \emph{not} enjoy global convergence guarantees \citep{NocedalWright}. We illustrate with a simple example.

\begin{figure}[ht]
    \centering
    \includegraphics[width=\linewidth]{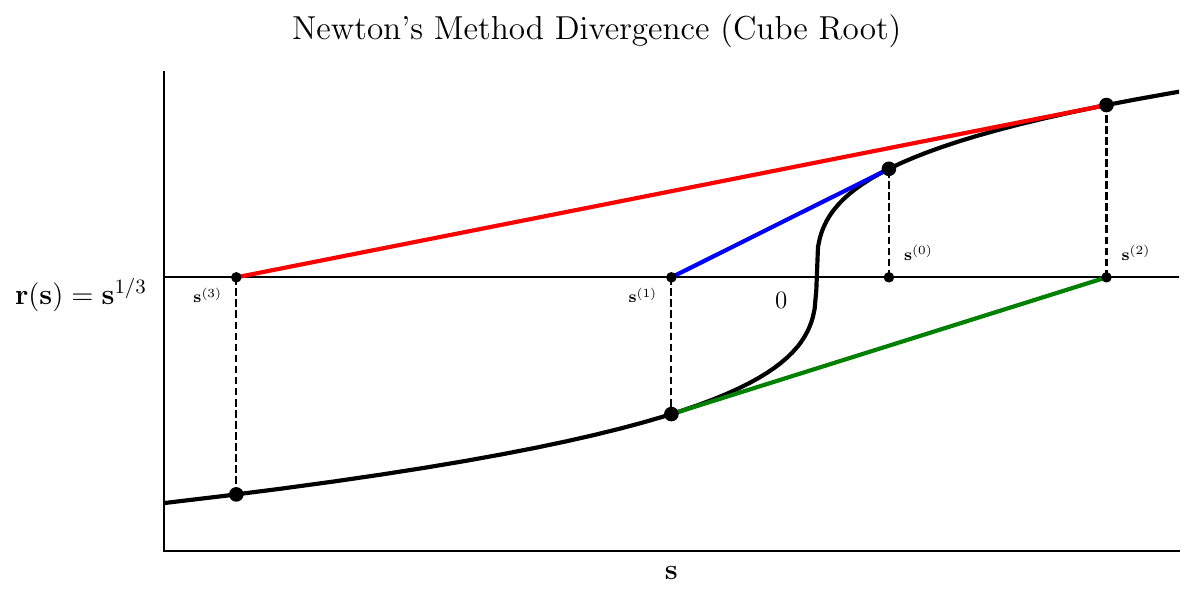}
    \caption{\textbf{Newton's method can globally diverge.} A graphical depiction showing how Newton's method for root-finding can globally \emph{diverge} for a simple function like $\mathbf{r}(\mathbf{s}) = \mathbf{s}^{1/3}$.}
    \label{fig:newton_diverges}
\end{figure}
\begin{example}[Newton's method can diverge: $\mathbf{r}(\mathbf{s}) = \mathbf{s}^{1/3}$.]\label{example:newton_diverge}
    Consider the standard cube root function $\mathbf{r}(\mathbf{s}) = \mathbf{s}^{1/3}$ defined on all of the real line.
    At all points $\mathbf{s} \in \mathbb{R}$, the derivative of this function is given by
    \begin{equation*}
        \mathbf{r}'(\mathbf{s}) = \frac{1}{3} \mathbf{s}^{-2/3}.
    \end{equation*}
    So, plugging into \cref{eq:newton_root}, it follows that, no matter the initial guess, the Newton iterates follow
    \begin{equation*}
        \mathbf{s}^{(i+1)} = -2 \mathbf{s}^{(i)}.
    \end{equation*}
    Consequently, we observe that for any initial guess not the unique solution $\mathbf{s}^\star = 0$, Newton's method will diverge for this function, as shown in \Cref{fig:newton_diverges}. Note that \Cref{prop:Newton_quadratic} does not even apply in this setting because the derivative is neither Lipschitz nor does its inverse have a uniform bound.
\end{example}
Studying the convergence rates of parallel Newton methods---as well as their possible instabilities---is a major theme of this thesis.
 
\subsection{Optimization}\label{ssc:opt}

While Newton's method is usually first presented in an introduction to calculus as a method for root-finding (\Cref{ssc:root_finding}), it is best known in machine learning in the context of optimization.
Say we have an objective function $F(\mathbf{s}): \mathbb{R}^P \to \mathbb{R}$ that is twice differentiable, and we wish to find its minimum, i.e.
\begin{equation*}
    \mathbf{s}^\star = \mathrm{argmin}_{\mathbf{s} \in \mathbb{R}^P} F(\mathbf{s}).
\end{equation*}
For large dimension $P$ and a complicated objective function $F(\mathbf{s})$,  optimization can be very difficult.
In fact, the problem of high-dimensional optimization is one of the central problems of machine learning \citep{adam, shampoo, jordan2024muon, vyas2024soap}.

However, if $F(\mathbf{s})$ is a convex quadratic function, i.e. we can write $F(\mathbf{s}) = \frac{1}{2} \mathbf{s}^{\top} \mathbf{M} \mathbf{s} + \mathbf{b}^{\top} \mathbf{s} + c$ for positive-definite matrix $\mathbf{M}$, then its unique minimizer is given by $\mathbf{s}^\star = -\mathbf{M}^{-1} b$.

Thus, Newton's method for optimization of a twice-differentiable function is directly analogous to Newton's method for root-finding for a differentiable function.
In Newton's method for root-finding, we built on the fact that we could solve invertible linear systems, and so for a nonlinear system $\mathbf{r}(\mathbf{s}) = \mathbf{0}$, we iteratively linearize $\mathbf{r}(\cdot)$ and solve.
In Newton's method for optimization, we build on the fact that we have a closed form solution for the minimum of a convex quadratic, and so for a twice-differentiable function $F(\cdot)$, we iteratively build and minimize the quadratic surrogate of $F(\cdot)$.

The quadratic surrogate for $F(\cdot)$ at our current guess $\mathbf{s}^{(i)}$ is given by
\begin{equation}\label{eq:quad_surr}
    \hat{F}_i(\mathbf{s}) =  F(\mathbf{s}^{(i)}) + \nabla_{\mathbf{s}} F(\mathbf{s}^{(i)})^{\top} \left( \mathbf{s} - \mathbf{s}^{(i)} \right) +  \frac{1}{2} \left( \mathbf{s} - \mathbf{s}^{(i)} \right)^{\top} \nabla^2_{\mathbf{s}}F(\mathbf{s}^{(i)}) \left( \mathbf{s} - \mathbf{s}^{(i)} \right),
\end{equation}
where $\nabla^2_{\mathbf{s}}F(\mathbf{s}^{(i)}) \in \mathbb{R}^{P \times P}$ is the Hessian of $F(\cdot)$ evaluated at $\mathbf{s}^{(i)}$. Therefore, if the Hessian is positive-definite, the minimizer of $\hat{F}_i(\mathbf{s})$, and therefore the formula for the next iteration in Newton's method for optimization, is 
\begin{align*}
    \mathbf{s}^{(i+1)} & := - (\nabla^2_{\mathbf{s}}F(\mathbf{s}^{(i)}))^{-1} \left( \nabla_{\mathbf{s}} F(\mathbf{s}^{(i)}) - \nabla^2_{\mathbf{s}}F(\mathbf{s}^{(i)}) \mathbf{s}^{(i)}  \right) \\
    & = \mathbf{s}^{(i)} - (\nabla^2_{\mathbf{s}}F(\mathbf{s}^{(i)}))^{-1} \nabla_{\mathbf{s}} F(\mathbf{s}^{(i)}).
\end{align*}
However, we recognize this update for Newton's method for optimization as the same as Newton's method for root-finding in \cref{eq:newton_root}, where the function we are finding the root of is $\nabla_{\mathbf{s}} F (\cdot): \mathbb{R}^P \to \mathbb{R}^P$.
Thus, we see that Newton's method for \emph{optimization} of a function $F(\cdot)$ is nothing more than Newton's method for \emph{root-finding} applied to the \emph{derivative} of $F(\cdot)$.

This connection is part of the rich interplay in numerical analysis between root-finding (finding the zero of a function) and optimization (finding the minima of a function) \cite{NocedalWright}. The fact that Newton's method for optimization of objective function $F(\cdot)$ is equivalent to Newton's method for root-finding applied to its derivative $\nabla F (\cdot)$ makes sense because for a differentiable function $F(\mathbf{s}): \mathbb{R}^P \to \mathbb{R}$, its minima lie among its stationary points (the set of points where its derivative $\nabla F(\mathbf{s}) = \mathbf{0}$).

\paragraph{Gauss-Newton method for optimization of sum-of-squares}

However, there are even more connections between root-finding and optimization.
If we return to the problem of finding a root of a residual function $\mathbf{r}(\mathbf{s}): \mathbb{R}^P \to \mathbb{R}^P$, we observe that we can form a \emph{merit function}
\begin{equation}\label{eq:merit}
    \mathcal{L}(\mathbf{s}) := \frac{1}{2} \| \mathbf{r}(\mathbf{s}) \|_2^2.
\end{equation}
Because $\mathcal{L}(\mathbf{s})$ is a sum-of-squares objective, it is greater than or equal to zero, and we observe that $\mathcal{L}(\mathbf{s}^\star) = 0$, meaning the root $\mathbf{s}^\star$ of $\mathbf{r}(\cdot)$ is also the minimizer of the merit function\footnote{While it is admittedly counterintuitive to desire to "minimize" a "merit function," we follow the naming convention set by the classic textbook \citet{NocedalWright}.} $\mathcal{L}(\cdot)$.

By basic calculus, we observe that the gradient and Hessian of $\mathcal{L}$ are given by
\begin{align*}
    \nabla \mathcal{L}(\mathbf{s}) & = \mathbf{J}^{\top} \mathbf{r} \\ 
    \nabla^2 \mathcal{L}(\mathbf{s}) & = \mathbf{J}^{\top} \mathbf{J} + \sum_{i = 1}^{P} r_i(\mathbf{s}) \nabla^2 r_i(\mathbf{s}),
\end{align*}
where 
\begin{equation*}
    \mathbf{J}(\mathbf{s}) := \dfrac{\partial \mathbf{r}}{ \partial \mathbf{s}}(\mathbf{s}).
\end{equation*}
While we could apply Newton's method for optimization to $\mathcal{L}$, we know from our previous discussion that this would be Newton's method for root-finding applied to the gradient $\nabla \mathcal{L}(\mathbf{s}) = \mathbf{J}^{\top}(\mathbf{s}) \mathbf{r}(\mathbf{s})$, and \emph{not} Newton's method for root-finding applied to the original residual function $\mathbf{r}(\mathbf{s})$.

However, a very simple modification called the \emph{Gauss-Newton} method restores the link between optimization of the sum-of-squares merit function $\mathcal{L}$ and root-finding of the residual $\mathbf{r}$. In the Gauss-Newton method, we apply Newton's method, but we approximate the Hessian by $\mathbf{J}^{\top} \mathbf{J}$. The Gauss-Newton method is thus a way to get the benefit of second-order methods while only taking one derivative. Moreover, its updates take the form
\begin{equation*}
    \mathbf{s}^{(i+1)} = \mathbf{s}^{(i)} - \left(\mathbf{J}^{\top} \mathbf{J} \right)^{-1} \mathbf{J}^{\top} \mathbf{r}.
\end{equation*}
If $\mathbf{J}$ is invertible, then the Gauss-Newton updates take the form
\begin{equation*}
    \mathbf{s}^{(i+1)} = \mathbf{s}^{(i)} - \mathbf{J}^{-1} \mathbf{r},
\end{equation*}
which we again recognize as \cref{eq:newton_root}, i.e. root-finding for $\mathbf{r}$.

Note, therefore, that if $\mathbf{J}$ is invertible, then Gauss-Newton as an optimization technique for $\mathcal{L}$ is mathematically equivalent to Newton's method for root finding applied to $\mathbf{r}$.
For this reason, another interpretation of the Gauss-Newton method is as \emph{linearizing the residual function} $\mathbf{r}(\mathbf{s})$: that is, each step of the Gauss-Newton method minimizes the quadratic loss
\begin{equation*}
    \hat{\mathcal{L}}_{\mathbf{s}^{(i)}}(\mathbf{s}) := \frac{1}{2} \left\| \mathbf{r}(\mathbf{s}^{(i)}) + \mathbf{J}(\mathbf{s}^{(i)}) \left( \mathbf{s} - \mathbf{s}^{(i)} \right) \right\|_2^2.
\end{equation*}

For small residuals, the Newton and Gauss-Newton methods have similar convergence properties (cf. \citep{NocedalWright}). Importantly, just like Newton's method for root-finding, they can also both diverge globally. For example, take \Cref{example:newton_diverge} and turn it into an optimization problem with objective function $F(\mathbf{s}) = \mathbf{s}^{4/3}$ to see that Newton's method will diverge or merit function $\mathcal{L}(\mathbf{s}) = \mathbf{s}^{2/3}$ to see that Gauss-Newton will diverge.

\subsection{Fixed-point methods}\label{ssc:fxd_pts}

We can also write each step of Newton's method as the action of an operator $\mathcal{A}_N(\mathbf{s}): \mathbb{R}^P \to \mathbb{R}^P$, i.e.
\begin{align*}
    \mathbf{s}^{(i+1)} & = \mathcal{A}_{N}(\mathbf{s}^{(i)}) \\
    \mathcal{A}_{N}(\mathbf{s}^{(i)}) & = \mathbf{s}^{(i)} - \left( \mathbf{J}^{(i)} \right)^{-1} \mathbf{r}(\mathbf{s}^{(i)}).
\end{align*}
Importantly note that if $\mathbf{s}^\star$ is a root of $\mathbf{r}(\mathbf{s})$, i.e. $\mathbf{r}(\mathbf{s}^\star) = \mathbf{0}$, then it follows that
\begin{equation*}
    \mathcal{A}_N(\mathbf{s}^\star) = \mathbf{s}^\star,
\end{equation*}
i.e. $\mathbf{s}^\star$ is a \emph{fixed-point} of the Newton's method operator $\mathcal{A}_N$.

In general, a \emph{fixed-point} problem aims to find $\mathbf{s}^\star$ satisfying
\begin{equation*}
    \mathbf{F}(\mathbf{s}^\star) = \mathbf{s}^\star,
\end{equation*}
for some function $\mathbf{F}(\mathbf{s}): \mathbb{R}^P \to \mathbb{R}^P$. Any fixed-point problem can be interpreted as a root-finding problem by defining $\mathbf{r}(\mathbf{s}) = \mathbf{s} - \mathbf{F}(\mathbf{s})$, and then asking to find $\mathbf{s}^\star$ such that $\mathbf{r}(\mathbf{s}^\star) = 0$.

Because of all of these connections, Newton's method is also a foundational concept in fixed-point methods and solvers \citep{ortega1970iterative}. 
As we will see in \Cref{ch:quasi_convergence}, many different fixed-point methods can be used to parallelize SSMs, including Picard and Jacobi iterations. We discuss in more detail in that section.

\section{Putting it all together: Parallel Newton methods}\label{sec:deer}

In the previous sections, we reviewed dynamics, parallel computation, and numerical analysis, with the goal of combining these three diverse fields to parallelize the unrolling of state space models. In this section, we combine these three ingredients to show how parallel Newton methods allow for the parallelization of such ``inherently sequential'' processes.

\subsection{Parallel Newton methods: DEER and DeepPCR}

Concurrently, \citet{deer2024} developed DEER and \citet{deeppcr} developed DeepPCR, both of which are the same \emph{parallel Newton} method for the parallelization of SSMs.
This section reviews their foundational work.
Throughout this thesis, we use the terms ``DeepPCR'', ``DEER'', and ``parallel Newton methods'' interchangeably.

The fundamental idea of parallelizing SSMs is to replace sequential evaluation with parallel \emph{iterative} evaluation. We compare these two approaches to evaluating an SSM in \Cref{fig:parr_v_seq}.
\begin{figure}
    \centering
    \includegraphics[width=0.95\linewidth]{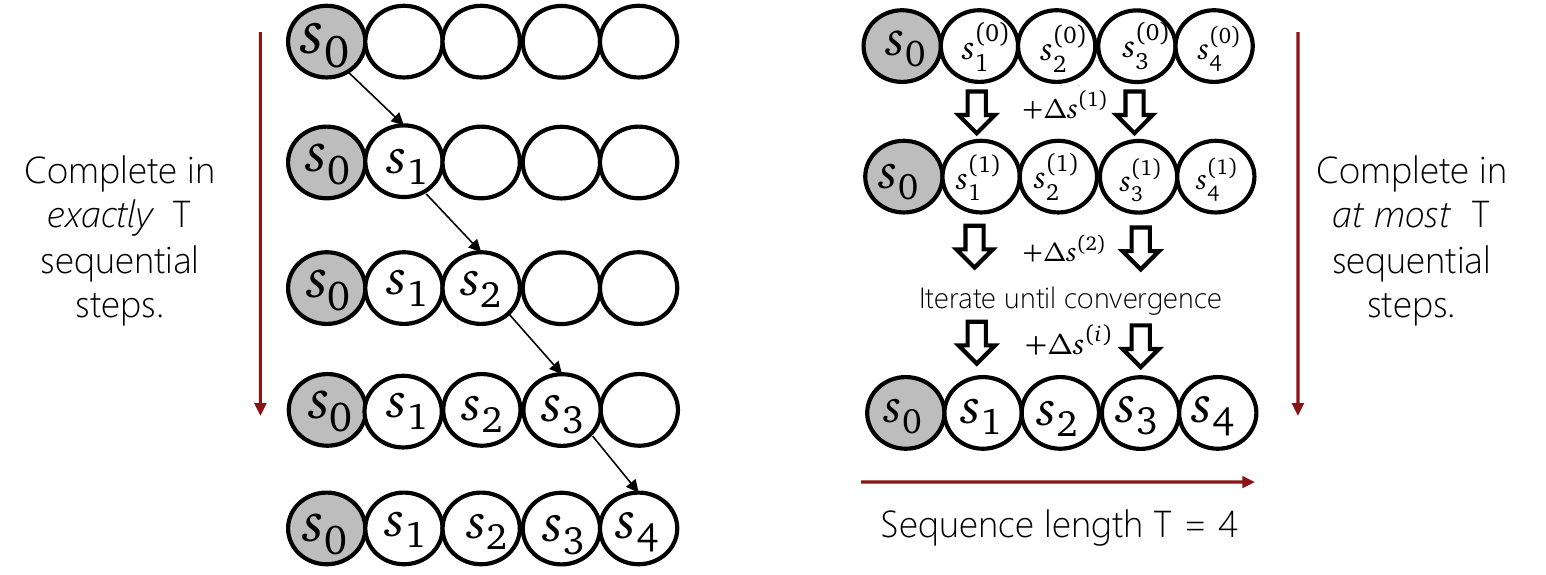}
    \caption{Comparison of standard sequential evaluation of an SSM \textbf{(left)} with \emph{parallel iterative evaluation} of an SSM \textbf{(right)}. In the parallel iterative paradigm, we make a guess over the \emph{entire} sequence, as indicated by the top right row labeled $s_1^{(0)}, s_2^{(0)}, \hdots$. Using parallel computation over the sequence length, we find an update $\Delta \mathbf{s}^{(i)}$ to go from our current guess $\mathbf{s}^{(i)}$ for the entire trajectory to our next guess $\mathbf{s}^{(i+1)}$. Adapted from Figure 1 of \citet{deer2024}.}
    \label{fig:parr_v_seq}
\end{figure}
Going forward, we will denote the true roll-out from the SSM over the entire trajectory of length $T$ as $\mathbf{s}^\star \in \mathbb{R}^{TD}$, i.e. $s_1^\star = f_1(s_0), s_2^\star = f_2(s_1^\star)$, and in general $s_t^\star = f_t(s_{t-1}^\star)$.\footnote{In our discussion of parallel Newton methods, and henceforth in this thesis, we will use bold script for variables of shape $TD$ or $TD \times TD$, and not bold for variables of shape $D$ or $D \times D$. So, bolding will be reserved for variables that extend over the sequence length, while variables that are just at a particular point in time will not be bolded. We follow this convention to distinguish between operations that occur across the sequence length vs. at a particular point.}
Note that at initialization, $\mathbf{s}^\star \neq \mathbf{s}^{(0)}$, i.e. we may be initializing in a way that is not faithful at all to the true SSM dynamics. Thus, we need to make updates $\mathbf{s}^{(i+1)} = \mathbf{s}^{(i)} + \Delta \mathbf{s}^{(i)}$ in a way that brings our guesses close to $\mathbf{s}^\star$ in a small number of iterations. This desideratum raises an important question about our updates $\Delta \mathbf{s}^{(i)}$:

\blockquote{\textbf{How can we compute a useful $\Delta \mathbf{s}^{(i)}$ in a way that uses \emph{parallel computation over the sequence length? }}}

While the addition $\mathbf{s}^{(i)} + \Delta \mathbf{s}^{(i)}$ is embarrassingly parallel over the sequence length, we will not achieve our goal of \emph{parallelizing} SSMs over the sequence length if computing the update $\Delta \mathbf{s}^{(i)}$ itself requires inherently sequential computation.

Parallel Newton methods offer an ingenious way to compute these updates $\Delta \mathbf{s}$ using parallel computation over the sequence length. The core insight is that even though our initial guess $\mathbf{s}^{(0)}$ may be completely wrong---and even though we do not use the true roll-out $\mathbf{s}^\star$ at any point in the computation---we can still use the SSM dynamics from \cref{eq:ssm} to measure \emph{how wrong our current guess is}.

We measure how wrong our initial guess is with its \emph{residual vector} $\mathbf{r}(\mathbf{s}^{(i)}) \in \mathbb{R}^{TD}$. Each entry $r_t$ of the residual vector is given by the one-step prediction error, i.e.
\begin{equation}\label{eq:r_def}
    r_t(\mathbf{s}^{(i)}) := s_t^{(i)} - f_t(s_{t-1}^{(i)}).
\end{equation}
Crucially, $\mathbf{r}(\mathbf{s}^\star) = \mathbf{0}$ because $\mathbf{s}^\star$ follows the SSM dynamics, and in fact $\mathbf{s}^\star$ is the unique zero of $\mathbf{r}(\cdot)$.

Thus, by defining the residual in \cref{eq:r_def}, we have recast the problem of SSM evaluation as a high-dimensional root-finding problem, i.e. starting from an initial guess $\mathbf{s}^{(0)}$, find $\mathbf{s}^\star \in \mathbb{R}^{TD}$ such that
\begin{equation}\label{eq:resid_to_zero}
    \mathbf{r}(\mathbf{s}^\star) = \mathbf{0}.
\end{equation}
We discussed exactly this type of problem in our background \cref{ssc:root_finding} on root-finding!

As their name suggests, parallel Newton methods solve this high-dimensional nonlinear \cref{eq:resid_to_zero} using Newton's method. Moreover, in the specific case of evaluating SSMs, where the residual at each time step is given by \cref{eq:r_def}, each Newton update is given by a linear dynamical system and so can be evaluated using a parallel scan.

That each step of Newton's method for finding the zero of the residual defined in \cref{eq:r_def} is an LDS comes from the fact that at each step, Newton's method \emph{linearizes} the residual. To review, at each step of Newton's method for root-finding, we find the root of the linearized residual $\hat{\mathbf{r}}^{(i)}(\mathbf{s})$, where each entry of $\hat{\mathbf{r}}^{(i)}(\mathbf{s})$ is given by
\begin{equation}\label{eq:deer_rhat}
    \hat{\mathbf{r}}_t^{(i)}({ \color{blue} \mathbf{s}}) = {\color{blue} s_t} - \underbrace{\left( f_t(s_{t-1}^{(i)}) + \A_t^{(i)} ({ \color{blue} s_{t-1}} - s_{t-1}^{(i)})  \right)}_{\text{linearization of dynamics function $f_t$ at $s_{t-1}^{(i)}$}},
\end{equation}
where throughout this thesis we use the shorthand
\begin{equation}\label{eq:lil_j}
    \A_t^{(i)} := \dfrac{\partial f_t}{\partial s_{t-1}}(s_{t-1}^{(i)}) \in \mathbb{R}^{D \times D}.
\end{equation}
Since each step of Newton's method involves finding $\mathbf{s}^{(i+1)}$ such that $\hat{\mathbf{r}}^{(i)}(\mathbf{s}^{(i+1)}) = \mathbf{0}$, we see from \cref{eq:deer_rhat} that setting each component of $\hat{\mathbf{r}}^{(i)}$ to zero gives rise to the LDS
\begin{equation}\label{eq:deer_lds}
    \boxed{{ \color{blue} s_t^{(i+1)}} = \A_t^{(i)} { \color{blue} s_{t-1}^{(i+1)}} + \underbrace{\left( f_t(s_{t-1}^{(i)}) - \A_t^{(i)} s_{t-1}^{(i)}\right)}_{b_t^{(i)}}.}
\end{equation}
But as discussed in \cref{ssc:pscan_lds}, with $\mathcal{O}(T)$ processors we can evaluate any LDS in $\mathcal{O}(\log T)$ computational depth. Thus, we have shown that on a massively parallel machine like a GPU, we can evaluate each iteration of a parallel Newton method in $\mathcal{O}(\log T)$ time. If we can converge in fewer than $\mathcal{O}(\nicefrac{T}{\log T})$ iterations, then for sufficiently long sequence lengths and powerful parallel processors, we would expect to see wallclock speedups from parallelizing SSMs.

We summarize the parallel Newton methods in \Cref{alg:deer}, and provide a more detailed derivation in the next section.
\begin{algorithm}[ht]
    \caption{Parallel Newton methods for evaluating nonlinear SSMs}\label{alg:deer}
  \begin{algorithmic}
  \Procedure{ParallelNewton}{$f$, $s_0$, \text{initial guess} $\mathbf{s}_{1:T}^{(0)}$, \text{tolerance} $\epsilon$}
    \For{$i=0,1,\ldots, T$}
        \State $A_{1:T}, b_{1:T}  \gets \textsc{LinearizeDynamics}(f, s_0, \mathbf{s}_{1:T}^{(i)})$ \Comment{For all $t$ in parallel}
        \State $\mathbf{s}_{1:T}^{(i+1)} \gets \textsc{EvaluateLDS}( A_{1:T}, b_{1:T}, s_0, \mathbf{s}_{1:T}^{(i)})$
        \Comment{pscan has $\mathcal{O}(\log T)$ depth}
        \If{$\textsc{ComputeError}(f, \mathbf{s}_{1:T}^{(i+1)}) < \epsilon$}
            \State \textbf{break}
        \EndIf
    \EndFor
    \State \Return $\mathbf{s}_{1:T}^{(i+1)}$
    \EndProcedure
  \end{algorithmic}
\end{algorithm}

\subsection{More in depth derivation}

We provide an alternative derivation of the parallel Newton update in \cref{eq:deer_lds} to highlight important notions.

To apply Newton's method for root-finding to the residual used in DEER/DeepPCR (defined coordinate-wise in \cref{eq:r_def}), the update given in \cref{eq:newton_root} is
\begin{equation}\label{eq:deer_full_update}
    \mathbf{s}^{(i+1)} = \mathbf{s}^{(i)} - \underbrace{\mathbf{J}(\mathbf{s}^{(i)})^{-1} \mathbf{r}(\mathbf{s}^{(i)})}_{\Delta \mathbf{s}^{(i)}},
\end{equation}
where the Jacobian matrix $\mathbf{J} := \dfrac{\partial \mathbf{r}}{\partial \mathbf{s}}( \mathbf{s}) \in \mathbb{R}^{TD \times TD}$ is a block bidiagonal matrix of the form
\begin{equation}\label{eq:big_j}
    \mathbf{J} = \begin{pmatrix} I_D & 0 & \hdots & 0 & 0 \\
    \\
    - \A_2 & I_D & \hdots & 0 & 0 \\
    \vdots & \vdots  & \ddots & \vdots & \vdots \\ 
    0 & 0  & \hdots & I_D & 0 \\
    0 & 0  & \hdots & -\A_{T} & I_D \\
    \end{pmatrix},
\end{equation}
where $\A_t \in \mathbb{R}^{D \times D}$ are defined as in \eqref{eq:lil_j}. Importantly, the Jacobian $\mathbf{J}$ in \cref{eq:big_j} is always invertible with all eigenvalues equal to one. 
Storing and naively inverting the Jacobian is infeasible for large state size $D$ or sequence length $T$.
However, since $\mathbf{J}(\mathbf{s})$ is block bidiagonal, we can solve for $\Delta \mathbf{s}$ (i.e. $\mathbf{J}(\mathbf{s}^{(i)})^{-1} \mathbf{r}(\mathbf{s}^{(i)})$)  by forward substitution.
This reduces to a linear recursion with the initial condition $\Delta s_1^{(i+1)} = -r_1(s^{(i)})$, and for $t > 1$,
\begin{align}\label{eq:lin_rr}
    \Delta s_t^{(i+1)} = 
    \A_t^{(i)} \Delta s_{t-1}^{(i+1)} - r_t(s^{(i)}).
\end{align}
Plugging \cref{eq:lin_rr} into \cref{eq:newton_root} and simplifying, we again obtain the DEER/DeepPCR update \cref{eq:deer_lds}.

We emphasize that the Newton update $\Delta \mathbf{s} \in \mathbb{R}^{TD}$ is given by
\begin{equation*}
    \Delta \mathbf{s}^{(i)} = \mathbf{J}(\mathbf{s}^{(i)})^{-1} \mathbf{r}(\mathbf{s}^{(i)}).
\end{equation*}
The beauty of parallel Newton updates for SSMs is that we can exploit the particular block bidiagonal structure of $\mathbf{J}$ (shown in \cref{eq:big_j}) to invert $\mathbf{J}$ using a parallel scan. However, it is also worth examining $\mathbf{J}^{-1}$, which is itself a structured matrix. Using the example of $T=4$ to demonstrate, we see that $\mathbf{J}^{-1}$ takes the form
\begin{equation}\label{eq:J_inv}
    \mathbf{J}^{-1} = \begin{pmatrix}
    I_D & 0 & 0 & 0 \\
    \A_2 & I_D & 0 & 0 \\
    \A_3 \A_2 & \A_3 & I_D & 0 \\
    \A_4 \A_3 \A_2 & \A_4 \A_3 & \A_4 & I_D
    \end{pmatrix}.
\end{equation}
What \cref{eq:J_inv} is meant to demonstrate is that, in general, $\mathbf{J}^{-1}$ is itself lower triangular, with each block term being a product of a sequence of Jacobian matrices. This particular form for $\mathbf{J}^{-1}$ makes sense because we know from \cref{eq:lin_rr} that we can invert $\mathbf{J}$ with an LDS, and so simply applying $\mathbf{J}^{-1}$ should be equivalent to applying the convolution that is this LDS. Studying the properties and conditioning of $\mathbf{J}^{-1}$ will be crucial to proving convergence rates of parallel Newton methods, which we do in \Cref{part:methods}.

We note that, as discussed in \Cref{ssc:opt}, all of the above can be interpreted as applying the Gauss-Newton method for optimization to a merit function $\mathcal{L}(\mathbf{s}) = \frac{1}{2} \| \mathbf{r}(\mathbf{s}) \|_2^2$, in addition to the provided interpretation as Newton's method for root finding on $\mathbf{r}(\mathbf{s})$. This optimization perspective on parallel Newton methods is foundational for this dissertation, as we use it to develop scalable and stable methods (\Cref{part:methods}), as well as prove convergence rates (\Cref{part:theory}). 

Finally, we can also view each each iteration of \cref{eq:deer_full_update} as a fixed-point iteration \citep{movahedi2025fixed}.
In this way, another perspective on DEER is that it recasts RNNs and nSSMs in general in the framework of \emph{deep equilibrium models} (DEQs) \citep{bai2019deep, bai2022equilibrium}.
Fixed-point methods, including Newton iterations \citep{kennedy2010parallel}, are often commonly used in the field of \emph{multidisciplinary optimization} (MDO) in aeronautical engineering \citep{martins2013multidisciplinary, kochenderfer2026algorithms}.
All of these fields have deep connections to DEER, and interesting future work could involve exploring them.

\subsection{Limitations of Newton's method}\label{ssc:limitations}

\Cref{eq:deer_lds} is the fundamental update behind Newton's method for parallelizing an SSM. However, it also contains the ingredients behind some critical limitations of "plain vanilla" Newton's method: scalability and stability.

\paragraph{Methodological limitations: scalability and stability}

The difficulty in \emph{scaling} \cref{eq:deer_lds} comes from the need to instantiate $T$ Jacobian matrices, each of which are $\mathbb{R}^{D \times D}$. Because the parallel scan must instantiate all of these matrices simultaneously, doing so requires $\mathcal{O}(T {\color{red} D^2})$ memory, which can be prohibitive for large state size or long sequence length. Moreover, because the parallel scan involves dense matrix-matrix multiplies, the total computational work is $\mathcal{O}(T {\color{red} D^3})$.
While the factor of $T$ in the work is divided across parallel processors\footnote{If we have $\mathcal{O}(T)$ parallel processors, the $\mathcal{O}(T)$ work is done in $\mathcal{O}(\log T)$ computational depth.}, the cubic cost in state size can also make the method prohibitively slow for large state size. For these reasons, using the update \cref{eq:deer_lds} in parallel Newton methods is difficult to use in practice at scale, often running out of memory or running too slowly.

The difficulties in \emph{stability} for \cref{eq:deer_lds} also come from studying its behavior as a linear dynamical system. In particular, the spectral norm of any matrix $J_t$ measures the maximum amount by which it may increase the size of a vector to which it is applied. So, intuitively, if the spectral norms of too many Jacobian matrices in \cref{eq:deer_lds} are larger than one, the update \cref{eq:deer_lds} may be highly unstable, resulting in numerical overflow and slow convergence. These difficulties with stability are common in Newton methods in general, see \Cref{example:newton_diverge}.

\paragraph{Gaps in theoretical understanding: convergence properties}

Finally, both foundational works of \citet{deeppcr} and \citet{deer2024} explicitly left open the question of the global convergence of the parallel Newton method, i.e., will the method converge regardless of our initial guess $\mathbf{s}^{(0)}$.
In general, Newton's method does not enjoy such properties, as we showed in \Cref{example:newton_diverge}. But confidence that the method will robustly and globally converge is important for broad deployment of the method. Moreover, while it is broadly known that Newton's method enjoys quadratic convergence in a basin around its solution \cite{NocedalWright, boyd2004convex, deer2024}, it was unclear if anything more could be said specifically about the rates of convergence of parallel Newton methods. In particular, it was unclear if we could generally expect speed-ups from parallelization in arbitrary SSMs, or if there were certain SSMs that benefit from parallelization and other SSMs that are more efficient to evaluate sequentially.

Resolving these scaling and stability limitations of the parallel Newton method (\Cref{part:methods}) and providing general theory about its convergence properties (\Cref{part:theory}) are the contributions of the rest of this thesis.
\clearpage
\ctparttext{The second part of this thesis presents its methodological contributions.
We develop methods for scalable and stable parallelization of nonlinear SSMs. We achieve scalability using a quasi-Newton method we develop and call \emph{quasi-DEER}. We achieve stability using a trust region method we develop and call \emph{ELK}: \textbf{E}valuating \textbf{L}evenberg-Marquardt with \textbf{K}alman. 
\par
\begin{minipage}{\linewidth}
    \captionsetup{hypcap=false}
    \includegraphics[width=\linewidth]{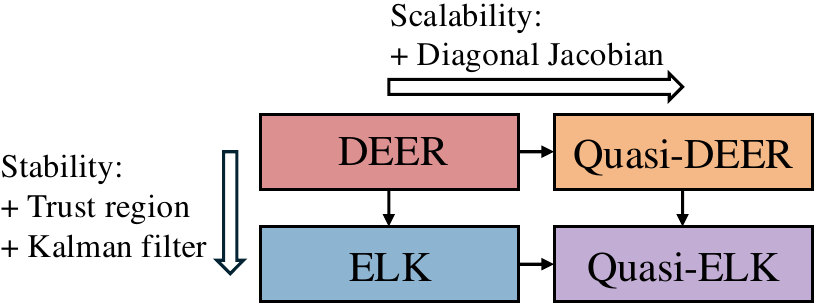}
    \captionof{figure}{\textbf{The ungulates.} This methods part of this thesis introduces scalable and stable variants of DEER. Broadly, we call these methods "parallel Newton methods." More colloquially, we call these methods "the ungulates," which are large hoofed mammals like deer and elk. The experiments in this part are based on the code available at: \url{https://github.com/lindermanlab/elk}}
    \label{fig:ungulates}
\end{minipage}
}
\part{Methods: Scalable and Stable Parallelization}\label{part:methods}
%************************************************
\chapter{Scalable Parallelization: Quasi-Newton Methods}\label{ch:scalable}
%************************************************

As we discussed in the Introduction and in the Background (\Cref{sec:deer}), the parallel Newton methods of \citet{deeppcr} and \citet{deer2024} provide a novel approach to parallelize nonlinear state space models (nSSMs), even though evaluating nSSMs had long been believed to be "inherently sequential."

However, it is well known in numerical analysis that Newton's method---while an extremely powerful and fundamental method---has many limitations (see \Cref{sec:math_background} as well as a textbook treatment in \citep{NocedalWright}). The common thread throughout this thesis is how we can leverage the vast literature on numerical analysis to extend, improve, and understand parallel Newton methods.

In this chapter, we focus in particular on the limitation of Newton's method with respect to \emph{scalability}. 
In general, for trying to find the root $\mathbf{s}^*$ of a high-dimensional function $\mathbf{r}(\cdot): \mathbb{R}^P \to \mathbb{R}^P$, Newton's method has updates of the form
\begin{equation*}
    \mathbf{s}^{(i+1)} = \mathbf{s}^{(i)} - \mathbf{J}^{-1} \mathbf{r}.
\end{equation*}
In general, this Newton update is prohibitive for large dimension $P$ because it involves
\begin{itemize}
    \item computing the derivative $\mathbf{J}$; 
    \item storing the $P \times P$ matrix $\mathbf{J}$; and
    \item inverting this matrix. 
\end{itemize}
All three of the steps are expensive in either compute or memory.

For a parallel Newton method, the dimension of $\mathbf{s}$ is $TD$, where $T$ is the sequence length and $D$ is the state size. Thus, forming a $TD \times TD$ matrix is in general intractable. Parallel Newton methods avoid forming $\mathbf{J}$ explicitly, instead using the structure (\cref{eq:r_def}) of 
the one-step prediction error $\mathbf{r}(\cdot)$ to cast each step of Newton's method as a linear dynamical system (LDS) (\cref{eq:deer_lds}):
\begin{equation*}
    s_t^{(i+1)} = {\color{red} \A_t^{(i)}} s_{t-1}^{(i+1)} + \left( f_t(s_{t-1}^{(i)}) - \A_t^{(i)} s_{t-1}^{(i)}\right).
\end{equation*}
However, each $\A_t \in \mathbb{R}^{D \times D}$, and so parallelizing this LDS using a parallel scan results in work that scales as $\mathcal{O}(T {\color{red} D^3})$ and memory requirement that scales as $\mathcal{O}(T {\color{red} D^2})$. For large state sizes and sequence lengths, these costs soon become prohibitive.

Fortunately, there exists a wide literature \citep{NocedalWright} on \emph{quasi-Newton} methods that use some approximation $\Tilde{\mathbf{J}}$ for $\mathbf{J}$. In this chapter, we explore ways to scale parallel Newton methods by introducing quasi-Newton methods that are amenable to a parallel scan.

\section{Quasi-DEER: A diagonal approximation}

We propose a very simple quasi-Newton approximation we call \emph{quasi-DEER}\footnote{\citet{deer2024} names parallel Newton methods DEER, for "\textbf{D}ifferential \textbf{E}quations as fixed-point it\textbf{ER}ation."}, where we use the diagonal of the Jacobians, i.e. we use updates of the form
\begin{equation}\label{eq:quasi_lds}
    s_t^{(i+1)} = {\color{orange} \mathrm{diag}[\A_t^{(i)}]} s_{t-1}^{(i+1)} + \left( f_t(s_{t-1}^{(i)}) - {\color{orange} \mathrm{diag}[\A_t^{(i)}]} s_{t-1}^{(i)}\right).
\end{equation}
We developed this diagonal approximation because of its compatibility with the parallel scan and because of its lower computational and memory cost (vs dense matrix multiplication).

To be compatible with the parallel scan, a the operands of the chosen binary operator crucially must remains \emph{closed} (\Cref{def:closure}). Fortunately, the product of two diagonal matrices is again a diagonal matrix.

Moreover, using diagonal matrices is clearly more memory and compute efficient than using dense matrices. Both the memory cost of storing and the computational work of multiplying these diagonal matrices now scales only as $\mathcal{O}(T{\color{orange} D})$, i.e. linearly with the state size. 

However, this quasi-DEER method based on a diagonal approximation of the Jacobian of the dynamics is very different from anything in the standard quasi-Newton literature \cite{NocedalWright}. Some immediate and natural questions are:
\begin{enumerate}
    \item will this approach even converge?
    \item if this approach does converge, will it converge in few enough iterations actually to be useful?
\end{enumerate}
One response to this question is to note that while a diagonal approximation serves as a type of matrix that enjoys an efficient parallel scan, in general any form of approximation $\Tilde{\A}_t$ to $\A_t$ would work as a quasi-DEER method\footnote{i.e., an efficient iterative step that makes use of the parallel scan} if the class of matrices used for $\Tilde{\A}_t$ are closed under composition and have memory and compute costs that scales linearly in $D$. We discuss in \Cref{ch:quasi_convergence} how many foundational fixed-point methods can be interpreted as different forms of quasi-DEER for different approximations $\Tilde{\A}_t$.

Incredibly, however, \emph{all} such quasi-DEER methods (including the full Newton method DEER and the diagonal approximation) enjoy \emph{global convergence}. Note that Newton's method in general may fail to converge. This global convergence of parallel methods and \emph{all} quasi-versions of the form proposed is a special feature of the particular problem of parallelizing nonlinear SSMs (cf. \cref{eq:r_def}). Thus, we can answer our first question: \textbf{yes}, this diagonal approximation in fact converges globally.

Moreover, the diagonal approximation also performs well empirically. We showcase its performance for evaluating and training nonlinear RNNs, including on a benchmark dataset from computational neuroscience.

Finally, beyond global convergence, we can provide a bound on how slowly quasi-DEER can converge, which is effectively based on the quality of the approximation for different dynamical systems.

In the rest of this chapter, we will discuss the global convergence of quasi-DEER and all of its variants, as this result is foundational for this thesis and line of work. We will also showcase the experiments showing the empirical usefulness of the method. We defer a discussion of quasi-DEER convergence rates to \Cref{ch:quasi_convergence} in the "Theory" part of this thesis. We instead conclude this chapter with discussions of further extensions that have been made to quasi-DEER, as well as promising directions for future work.

\section{Global convergence}\label{sec:global_convergence}

In general, Newton's method is not guaranteed to converge (\Cref{example:newton_diverge}). This general risk of failing to converge led both \citet{deeppcr} and \citet{deer2024} to flag the question of convergence in parallel Newton methods as an important open question, though neither answered this question.

In fact, this question of DEER's convergence was answered in 1989 by \citet[Remark 2.1]{Bellen1989}, which we rediscovered in \citet[Proposition 1]{gonzalez2024scalable}.
Not only is DEER globally convergent, but so are a wide variety of quasi-DEER methods, including the use of the diagonal approximation.
\begin{proposition}\label{prop:global_convergence}
    Consider the problem of finding $\mathbf{s}^{\star}_{1:T}$ which satisfy $s^{\star}_t = f_t(s^{\star}_{t-1})$ and $s_1^{\star} = f_1(s_0)$, for known dynamics functions $\{ f_t \}_{t=1}^T$ and initial condition $s_0$. Also consider an iterative method $\mathcal{A}(\cdot)$ of the form $\mathbf{s}_{1:T}^{(i+1)} = \mathcal{A}(\mathbf{s}_{1:T}^{(i)})$ where the action of the operator $\mathcal{A}(\cdot)$ can be written as a linear dynamical system over the sequence length, i.e. each application of $\mathcal{A}$ takes the form
    \begin{equation}\label{eq:gen_qdeer}
        s_t^{(i+1)} = \Tilde{\A}_t s_{t-1}^{(i+1)} + \left( f_t(s_{t-1}^{(i)}) - \Tilde{\A}_t s_{t-1}^{(i)}\right),
    \end{equation}
    for \emph{arbitrary} matrices $\{ \Tilde{\A}_t \}_{t=1}^T$.

    Then updates based on $\mathcal{A}(\cdot)$ will converge to $\mathbf{s}^{\star}_{1:T}$ in at most $T$ iterations, \emph{regardless} of the initial guess $\mathbf{s}_{1:T}^{(0)}$.
\end{proposition}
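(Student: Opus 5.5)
The plan is an induction on the iteration count $i$. The claim to prove is that after iteration $i$ the first $i$ states are exact:
\begin{equation*}
    s_t^{(i)} = s_t^\star \quad \text{for all } 1 \leq t \leq i.
\end{equation*}
Taking $i = T$ then gives $\mathbf{s}_{1:T}^{(T)} = \mathbf{s}_{1:T}^\star$, which is the statement. The mechanism is that the update \cref{eq:gen_qdeer} is exact whenever its input at time $t-1$ is already exact, so correctness propagates forward one time step per iteration, regardless of what the matrices $\Tilde{\A}_t$ are.

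First I fix the boundary convention. For $t=1$ the previous state is the known initial condition $s_0$, so $s_0^{(i)} = s_0^{(i+1)} = s_0 = s_0^\star$ for every $i$. This makes the base case ($i=0$, an empty claim) trivial. It also means the $t=1$ update reads
\begin{equation*}
    s_1^{(i+1)} = \Tilde{\A}_1 s_0 + f_1(s_0) - \Tilde{\A}_1 s_0 = f_1(s_0) = s_1^\star.
\end{equation*}
So after one iteration the first state is exact.

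For the inductive step, assume $s_t^{(i)} = s_t^\star$ for $t \leq i$. I show $s_t^{(i+1)} = s_t^\star$ for $t \leq i+1$ by a second, inner induction on $t$ within the new iterate. Suppose $s_{t-1}^{(i+1)} = s_{t-1}^\star$; this holds at $t-1=0$. For $t \leq i+1$ we have $t-1 \leq i$, so the outer hypothesis also gives $s_{t-1}^{(i)} = s_{t-1}^\star$. Substituting both into \cref{eq:gen_qdeer}, the two $\Tilde{\A}_t$ terms cancel exactly:
\begin{equation*}
    s_t^{(i+1)} = \Tilde{\A}_t s_{t-1}^\star + f_t(s_{t-1}^\star) - \Tilde{\A}_t s_{t-1}^\star = f_t(s_{t-1}^\star) = s_t^\star.
\end{equation*}
This closes both inductions.

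The only real subtlety is bookkeeping rather than analysis. One must use the old iterate and the new iterate at time $t-1$ simultaneously, and make sure both are already exact at the relevant index. This is why an inner induction over $t$ inside the outer induction over $i$ is needed. With it in place, no property of the $\Tilde{\A}_t$ (invertibility, norm bounds, closeness to the true Jacobians) is used anywhere. I will also remark that the argument implies $\mathbf{s}^\star$ is a fixed point of $\mathcal{A}$. Hence once convergence occurs the iterates stay there, and "at most $T$ iterations" is sharp only in the worst case.
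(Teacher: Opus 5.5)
Your proposal is correct and follows the paper's proof: induction on the iteration count, using the fact that the $\Tilde{\A}_t$ terms in \cref{eq:gen_qdeer} cancel whenever $s_{t-1}$ is already exact at both iterations $(i)$ and $(i+1)$. The only difference is that you spell out the inner induction over $t$ within the new iterate, which the paper compresses into ``by the above logic.''
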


\begin{proof}
    The intuition for this proof is that the initial condition for $s_0$ is fixed and known, and that each iteration of $\mathcal{A}(\cdot)$ as given by \cref{eq:gen_qdeer} gives at least one more correct term in the sequence length, while not disturbing any previously correct terms.

    Formally, we prove this theorem by induction.

    \textbf{Base case}: we know the initial condition $s_0$, as it is fixed and given by assumption.

    \textbf{Induction hypothesis}: assume at iteration $(i)$ that $\mathbf{s}_{1:t_i}^{(i)} = \mathbf{s}^{\star}_{1:t_i}$, i.e. the first $t_i$ terms are correct.

    \textbf{Induction step}: we need to show that $\mathbf{s}_{1:t_i+1}^{(i+1)} = \mathbf{s}^{\star}_{1:t_i+1}$, i.e. that none of the previously correct terms become wrong, and that at least one more term becomes correct. Rewriting \cref{eq:gen_qdeer} as
    \begin{equation}\label{eq:qdeer_lds}
        s_t^{(i+1)} =  f_t(s_{t-1}^{(i)}) + \Tilde{\A}_t \left( s_{t-1}^{(i+1)}  - s_{t-1}^{(i)} \right),
    \end{equation}
    we see that if $s_{t-1}$ is correct at both iterations $(i)$ and $(i+1)$, i.e. $s_{t-1}^{(i)} = s_{t-1}^{(i+1)} = s_{t-1}^{\star}$, then it must be the case that $s_t^{(i+1)} = f_t(s_{t-1}^{\star}) = s_t^{\star}$. Of course, $s_0^{(i+1)} = s_0^{(i)} = s_0^{\star}$ because $s_0$ is a fixed and known initial condition. So, by the above logic, it follows that if $\mathbf{s}_{1:t_i}^{(i)} = \mathbf{s}^{\star}_{1:t_i}$, then $\mathbf{s}_{1:t_i+1}^{(i+1)} = \mathbf{s}^{\star}_{1:t_i+1}$. 

    Since we have shown in the induction step that one more correct term always accrues with each application of $\mathcal{A}(\cdot)$, and because of our base case that $s_0^{(0)} = s_0^{\star}$, the result follows from induction.
\end{proof}

\Cref{prop:global_convergence} is significant and interesting for a number of reasons.

First, \Cref{prop:global_convergence} answers the question posed by both \citet{deeppcr} and \citet{deer2024}: does DEER converge globally? In general, Newton's method does not enjoy global convergence\footnote{In fact, as \citet{hubbard2015vector} write in their classic textbook: "no one knows anything about the global behavior of Newton's method."}, but we show that not only DEER but in fact a wide family of quasi-DEER methods all enjoy global convergence. This special behavior is a result of the special structure of our residual $\mathbf{r}(\cdot)$ that arises from parallelizing SSMs (see \cref{eq:r_def}). 

\Cref{prop:global_convergence}, as stated as Proposition 1 in \citet{gonzalez2024scalable}, was the first of its kind for global convergence in the context of parallelizing nonlinear RNNs with Newton iterations. 
While on the one hand this result was surprising since Newton's method can in general diverge (\Cref{fig:newton_diverges}), this exact result was known in the parallel-in-time literature: see \citet[Remark 2.1]{Bellen1989} and \citet[Remark 4.7]{GanderVandewalle2007}.
These results were also rediscovered in the context of parallelizing sampling from diffusion models (another nonlinear SSM). Notably, \citet{shih2023parallel} proved a special case of \Cref{prop:global_convergence} for $\Tilde{\A}_t = I_D$ using the same proof by induction mechanism. \citet{tang2024accelerating} then proved an even stronger result that includes our \Cref{prop:global_convergence} as a special case. We include an extended discussion of Theorem 3.6 of \cite{tang2024accelerating} in \Cref{appendix:global_convergence}. All in all, this core result has been rediscovered many times in different communities.

Second, \Cref{prop:global_convergence} ensures that \emph{arbitrary approximations} can be used in the computation of the Jacobians $\A_t = \dfrac{\partial f_t}{\partial s_{t-1}}$ without damaging the global convergence (though the convergence rate may slow). This guarantee of global convergence extends not only to the diagonal approximation proposed in \citet{gonzalez2024scalable}, but also to a stochastic version proposed in \citet{pmcmc} which we will discuss further in \Cref{ssc:efficiency}. In fact, as \citet{pmcmc} demonstrates, \Cref{prop:global_convergence} ensures global convergence \emph{even when the dynamics $f_t$ are not differentiable}, as in the Metropolis-Hastings algorithm \citep{metropolis1953equation, hastings1970monte, chib1995understanding} for Markov chain Monte Carlo (MCMC). \citet{pmcmc} shows empirically that using updates based on \cref{eq:gen_qdeer} works well even for non-differentiable dynamics $f_t$ by using an intelligent choice of surrogate gradient $\Tilde{\A}_t$. 

Finally, the proof by induction of \Cref{prop:global_convergence} highlights how parallel Newton methods converge in a "causal" manner, i.e. from the start at the initial condition $s_0$ to the end at $s_T$. This arrow of causality has important implications both for the design of parallel Newton variants, as we will see in \Cref{ch:elk}, as well as for an interpretation of what parallel Newton methods are doing, as we will see in \Cref{ch:predictability}. Furthermore, this "causal convergence" also results in a useful heuristic when parallelizing systems that are unstable or at the edge of stability: if intermediate computations in the parallel Newton method should ever overflow numerically, they can always be reset to an arbitrary value without damaging global convergence (though of course slowing the rate of convergence). We make great use of this "reset heuristic" in \Cref{ch:elk}. Finally, this left-to-right convergence also justifies implementing parallel Newton methods with a \emph{sliding window} \citep{shih2023parallel, pmcmc}, where only $t_c$ states have \cref{eq:gen_qdeer} applied to them at a time. While using $t_c < T$ will increase the number of iterations needed to converge, the memory layout and other architectural features of GPUs can lead the choice of certain $t_c < T$ resulting in wallclock speedups compared to naively applying \cref{eq:gen_qdeer} over the entire sequence length \citep{shih2023parallel, pmcmc}. Using a sliding window to implement parallel Newton methods is best practice and should always be used.

Having discussed the important implications of the theoretical convergence of parallel Newton methods, we now let the rubber hit the road and ask the question: but does the diagonal approximation in \cref{eq:quasi_lds} work in practice?

\section{Experiments and performance of quasi-DEER}

In this section, we showcase a variety of settings where quasi-DEER performs well in the parallel evaluation and training of nonlinear RNNs, specifically using the Gated Recurrent Unit (GRU) \cite{gru} as a simple and expressive RNN cell. 

\subsection{Quasi-DEER for Evaluation}
\label{ssc:gru_bench}
To benchmark the speed and memory usage of sequential evaluation, DEER, and quasi-DEER on forward passes of RNNs, we use an experimental design from \citet{deer2024}.
The task is to evaluate an untrained GRU across a range of hidden state sizes ($D$) and sequence lengths ($T$) on a 16GB V100 GPU; the inputs to the RNN also have dimension $D$.
We evaluate these RNNs using three approaches: sequential evaluation, DEER, and quasi-DEER. For DEER and quasi-DEER, we end the Newton iterations when $\| \mathbf{s}^{(i)} - \mathbf{s}^{(i-1)} \|_\infty < \mathrm{tol}$, for some specified tolerance $\mathrm{tol}$. In these experiments, we use a tolerance of $\mathrm{tol} = \num{1e-4}$. In \Cref{fig:quasi_acc}, we show qualitatively that both DEER and quasi-DEER converge with great accuracy to the true sequential rollouts. 
\begin{figure}[H]
  \centering
  \includegraphics[width=\textwidth]{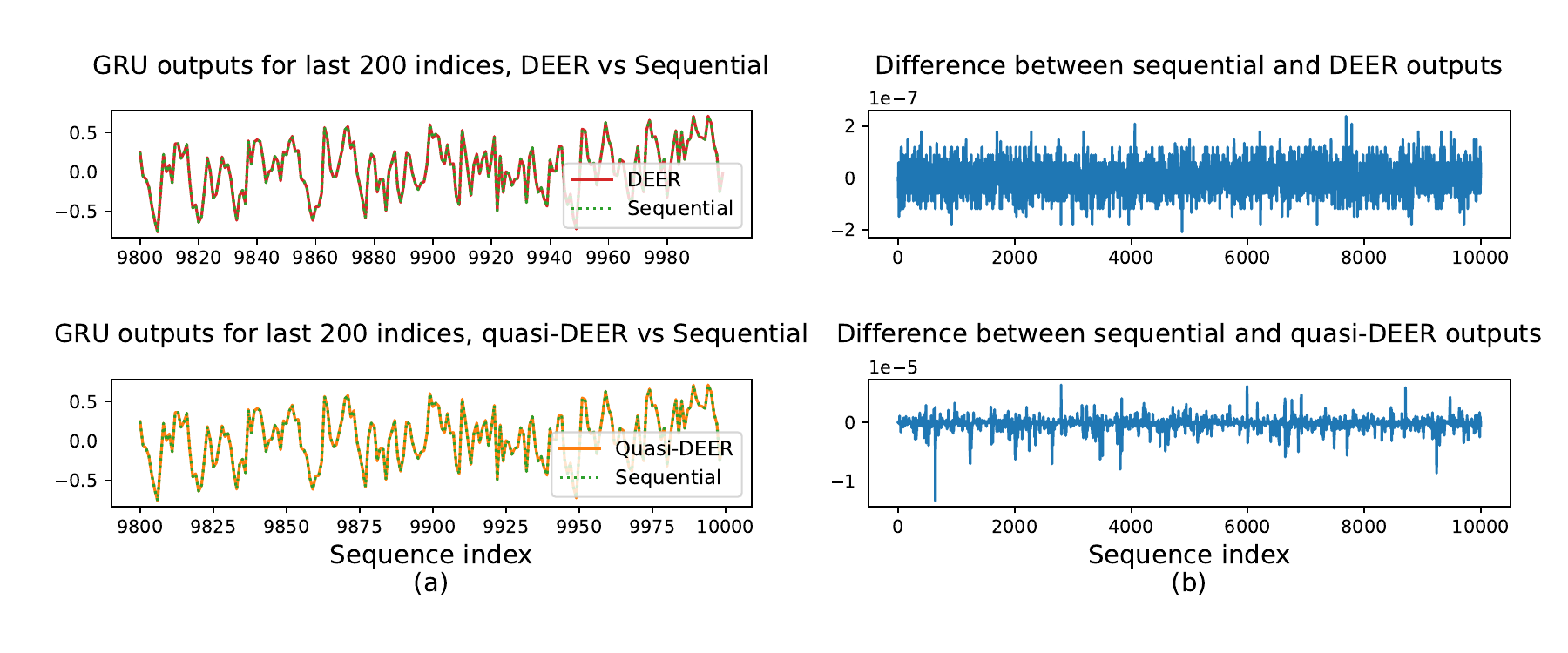}
  \caption{The accuracy of evaluating with parallelized methods (DEER and quasi-DEER) as opposed to sequential evaluation.
  The parallelized methods converge to the correct trace within numerical precision.
  The hidden state size is $D=4$ and the sequence length is $T=10,000$.
}\label{fig:quasi_acc}
\end{figure}

Having confirmed the accuracy of the parallel Newton methods, we now compare the wall-clock time and memory usage of sequential evaluation, DEER, and quasi-DEER.
Results shown in \Cref{fig:untrained_gru}.
\begin{figure}
  \centering
  \includegraphics[width=0.95\textwidth]{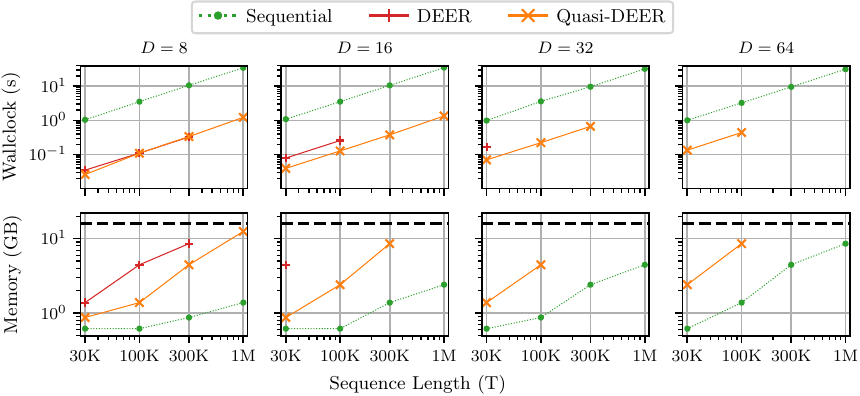}
  \vspace*{-0.0cm}
  \caption{\textbf{Evaluating an untrained GRU.} Relative performance of sequential, DEER and quasi-DEER for evaluating a randomly initialized (and untrained) GRU on (\textbf{Top Row}) wall-clock time, averaged over 20 random seeds and (\textbf{Bottom Row}) memory, averaged over 3 random seeds.
  All experiments use a 16GB V100 SMX2 (memory capacity indicated by the black dashed line) and Newton methods were run to convergence.
  Missing points in each series indicate the GPU ran out of memory. 
  In these settings, quasi-DEER has a runtime commensurate with DEER, but with lower memory consumption. Therefore, quasi-DEER can work at scales where DEER cannot.
  \label{fig:untrained_gru}
}
\end{figure}
Both DEER and quasi-DEER are up to twenty times faster than sequential evaluation.  
The runtimes are similar between DEER and quasi-DEER for small networks, because although quasi-DEER steps are faster, quasi-DEER takes more iterations to converge.  
For larger networks, the difference in runtime is more pronounced.
We also see that quasi-DEER requires as much as an order of magnitude less memory than DEER, thus allowing the application to architectural regimes previously infeasible with DEER.  

In Figure~\ref{fig:larger_regime}, we run the timing benchmarks of Section~\ref{ssc:gru_bench} on a wider range of sequence lengths $T$ and hidden state sizes $D$, on a larger GPU (a V100 with 32 GB) and with a smaller batch size of 1.
In doing so, we highlight the parallel nature of DEER and quasi-DEER, as their wall-clock time scales sublinearly in the sequence length $T$ in smaller ($D$, $T$) regimes.
However, we note that in the larger regimes considered in our main text and in \citet{deer2024}, we often observe linear scaling in the sequence length $T$ for the wall-clock time of DEER and quasi-DEER, even though these algorithms are still faster than sequential evaluation. 
Figure~\ref{fig:larger_regime} shows good evidence that these parallel algorithms are suffering from saturation of the GPU, and would benefit from even more optimized implementations.

\begin{figure}
  \centering
  \includegraphics[width=\textwidth]{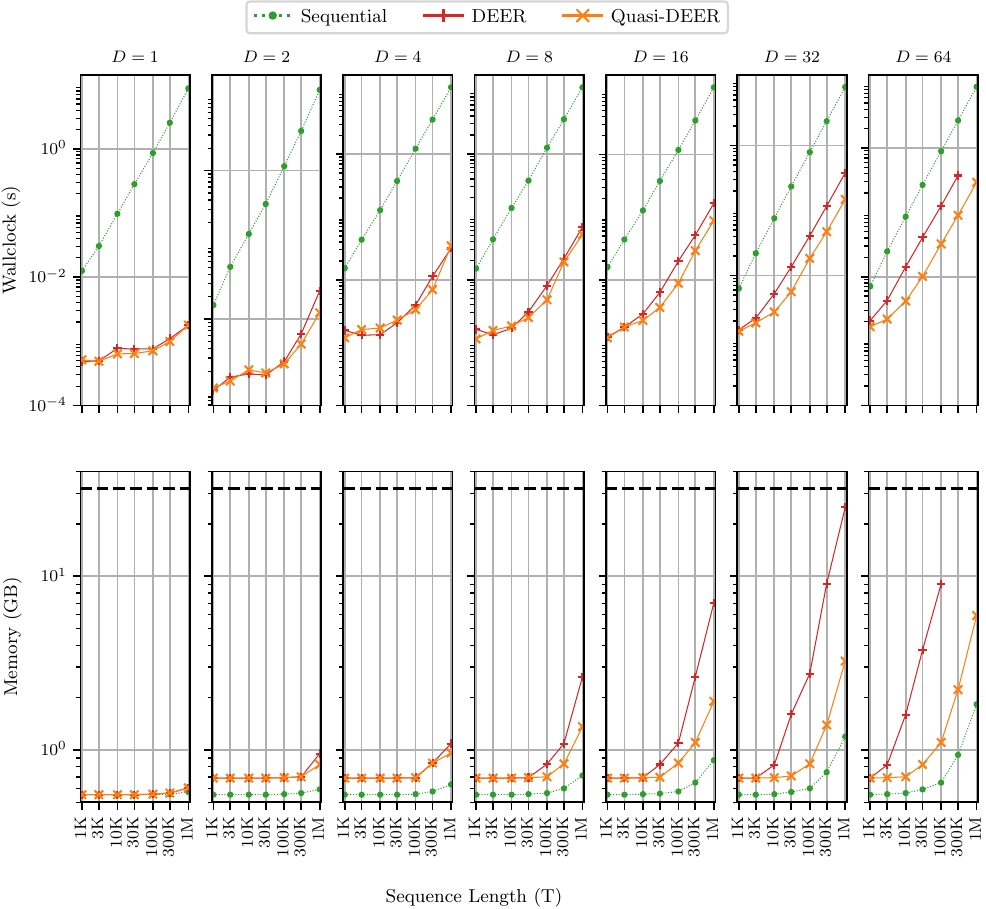}
  \caption{\textbf{Evaluating an untrained GRU.} Sublinear and linear timing regimes for parallelized algorithms. The above experiments were run on a 32 GB V100 with a batch size of 1.
  As in Figure~\ref{fig:untrained_gru}, we use 20 seeds for timing, 3 seeds for memory, and the dashed black line indicates the memory capacity of the GPU (32 GB).
  We observe that in smaller regimes in $D$ and $T$ that the wall-clock time shows sublinear scaling indicative of the use of parallel algorithms.
  However, when the GPU becomes saturated, the benefits of parallelization are reduced and we begin to see linear scaling in wall-clock time with $T$.}
  \label{fig:larger_regime}
\end{figure} 

The parallel scan, given sufficiently many processors, scales as $O(\log T)$. As we show in Figure~\ref{fig:larger_regime}, we see this speedup at low model sizes and sequence lengths. Once the processors are saturated, we see a linear increase in the runtime (since the amount of work done is linear), but it is making much more effective use of the GPU, resulting in a constant factor speedup over sequential application at larger model sizes/sequence lengths. 

Together, these experiments confirm that quasi-DEER can replicate the performance of DEER, but with a smaller memory footprint.  

\subsection{Quasi-DEER for Training}
\label{ssc:worms}
We verify that quasi-DEER expedites training nonlinear RNN models.
We replicate the third experiment from \citet{deer2024}, where a GRU is trained to classify \emph{C. elegans} phenotypes from the time series of principal components of the worms' body posture~\citep{eigenworms}.  
This task is colloquially known as the "eigenworms" task.
With a sequence length of $T=17,984$, it is the longest task on the UEA Multivariate Time Series Classification archive, a standard benchmark set for assessing the performance of sequence models on long sequences \citep{bagnall2018uea}.

\begin{figure}
    \centering
    \includegraphics[width=\textwidth]{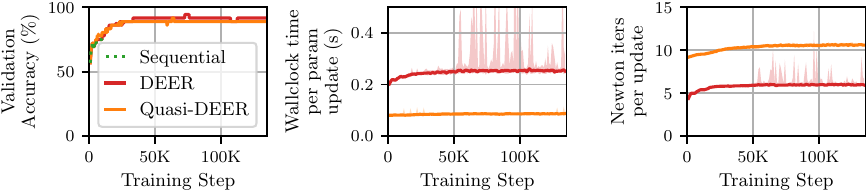}
    \caption{\textbf{Training a GRU with DEER.}
    Comparison of DEER and quasi-DEER during GRU training for the \emph{C. elegans} time-series classification task (Section~\ref{ssc:worms}).  Each time series has length $T=17,984$.  We show the median, and 5-95\% interval across a rolling window of 20 training steps.  \textbf{(Left)}  DEER and quasi-DEER have the similar validation accuracy trajectories, indicating similar training dynamics.  The sequential trace shown is for 24 hours of training (compared to 11 and 4 hours for the whole DEER and quasi-DEER traces).  \textbf{(Center)}  Each quasi training iteration is 2.5 times faster than each DEER training iteration.  Sequential training steps took more than 6 seconds each (not pictured).   \textbf{(Right)}  Each quasi training iteration requires (approximately) 2 times more Newton iterations to converge, indicating that each quasi Newton step is approximately 5 times faster than the corresponding DEER Newton step.}
    \label{fig:eigenworms}
\end{figure}
We show results in Figure~\ref{fig:eigenworms}.
We see that the training dynamics under quasi-DEER leads to similar validation accuracy trajectories.
However, every quasi-DEER training step is faster by a factor of $2.5$, despite performing around $2$ times more Newton iterations per training step.
This finding highlights how quasi-DEER can improve DEER when training nonlinear RNNs.
In our experiment, we use the quasi-DEER approximation for the backward pass as well, leading to gradients that are different from DEER in this setting. In this particular experiment, we found that there was very little degradation in performance (Figure~\ref{fig:eigenworms}, left). Nonetheless, in general we recommend modifications to quasi-DEER that also allow for an exact backwards pass: see the discussion in \Cref{ssc:bwds}.

The RNN used in this experiment is a 5 layer GRU. 
When we evaluate this architecture in parallel, we evaluate each layer in parallel using (quasi)-DEER. In Figure \ref{fig:eigenworms} (right), we report the number of (quasi)-DEER iterations averaged over all layers and batches.

\section{Further development and directions for future work}\label{sec:quasi_future}

Since publication of this diagonal quasi-Newton method at NeurIPS in 2024, there have been many extensions. 
There are also many interesting avenues for future work. This section highlights additional important ideas and future directions for quasi-Newton methods for parallelizing nSSMs.

\subsection{Efficiently Estimating the Diagonal of the Jacobian}\label{ssc:efficiency}

\paragraph{Efficiently Estimating the Diagonal of the Dynamics Jacobian}

The diagonal approximation presented in \cref{eq:quasi_lds} uses the $\{ \mathrm{diag}(\A_t) \}$, the diagonals of the dynamics Jacobian, to be significantly more memory and work efficient. However, an important question is: how does one acquire these diagonals?

The simplest approach is to compute the $\{ \A_t \}$ with autodifferentiation, and then take their diagonals.
This simple approach still decreases the required work during the parallel scan by a factor of $D^2$. We use this approach in the "eigenworms" experiment in \Cref{fig:eigenworms}, where we show empirically in this setting that this simple approach can still yield substantial speedups. However, the price of this simplicity is that we do not unlock all of the benefits of quasi-DEER. For example, this approach offers no savings on peak memory utilization. Furthermore, during autodifferentiation, we still require $D$ function calls.

An approach to unlock the full benefits of quasi-DEER is to compute $\{ \mathrm{diag}(\A_t) \}$ analytically and implement its closed form directly. We follow this approach in \Cref{fig:untrained_gru} and \Cref{fig:larger_regime}, demonstrating substantial memory savings.

Nonetheless, computing derivatives by hand has many drawbacks, and for sufficiently complex dynamics functions $\mathrm{diag}(\A_t)$ may not even have an implementable closed form.
For this reason, \citet{pmcmc} takes a different approach: provide a \emph{stochastic estimator} of $\mathrm{diag}(\A_t)$ that requires only $\mathcal{O}(D)$ memory and one function call. This approach leverages the \emph{Hutchinson estimator} for the diagonal of a matrix \citep{hutchinson1989stochastic, bekas2007estimator, yao2021adahessian}.

Consider a matrix $A$. The Hutchinson estimator $\hat{A}$ for $\mathrm{diag}(A)$ is
\begin{equation}\label{eq:hutch}
    \hat{A} = v \odot A v,
\end{equation}
where each entry of $v$ is an iid draw from a Rademacher random variable, i.e. $v=1$ with probability $\nicefrac{1}{2}$ and $v=-1$ otherwise; and where $\odot$ represents elementwise multiplication of two vectors. $\hat{A}$ is an unbiased estimator for $\mathrm{diag}(A)$ as $\E[\hat{A}]~=~\mathrm{diag}(A)$.

As presented in \cref{eq:hutch}, the Hutchinson estimator $\hat{A}$ seems a bit silly: we already knew $A$, and so could have just taken $\mathrm{diag}(A)$ directly. However, say we want to instead find the diagonal of $\dfrac{\partial f}{\partial s}(s)$---which is exactly what we need to run quasi-DEER---\emph{without ever instantiating the $D \times D$ matrix} (which requires wasteful memory and compute costs). After sampling the Rademacher variable $v$, we can compute the matrix-vector product  $\dfrac{\partial f}{\partial s}(s) v$ with a single \emph{Jacobian vector product} (JVP), which requires only a single pass through $f$.

A JVP is a standard primitive in automatic differentiation\footnote{ See \citet{baydin2018automatic} and \citet{maclaurin_thesis} for more details on automatic differentiation, more commonly and colloquially known as "autodiff"} libraries like JAX \citep{jax2018github} and PyTorch \citep{paszke2019pytorch}. A JVP takes in a simple function $f$ and a tangent vector $v$, and returns the product of the Jacobian of $f$ with the tangent vector $v$. 
By virtue of the chain rule, if JVPs for a suitable basis of functions are defined in an autodifferentiation library, one can evaluate derivatives for wide-classes of functions. In fact, the Jacobian $\nicefrac{\partial f}{\partial s}(s)$ can be obtained from $D$ number of JVPs, one for each basis vector.

Consequently, the Hutchinson estimator obtains an unbiased estimator for the diagonal of the dynamics Jacobian. However, the Hutchinson estimator never needs to instantiate the $D \times D$ matrix $\mathrm{diag}(\nicefrac{\partial f}{\partial s}(s))$, and requires only a single function call. Thus, the Hutchinson estimator shares the same cost in memory and compute as analytically implementing $\mathrm{diag}(\nicefrac{\partial f}{\partial s}(s))$---but even when the closed form of $\mathrm{diag}(\nicefrac{\partial f}{\partial s}(s))$ is difficult to obtain, the Hutchinson estimator can still be computed easily using standard autodifferentiation libraries. The variance of the Hutchinson estimator can be reduced by using more Rademacher random variables. Moreover, if the Jacobian $\dfrac{\partial f}{\partial s}$ truly is diagonal, then the Hutchinson estimator is exact. In any case, because of \Cref{prop:global_convergence}, we know that substituting an approximate Jacobian based on the Hutchinson estimator will \emph{still} converge globally. Finally, \citet{pmcmc} provides a variety of empirical demonstrations showing the strong performance of the Hutchinson estimator for parallelizing the sampling of complicated, high-dimensional distributions via Markov chain Monte Carlo.

In conclusion, if the desired diagonal is tractable analytically and performance is paramount, implementing the derivative directly may yield the most efficient performance. However, if computing the diagonal is intractable or unwieldy for prototyping many functions $f$, the Hutchinson estimator introduced in \citet{pmcmc} allows for the use of autodifferentiation---with comparable memory and compute costs---to obtain a practically useful estimate.

\subsection{Generalizing quasi-DEER to other approximate Jacobians}\label{ssc:approx}

As we will formalize in \Cref{ch:quasi_convergence}, the closer our approximate dynamics matrices $\Tilde{\A}_t$ are to the true Jacobians $\A_t$, the faster the rate of convergence. Moreover, we know from \Cref{prop:global_convergence} that any approximate Jacobian will still result in global convergence. Thus, a major direction of future research in quasi-Newton methods is finding other structured matrices that improve expressivity while retaining efficiency.

\paragraph{Reparameterizing the dynamics to be diagonal}

Clearly, if the dynamics are axes-aligned, then the Jacobian is a diagonal matrix and the diagonal approximation is exact.
If the dynamics are not axes-aligned, but there exist some coordinate transform on the $s_t$ to make the dynamics axes-aligned, then we could also run quasi-DEER on these reparameterized dynamics to enjoy the efficiency of quasi-DEER with the convergence speed of full DEER.
However, even if each matrix individually is diagonalizable, it is not always possible to find a basis in which a set of matrices $\{ \A_t \}$ are mutually diagonalizable.
Nonetheless, even if we only \emph{approximately} diagonalize the $\{ \A_t \}$, we know from \Cref{prop:global_convergence} that the resulting quasi-DEER will still globally converge, and may still be much faster than just taking the diagonal approximation. Ways to obtain such an approximate joint diagonalization include taking some representative matrix, such as the first Jacobian $\A_1$ or an average of all the Jacobians, and finding its eigenbasis. 
In general, such a resulting eigenbasis is complex-valued. For reasons still not fully understood, such a complex-valued reparameterization struggles with convergence, especially on GPUs, likely indicating an issue with numerical precision.

However, an elegant approach for reparameterization taken by \citet{pmcmc} is to use a \emph{real} eigenbasis. This real eigenbasis is obtained by symmetrizing the representative matrix before finding its eigenbasis. This approach is particularly well-suited to the context of parallelizing MCMC because the dynamics Jacobian in Langevin dynamics \citep{Langevin1908Eng}---a common sampling approach that is a backbone of the MALA MCMC algorithm \citep{besag1994comment}---is already a real symmetric matrix because it is the Hessian of the log probability of the target distribution $p$. \citet{pmcmc} demonstrate across a wide-range of experiments that this reparameterization using a real-valued eigenbasis is a robust, efficient, and effective method for parallelizing MCMC over the sequence length. 

A final consideration around reparameterization is its computational cost. Another advantage of reparameterizing in MCMC is that the cost of the eigendecomposition is a fixed, one-time cost for a particular kernel---whereas in the context of parallelizing RNNs, one may have to frequently rediagonalize to account for the change in dynamics across gradient updates.

\paragraph{Using other structured matrices in the parallel scan}

In quasi-DEER as presented in \cref{eq:quasi_lds}, we used $\mathrm{diag}(\A_t)$ for our approximate dynamics matrix $\Tilde{A}_t$. We chose to use the diagonal because the composition of diagonal matrices is closed---multiplying two diagonal matrices together yields another diagonal matrix---and closure of the operation is required to use the parallel scan.

Nonetheless, \Cref{prop:global_convergence} shows that \emph{any} approximate matrix $\Tilde{A}_t$ will still result in global convergence. For example, in \Cref{ch:quasi_convergence}, we will show that many common fixed-point methods---including Jacobi and Picard iterations---can also be interpreted as versions of quasi-DEER with different types of approximation techniques used to form $\Tilde{A}_t$.

Therefore, it is natural to ask what other types of structured matrices $\Tilde{A}_t$ can be easily computed and are closed under composition. For example, in parallelizing Hamiltonian Monte Carlo (HMC)\citep{neal2011mcmc, betancourt2017conceptual}, which includes both position and momenta variables, \citet{pmcmc} demonstrated that "diagonal-block" matrices\footnote{i.e. a block matrix where every block is a diagonal matrix} satisfy these desiderata. Under permutation of the coordinates, these diagonal-block matrices are equivalent to block diagonal matrices. A benefit of block diagonal matrices is that they can better utilize the tensor cores of GPUs.

Other possibilities for future work include developing quasi-DEER methods based on parallel scan for other structured matrices, such as low-rank matrices. For example, \citet{terzic2025permutation} developed an efficient parallel scan for permutation matrices, which could be an intriguing option for quasi-DEER in certain settings. Moreover, other matrices such as Householder matrices \citep{bischof1985wy} are not well-suited to parallel scans, but admit a chunkwise parallel form that has achieved great success for language modeling in the DeltaNet architecture \citep{deltanet, yang2024parallelizing}. In general, there are many varieties \citep{sindhwani2015structured, dao2022monarch} of structured matrices that all merit further exploration for use in the context of parallelizing nonlinear SSMs, whether using parallel scans, chunkwise parallel approaches, or other as yet unimagined schemes.

\paragraph{Foregoing autodiff and using Broyden type methods}

A unique aspect of the quasi-DEER methods discussed in this chapter when compared with the broader quasi-Newton literature (cf. \citep{dennis1996numerical, NocedalWright}) is the manner in which the approximate derivative $\Tilde{\J}$ is constructed.
In all of the instantiations of quasi-DEER discussed above, we in some way differentiate the residual $\mathbf{r}(\cdot)$ at \emph{every iteration}; and then we use an approximation of this derivative to reduce the memory and compute requirements of the parallel scan we use to evaluate the resulting LDS.

However, much of the quasi-Newton literature---especially the widely-used \emph{Broyden methods}
\citep{broyden1970convergence, liu1989limited, dennis1996numerical, fang2009multisecant}---is motivated by trying to avoid the computational cost of differentiating $\mathbf{r}(\cdot)$ itself.\footnote{In contrast, the quasi-DEER methods have accepted the cost of differentiating $\mathbf{r}(\cdot)$, and instead focus on reducing the cost of the next step, which is the parallel scan.} In Broyden methods, an approximation to either $\J$ or $\J^{-1}$ is built up over the optimization trajectory using only information gleaned from the trajectory itself (primarily the values $\{\mathbf{s}^{(i)}\}$ and $\{\mathbf{r}(\mathbf{s}^{(i)})\}$). 

As discussed in \citet{dennis1996numerical}, building up an approximation $\Tilde{\J}$ for $\J$ is called \emph{Broyden's first method} or \emph{Broyden's good update}; building up an approximation $\mathbf{G}$ for $\J^{-1}$ is called \emph{Broyden's second method} or \emph{Broyden's bad update}.
A seeming advantage of the so-called "bad update" is that by approximating $\J^{-1}$ directly, one does not have to bear the cost of the matrix inversion.
However, the reason for this colorful nomenclature is the robust observation across practitioners that Broyden's good update tended to outperform Broyden's bad update in application (cf. \citep{dennis1996numerical}); \citet{lin2021explicit} provides theoretical analysis suggesting that the good update is more robust in a wider range of initializations.

\citet{tang2024accelerating} used Broyden's bad update to parallelize the evaluation of nonlinear SSMs---in their chosen setting, sampling from diffusion models.
Building on this work, future work that leans more deeply into the rich literature of Broyden methods---especially Broyden's good update---could have important implications for parallelizing nonlinear SSMs.

\subsection{Training and the backwards pass}\label{ssc:bwds}

To train an RNN, we need both the forward pass (which fills in the state trajectory $\mathbf{s}_{1:T}$) and the backward pass (which computes the gradient of some loss function with respect to the RNN parameters $\theta \in \mathbb{R}^P$, and updates those parameters accordingly).

In this chapter, we have primarily focused on how DEER and quasi-DEER let us parallelize the \emph{forward} pass of an RNN, up to numerical precision. However, we should also discuss how to parallelize the backwards pass as well, specifically the fact that \textbf{DEER also has an exact backward pass that is parallelized across the sequence length.}

To show this, let us consider an RNN cell parameterized by $\theta$, i.e. $s_t = f_\theta(s_{t-1})$, where here $s_t$ represents the RNN hidden state. Assume we want to train our RNN to minimize some supervised scalar loss $L(s_T)$ that is explicitly a function of the final RNN hidden state, but of course depends recurrently on all of the RNN hidden states $\mathbf{s}_{1:T}$ and the RNN cell parameters $\theta$.

In modern deep learning, optimization is always done using updates to the parameters $\theta$ based on some function of the gradient of the loss $L$ with respect to the parameters. This derivative is computed during the "backwards pass," i.e. backpropagation or the chain rule. In the context of RNNs, this approach to finding the derivative is also called \emph{backpropagation through time (BPTT)}. This name emphasizes that we are applying the chain rule over dependencies over the sequence length (which, especially in neuroscience applications, can be thought of as time).

Therefore, using the chain rule to compute $\nicefrac{dL}{d\theta}$, it follows that
\begin{align}
    \dfrac{d L}{d \theta}(s_T) & = \dfrac{\partial L}{\partial s_T}(s_T) \dfrac{d s_T}{d \theta}(s_{T-1}) \label{eq:bptt} \\
    \dfrac{d s_t}{d \theta} & =  \underbrace{\dfrac{\partial s_t}{\partial s_{t-1}}}_{\A_t} \dfrac{d s_{t-1}}{d \theta} + \dfrac{\partial f}{\partial \theta}(s_{t-1}), \label{eq:bptt_lds}
\end{align}
where the $\A_t$ are exactly the dynamics function Jacobians that DEER uses to parallelize the forward pass, and we can compute $\dfrac{\partial f}{\partial \theta}$ over all $\mathbf{s}_{1:T}$ in an embarrassingly parallel manner using a \textsc{Map}.
Moreover, \cref{eq:bptt,eq:bptt_lds} indicate that BPTT is an LDS, which we know how to parallelize using a parallel scan. 
In more detail, unrolling this recursion in \cref{eq:bptt,eq:bptt_lds}, it follows that
\begin{equation}\label{eq:unrolled_bwds}
    \underbrace{\dfrac{d L}{d \theta}(s_T)}_{\in \mathbb{R}^{1 \times P}} = \sum_{t=1}^T \underbrace{\dfrac{\partial L}{\partial s_T}(s_T)}_{\in \mathbb{R}^{1 \times D}} \cdot \underbrace{\prod_{\tau=T}^{t+1}\A_{\tau}}_{\in\mathbb{R}^{D \times D}} \cdot \underbrace{\dfrac{\partial f}{\partial \theta}(s_{t-1})}_{\in \mathbb{R}^{D \times P}}.
\end{equation}
We observe that we can obtain all of the products $\prod_{\tau=T}^{t+1}\A_{\tau}$ with a parallel scan, showing how we can also parallelize the backwards pass.

To summarize, while DEER may need multiple LDSs to achieve the exact forward pass, when it has converged it has the exact matrices $\{ A_t \}$ needed for the backward pass, which can then also be parallelized with a single parallel scan.

However, this single parallel scan backwards would incur all of memory and compute costs that quasi-DEER sought to avoid. For this reason, in the eigenworms experiment shown in \Cref{fig:eigenworms}, we also use the $ \{ \tilde{A}_t = \mathrm{diag}(\A_t) \}$ for the \emph{backwards} pass shown in \cref{eq:unrolled_bwds} as well. As we run only one parallel scan in the eigenworms experiment, we are using an \emph{approximate} gradient that is not equal to the true gradient that would arise from either DEER or sequential evaluation.
This approach using approximate gradients worked empirically for training in the eigenworms experiment shown in \Cref{fig:eigenworms}.
Furthermore, \citet{caillon2025fast} used an even more approximate gradient--choosing $\tilde{A}_t$ be a \emph{random} diagonal matrix---though their language modeling experiments show degraded performance relative to full BPTT.
Because of the alteration of the training dynamics, we do \textbf{not} recommend training with approximate gradients in general. We instead recommend the following two alternatives to obtain exact backwards passes in a computationally feasible manner.

First, \citet{farsang2025scaling}, \citet{danieli2025pararnn}, and \citet{zattra2025context} take the approach of adjusting the RNN architectures \emph{to have diagonal\footnote{or block-diagonal.} Jacobians}, with \citet{danieli2025pararnn} demonstrating that this approach scales to strong language modeling performance with 7 billion parameter models. In such architectures, the quasi-DEER approximate Jacobian is actually \emph{exact}, i.e. the memory and compute costs of running full DEER are reduced to be linear in $D$. Furthermore, the backwards pass is now exact as well. While restricting the architecture in this way would seem intuitively to reduce its expressivity, precisely and rigorously investigating this intuition is an important avenue for future work. 
Moreover, just as using richer structured matrices could increase the convergence speed of quasi-DEER (see our discussion in \Cref{ssc:approx}), so too could they improve the expressivity of RNN architectures. 

Second, just as both DEER and quasi-DEER use multiple parallel scans to obtain an exact forward pass, we could also use multiple parallel scans to obtain an exact \emph{backwards} pass for quasi-DEER. In particular, observing \cref{eq:unrolled_bwds}, we can treat $\tilde{s}_T := \nicefrac{\partial L}{\partial s_T}(s_T) \in \mathbb{R}^D$ as an initial state\footnote{Note the reversal of time for the backwards pass.} in an LDS with transition function being multiplication by $\A_t^{\top}$. 
We can then implement the resulting quasi-DEER \emph{backwards} update along the lines of the form shown in \cref{eq:quasi_lds} where we can use $\diag[\A_t]$ for our transition matrix and use vector-Jacobian products (VJPs) of $f_t$ to efficiently compute $A_t^{\top} \tilde{s}_t^{(i)}$ for the transition function.\footnote{We can use a VJP of $f_t$ to compute $A_t^{\top} \tilde{s}_t^{(i)}$ because $\A_t := \nicefrac{\partial f_t}{\partial s_{t-1}}(s_{t-1}^{\star})$.} While such an approach provably must converge, assessing its efficacy empirically is an interesting avenue for future work. However, we note that a highly related idea called Highway backpropagation has already been shown to accelerate the training of GRUs for character level language modeling \citep{fagnou2025accelerated}. 

In conclusion, DEER enjoys exact forward and backwards passes, using multiple parallel scans to achieve the forward pass and a single parallel scan for the backwards pass. Quasi-DEER as implemented in \Cref{fig:eigenworms} enjoys an exact forward pass, but only an approximate backwards pass, as it uses only a single parallel scan for the backwards pass. Using an exact backward pass is important in general, and can be achieved by restricting the architecture to make the $\tilde{A_t}$ exact, or by using multiple parallel scans in the backward pass as well. Of course, there are other ways beyond BPTT to train RNNs, including e-prop \citep{bellec2020solution}, forward-mode optimization \citep{sofo}, evolutionary methods \citep{eggroll}, and zeroth order methods \citep{chaubard2025scaling}.

\subsection{Initializing the guess for the state trajectory}

An important consideration for parallel Newton methods is how to initialize the initial guess for the state trajectory $\mathbf{s}_{1:T}^{(0)}$. 
As we saw in \Cref{prop:Newton_quadratic}, Newton's method enjoys quadratic convergence if it is initialized close to the true solution $\mathbf{s}^{\star}$.
However, in general, picking an initial guess for the trajectory that is close to the true trajectory $\mathbf{s}_{1:T}^{\star}$ can be as difficult as finding the true trajectory $\mathbf{s}_{1:T}^{\star}$ itself. An exception can be if an approximate trajectory is already known, which could arise if we were training an RNN on a single sequence (so the state trajectory does not change too much with each training step), or conducting sensitivity analysis in Markov chain Monte Carlo (so each chain is close).
    
In this chapter, the parallel Newton methods were initialized from all zeros. Consequently, the initial dynamics matrices $\A_t^{(0)}$ are all the same, which can exacerbate instability. A better approach used in \Cref{part:theory} is to initialize at random.

Probably the best approach presented in the literature comes from \citet{danieli2025pararnn}, which uses one Jacobi iteration (starting from all zeros) to initialize the states, i.e. $s_t^{(0)} = f(0, u_t)$, where $u_t$ are the inputs to the RNN. This Jacobi iteration is embarrassingly parallel over the sequence length, and can provide a good initialization for the parallel Newton methods.
Further research in even better initialization may prove fruitful.
%************************************************
\chapter{Stable Parallelization: ELK and Trust Region Methods}\label{ch:elk}
%************************************************

Another well-known failure mode of Newton's methods is instability---the failure mode where Newton's method diverges, with the solutions growing in magnitude instead of converging to the solution $\mathbf{s}^{\star}$ (cf. \Cref{fig:newton_diverges}).
This exact failure mode of never converging does not directly apply to parallel Newton methods, which are guaranteed by \Cref{prop:global_convergence} to converge in at most $T$ iterations.
However, if parallel Newton methods take too many iterations to converge, they will be slower than sequential evaluation, defeating the goal of parallelization in the first place.
As we discuss in this chapter, a failure mode that can slow down the convergence of Newton methods, especially in finite precision, is when intermediate iterates $\mathbf{s}^{(i)}$ explode in value.
To overcome this slowed convergence caused by instability, we introduce a parallelized trust-region optimizer called ELK: \textbf{E}valuating \textbf{L}evenberg-\textbf{M}arquardt with \textbf{K}alman.

\section{Levenberg-Marquardt and Trust-Region Methods}

For the purpose of stabilizing parallel Newton methods, we take the optimization perspective discussed in \Cref{ssc:opt}, focusing on the merit function $\mathcal{L}(\mathbf{s})$ introduced in \cref{eq:merit}. However, instead of optimizing this merit function with the Gauss-Newton algorithm (GN) (i.e. DEER), we will instead use the \emph{Levenberg-Marquardt} (LM) algorithm \citep{levenberg1944method, marquardt1963algorithm}, one of the most standard \emph{trust-region} approaches.
\begin{figure}
    \centering
    \includegraphics[scale=0.1]{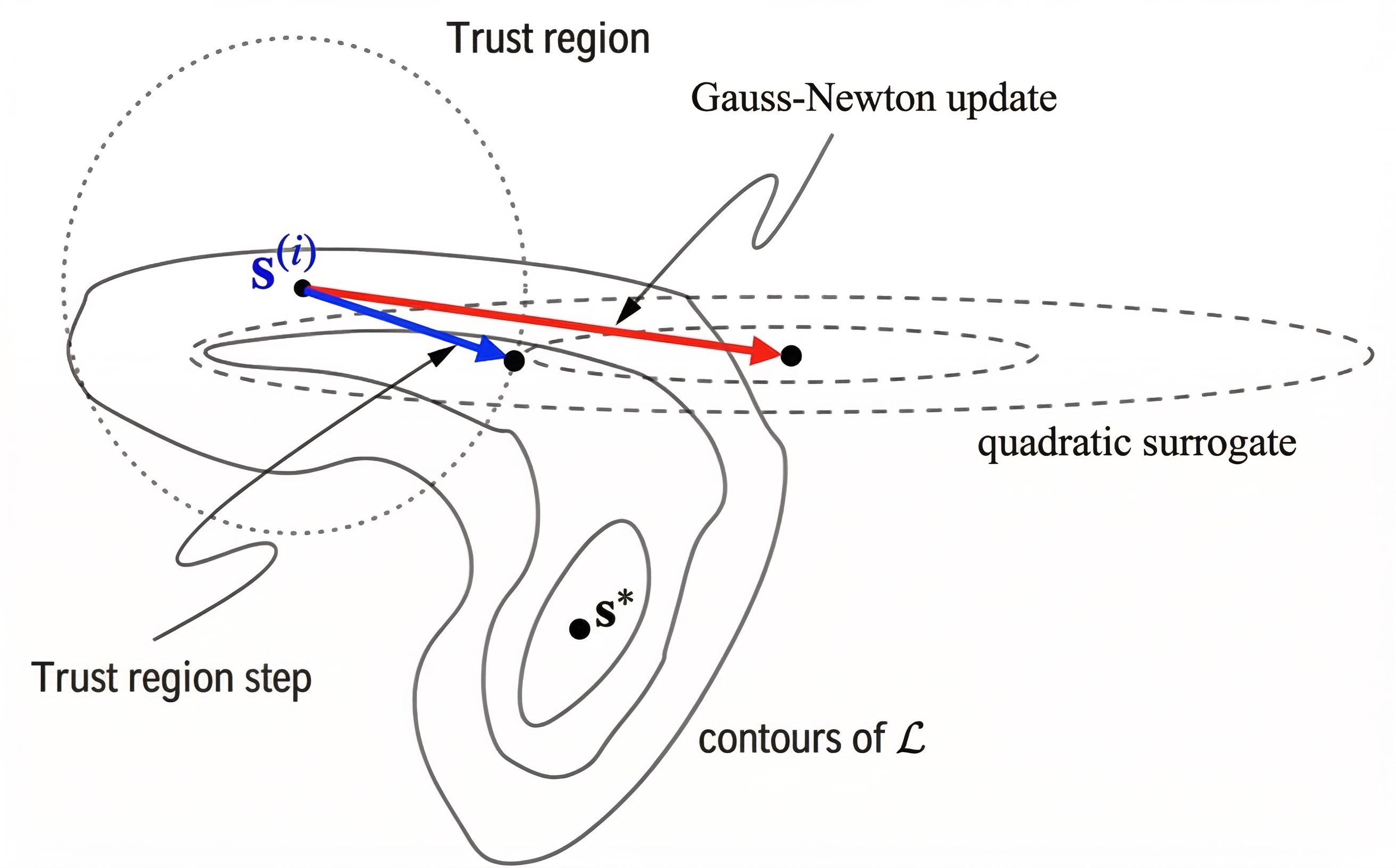}
    \caption{\textbf{Graphical Depiction of Trust-Region Methods.} We show both an undamped Gauss-Newton step (red) and a stabilized trust-region step (blue). The solid lines indicate the contours of the merit function $L$ we want to minimize. The dashed lines indicate the contours of the quadratic surrogate that Gauss-Newton is minimizing on this iteration. The dotted lines indicate the \emph{trust region} around $\mathbf{s}^{(i)}$; trust-region methods restrict the update to this ball, resulting in this case in an update that reduces the objective. Figure adapted from \citet[Figure 4.1]{NocedalWright}.}
    \label{fig:trust_region}
\end{figure}
The idea of a trust region is simple, and is depicted in \Cref{fig:trust_region}, which is adapted from Figure 4.1 of \citet{NocedalWright}.
The core idea of trust-region methods is that the quadratic surrogate being minimized by the Gauss-Newton method may only be accurate or helpful in a neighborhood of our current guess $\mathbf{s}^{(i)}$. 
Thus, trust-region methods require that the next iterate $\mathbf{s}^{(i+1)}$ minimize the merit function $\mathcal{L}(\mathbf{s})$, subject to being in some neighborhood of the current guess $\mathbf{s}^{(i)}$.
Trust regions are often used in conjunction with Newton's method to improve numerical stability and convergence.
Each Gauss-Newton step solves an \emph{unconstrained} optimization problem, while each trust-region step solves a \emph{constrained} optimization problem.

The Levenberg-Marquardt algorithm in particular is a canonical trust-region method.
Let us define the quadratic surrogate that Levenberg-Marquardt is minimizing at each iteration $(i)$ as a function of the step $\Delta \mathbf{s}$ it takes, i.e.
\begin{align} \label{eq:linmerit}
    \widetilde{\mathcal{L}}_{\mathbf{s}^{(i)}}(\Delta \mathbf{s}) = \frac{1}{2} \left\| \mathbf{r}(\mathbf{s}^{(i)}) + \J(\mathbf{s}^{(i)}) \Delta \mathbf{s} \right\|_2^2.
\end{align}
Then, LM uses updates that solve the constrained optimization problem
\begin{align}\label{eq:constr_opt}
    \min_{\Delta \mathbf{s}} \widetilde{\mathcal{L}}_{\mathbf{s}^{(i)}}(\Delta \mathbf{s}) \quad \text{ subject to } \left\| \Delta \mathbf{s} \right\|_2 \leq D_{i+1},
\end{align}
where $D_{i+1}$ is an upper bound on the step size, thus defining our trust region.

Note that both the objective $\widetilde{\mathcal{L}}_{\mathbf{s}^{(i)}}$ and the constraint $g(\Delta \mathbf{s}) := \| \Delta \mathbf{s} \|_2^2 - D_{i+1}^2$ are convex in $\Delta s$ (in fact, both are quadratic).
Therefore, by the method of Lagrange multipliers, minimizing the constrained optimization problem in \cref{eq:constr_opt} is equivalent to minimizing the Lagrangian
\begin{align}\label{eq:lagrangian}
     \widehat{\mathcal{L}}_{\mathbf{s}^{(i)}}(\Delta \mathbf{s}) &= 
     \widetilde{\mathcal{L}}_{\mathbf{s}^{(i)}}(\Delta \mathbf{s}) + \frac{\lambda_{i+1}}{2} \left\| \Delta \mathbf{s} \right\|_2^2
\end{align}
over $\Delta \mathbf{s}$ for some fixed $\lambda_{i+1} \geq 0$.
Note that if $\lambda_{i+1} = 0$, then the \emph{unconstrained} minimizer of $\widetilde{\mathcal{L}}_{\mathbf{s}^{(i)}}(\Delta \mathbf{s})$
is inside of the trust region.

Since \cref{eq:lagrangian} is quadratic in $\Delta \mathbf{s}$, it follows that
\begin{equation*}
    \Delta \mathbf{s}_{\mathrm{LM}} = - \left( \J^{\top} \J + \lambda \mathbf{I} \right)^{-1} \J^{\top} \mathbf{r}.
\end{equation*}
Therefore, we observe that if $\lambda = 0$, then we recover the typical GN step.
On the other hand, if $\lambda$ is large, we see that the LM update approaches the gradient descent (GD) update with step size $1 / \lambda$.

Intuitively, LM is "regularizing" the update by adding a non-negative term to the diagonal of $\J^{\top} \J$.
This regularization can help to stabilize the update when $\J^{\top} \J$ has small eigenvalues, resulting in an update with smaller and more manageable magnitude.
In fact, this stabilization technique used by LM is exactly analogous to the regularization technique of \emph{ridge regression} or \emph{$\ell_2$-regularization} used in statistical machine learning \citep{tikhonov1963solution, hoerl1970ridge, krogh1991simple, hastie2009elements, hastie2020ridge, wang2025test}.

However, while it is intuitive why LM can help stabilize the GN updates, it is not immediately obvious how we can parallelize the LM update over the sequence length to help us achieve our goal of parallelizing the evaluation of nonlinear SSMs.
In the next section, we will show how we can parallelize LM updates in our setting via a connection with Kalman smoothing.

\section{ELK: Evaluating Levenberg-Marquardt with Kalman}

There is a rich literature connecting optimization techniques with the problem of filtering and smoothing\footnote{For background on filtering and smoothing, see our introduction to Bayesian filtering and smoothing in \Cref{ssc:kalman}, and \citet{sarkka2023bayesian} for the standard textbook introduction.}.
In particular, \citet{bell-filter} and \citet{bell-smoother} draw connections between the Gauss-Newton method and the iterated extended Kalman filter and smoother~\citep{Sorenson1966, sarkka2023bayesian}.
Because Gauss-Newton is unstable, it is natural to use Levenberg-Marquardt~\citep{levenberg1944method, marquardt1963algorithm} to stabilize the filtering/smoothing problem~\citep{Sarkka-lm, Chen2013, mandel2016hybrid}.

This connection between optimization and Kalman smoothing hinges on the following point noted by \citet{Sarkka-lm}: the minimizer of this Lagrangian in \cref{eq:lagrangian} can be obtained by a Kalman smoother. 
We emphasize this connection in the following proposition.
\begin{proposition}\label{prop:ELK}
    Solving for the Levenberg-Marquardt update that minimizes \eqref{eq:lagrangian} with fixed $\lambda_{i+1}$ is equivalent to finding the \emph{maximum a posteriori} (MAP) estimate of $\mathbf{s}_{1:T}$ in a linear Gaussian state space model, which can be done in $\mathcal{O}(\log T)$ time on a sufficiently large parallel machine.
\end{proposition}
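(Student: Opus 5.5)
Write the Lagrangian \cref{eq:lagrangian} as a sum of local terms, recognize it as the negative log of a linear Gaussian state space model (LGSSM) joint density, and then invoke the parallel Kalman smoother of \citet{parallel-kalman}.

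\textbf{Step 1: change of variables.} Rather than working with $\Delta\mathbf{s}$, I will work with the new iterate $\mathbf{s} = \mathbf{s}^{(i)} + \Delta\mathbf{s}$. By \cref{eq:r_def} and the block bidiagonal structure \cref{eq:big_j}, the $t$-th block of the linearized residual is
\begin{equation*}
r_t(\mathbf{s}^{(i)}) + \Delta s_t - \A_t^{(i)}\Delta s_{t-1} = s_t - \A_t^{(i)} s_{t-1} - b_t^{(i)},
\end{equation*}
with $b_t^{(i)}$ as in \cref{eq:deer_lds} and $s_0$ held fixed, so that $\Delta s_0 = 0$. Hence
\begin{equation*}
\widehat{\mathcal{L}}_{\mathbf{s}^{(i)}} = \sum_{t=1}^T \tfrac12\|s_t - \A_t^{(i)} s_{t-1} - b_t^{(i)}\|_2^2 + \sum_{t=1}^T \tfrac{\lambda_{i+1}}{2}\|s_t - s_t^{(i)}\|_2^2 .
\end{equation*}

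\textbf{Step 2: match to an LGSSM.} Each term is a Gaussian negative log-density up to constants. The first sum corresponds to dynamics $s_t \mid s_{t-1} \sim \mathcal{N}(\A_t^{(i)} s_{t-1} + b_t^{(i)}, I_D)$ started from the known $s_0$. The second sum corresponds to pseudo-observations $o_t := s_t^{(i)}$ with emissions $o_t \mid s_t \sim \mathcal{N}(s_t, \lambda_{i+1}^{-1} I_D)$, which is the model of \Cref{fig:lgssm} with $C = I$ and $R = \lambda_{i+1}^{-1}I$. So $\widehat{\mathcal{L}} = -\log p(\mathbf{s}_{1:T}, o_{1:T}) + \text{const}$. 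For fixed observations, the LM minimizer is then the argmax of $p(\mathbf{s}_{1:T}\mid o_{1:T})$, which is the MAP estimate.

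\textbf{Step 3: MAP equals the smoother means.} The posterior is jointly Gaussian, so the MAP equals the posterior mean. Those means are exactly what the RTS smoother outputs. It remains to cite the result that filtering and smoothing in an LGSSM can be written as parallel scans with custom associative operators \citep{parallel-kalman}, which gives $\mathcal{O}(\log T)$ depth on $\mathcal{O}(T)$ processors.

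\textbf{Expected obstacle.} The main difficulty is the edge cases rather than the algebra. When $\lambda_{i+1}=0$ the observation precision vanishes; the posterior is then the prior, whose mean is the DEER LDS \cref{eq:deer_lds}, and this should be noted separately. I also need to check that the known $s_0$ is handled as a fixed initial condition, i.e.\ a degenerate prior, and that the time-varying $\A_t, b_t$ are compatible with the parallel-scan construction.
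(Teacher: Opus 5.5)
Your proposal is correct and follows the same route as the paper. Both rewrite the Lagrangian, in terms of the new iterate $\mathbf{s}=\mathbf{s}^{(i)}+\Delta\mathbf{s}$, as the negative log joint of an LGSSM with linearized dynamics $\mathcal{N}(A_t s_{t-1}+b_t, I_D)$ and pseudo-observations $s_t^{(i)}$ of precision $\lambda_{i+1}$, then obtain the posterior mode with the parallel Kalman smoother; your checks on $\Delta s_0=0$ (which reproduces the $\mathcal{N}(s_1\mid f_1(s_0),I_D)$ term), on mode${}={}$smoothed mean, and on the $\lambda_{i+1}=0$ DEER case are consistent with the paper's argument and make explicit what it leaves implicit.
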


\begin{proof}[Proof]
Expanding the residual and Jacobian functions in \eqref{eq:linmerit}, we see that up to an additive constant, the negative Lagrangian can be rewritten as,
\vspace{-1em}
\begin{multline} \label{eq:lgssm}
    -\widehat{\mathcal{L}}(\Delta \mathbf{s}, \lambda_{i+1})
    \stackrel{\cdot}{=} \log \mathcal{N}\left(\mathbf{s}_1 \mid f(\mathbf{s}_0), I_D \right) + \sum_{t=1}^T \log \mathcal{N}\left(\mathbf{s}_t^{(i)} \, \Big|\, \mathbf{s}_t, \frac{1}{\lambda_{i+1}} I_D \right) \\
    + \sum_{t=2}^T \log \mathcal{N}\left(\mathbf{s}_t \, \Big|\,  f(\mathbf{s}_{t-1}^{(i)}) + \left[ \frac{\partial f}{\partial \mathbf{s}}(\mathbf{s}_{t-1}^{(i)}) \right] (\mathbf{s}_{t-1} - \mathbf{s}_{t-1}^{(i)}), I_D \right),
\end{multline}
where $\mathcal{N}(\mathbf{x} \mid \mu, \Sigma)$ denotes the probability density function of the multivariate normal distribution.

We recognize \eqref{eq:lgssm} as the log joint probability of a linear Gaussian state space model (LGSSM)~\citep{sarkka2023bayesian} on $(\mathbf{s}_1, \ldots, \mathbf{s}_T)$. Consequently, the dynamics distributions are given by the linearization of $f$, and the emissions are the previous iteration's states, $\mathbf{s}^{(i)}$. The parameter $\lambda_{i+1}$ sets the precision of the emissions, governing how far the posterior mode deviates from the previous states. 
We show the graphical diagram for this LGSSM for $T=3$ in \Cref{fig:elk_kalman}. 

The minimizer of \eqref{eq:lagrangian} is the posterior mode of the LGSSM \eqref{eq:lgssm}, and can be obtained by Kalman smoothing~\citep{sarkka2023bayesian}. As with the linear recursions in DEER, the Kalman smoother can be implemented as a parallel scan that scales as $\mathcal{O}(\log T)$ in time on a machine with $\mathcal{O}(T)$ processors~\citep{parallel-kalman,linderman2025dynamax}.
\end{proof}
\begin{figure}
    \centering
    \begin{tikzpicture}
        % Define a uniform node style for circles with minimum size
        [node distance=2cm, minimum size=1cm, circle, draw=black, align=center]

        % Define nodes
        \node[circle, draw=black, fill = gray, minimum size=1cm] (x0) at (0,0) {\textcolor{black}{$s_0$}};
        \node[circle, draw=black, minimum size=1cm] (x1) at (2,0) {\textcolor{red}{$s_1$}};
        \node[circle, draw=black, fill=gray, minimum size=1cm] (y1) at (2,-1.5) {\textcolor{blue}{$s_{\textcolor{blue}{1}}^{(i)}$}};
        \node[circle, draw=black, minimum size=1cm] (x2) at (4,0) {\textcolor{red}{$s_2$}};
        \node[circle, draw=black, fill=gray, minimum size=1cm] (y2) at (4,-1.5) {\textcolor{blue}{$s_{\textcolor{blue}{2}}^{(i)}$}};
        \node[circle, draw=black, minimum size=1cm] (x3) at (6,0) {\textcolor{red}{$s_3$}};
        \node[circle, draw=black, fill=gray, minimum size=1cm] (y3) at (6,-1.5) {\textcolor{blue}{$s_{\textcolor{blue}{3}}^{(i)}$}};

        % Draw arrows with labels above
        \draw[-to] (x0) -- (x1) node[midway, above] {\textcolor{red}{$A_{1}, b_{1}$}};
        \draw[-to] (x1) -- (y1) node[midway, right] {\textcolor{blue}{$1/\lambda$}};
        \draw[-to] (x1) -- (x2) node[midway, above] {\textcolor{red}{$A_{2}, b_{2}$}};
        \draw[-to] (x2) -- (y2) node[midway, right] {\textcolor{blue}{$1/\lambda$}};
        \draw[-to] (x2) -- (x3) node[midway, above] {\textcolor{red}{$A_{3}, b_{3}$}};
        \draw[-to] (x3) -- (y3) node[midway, right] {\textcolor{blue}{$1/\lambda$}};
    \end{tikzpicture}
    \caption{\textbf{Graphical Diagram of the ELK LGSSM.} We provide a graphical diagram illustrating how the LM update in the context of parallelizing nSSMs is the MAP solution to posterior inference in an appropriately constructed LGSSM. Without any observations (i.e. $\lambda=0$, or equivalently observations with infinite variance), we simply recover the DEER update. However, by using our previous state $\mathbf{s}_{1:T}^{(i)}$ as our observations, we restrict the dynamics to a trust region.}
    \label{fig:elk_kalman}
\end{figure}
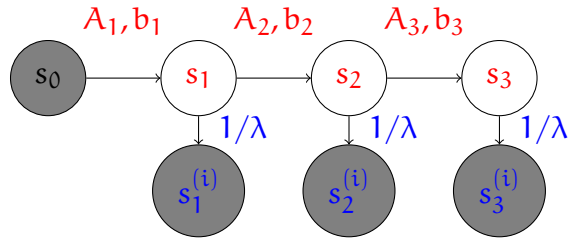

Therefore, we can evaluate an RNN by minimizing the merit function with the Levenberg-Marquardt algorithm. Since each step of LM can be performed by parallel Kalman smoothing, we call this approach \emph{Evaluating Levenberg-Marquardt with Kalman} (ELK).
Note that DEER is a special case of ELK, where $\lambda=0$, which can be seen as minimizing the unpenalized linearized objective \eqref{eq:linmerit}, or, alternatively, taking a Newton step with an infinitely large trust region.
Moreover, under certain conditions, ELK also enjoys global convergence guarantees~\citep[][Thms. 11.7, 11.8]{NocedalWright}.

\paragraph{Quasi-ELK: Scalability and Stability}
As with DEER, we can substitute an approximate Jacobian into the Lagrangian to obtain the \emph{quasi-ELK} algorithm. 
Quasi-ELK enjoys the compute and memory scaling of quasi-DEER, as well as stability from the trust region damping from ELK.
We show empirically in \Cref{sec:elk_exps} that while quasi-ELK takes more iterates to converge than ELK, each quasi-ELK iterate is faster, giving overall runtime speedups. 

\paragraph{Implementation Details} 
The convergence rate of (quasi-)ELK depends on the trust region radius $D_{i}$ (or alternatively $\lambda_{i}$).
Although there exist methods to analytically set $\lambda_i$~\citep[Algorithm 4.3]{NocedalWright}, these approaches require factorizing $\nicefrac{\partial \mathbf{r}}{\partial \mathbf{s}}$, which is intractable at scale.
Therefore, in practice, we treat $\lambda$ as a hyperparameter set by a sweep over log-spaced values.

We also use Kalman filtering instead of smoothing.
We do so for two main reasons: filtering requires less work and memory; and we also found it to converge in fewer Newton iterations than smoothing. 
We hypothesize that this faster convergence is related to \Cref{prop:global_convergence}, whose proof shows that the early part of the trace converges first. The traces in parallel Newton iterations converge causally, propagating information from the ground truth initial condition $s_0$ to the end of the sequence. Therefore, it makes intuitive sense that a Kalman filter, which is also causal, would have better empirical performance than a Kalman smoother.

Using the Kalman filter also provides an intuitive explanation---based on dynamics instead of optimization---as to how ELK calms instabilities that can arise in DEER. We discuss this connection in the next section. 

\section{Dynamics perspective on ELK}\label{sec:elk_dyn}

A complementary perspective on how ELK results in more stable evaluation of nonlinear RNNs is to see how the Kalman filter damps the spectral norms of the Jacobian matrices $\{A_t\}$ of the transition dynamics. 
The spectral norm of a matrix gives the maximum factor by which it can scale an input vector, and so is intuitively related to the stability of a linear dynamical system. 
We first provide a high-level overview, and then provide a more detailed derivation.

\paragraph{Overview} Let $\{\A_t\}$ be the Jacobians $\{ \nicefrac{\partial f_t}{\partial s_{t-1}} \}$ used in the linear recurrence relations and $b_t$ be the offsets. 
Then the prediction step of the Kalman filter (ELK) is the same as DEER. However, after applying the update step in ELK (which imposes the trust region), we obtain a second linear recurrence relation where the linear operator is given by $\Gamma_t \A_t$. 
Note that $\Gamma_t$ is a symmetric positive definite matrix with eigenvalues bounded above by $\nicefrac{1}{1 + \lambda}$. Thus, by the Spectral Theorem, it follows that the spectral norm of $\Gamma_t \A_t$ is bounded above by $\nicefrac{\| A_t \|}{1+\lambda}$.
Note that larger $\lambda$ corresponds to more regularization/smaller trust region; and therefore correspondingly results in smaller effective spectral norm.
We recover DEER exactly if $\lambda=0$. Thus, while large spectral norms of $A_t$ are a cause of the instability of DEER when evaluating unstable dynamical systems, ELK directly attenuates these spectral norms, providing an explanation for why the intermediate iterations using ELK remain stable.

\paragraph{Derivation}
We define our dynamics used in Newton iteration $(i+1)$ as
\begin{align*}
    \A_t & = \dfrac{ \partial f_{t}}{ \partial s_{t-1}}(s_{t-1}^{(i)}) \\
    b_t & = f_{t}(s_{t-1}^{(i)}) - \dfrac{ \partial f_{t}}{ \partial s_{t-1}}(s_{t-1}^{(i)}) s_{t-1}^{(i)}.
\end{align*}
Now $\A_t \in \mathbb{R}^{D \times D}$ and $b_t \in \mathbb{R}^D$.

In line with considering the system as the LDS in \eqref{eq:lgssm}, we set the process noise to be $I_D$, and with the emissions governed by
\begin{equation*}
    s_t^{(i+1)} \sim \mathcal{N}(s_t^{(i)}, \sigma^2 I_D),
\end{equation*}
where $\sigma^2$ controls the size of our trust region, since $\lambda = 1/\sigma^2$.

In the notation of \citet{murphy}, we see that the \textbf{predict step} is
\begin{align*}
    \mu_{(t+1)|t} & = \A_{t+1} \mu_{t|t} + b_{t+1} \\
    \Sigma_{(t+1)|t} & = \A_{t+1} \Sigma_{t|t} \A_{t+1}^{\top} + I_D.
\end{align*}

Meanwhile, the \textbf{update step} is
\begin{align*}
    \mu_{(t+1)|(t+1)} & = \mu_{(t+1)|t} +  \Sigma_{(t+1)|t} (\Sigma_{(t+1)|t} + \sigma^2 I_D)^{-1}(s_{t+1}^{(i)} - \mu_{(t+1)|t}) \\
    \Sigma_{(t+1)|(t+1)} & = \Sigma_{(t+1)|t} - \Sigma_{(t+1)|t} (\Sigma_{(t+1)|t} + \sigma^2 I_D)^{-1} \Sigma_{(t+1)|t}^{\top}.
\end{align*}

To unpack this further, we first define the \emph{attenuation matrix}
\begin{equation*}
    \Gamma_{t+1} \coloneq \sigma^2 \Big(\A_{t+1} \Sigma_{t|t} \A_{t+1}^{\top} + (\sigma^2 + 1) I_D\Big)^{-1}.
\end{equation*}

Because $\Sigma_{t|t}$ is a covariance matrix, it is symmetric positive semidefinite, and so $\A_{t+1} \Sigma_{t|t} \A_{t+1}^{\top}$ is also symmetric positive semidefinite, and so all of its eigenvalues are nonnegative. Therefore, all the eigenvalues of  $\A_{t+1} \Sigma_{t|t} \A_{t+1}^{\top} + (\sigma^2 + 1) I_D$ are greater than or equal to $\sigma^2 + 1$.

Consequently, $\Gamma_{t+1}$ is symmetric positive definite. Thus, by the Spectral Theorem, all eigenvalues of $\Gamma_{t+1}$ are positive. By the above argument, the eigenvalues of $\Gamma_{t+1}$ are all less than or equal to $\frac{\sigma^2}{1 + \sigma^2} < 1$.
Moreover, since $\Gamma_{t+1}$ is symmetric positive definite, its eigenvalues are equal to its singular values, and so $\| \Gamma_{t+1} \|_2 \leq \nicefrac{1}{1 + \lambda}$.

Thus, we observe that the resulting filtering is given by the recurrence relation
\begin{align*}
    \mu_{(t+1)|(t+1)} & = \overbrace{\Gamma_{t+1} \A_{t+1}}^{\text{linear dynamics}}  \mu_{t | t} \\
    & +
    \overbrace{\Gamma_{t+1} b_{t+1} + (\A_{t+1} \Sigma_{t|t} \A_{t+1}^{\top} + I_D) \Big(\A_{t+1} \Sigma_{t|t} \A_{t+1}^{\top} + (\sigma^2 + 1) I_D\Big)^{-1} s_{t+1}^{(i)}}^{\text{bias term}} \\
    \Sigma_{(t+1)|(t+1)} & = \Gamma_{t+1} (\A_{t+1} \Sigma_{t|t} \A_{t+1}^{\top} + I_D).
\end{align*}
If we are given the $\left\{ \Sigma_{t|t} \right\}$, we see that the filtered means (the updates for ELK) come from a linear recurrence relation with linear term $\Gamma_{t+1} \A_{t+1}$.

Finally, by the submultiplicativity of norms and our results above, it follows that
\begin{align*}
    \| \Gamma_{t+1} A_{t+1} \|_2 & \leq \| \Gamma_{t+1} A_{t+1} \|_2 \\
    & \leq \frac{1}{1 + \lambda} \| A_{t+1} \|_2. \tag*{$\square$}
\end{align*}

\section{Experiments and performance of ELK}\label{sec:elk_exps}

Having derived the ELK algorithm and studied its theoretical properties, we now empirically assess its performance in parallelizing dynamical systems at the edge of stability.
We examine two dynamical systems: a sine wave and the Lorenz-96 dynamical system \citep{lorenz1996predictability}. All the experiments in this section were run on a single NVIDIA A100 GPU with 80 GB onboard memory.

\subsection{Edge of stability: Parallelizing a sine wave}

First, we pretrain an RNN to recapitulate a sine wave. For our architecture, we use a GRU with hidden states $\mathbf{h}_t \in \mathbb{R}^3$ and scalar inputs $x_t \in \mathbb{R}$.
However, at every point in the sequence $t,$ we readout the hidden state $h_t \in \mathbb{R}^3$ and use it to parameterize a mean $\mu_{t+1} \in \mathbb{R}$ and a variance $\sigma^2_{t+1} \in \mathbb{R}_+$.
We then sample $x_{t+1}$ according to $x_{t+1} \sim \mathcal{N}(\mu_{t+1}, \sigma_{t+1}^2)$; this \emph{output} $x_{t+1}$ is then fed back in as the \emph{input} to the autoregressive GRU at time step $t+1$ to make the new hidden step $\mathbf{h}_{t+1}$. 
Crucially, when parallelizing this architecture, the Markovian state $\mathbf{s}_t$ must be expanded to include the current sampled output value, as well as the current GRU state.  

We pretrain this GRU using standard sequential evaluation and backpropagation-through-time to produce a noisy sine wave of length 10,000.
We train the GRU on 1024 traces $\mathbf{x}_{1:T}$ generated from a sine wave with amplitude 10 and white noise applied to each time step, and the training objective is to minimize the negative log probability of the $\mathbf{x}_{1:T}$.

We note that such a system is Markovian with state dimension $D = \dim(\mathbf{h}) + \dim(x)$, as together the hidden state $\mathbf{h}_t$ and output $x_{t+1}$ determine the next hidden state $\mathbf{h}_{t+1}$ and output $x_{t+2}$. 
Thus, in the notation of \cref{eq:ssm}, a hidden state $\mathbf{s}_t$ of the Markovian state space model is $\mathbf{s}_t = (x_{t+1}, \mathbf{h}_t)$.
Therefore, we can apply parallel Newton methods to try to find the correct trace $\mathbf{s}^*$ in a parallelized manner instead of autoregressively.

\paragraph{Initialized AR GRU}  We first repeat the analysis in Section~\ref{ssc:gru_bench} for evaluating a randomly initialized autoregressive GRU. 
We see in the top left panel of Figure~\ref{fig:argru} that all four parallel Newton methods converge rapidly and stably to the correct trace, indicated by a low mean absolute discrepancy (MAD) between the true trace and the generated trace.  

\paragraph{Trained AR GRU}  We then study a pre-trained GRU that generates a noisy sine wave (see Figure~\ref{fig:argru}, bottom). 
The linear recurrence relation \eqref{eq:lin_rr} was numerically unstable in DEER and quasi-DEER.  
To remedy these instabilities, we take the approach described earlier of setting the unstable parts of the trace to a fixed value (here zero).
Doing so ensures convergence, but at the cost of ``resetting'' the optimization for large swathes of the trace (Figure~\ref{fig:argru}, bottom) and slowing convergence (see Figure~\ref{fig:argru}, top right).
This finding highlights how the instabilities of DEER --- which are inherited from both pathologies of Newton's method and the parallel recurrence --- can be crippling in even very simple scenarios. 
While resetting allows for convergence, the resulting convergence is very slow.

\begin{figure}[t]
    \centering
    \includegraphics[width=0.95\textwidth]{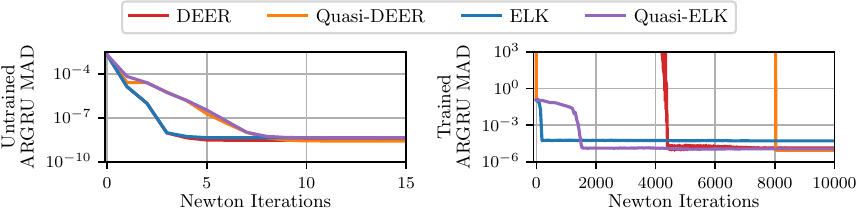}
    \includegraphics[width=0.95\textwidth]{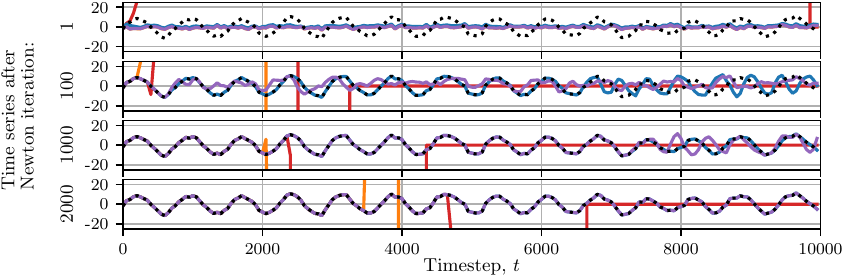}
    \vspace*{-0.0cm}
    \caption{ELK stabilizes parallel evaluation of an AR GRU.
    \textbf{(Top Left)} The mean absolute difference (MAD) evaluated on the outputs converges rapidly for all four methods on a sequence generated by an \emph{untrained} AR GRU.
    \textbf{(Top Right)} The MAD for evaluating a trained AR GRU. Undamped DEER variants are unstable and converge slowly (using the reset heuristic). ELK stabilizes and accelerates convergence.
    \textbf{(Bottom)} The output after 1, 100, 1000, and 2000 Newton iterations. The black dotted line is the true trace.  ELK and quasi-ELK converge rapidly, but DEER and quasi-DEER are unstable.  The lines where DEER and quasi-DEER are zero depict the zeroing heuristic. 
    }
    \label{fig:argru}
\end{figure} 

We then apply ELK and quasi-ELK.
We show the results in the top right and bottom panels of Figure~\ref{fig:argru}.
We select the trust region size with a one-dimensional search over log-spaced values between  $10^0$ and $10^7$.
We see ELK has stabilized convergence, with the evaluation never incurring numerical instabilities or requiring heuristics.
Crucially, by taking more stable steps (and not needing stabilizing heuristics) ELK and quasi-ELK converge faster than DEER and quasi-DEER.
ELK can stabilize and expedite the convergence of DEER, with quasi-ELK faster still (by wall-clock time).

However, when run on an A100 GPU with 80 GB onboard memory, all parallel Newton methods (including DEER) are slower than sequential generation, as shown in \Cref{tab:timing}.
Quasi-ELK is the fastest parallel method, taking 221 milliseconds, compared to sequential evaluation, taking 96 milliseconds.
For comparison, DEER took 1,225 milliseconds. 
Quasi-ELK therefore still represents a large improvement in runtime over previous parallel methods.
\begin{table}[ht]
  \caption{Time to evaluate a length $T=10,000$ trained AR GRU using sequential vs parallelized methods. We note the \texttt{dynamax} package~\citep{linderman2025dynamax} we used for the parallel Kalman filter implementation in ELK is not optimized for speed, and hence these run times could be further improved.}
  \label{tab:timing}
  \centering
  {\small
  \begin{tabular}{lp{2cm}p{2cm}p{2cm}}
    \toprule
    Algorithm & Time per Newton step (ms, mean $\pm$ std) & Newton steps to convergence & Total time to convergence (ms) \\
    \midrule
    \multicolumn{4}{l}{\textbf{Sequential Evaluation}} \\
    \midrule
    Sequential & \multicolumn{1}{r}{N/A} & \multicolumn{1}{r}{N/A} & \multicolumn{1}{r}{$96$} \\
    \midrule
    \multicolumn{4}{l}{\textbf{Parallelized Methods}} \\
    \midrule
    DEER & \multicolumn{1}{r}{$0.282 \pm 0.0005$} & \multicolumn{1}{r}{$4449$} & \multicolumn{1}{r}{$1255$} \\
    Quasi-DEER & \multicolumn{1}{r}{$0.087 \pm 0.0002$} & \multicolumn{1}{r}{$7383$} & \multicolumn{1}{r}{$642$} \\
    ELK & \multicolumn{1}{r}{$3.600 \pm 0.0670$} & \multicolumn{1}{r}{$172$} & \multicolumn{1}{r}{$619$} \\
    Quasi-ELK & \multicolumn{1}{r}{$0.141 \pm 0.0004$} & \multicolumn{1}{r}{$1566$} & \multicolumn{1}{r}{$221$} \\
    \bottomrule
  \end{tabular}
  }
\end{table}

These timing results are illustrative of multiple themes of our paper.
We see that the undamped Newton steps are individually faster because they are carrying out fewer computations. The undamped Newton steps are just computing a linear recurrence relation, while the trust-region methods are computing a filtering pass.
However, because the undamped Newton methods are numerically unstable, they take dramatically more Newton steps to converge.

Similarly, we see that the quasi methods are dramatically faster than their dense counterparts as they replace $\mathcal{O}(D^3)$ matrix-matrix multiplication with $\mathcal{O}(D)$ diagonal matrix multiplication.
The $\mathcal{O}(D^3)$ work required by a parallel scan on a dense linear recurrence likely saturates the GPU). We see in Table~\ref{tab:timing} that individual steps in the dense DEER/ELK are (approximately) a factor of between 3.5 and 30 times slower \emph{per step} than their quasi  (diagonal) variants. However, they take a factor of between 2 and 10 fewer iterations. 

\paragraph{Further details on setting $\lambda$}

\begin{figure}[ht]
    \centering
    \includegraphics[width=\textwidth]{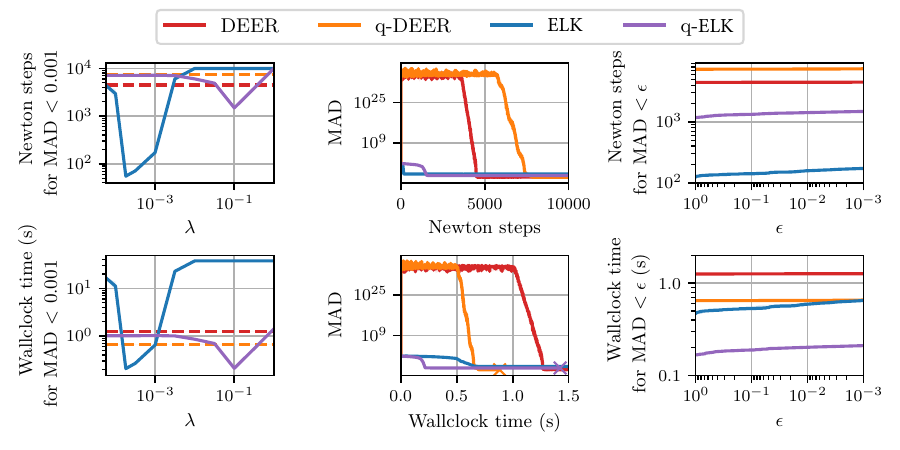}
    \caption{Experiment to show how to set the hyperparameters for (quasi)-ELK on the AR GRU pre-trained to generate a noisy sine wave (Figure~\ref{fig:argru} in the main text). Top row plots Newton steps; bottom row plots wall-clock time.  Lower is better for all plots.  (\textbf{Left})  median steps/time to convergence over $\lambda$ over $15$ sequences.
    Quartiles are shaded but are very small.
    DEER methods are independent of $\lambda$.  (\textbf{Center}) Updated version of Figure~\ref{fig:argru} instead plotting MAD as a function of wall-clock time.  (\textbf{Right}) Time to convergence is robust as a function of convergence threshold $\epsilon$.  Median and quartiles across 15 sequences are shown. DEER methods are nearly constant at the thresholds considered (very slight positive slope).  Note we plot for \emph{increasing} $\lambda$ corresponding to a smaller trust region, and \emph{reducing} $\epsilon$ corresponding to a tighter convergence threshold.
    }
    \label{fig:supp1}
\end{figure}

We provide more details on how to set the hyperparameters for ELK in  \Cref{fig:supp1}.
We sweep over the hyperparameter for 15 different input sequences, and plot the median and quartiles of the cost to convergence in terms of Newton iterates and runtime (left column of Figure~\ref{fig:supp1}).
We see a U-shaped curve: large $\lambda$ takes needlessly small steps, slowing progress; small $\lambda$ results in many resets, slowing convergence. Crucially, we see there is little variance across individual sequences. These results show that there is a well-behaved  dependence that can be optimized on a validation set with a simple 1-d grid search.

We also chart the approximation error against cost for the AR GRU (center and right column of Figure~\ref{fig:supp1}). We see that the approximation error reduces in fewer Newton steps with full DEER as opposed to quasi-DEER, but, crucially, the wall-clock time (the more important of the two metrics) is notably lower across all accuracies for quasi-DEER. This indicates that our more efficient – but approximate – quasi-DEER is broadly preferable to the more expensive – but exact – DEER updates. Furthermore, the stabilized ELK and quasi-ELK are better still. We also show the steps/time to convergence for a range of accuracy thresholds, and see that our methods outperform DEER across the full range of thresholds and metrics.

\subsection{Chaotic system: Parallelizing the Lorenz96 System}

\begin{figure}[ht]
    \centering
    \includegraphics[width=\textwidth]{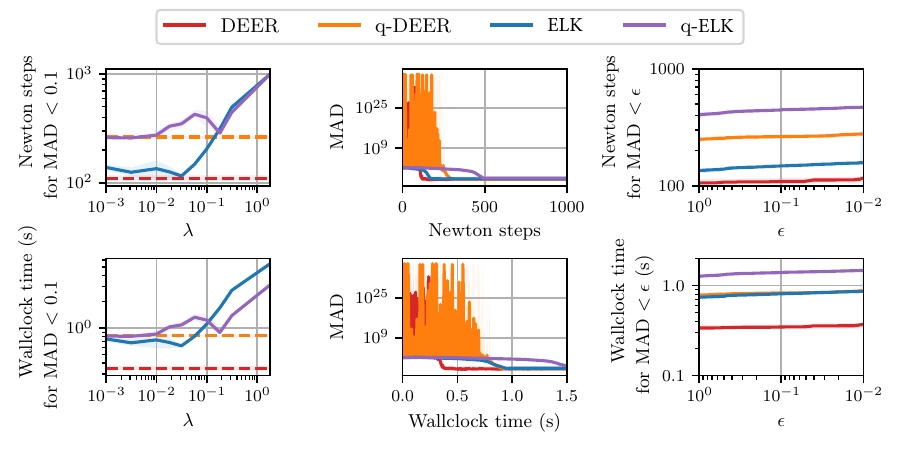}
    \includegraphics[width=\textwidth]{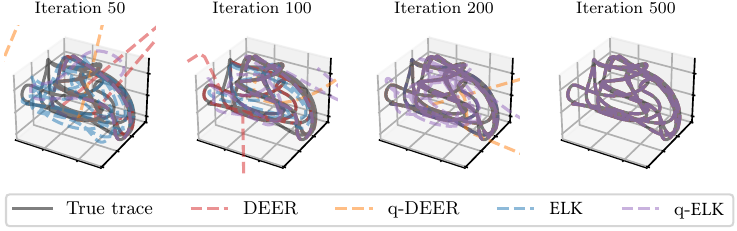}
    \caption{Evaluating the Lorenz96 system in parallel. \textbf{(Top two rows)}: Same format as Figure~\ref{fig:supp1}. \textbf{(Bottom row)}: Plot of Lorenz96 trajectory during optimization.  DEER methods are noticeably more unstable than ELK methods.
    }
    \label{fig:supp2}
\end{figure}

Having investigated the parallel Newton methods on the edge of stability (a sinusoidal oscillation), we now investigate their performance on a chaotic system. We tackle the parallel evaluation of the classic non-linear 5-dimensional Lorenz-96 system, with $F=8$ which results in chaotic dynamics. We seek to evaluate this system (for $T=1000$ timesteps) using (quasi)-DEER and (quasi)-ELK.
We directly use the Lorenz-96 dynamics as our nonlinear dynamics function $f$, i.e. the architecture/time evolution \emph{is} the Lorenz-96 ODE system, evaluated using the Dormand-Prince solver \citep{dormand1980family}. The state is the five-dimensional Lorenz system state. The input is therefore the initial condition of the ODE; and the outputs are the $T \times 5$ subsequent system states. Of course, ODE solvers are also examples of SSMs (see \Cref{tab:ssm_examples}).

We demonstrate that all the parallelized methods converge to the correct trace, but that (quasi)-ELK is dramatically more stable at intermediate Newton iterations prior to convergence.
We see that DEER and ELK methods converge in a comparable number of steps (this makes sense as DEER is a special case of ELK for $\lambda \to 0$). DEER is faster (in terms of wall-clock time) because of the extra work done per ELK iteration. However, ELK has stabilized convergence, whereas DEER relies heavily on resetting. Interestingly we see that quasi is slower by all metrics, suggesting that the chaotic dynamics may require the more accurate updates. Quasi methods can be implemented to consume notably lower memory, however, and so may be preferable in certain circumstances.

In Figure~\ref{fig:supp2}, we report mean absolute deviation (MAD) of the time series at Newton iteration $(i)$ against the true state sequence. “Iteration” then refers to the number of Newton iterations, i.e. the number of updates applied to the entire state sequence. We set hyperparameters using 10 different evaluations of the Lorenz96 (i.e. starting from 10 different initial points).

\section{Further extensions: scale- and clip-ELK}

Since running the experiments for ELK published in our NeurIPS 2024 paper \citep{gonzalez2024scalable}, we developed simpler and more lightweight damping techniques to achieve many of the stabilization benefits of ELK. \citet{pmcmc} often uses these damping techniques to parallelize MCMC chains. We discuss two of these extensions, \emph{scale-ELK} and \emph{clip-ELK}, below.

\subsection{Scale-ELK}

Motivated by our demonstration in \Cref{sec:elk_dyn} which shows that ELK reduces the spectral norms of the Jacobian matrices in the transition dynamics, we recommend a more lightweight version of ELK which we call \emph{scale-ELK}.

Scale-ELK uses a hyperparameter $k \in [0,1]$ (as opposed to $\lambda \in [0, \infty)$ used by ELK). Scale-ELK uses a linear dynamical system just like DEER, with the dynamics defined as
\begin{align*}
    \A_t & = (1-k) \dfrac{ \partial f_{t}}{ \partial s_{t-1}}(s_{t-1}^{(i)}) \\
    b_t & = f_{t}(s_{t-1}^{(i)}) - (1-k) \dfrac{ \partial f_{t}}{ \partial s_{t-1}}(s_{t-1}^{(i)}) s_{t-1}^{(i)}.
\end{align*}
Thus, setting $k=0$ recovers DEER, while setting $k=1$ recovers a (computationally expensive form of) sequential evaluation.
Ideally, $k$ is chosen to keep the spectral norms of $\{ A_t\}_{t=1}^T$ below 1.
Note that $k_t$ can also be chosen on a timestep dependent basis.
By \Cref{prop:global_convergence}, scale-ELK also enjoys global convergence.

Scale-ELK enjoys two primary benefits over ELK. First, an evaluation of scale-ELK uses fewer FLOPs than ELK, as scale-ELK is just parallelizing an LDS while ELK uses a parallelized Kalman filter. Second, the Kalman filter involves inverses which run the risk of introducing numerical instability, while scale-ELK avoids these complications.

\subsection{Clip-ELK}

However, scale-ELK still has a hyperparameter $k$. Although this hyperparameter can be set using techniques as shown in \Cref{fig:supp1}, it would be desirable to have a hyperparameter-free method.

Therefore, we propose \emph{clip-ELK}, which is a hyperparameter free approach to achieve the same goal of a stable LDS. Clip-ELK applies to the "quasi" diagonal approximation only, and simply clips each element of $A_t$ (which in this setting is a diagonal matrix) to be between $[-1,1]$. Clip-ELK also converges globally by \Cref{prop:global_convergence}. Moreover, by design, it ensures that each iteration of clip-ELK is a stable LDS.
Clipping can also be done to some hyperparameter, e.g. $[-\underline{\rho}, \bar{\rho}]$, for $\underline{\rho}, \bar{\rho} \leq 1$.

\section{Conclusion}

ELK presents a beautiful connection between dynamics---Kalman filtering and smoothing---and optimization---the Levenberg-Marquardt, or trust-region methods---to parallelize dynamical systems in a stable way. In the experiments we provide in \Cref{sec:elk_exps}, we show that the intermediate ELK iterates are much more stable than the DEER iterates. Interestingly, at early (around 100) iterations, even though ELK has not recovered the exact trace $\mathbf{s}_{1:T}^{\star}$, \Cref{fig:argru,fig:supp2} show that it qualitatively appears to have the right "manifold" of the dynamics. For this reason, ELK could prove very useful in the "early stopping" of these parallel Newton methods in the context of parallelizing MCMC chains, or in general when the desired output of a procedure is a distribution instead of an exact trajectory.

Nevertheless, as shown in \Cref{tab:timing}, even at the edge of stability, all of the parallel Newton methods struggle to achieve parity with---let alone beat---the speed of sequential evaluation for obtaining an exact trajectory.
These difficulties raise the important question: are there certain dynamical systems that cannot be parallelized efficiently? 
We answer this question in the next part of this thesis, which provides a thorough account of the convergence rates of these parallel Newton methods.
\ctparttext{The third part of this thesis presents its theoretical contributions. We present the first detailed analysis of the convergence rates of these parallel Newton methods. In particular, we show how the predictability of the dynamics is the primary determining factor of the convergence rate of the method. Furthermore, we show how a wide-range of fixed-point methods in use for parallelizing sequential computation can be unified in the quasi-DEER framework. We show how the quality of the quasi-DEER approximation in this framework affects the convergence rates of different fixed-point methods in different problems.
\par
\begin{minipage}{\linewidth}
    \centering
    \captionsetup{hypcap=false}
    \includegraphics[width=\linewidth]{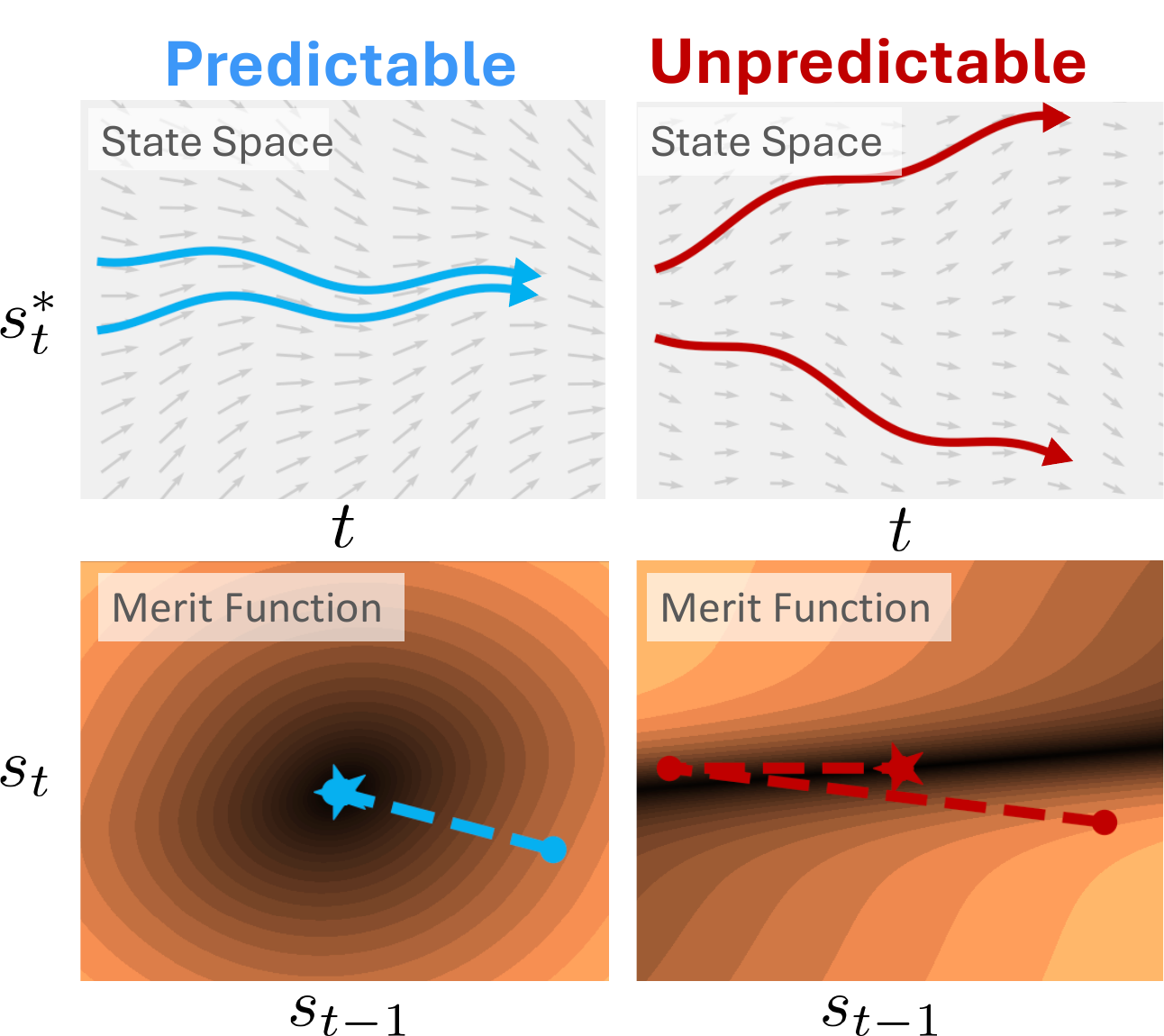}
    \captionof{figure}{\textbf{Predictability enables parallelization.} Predictable dynamics yield well-conditioned merit functions, enabling rapid convergence.
Unpredictable dynamics produce flat or ill-conditioned merit landscapes, resulting in slow convergence or numerical failure.}
    \label{fig:pred}
\end{minipage}
}
\part{Theory: Convergence Rates}\label{part:theory}
%************************************************
\chapter{Convergence Rates of Gauss-Newton for Parallelizing nonlinear SSMs}\label{ch:predictability}
%************************************************

The previous chapters developed practical algorithms for parallelizing nonlinear SSMs. A natural question arises: \emph{which systems admit efficient parallelization?} 

This chapter establishes a fundamental connection between the \emph{dynamics} of a system and the difficulty of the resulting \emph{optimization} problem (of the merit function defined in \cref{eq:merit}).
Our central result is that \textbf{predictability enables parallelization}: systems whose future states can be reliably predicted from past states admit efficient parallel evaluation, while chaotic systems do not.

In particular, we establish a precise relationship between a system's dynamics and the conditioning of its corresponding optimization problem, as measured by its Polyak-Łojasiewicz (PL) constant. We show that the predictability of a system, defined as the degree to which small perturbations in state influence future behavior and quantified by the largest Lyapunov exponent (LLE), impacts the number of optimization steps required for evaluation.
    
For predictable systems, the state trajectory can be computed in at worst $\mathcal{O}((\log T)^2)$ time, where $T$ is the sequence length: a major improvement over the conventional sequential approach. One factor of $\log(T)$ comes from the computational cost of each Gauss-Newton step, which uses a parallel scan. The other factor of $\log(T)$ comes from the number of Gauss-Newton steps needed to converge, which yields the interpretation that a predictable nonlinear SSM can be thought of as a stack of $\mathcal{O}(\log T)$ LDSs.
In contrast, chaotic or unpredictable systems exhibit poor conditioning,
with the consequence that parallel evaluation converges too slowly to be useful. 

Importantly, our theoretical analysis shows that predictable systems always yield well-conditioned optimization problems, whereas unpredictable systems lead to severe conditioning degradation. We validate our claims through extensive experiments, providing practical guidance on when nonlinear dynamical systems can be efficiently parallelized. We highlight predictability as a key design principle for parallelizable models.

\section{Predictability and the Largest Lyapunov Exponent}\label{sec:lle}

Predictability is usually defined through its antonym: \textit{un}predictability \citep{lighthill1986recently,strogatz2018nonlinear}.

\textbf{Unpredictable systems} are dynamical systems
whose future behavior is highly sensitive to small perturbations.
The system's intrinsic sensitivity amplifies small perturbations and leads to massive divergence of trajectories.
A common example is a chaotic system, like the weather: a butterfly flapping its wings in Tokyo today can lead to a thunderstorm in Manhattan next month \citep{lighthill1986recently,strogatz2018nonlinear}.
 Given a snapshot of the current atmospheric state, weather models can provide accurate forecasts over short time horizons—typically a few days. However, predictions degrade rapidly beyond that, as the system’s intrinsic sensitivity amplifies small uncertainties in the initial snapshot \citep{lorenz1963deterministic}.
 
By contrast, \textbf{predictable systems} \citep{thompson1957uncertainty, lorenz1996predictability} are those in which small perturbations are forgotten. Small perturbations are diminished over time, rather than amplified. A familiar example is aviation: a patch of choppy air rarely makes an airplane land at the wrong airport.

The notion of (un)predictability can be formalized through various routes such as chaos theory \citep{gleick2008chaos,schuster2006deterministic} and contraction analysis \citep{lohmiller1998contraction,FB-CTDS}.
We provide a definition of predictability in terms of the Largest Lyapunov Exponent (LLE)
\citep{pikovsky2016lyapunov,strogatz2018nonlinear}:

\begin{mdframed}[linewidth=1pt]
\begin{definition}[\textbf{Predictability and Unpredictability}]\label{def:predictable_systems}
Consider a sequence of Jacobians $\A_1, \A_2, \cdots, \A_T$. We define the associated Largest Lyapunov Exponent (LLE) to be
\begin{equation}\label{eq:LLE_definition}
\text{LLE} \ := \ \lim_{T \to \infty} \frac{1}{T} \log\left( \left\| \A_T \A_{T-1} \cdots \A_1 \right\| \right) \ = \ \lambda,
\end{equation}
where $\| \cdot \|$ is an induced operator norm. If $\lambda < 0$, we say that the nonlinear state space model is \emph{predictable} at $s_0$. Otherwise, we say it is \emph{unpredictable}.
\end{definition}
\end{mdframed}
Suppose we wish to evaluate a nonlinear SSM \eqref{eq:ssm} from an initial condition \( s_0 \), but we only have access to an approximate measurement \( s_0' \) that differs slightly from the true initial state. If the system is unpredictable ($\lambda > 0$), then the distance between nearby trajectories grows as
\begin{equation}\label{eq:chaotic_seperation}
 \|s_t - s_t'\| \ \sim \ e^{\lambda t} \|s_0 - s_0'\|.     
\end{equation}
Letting \( \Delta \) denote the maximum acceptable deviation beyond which we consider the prediction to have failed, the time horizon over which the prediction remains reliable scales as  
\begin{equation}\label{eq:chaotic_time_horizon}
\text{Time to degrade to $\Delta$ prediction error} \ \sim \ \frac{1}{\lambda} \log\left( \frac{\Delta}{\|s_0 - s_0'\|} \right).
\end{equation}  
This relationship highlights a key limitation in unpredictable systems: even significant improvements in the accuracy of the initial state estimate yield only logarithmic gains in prediction time. The system's inherent sensitivity to initial conditions overwhelms any such improvements. Predictable systems, such as contracting systems, have the opposite property: trajectories initially separated by some distance will eventually converge towards one another (\Cref{fig:pred}), \textit{improving} prediction accuracy over time.

The sign of $\lambda$ determines the system's qualitative behavior:
\begin{itemize}
    \item $\lambda < 0$ (\textbf{predictable}): Perturbations decay exponentially. Small errors in initial conditions have diminishing effects on future states. Examples include stable linear systems and contractive nonlinear maps.

    \item $\lambda = 0$ (\textbf{marginal}): Perturbations neither grow nor decay on average. This is the boundary between predictable and chaotic dynamics.

    \item $\lambda > 0$ (\textbf{chaotic}): Perturbations grow exponentially. The system exhibits sensitive dependence on initial conditions---the hallmark of chaos. Small errors rapidly amplify, making long-term prediction impossible.
\end{itemize}

We will show that the predictability of the dynamics directly governs the conditioning of the corresponding merit function 
\begin{equation}\label{eq:merit_ssm}
    \mathcal{L}(\mathbf{s}_{1:T}) := \frac{1}{2} \| \mathbf{r}(\mathbf{s}_{1:T}) \|_2^2.
\end{equation}
To show this rigorously, in the next section we introduce the Polyak-Łojasiewicz (PL) constant $\mu$ to quantify the conditioning (flatness) of $\mathcal{L}$. 

\section{Polyak-Łojasiewicz and Merit Landscape Conditioning}\label{subsec:Merit_PL}

\citet{ChewiStromme2025BallisticPL_AIHP} state that
\begin{quote}
    The Polyak-Łojasiewicz (PL) condition forms the cornerstone of modern non-convex optimization.
\end{quote}
Also known as gradient dominance, the PL condition \citep{polyak1963gradient, NesterovPolyak2006, karimi2016linear, fazel2018global} is simple:
A function $\mathcal{L} (\mathbf{s})$ is $\mu$-PL if it satisfies, for $\mu > 0$, 
\begin{equation}\label{eq:PL}
\frac{1}{2} ||\nabla \mathcal{L} (\mathbf{s})||^2 \ \geq \ \mu \, \left(\mathcal{L}(\mathbf{s}) - \mathcal{L}(\mathbf{s}^{\star}) \right)  
\end{equation}
for all $\mathbf{s}$. The largest $\mu$ for which \cref{eq:PL} holds for all $\mathbf{s}$ is called the PL constant of $\mathcal{L}(\mathbf{s}).$

In general, it can be difficult to use the PL condition if the minimum $\mathcal{L}(\mathbf{s}^{\star})$ is not known in advance. However, $\mathcal{L}(\mathbf{s}^{\star}) = 0$ in all applications of parallel Newton methods in this thesis, allowing for further simplification of \cref{eq:PL}.
PL is a form of gradient dominance because \cref{eq:PL} requires that if we are away from the true minimum (i.e. $\mathcal{L}(\mathbf{s}) - \mathcal{L}(\mathbf{s}^{\star})$ is large), then the gradient must be large as well. Therefore, the PL constant $\mu$ can be thought of as a measure of the "flatness" of the merit function: as $\mu \to 0$, the magnitude of the gradient becomes smaller and smaller as the merit function landscape becomes flatter and flatter, as shown in \Cref{fig:PL_flat}.
\begin{figure}
    \centering
    \includegraphics[width=0.95\linewidth]{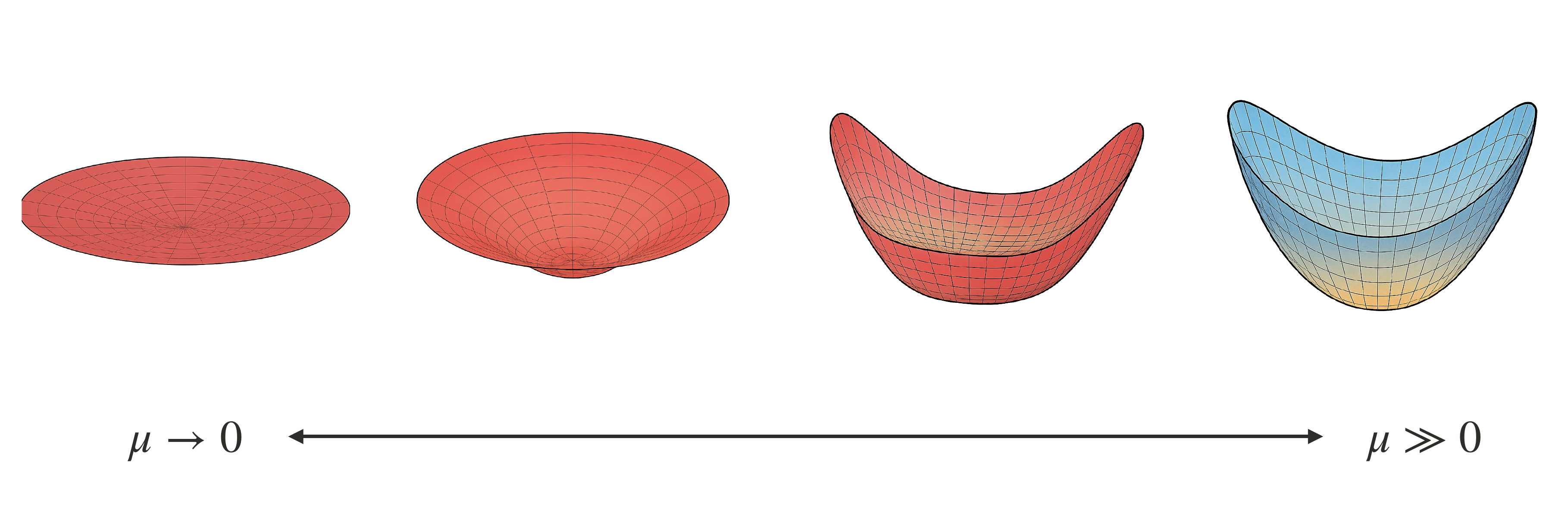}
    \caption{\textbf{PL constant $\mu$ captures the flatness of the merit function landscape.} We provide a schematic illustrating how a smaller PL constant $\mu$ results in flatter merit function landscapes.}
    \label{fig:PL_flat}
\end{figure}

All of the intuition and results about PL functions applies to parallel Newton methods because the merit function defined in \cref{eq:merit} satisfies \cref{eq:PL}. In fact, this result is known in the literature for general sum-of-squares functions \citep{NesterovPolyak2006}:
\begin{proposition}\label{prop:DEER_is_PL}
The merit function $\mathcal{L}(\mathbf{s})$  defined in \cref{eq:merit} satisfies \cref{eq:PL} for
\begin{equation}\label{eq:mu_def}
    \mu := \inf_{\mathbf{s}} \sigma_{\mathrm{min}}^2(\mathbf{J}(\mathbf{s})).
\end{equation}
\end{proposition}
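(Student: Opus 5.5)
The plan is a direct computation using the gradient formula from \Cref{ssc:opt}, namely $\nabla \mathcal{L}(\mathbf{s}) = \mathbf{J}(\mathbf{s})^{\top} \mathbf{r}(\mathbf{s})$, together with the fact that $\mathcal{L}(\mathbf{s}^{\star}) = 0$ for the SSM residual (\cref{eq:resid_to_zero}). With these two facts, the PL inequality \cref{eq:PL} reduces to
\begin{equation*}
    \frac{1}{2} \left\| \mathbf{J}(\mathbf{s})^{\top} \mathbf{r}(\mathbf{s}) \right\|_2^2 \geq \mu \cdot \frac{1}{2} \left\| \mathbf{r}(\mathbf{s}) \right\|_2^2 ,
\end{equation*}
so the only work is a pointwise lower bound on $\| \mathbf{J}^{\top} \mathbf{r} \|_2$ in terms of $\| \mathbf{r} \|_2$.

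For that bound I will use the variational characterization of the smallest singular value. For a square matrix $M$ one has $\| M^{\top} v \|_2 \geq \sigma_{\min}(M^{\top}) \| v \|_2$, and $\sigma_{\min}(M^{\top}) = \sigma_{\min}(M)$ since $M$ and $M^{\top}$ share singular values. Here $\mathbf{J}(\mathbf{s}) \in \mathbb{R}^{TD \times TD}$ is square, so
\begin{equation*}
    \| \mathbf{J}(\mathbf{s})^{\top} \mathbf{r}(\mathbf{s}) \|_2^2 \geq \sigma_{\min}^2(\mathbf{J}(\mathbf{s})) \, \| \mathbf{r}(\mathbf{s}) \|_2^2 \geq \mu \, \| \mathbf{r}(\mathbf{s}) \|_2^2 ,
\end{equation*}
where the last step uses the definition of $\mu$ in \cref{eq:mu_def} as an infimum over all $\mathbf{s}$. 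Halving both sides gives \cref{eq:PL} for every $\mathbf{s}$.

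The main subtlety is not the inequality but the nondegeneracy of $\mu$: \cref{eq:PL} is stated for $\mu > 0$. If the infimum in \cref{eq:mu_def} is zero, the inequality still holds but carries no information. I will note that each $\mathbf{J}(\mathbf{s})$ is invertible, being unit block lower bidiagonal as in \cref{eq:big_j}, so $\sigma_{\min}(\mathbf{J}(\mathbf{s})) > 0$ pointwise. A strictly positive infimum additionally needs a uniform bound on $\| \mathbf{J}^{-1} \|_2 = 1/\sigma_{\min}(\mathbf{J})$, which follows if the dynamics Jacobians $\A_t$ are uniformly bounded, using the explicit form \cref{eq:J_inv}. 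I will state this as the condition under which the proposition is meaningful, and defer the quantitative dependence of $\mu$ on the dynamics to the LLE analysis that follows.
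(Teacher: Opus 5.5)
Your proof is correct and follows the paper's argument. Like the paper, you substitute $\nabla\mathcal{L}(\mathbf{s}) = \mathbf{J}(\mathbf{s})^{\top}\mathbf{r}(\mathbf{s})$ and $\mathcal{L}(\mathbf{s}^{\star})=0$, then bound $\mathbf{r}^{\top}\mathbf{J}\mathbf{J}^{\top}\mathbf{r} \geq \lambda_{\min}(\mathbf{J}\mathbf{J}^{\top})\|\mathbf{r}\|_2^2 = \sigma_{\min}^2(\mathbf{J})\|\mathbf{r}\|_2^2$ and take the infimum over $\mathbf{s}$. Your extra remark is also correct: a strictly positive infimum needs a uniform bound on the $A_t$, not just the pointwise invertibility that the paper's ``if $\mathbf{J}$ is full rank'' invokes.
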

\begin{proof}
Observe that 
\[ \nabla \mathcal{L}(\mathbf{s}) = \J(\mathbf{s})^\top \mathbf{r}(\mathbf{s}) \quad \text{and} \quad \mathcal{L}(\mathbf{s}^*) = 0.\]
Substituting these expressions into the PL inequality in \cref{eq:PL} we obtain
\[ \mathbf{r}^\top \, \J(\mathbf{s}) \, \J(\mathbf{s})^\top \, \mathbf{r} \ \geq \ \mu \, \mathbf{r}^\top \mathbf{r}.\]
Therefore, if $\J$ is full rank, then the merit function $\mathcal{L}$ is $\mu$-PL, where
\begin{align*}
    \mu & = \inf_{\mathbf{s}} \lambda_{\min} \left(\mathbf{J}(\mathbf{s}) \mathbf{J}(\mathbf{s})^{\top} \right) \\ 
    & = \inf_{\mathbf{s} } \sigma^2_{\min} \left(\mathbf{J}(\mathbf{s}) \right) \\ 
\end{align*}
\end{proof}
Consequently, the merit function in \cref{eq:merit_ssm} that is minimized by parallel Newton methods satisfies a number of desirable properties. For example, the merit function is \emph{invex}, meaning that all stationary points are global minima. In other words, no optimizer of the merit function in \cref{eq:merit_ssm} can be stuck in a local minimum or saddle point, because there are none: there is only the global minimizer $\mathbf{s}^{\star}$. The PL condition implies invexity, but we can also see the invexity of $\mathcal{L}(\mathbf{s}_{1:T})$ more directly: its gradient is $\nabla \mathcal{L}(\mathbf{s}) = \J(\mathbf{s})^\top \mathbf{r}(\mathbf{s})$, and $\J$ (defined in \cref{eq:big_j}) is always invertible. Therefore, the gradient can only be zero (a stationary point) when the residual is also zero, which occurs only at the true sequential rollout $\mathbf{s}_{1:T}^{\star}$.

Another reason why the PL condition is so important is that it is morally designed to be equivalent to linear rate for gradient descent. To provide this intuition, consider \emph{gradient flow} on a loss function $\mathcal{L}(\mathbf{s})$, i.e. the time evolution of $\mathcal{L}$ subject to $\mathbf{s}$ evolving according to
\begin{equation*}
    \dot{\mathbf{s}} = -\nabla \mathcal{L}(\mathbf{s}).
\end{equation*}
Then, if $\mathcal{L}$ is $\mu$-PL with $\mathcal{L}(\mathbf{s}^{\star}) = 0$, then
\begin{align*}
    \dot{\mathcal{L}} & = \nabla \mathcal{L} \cdot \dot{\mathbf{s}} && \text{chain rule} \\
    & = - \| \nabla \mathcal{L} \|^2 && \text{def. of grad. flow} \\
    & \leq -2 \mu \mathcal{L} && \text{PL condition}
\end{align*}
Therefore, it follows that $\mathcal{L}(t) \leq \mathcal{L}(0) \exp(-2 \mu t)$, which is linear rate for a continuous time system (i.e., the loss decays exponentially with the number of steps taken).
Note that the size of $\mu$ determines the precise convergence rate, with smaller $\mu$ (flatter landscapes) converging more slowly.
Converting this argument from gradient flow (continuous time) to gradient descent (discrete steps) is done in Theorem 1 of \citet{karimi2016linear} and requires only an additional Lipschitzness assumption to account for the discrete step sizes.
And, of course, by working backwards from the key desideratum of linear rate---i.e. that $\mathcal{L}(t) \leq \mathcal{L}(0) \exp(\gamma t)$ for some $\gamma$---we can also derive the PL condition.
Therefore, by showing that the merit function minimized by parallel Newton methods is PL, we show that it morally should achieve linear rate with gradient descent---albeit with a rate controlled by the flatness of the landscape $\mu$.

Having introduced the key ingredients---dynamical predictability as quantified by the LLE, and merit function conditioning as quantified by the PL constant---we now combine them in the next section to show how dynamical properties impact properties of $\J$ and $\mathcal{L}$.

\section{Conditioning depends on dynamical properties}\label{sec:cond}

In this section, we provide two results showing how the key quantities of the parallel Newton problem---chiefly the Jacobian $\J := \nicefrac{\partial \mathbf{r}}{\mathbf{s}_{1:T}}$ and the merit function $\mathcal{L}$---are determined by properties of the underlying dynamical system.
In particular, in \Cref{theorem:PL-LLE}, we show that the conditioning of $\J$ and $\mathcal{L}$ are determined by the predictability of the dynamics, while in \Cref{theorem:Lipschitz} we show that the Lipschitzness of $\J$ is also controlled by the Lipschitzness of the dynamical Jacobians $\{ \A_t \}$. These two results facilitate the proof and interpretation of convergence rates for various parallel Newton methods.

\subsection{Merit Function PL Constant is Controlled by the Largest Lyapunov Exponent of Dynamics}\label{ssc:pl_from_lyapunov}
As stated earlier, the Largest Lyapunov Exponent is a commonly used way to define the (un)predictability of a nonlinear state space model. In order to proceed, we need to control more carefully how the product of Jacobian matrices in \eqref{eq:LLE_definition} behaves for finite-time products. We will assume that there exists a "burn-in" period where the norm of Jacobian products can transiently differ from the LLE. In particular, we assume that 
\begin{equation}\label{eq:LLE_regularity}
\forall t > 1, \ \forall k \geq 0, \ \forall \mathbf{s}, \qquad b \ e^{\lambda k } \le \ \left\| \A_{t+k -1} \A_{t+k-2} \cdots \A_t \right\| \ \le \ \ a \ e^{\lambda k },  
\end{equation}
where $a \geq 1$ and $b \leq 1$. The constant $a$ quantifies the potential for transient growth—or overshoot—in the norm of Jacobian products before their long-term behavior emerges, while $b$ quantifies the potential for undershoot. 

\begin{theorem}\label{theorem:PL-LLE}
Assume that the LLE regularity condition \eqref{eq:LLE_regularity} holds. Then the PL constant $\mu$ satisfies
\begin{equation}\label{eq:merit_jacobian_inverse_LLE}
     \dfrac{1}{a} \cdot \dfrac{e^{\lambda} - 1}{e^{\lambda T} - 1} \ \leq \ \sqrt{\mu} \ \leq \ \min\left(\frac{1}{b} \cdot \frac{1}{e^{\lambda (T-1)}}, 1\right).
\end{equation}
\end{theorem}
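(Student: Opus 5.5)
By \Cref{prop:DEER_is_PL}, $\sqrt{\mu} = \inf_{\mathbf{s}} \sigma_{\min}(\J(\mathbf{s})) = \inf_{\mathbf{s}} 1/\|\J(\mathbf{s})^{-1}\|_2$. The plan is therefore to bound $\|\J^{-1}\|$ above and below, uniformly in $\mathbf{s}$, using the explicit lower-triangular block form of $\J^{-1}$ in \cref{eq:J_inv}. The $(t,s)$ block of $\J^{-1}$ is the Jacobian product $\A_t \A_{t-1}\cdots \A_{s+1}$, a product of $k = t-s$ factors, with the identity on the diagonal. The regularity assumption \eqref{eq:LLE_regularity} then gives $b e^{\lambda k} \le \|\text{block}_{t,s}\| \le a e^{\lambda k}$.

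\textbf{Lower bound on $\sqrt{\mu}$.} Write $\J^{-1} = \sum_{k=0}^{T-1} \mathbf{N}_k$, where $\mathbf{N}_k$ is the $k$-th block subdiagonal of $\J^{-1}$. Each $\mathbf{N}_k$ has at most one nonzero block in each block row and each block column. Hence $\|\mathbf{N}_k\|_2 = \max_t \|\A_{t}\cdots\A_{t-k+1}\| \le a e^{\lambda k}$, where $\mathbf{N}_0 = I$ is also covered because $a \ge 1$. The triangle inequality then gives
\[
\|\J^{-1}\| \le a \sum_{k=0}^{T-1} e^{\lambda k} = a\,\frac{e^{\lambda T}-1}{e^{\lambda}-1}.
\]
Since this holds for all $\mathbf{s}$, inverting gives the left inequality of \eqref{eq:merit_jacobian_inverse_LLE}. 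The case $\lambda = 0$ is read as the limit $1/(aT)$. The bound is stated for the spectral norm, but the block-permutation argument works for any induced norm that is compatible with block structure; I will note this choice explicitly.

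\textbf{Upper bound on $\sqrt{\mu}$.} Here it suffices to exhibit one $\mathbf{s}$ and one test vector. The bound $\sqrt{\mu} \le 1$ follows from $\|\J^{-1}\| \ge \|\mathbf{N}_0\text{-block}\| = 1$: apply $\J^{-1}$ to a vector supported on the last block, which is returned unchanged in the last block. Equivalently, $\sigma_{\min}(\J) \le \|\J e\|$ for a unit vector $e$ in the last block, which gives $1$.

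For the other term, take the bottom-left block $\A_T\cdots\A_2$, which is a product of $T-1$ factors. The operator norm of a matrix dominates the norm of any of its blocks, so
\[
\|\J^{-1}\| \ge \|\A_T\cdots\A_2\| \ge b e^{\lambda(T-1)}.
\]
This yields $\sqrt{\mu} \le 1/(b e^{\lambda(T-1)})$. Taking the minimum of the two gives the right inequality.

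The main subtlety is the indexing and the norm bookkeeping. Specifically, \eqref{eq:LLE_regularity} must apply to the particular products appearing in $\J^{-1}$, namely those starting at index $t \ge 2$ and running up to $T$, and the subdiagonal-decomposition bound $\|\mathbf{N}_k\| = \max$ block norm must be justified for the chosen norm. I expect the subdiagonal bound to be the step needing the most care. I would prove it for the $2$-norm via $\|\mathbf{N}_k x\|^2 = \sum_t \|B_t x_{\pi(t)}\|^2$ with $\pi$ injective.
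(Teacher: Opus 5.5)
Your proposal is correct and follows essentially the same route as the paper. You write $\mathbf{J}^{-1}=\sum_{k=0}^{T-1}\mathbf{N}^k$ (your $\mathbf{N}_k$ is the paper's $\mathbf{N}^k$), bound each term by $a e^{\lambda k}$ and sum the geometric series to get the lower bound on $\sqrt{\mu}$, then extract the bottom-left block $\A_T\cdots\A_2$ and an identity block of $\mathbf{J}^{-1}$ to get $\|\mathbf{J}^{-1}\|_2\ge\max\bigl(b e^{\lambda(T-1)},1\bigr)$ for the upper bound; your block-permutation argument for $\|\mathbf{N}^k\|_2\le a e^{\lambda k}$ also supplies the step the paper dismisses as ``straightforward linear algebra.''
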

\begin{proof}
See \Cref{appendix:pl_from_lyapunov_appendix} for the full proof and discussion. We provide a brief sketch.
Because $\sigma_{\mathrm{min}}(\mathbf{J}) = \nicefrac{1}{\sigma_{\mathrm{max}}(\mathbf{J}^{-1})}$, it suffices to control $\| \mathbf{J}^{-1} \|_2$. We can write $\mathbf{J} = \mathbf{I} - \mathbf{N}$ where $\mathbf{N}$ is a nilpotent matrix. 
Thus, it follows that $\mathbf{J}^{-1} = \sum_{k=0}^{T-1} \mathbf{N}^k$. As we discuss further in \Cref{appendix:pl_from_lyapunov_appendix}, the matrix powers $\mathbf{N}^k$ are intimately related to the dynamics of the system. The upper bound on $\|\mathbf{J}^{-1}\|_2$ follows after applying the triangle inequality and the formula for a geometric sum. The lower bound follows from considering $\|\mathbf{N}^{T-1}\|_2$.
\end{proof}
\vspace{-1.5mm}
Theorem \ref{theorem:PL-LLE} is the main result of this chapter, offering a novel connection between the predictability $\lambda$ of a nonlinear state space model and the conditioning $\mu$ of the corresponding merit function, which affects whether the system can be effectively parallelized.
If the underlying dynamics are unpredictable ($\lambda > 0$), then the merit function quickly becomes poorly conditioned with increasing $T$, because the denominators of both the lower and upper bounds explode due to the exponentially growing factor. Predictable dynamics $\lambda < 0$ lead to good conditioning of the optimization problem, and parallel methods based on merit function minimization can be expected to perform well in these cases. 
Indeed, when $\lambda < 0$, the conditioning of the merit function becomes asymptotically \textit{independent} of the sequence length $T$, due to the exponentially shrinking factor.

The proof mechanism we have sketched upper and lower bounds $\| \mathbf{J}^{-1} \|_2$ in terms of norms of Jacobian products. We only use the assumption in \cref{eq:LLE_regularity} to express those bounds in terms of $\lambda$. As we discuss at length in \Cref{appendix:pl_from_lyapunov_appendix}, we can use different assumptions from \cref{eq:LLE_regularity} to get similar results. \Cref{theorem:PL-LLE} and its proof should be thought of as a framework, where different assumptions (which may be more or less relevant in different settings) can be plugged in to yield specific results.

\paragraph{Why Unpredictable Systems have Excessively Flat Merit Functions
}
Theorem \ref{theorem:PL-LLE} demonstrates that the merit function becomes extremely flat for unpredictable systems and long trajectories. This flatness poses a fundamental challenge for \textit{any} method that seeks to compute state trajectories by minimizing the merit function. We now provide further intuition to explain why unpredictability in the system naturally leads to a flat merit landscape.

Suppose that we use an optimizer to minimize the merit function \eqref{eq:merit_ssm} for an unpredictable system until it halts with some precision. Let us further assume that the first state of the output of this optimizer following the initial condition is $\epsilon$-close to the true first state,~$\| s_1 - s^*_1 \| = \epsilon$.
Suppose also that the residuals for all times greater than one are precisely zero---in other words, the optimizer starts with a "true" trajectory starting from initial condition $s_1$. Then the overall residual norm is at most $\epsilon$, 
\begin{align*}
\| \mathbf{r}(\mathbf{s}) \|^2 
= \| s_1 - f(s_0) \|^2 
\leq \left( \| s_1 - s_1^* \| + \| s_1^* - f(s_0) \| \right)^2 
= \| s_1 - s_1^* \|^2 
= \epsilon^2.
\end{align*}
However, since $s_{t}$ and $ s_{t}^*$ are by construction both trajectories of an unpredictable system starting from slightly different initial conditions $s_1$ and $s_1^*$, the distance between them will grow exponentially as a consequence of \cref{eq:chaotic_time_horizon}. By contrast, predictable systems will have errors that shrink exponentially. This shows that changing the initial state $s_1$ by a small amount can lead to a massive change in the trajectory of an unpredictable system, but a \textit{tiny change} in the merit function. Geometrically, this corresponds to the merit function landscape for unpredictable systems having excessive flatness around the true solution (Figure \ref{fig:pred}, bottom right panel). Predictable systems do not exhibit such flatness, since small residuals imply small errors. Theorem \ref{theorem:PL-LLE} formalizes this idea.

\subsection{Residual function Jacobian Inherits the Lipschitzness of the Nonlinear State Space Model}\label{ssc:L_inherit}
In addition to the parameter $\mu$, which measures the conditioning of the merit function, the difficulty of minimizing the merit function is also influenced by the Lipschitz continuity of its Jacobian $\J$. The following theorem establishes how the Lipschitz continuity of the underlying sequence model induces Lipschitz continuity in $\J$.
\begin{theorem}\label{theorem:Lipschitz}
If the dynamics of the underlying nonlinear state space model have $L$-Lipschitz Jacobians, i.e., 
\[
\forall \, t > 1, \ \ \ s, s' \in \mathbb{R}^D: \quad \|\A_t(s) - \A_t(s')\| \leq L \|s - s'\|,
\]
then the residual function Jacobian $\J$ is also $L$-Lipschitz, with the same $L$. 
\end{theorem}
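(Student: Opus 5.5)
We prove the claim by computing $\J(\mathbf{s}) - \J(\mathbf{s}')$ explicitly from the block-bidiagonal form in \cref{eq:big_j}. For any two trajectories $\mathbf{s}, \mathbf{s}' \in \mathbb{R}^{TD}$, the identity blocks on the diagonal cancel in the difference. What remains is a matrix $\boldsymbol{\Delta}$ whose only nonzero blocks lie on the first block subdiagonal, with entries $-(\A_t(s_{t-1}) - \A_t(s'_{t-1}))$ for $t = 2, \ldots, T$. Note that $\A_1$ never appears in $\J$, because $s_0$ is fixed; this is why the hypothesis is only needed for $t > 1$.

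The key step is to bound the spectral norm of a block-subdiagonal matrix. Write $\boldsymbol{\Delta} = \mathbf{P}\,\mathrm{blkdiag}(\Delta_2, \ldots, \Delta_T, 0)$, or an analogous factorization. Here $\mathbf{P}$ is the block shift matrix, whose blocks are identities moved down one position, and $\Delta_t := \A_t(s_{t-1}) - \A_t(s'_{t-1})$. The shift is a partial isometry, so $\|\mathbf{P}\|_2 \le 1$. The norm of a block-diagonal matrix is the maximum of the norms of its blocks. Together these give
\begin{equation*}
\|\J(\mathbf{s}) - \J(\mathbf{s}')\|_2 \;\le\; \max_{2 \le t \le T} \|\A_t(s_{t-1}) - \A_t(s'_{t-1})\|_2.
\end{equation*}
The same bound also follows directly: for a unit vector $\mathbf{v} = (v_1, \ldots, v_T)$ we have $\|\boldsymbol{\Delta}\mathbf{v}\|^2 = \sum_{t\ge 2} \|\Delta_t v_{t-1}\|^2 \le \max_t \|\Delta_t\|^2 \sum_t \|v_{t-1}\|^2 \le \max_t \|\Delta_t\|^2$.

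Next we apply the hypothesis to each block: $\|\Delta_t\| \le L\|s_{t-1} - s'_{t-1}\|$. Each single-time-step difference is bounded by the full-trajectory difference, $\|s_{t-1} - s'_{t-1}\|_2 \le \|\mathbf{s} - \mathbf{s}'\|_2$, since the Euclidean norm of a stacked vector dominates the norm of any sub-block. Combining,
\begin{equation*}
\|\J(\mathbf{s}) - \J(\mathbf{s}')\|_2 \le L \|\mathbf{s} - \mathbf{s}'\|_2,
\end{equation*}
with the same constant $L$ and no dependence on $T$.

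The main obstacle is the step that rules out any factor depending on $T$. The naive route bounds by the Frobenius norm and sums over $t$, which gives $\sqrt{T-1}$ or worse. The fix is to use the disjoint block structure of $\boldsymbol{\Delta}$: each block row contains exactly one nonzero block, and each block column contains at most one. That structure is what makes the max-over-blocks bound hold. Throughout, we take the norms in the hypothesis to be spectral norms; with other induced norms, the argument needs a block-structure lemma for that norm.
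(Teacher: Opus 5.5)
Your proposal is correct and follows essentially the same argument as the paper: $\J(\mathbf{s})-\J(\mathbf{s}')$ is supported only on the first block subdiagonal, so its spectral norm is controlled by $\max_t \|\A_t(s_{t-1})-\A_t(s'_{t-1})\|_2$, and each block is then bounded by $L\|s_{t-1}-s'_{t-1}\|_2 \le L\|\mathbf{s}-\mathbf{s}'\|_2$. The differences are minor improvements on your side: you justify the max-over-blocks step, which the paper only asserts (as an equality), and you evaluate the Jacobians at $s_{t-1}$ rather than the paper's $s_t$, which matches the definition in \cref{eq:lil_j}.
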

\begin{proof}
By assumption, for each $t$, 
\[ \forall s,s' \in \mathbb{R}^D: \quad 
\|\A_t(s_t) - \A_t(s_t')\|_2 \;\le\; L\,\|s_t - s_t'\|_2.
\]
Define $D_t := \A_t(s_t') - \A_t(s_t)$ and
\[
\mathbf{D} \;:=\; \mathbf{J}(\mathbf{s}') - \mathbf{J}(\mathbf{s}).
\]
Since $\mathbf{D}$ places the blocks $D_t$ along one subdiagonal, we have
\[
\|\mathbf{D}\|_2 \;=\; \max_{t}\,\|D_t\|_2.
\]
But each block $D_t$ satisfies the Lipschitz bound
\[
\|D_t\|_2 \;\le\; L\,\|s_t' - s_t\|_2,
\]
so
\[
\|\mathbf{D}\|_2 
\;=\; 
\max_{t}\|D_t\|_2
\;\le\;
L \,\max_{t}\|s_t' - s_t\|_2
\;\le\;
L\,\|\mathbf{s}' - \mathbf{s}\|_2.
\]
Hence, it follows that
\[
\|\mathbf{J}(\mathbf{s}') - \mathbf{J}(\mathbf{s})\|_2 
\;=\; 
\| \mathbf{D} \|_2
\;\le\;
L\,\|\mathbf{s}' - \mathbf{s}\|_2.
\]
Thus $\mathbf{J}$ is $L$-Lipschitz.
\end{proof}
Theorem \ref{theorem:Lipschitz} will be important for the analysis in Section \ref{sec:rates_of_DEER_convergence}, where we consider convergence rates.
Because Gauss-Newton methods rely on iteratively linearizing the dynamics (or equivalently the residual), they converge in a single step for linear dynamics $L=0$, and converge more quickly if the system is close to linear ($L$ is closer to $0$). 

\section{Rates of Convergence for Optimizing the Merit Function}\label{sec:rates_of_DEER_convergence}
In~\Cref{sec:cond}, we established that the predictability of the nonlinear state space model directly influences the conditioning of the merit function. This insight is critical for analyzing \textit{any} optimization method used to compute trajectories via minimization of the merit function.

In this section, we apply those results to study the convergence behavior of the Gauss-Newton (DEER) algorithm for the merit function defined in \cref{eq:merit_ssm}. We derive worst-case bounds on the number of optimization steps required for convergence. In addition, we present an average-case analysis of DEER that is less conservative than the worst-case bounds and more consistent with empirical observations.

\subsection{DEER Always Converges Globally at a Linear Rate}
Although DEER is based on the Gauss-Newton method, which generally lacks global convergence guarantees, we prove that DEER always converges globally at a linear rate. This result relies on the problem’s specific hierarchical structure, which ensures that both the residual function Jacobian $\b J$ and its inverse are lower block-triangular. In particular, we prove the following theorem:
\begin{theorem}\label{theorem:deer_converges_globally}
Let the DEER (Gauss--Newton) updates be given by \cref{eq:deer_full_update}, and let $\mathbf{s}^{(i)}$ denote the $i$-th iterate. Let ${\mathbf{e}^{(i)} :=  \mathbf{s}^{(i)} - \mathbf{s}^* }$ denote the error at iteration $i$, and assume the regularity condition in \cref{eq:LLE_regularity}. Then the error converges to zero at a linear rate:
\[
\| \mathbf{e}^{(i)} \|_2 \leq \osw \, \beta^i \| \mathbf{e}^{(0)} \|_2,
\]
for some constant $\osw \geq 1$ independent of $i$, and a convergence rate $0 < \beta < 1$.
\end{theorem}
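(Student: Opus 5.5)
The plan is to write the DEER error recursion as multiplication by a \emph{strictly block lower-triangular} matrix. Every such matrix becomes a contraction in a suitably weighted norm. Converting the weighted norm back to $\|\cdot\|_2$ then costs only a constant $\osw \geq 1$, which is independent of $i$.

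\textbf{Step 1 (error recursion).} Subtract $\mathbf{s}^\star$ from \cref{eq:deer_lds} and use $s_t^\star = f_t(s_{t-1}^\star)$. This gives, for each $t$,
\[
e_t^{(i+1)} = \A_t^{(i)} e_{t-1}^{(i+1)} + \Big( f_t(s_{t-1}^{(i)}) - f_t(s_{t-1}^\star) - \A_t^{(i)} e_{t-1}^{(i)} \Big),
\]
with $e_0 = 0$. By the integral form of the mean value theorem, $f_t(s_{t-1}^{(i)}) - f_t(s_{t-1}^\star) = \bar{\A}_t^{(i)} e_{t-1}^{(i)}$, where $\bar{\A}_t^{(i)} := \int_0^1 \A_t\big(s_{t-1}^\star + \tau e_{t-1}^{(i)}\big)\,d\tau$. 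The bracket therefore equals $(\bar{\A}_t^{(i)} - \A_t^{(i)})\, e_{t-1}^{(i)}$. Let $\mathbf{D}^{(i)}$ be the matrix carrying the blocks $\bar{\A}_t^{(i)} - \A_t^{(i)}$ on the first block subdiagonal. Then in matrix form
\[
\mathbf{e}^{(i+1)} = \mathbf{M}^{(i)} \mathbf{e}^{(i)}, \qquad \mathbf{M}^{(i)} := \J(\mathbf{s}^{(i)})^{-1} \mathbf{D}^{(i)}.
\]
By \cref{eq:J_inv}, $\J^{-1}$ is block lower-triangular and $\mathbf{D}^{(i)}$ is strictly block lower-triangular, so $\mathbf{M}^{(i)}$ is strictly block lower-triangular. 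This already recovers \Cref{prop:global_convergence}: any product of $T$ such matrices vanishes.

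\textbf{Step 2 (uniform block bounds).} Block $(t,s)$ of $\mathbf{M}^{(i)}$, with $t>s$, equals
\[
\A_t^{(i)} \cdots \A_{s+2}^{(i)} \big(\bar{\A}_{s+1}^{(i)} - \A_{s+1}^{(i)}\big).
\]
The regularity condition \cref{eq:LLE_regularity} with $k=1$ gives $\|\A_t\| \leq a e^{\lambda}$ everywhere, so $\|\bar{\A}_{s+1}^{(i)} - \A_{s+1}^{(i)}\| \leq 2 a e^{\lambda}$. The Jacobian product in front is bounded by $a e^{\lambda (t-s-1)}$. Hence every block on the $k$-th subdiagonal of every $\mathbf{M}^{(i)}$ is bounded by $c_k := 2a^2 e^{\lambda k}$, uniformly in $i$ and in the iterates.

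\textbf{Step 3 (weighted norm).} Fix $\varepsilon \in (0,1]$ and set $\mathbf{W} := \diag(\varepsilon^{-1} I_D, \varepsilon^{-2} I_D, \ldots, \varepsilon^{-T} I_D)$. Conjugation $\mathbf{W}\mathbf{M}^{(i)}\mathbf{W}^{-1}$ multiplies block $(t,s)$ by $\varepsilon^{t-s}$. Split a strictly lower matrix into its block subdiagonals; each subdiagonal has operator norm equal to its largest block. This gives
\[
\|\mathbf{W}\mathbf{M}^{(i)}\mathbf{W}^{-1}\|_2 \leq \sum_{k=1}^{T-1} \varepsilon^k c_k =: \beta(\varepsilon).
\]
Since $\beta(\varepsilon) \to 0$ as $\varepsilon \to 0$, we can choose $\varepsilon$ with $0<\beta<1$. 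The case $\beta = 0$ corresponds to linear dynamics and is trivial. Writing $\|\mathbf{x}\|_W := \|\mathbf{W}\mathbf{x}\|_2$, we get $\|\mathbf{e}^{(i)}\|_W \leq \beta^i \|\mathbf{e}^{(0)}\|_W$. Converting back through the condition number of $\mathbf{W}$ gives
\[
\|\mathbf{e}^{(i)}\|_2 \leq \osw \, \beta^i \|\mathbf{e}^{(0)}\|_2, \qquad \osw := \varepsilon^{-(T-1)} \geq 1,
\]
which is the claimed statement.

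\textbf{Main obstacle.} The delicate step is Step 2: the contraction factor must hold \emph{uniformly over all iterates}, including wild early ones. This is why I rely on the global form of \cref{eq:LLE_regularity}, which is assumed for all $\mathbf{s}$, rather than on local Lipschitz arguments. A global bound on $\|\A_t\|$ must also be extracted cleanly from it, since the mean-value Jacobian $\bar{\A}_t^{(i)}$ is an average over a segment that may leave any neighborhood of $\mathbf{s}^\star$. A secondary concern is that $\osw$ may be exponentially large in $T$. That is acceptable here, because the theorem only claims independence from $i$; sharper, $\lambda$-dependent choices of $\varepsilon$ can be deferred to later results.
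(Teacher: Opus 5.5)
Step 3 fails as written because your weights point the wrong way. With $\mathbf{W} = \diag(\varepsilon^{-1} I_D, \ldots, \varepsilon^{-T} I_D)$, block $(t,s)$ of $\mathbf{W}\mathbf{M}^{(i)}\mathbf{W}^{-1}$ is $\varepsilon^{-t}[\mathbf{M}^{(i)}]_{ts}\,\varepsilon^{s} = \varepsilon^{-(t-s)}[\mathbf{M}^{(i)}]_{ts}$. So for $t>s$ and $\varepsilon<1$, every subdiagonal block is \emph{amplified}, not shrunk. Your bound then becomes $\sum_{k=1}^{T-1}\varepsilon^{-k}c_k$, which blows up as $\varepsilon \to 0$, and no contraction follows.

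You need weights that decrease along the sequence, $\mathbf{W} = \diag(\varepsilon I_D, \varepsilon^{2} I_D, \ldots, \varepsilon^{T} I_D)$. This down-weights the later time steps, which are the ones a strictly lower-triangular map pushes error into. Block $(t,s)$ is then scaled by $\varepsilon^{t-s}$, and your bound $\beta(\varepsilon) = 2a^2\sum_{k=1}^{T-1}(\varepsilon e^{\lambda})^k$ holds. The conversion constant is unchanged, $\osw = \varepsilon^{-(T-1)}$. This is exactly the paper's choice $\mathbf{W}^{1/2} = \diag(I_D, wI_D, \ldots, w^{T-1}I_D)$ with $w<1$.

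With that correction your proof is essentially the paper's. Your $\mathbf{D}^{(i)}$ is the paper's $\J(\mathbf{s}^{(i)}) - \mathbf{B}^{(i)}$. Both arguments show that $\J^{-1}(\J - \mathbf{B})$ contracts in a geometrically weighted block norm and then pay the condition number of the weight.

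There are two small differences. First, you bound the blocks of the product $\J^{-1}\mathbf{D}$ directly, whereas the paper bounds $\|\J^{-1}\|_W$ and $\|\J - \mathbf{B}\|_W$ separately and multiplies. Second, you obtain $\|\A_t\| \le a e^{\lambda}$ from the $k=1$ case of \cref{eq:LLE_regularity} instead of introducing the paper's separate constant $\rho$, which is defined over states of the optimization trajectory. Your version has a real advantage here: it also covers the averaged Jacobians evaluated on the segment between $\mathbf{s}^\star$ and $\mathbf{s}^{(i)}$, which lie off that trajectory.
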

\begin{proof}
Our general strategy for deriving DEER convergence bounds will be to fix some weighted norm
$\| \mathbf{e} \|_W \coloneq \| \b W^{1/2} \mathbf{e} \|_2$, for a symmetric positive definite matrix $\mathbf{W}$.
Doing so induces the operator norm
$\| \mathbf{J} \|_W \coloneq \| \b W^{1/2} \mathbf{J} \b W^{-1/2}\|_2$ such that each DEER step is a contraction in this norm, with contraction factor $\beta \in [0,1)$. This will imply that the DEER error iterates decay to zero with linear rate, as
\begin{equation}\label{eq:lin_rate_W}
    \|\mathbf{e}^{(i)} \|_W \leq \beta^i \| \mathbf{e}^{(0)} \|_W,   
\end{equation}
i.e.
\begin{equation*}
    \| \mathbf{W}^{1/2} \mathbf{e}^{(i)} \|_2 \leq \beta^i \| \mathbf{W}^{1/2} \mathbf{e}^{(0)} \|_2.
\end{equation*}
Using the above equation and properties of singular values, it follows that
\begin{equation*}
    \sqrt{\lambda_{\min}(\mathbf{W})} \| \mathbf{e}^{(i)} \|_2 \leq \beta^i \sqrt{\lambda_{\max}(\mathbf{W})} \| \mathbf{e}^{(0)} \|_2.
\end{equation*}
Therefore, to convert the linear rate in \cref{eq:lin_rate_W} back to standard Euclidean space, we incur an additional multiplicative factor that depends on the conditioning of $\b W^{1/2}$:
\begin{equation}\label{eq:error_bound_condition_number}
\| \mathbf{e}^{(i)} \|_2 \leq \osw \, \beta^i \| \mathbf{e}^{(0)} \|_2, \qquad \text{where} \qquad \osw \coloneq \sqrt{\frac{\lambda_{\max}(\b W)}{\lambda_{\min}(\b W)}}.    
\end{equation}

\paragraph{DEER as a Contraction Mapping}

Recall that the DEER (Gauss-Newton) updates are given by 
\[\mathbf{s}^{(i+1)} = \mathbf{s}^{(i)} - \mathbf{J}^{-1}(\mathbf{s}^{(i)}) \mathbf{r}(\mathbf{s}^{(i)})\]
Recalling that $\b r(\b s^*) = \b 0$ and subtracting the fixed point $\mathbf{s}^{*}$ from both sides, we have that 
\[\mathbf{e}^{(i+1)} \ = \ \mathbf{e}^{(i)} - \mathbf{J}^{-1}(\mathbf{s}^{(i)}) \mathbf{r}^{(i)} + \mathbf{J}^{-1}(\mathbf{s}^{(i)}) \, \b r(\b s^*) \ = \ \mathbf{e}^{(i)} - \mathbf{J}^{-1}(\mathbf{s}^{(i)}) \, \bigg(\b r( \mathbf{s}^{(i)}) - \b r(\b s^*) \bigg). \]
This equation can be written using the mean value theorem as 
\[\mathbf{e}^{(i+1)} = \bigg( \b I - \mathbf{J}^{-1}(\mathbf{s}^{(i)}) \mathbf{B}^{(i)} \bigg) \mathbf{e}^{(i)}  \qquad \text{where} \qquad \mathbf{B}^{(i)}  \coloneq \int_0^1 \mathbf{J}(\mathbf{s}^* + \tau \mathbf{e}^{(i)}) \, d\tau\]
From this, we can conclude that the DEER iterates will converge (i.e., the error shrinks to zero) if 
\begin{equation}\label{eq:DEER_contraction_1}
\| \mathbf{I} -  \mathbf{J}^{-1}(\mathbf{s}^{(i)}) \mathbf{B}^{(i)} \|_W \ = \ \| \mathbf{J}^{-1}(\mathbf{s}^{(i)}) \left(\mathbf{J}(\mathbf{s}^{(i)}) - \mathbf{B}^{(i)} \right) \|_W \leq \beta <  1.   
\end{equation}

\paragraph{Constructing the Weighted Norm}
We will choose a diagonal weighted norm, given by 
\begin{equation}
\mathbf W \;\coloneqq\; \operatorname{Diag} \bigl(I_D,\;w^{2}I_D,\;\dots,\;w^{2(T-1)}I_D\bigr)
      \;\in\;\mathbb R^{TD\times TD},
      \qquad w>0 .
\label{eq:W}
\end{equation}
Under the norm induced by \eqref{eq:W} we have
\begin{align}
\|\mathbf J(\mathbf{s}^{(i)})-\mathbf B^{(i)}\|_{W} &\;\le\; 2w\rho ,               && \label{eq:JminusB}\\[4pt]
\|\mathbf J^{-1}(\mathbf{s}^{(i)})\|_{W} &\;\le\; a\frac{1-(we^{\lambda})^{T}}{1-we^{\lambda}}, && \label{eq:Jinv}
\end{align}
where $\rho$ upper bounds $\| J \|_2$ over all states in the DEER optimization trajectory.

% Combined bound
Multiplying \eqref{eq:JminusB} and \eqref{eq:Jinv} yields
\begin{equation}
\|\mathbf J^{-1}(\mathbf{s}^{(i)})\|_{W}\,\|\mathbf J(\mathbf{s}^{(i)})-\mathbf B^{(i)}\|_{W}
      \;\le\;
      2a w\rho\,
      \frac{1-(we^{\lambda})^{T}}{1-we^{\lambda}} .
\label{eq:ProductBound}
\end{equation}

% Choice of w
To ensure the right‑hand side of \eqref{eq:ProductBound} does not exceed a prescribed
$\beta\in[0,1)$, choose
\begin{equation}
w \;=\; \frac{\beta}{2\rho a+\beta e^{\lambda}} .
\label{eq:wChoice}
\end{equation}

% Consistency checks
With this choice, 
\begin{equation}
we^{\lambda}<1,
\qquad\text{and}\qquad
\dfrac{2a w\rho}{1-we^{\lambda}} \;=\; \beta,
\label{eq:Consistency}
\end{equation}
so the geometric series in \eqref{eq:Jinv} is convergent and the bound in
\eqref{eq:ProductBound} holds for all $T$, because
\[ \|\mathbf J^{-1}(\mathbf{s}^{(i)})\|_{W}\,\|\mathbf J(\mathbf{s}^{(i)})-\mathbf B^{(i)}\|_{W}
      \;\le\;
      2a w\rho\,
      \frac{1-(we^{\lambda})^{T}}{1-we^{\lambda}}  \ = \ \beta \left(1- (w e^{\lambda })^T\right) \ \leq \ \beta.\]

This shows that we can always pick a weighted norm so that DEER converges with linear rate \textit{in that norm}. Converting back into the standard Euclidean norm using \eqref{eq:error_bound_condition_number} and substituting in the condition number of $\b W^{1/2}$ one finds that
\begin{equation}\label{eq:general_DEER_convergence}
 \| \mathbf{e}^{(i)} \|_2 \leq \ \left(\frac{2\rho a+\beta e^{\lambda}}{\beta }\right)^T  \, \beta^i \, \| \mathbf{e}^{(0)} \|_2.    
\end{equation}
Thus, the DEER error converges with linear rate towards zero.
\end{proof}
Theorem \ref{theorem:deer_converges_globally} is unexpected since, in general, Gauss-Newton methods do not enjoy global convergence.
The key caveat of this theorem is the multiplicative factor $\osw$, which can grow exponentially with the sequence length $T$. This factor governs the extent of transient error growth before the decay term $\beta^i$ eventually dominates.

Theorem \ref{theorem:deer_converges_globally} has several useful, practical consequences. First, when the nonlinear state space model is sufficiently contracting ($\lambda$ is sufficiently negative), then $\osw$ in  Theorem \ref{theorem:deer_converges_globally} can be made small, implying that in this case DEER converges with little-to-no overshoot.
% (Appendix \ref{sec:DEER_Global_Convergence_Contraction}).

Theorem \ref{theorem:deer_converges_globally} also lets us establish key worst-case and average-case bounds on the number of steps needed for Gauss-Newton
% and gradient descent
to converge to within a given distance of the solution. In particular, when $\osw$ does not depend on the sequence length $T$, then \Cref{theorem:deer_converges_globally} implies Gauss-Newton will only require $\mathcal{O}\left( (\log T)^2 \right)$ total computational time, with one $\log$ factor coming from the parallel scan at each optimization step and the other coming from the total number of optimization steps needed.
% We elaborate on these points in Appendix \ref{section:worst_case_complexity}.

\subsection{Size of DEER Basin of Quadratic Convergence}\label{ssc:quad_basin}
It is natural that DEER depends on the Lipschitzness of $\mathbf{J}$ since Gauss-Newton converges \emph{in one step} for linear problems, where $L = 0$.
In \Cref{sec:cond}, we showed that the conditioning of the merit function, as measured by the PL-constant $\mu$, depends on the stability, or predictability, of the nonlinear dynamics. Thus, the performance of DEER depends on the ratio of the nonlinearity and stability of the underlying nonlinear state space model. Note that once $\b s$ is inside the basin of quadratic convergence, it takes $O(\log \log (1/\epsilon))$ steps to reach $\epsilon$ residual (effectively a constant number of steps). 

Because DEER converges so quickly within its basin of quadratic convergence, it is important to understand the size of this basin in terms of the properties of the underlying SSM we are trying to parallelize.
We provide such a bound in \Cref{theorem:basin_size}.
We make no claim about the originality of lower bounding the size of the basin of quadratic convergence in Gauss-Newton.
In fact, our proof of \Cref{theorem:basin_size} closely follows the convergence analysis of \emph{Newton's} method in Section 9.5.3 of \citet{boyd2004convex}.
Our contribution is we highlight the elegant way the predictability $\lambda$ and nonlinearity $L$ of a dynamical system influence an important feature of its merit function's landscape.

\begin{theorem}\label{theorem:basin_size}
Let $\mu$ denote the PL-constant of the merit function, which Theorem \ref{theorem:PL-LLE} relates to the LLE $\lambda$. Let $L$ denote the Lipschitz constant of the Jacobian of the dynamics function $\A(s)$. Then, $\nicefrac{2 \mu}{L}$ lower bounds the radius of the basin of quadratic convergence of DEER; that is, if
\begin{equation}\label{eq:basin_of_q_convergence_size}
||\b r(\b s^{(i)}) ||_2 <  \ \dfrac{2 \mu}{L},
\end{equation}
then $\b s^{(i)}$ is inside the basin of quadratic convergence. In terms of the LLE $\lambda$, it follows that if 
\begin{equation*}
    ||\b r(\b s^{(i)}) ||_2 < \ \dfrac{2}{a^2 L} \cdot \left( \dfrac{e^{\lambda}-1}{e^{\lambda T}-1} \right)^2,
\end{equation*}
then $\b s^{(i)}$ is inside the basin of quadratic convergence.
\end{theorem}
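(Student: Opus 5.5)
We track the residual rather than the error, following the Newton analysis of Section 9.5.3 of \citet{boyd2004convex}. The aim is a one-step inequality of the form
\begin{equation*}
    \|\mathbf{r}(\mathbf{s}^{(i+1)})\|_2 \;\le\; \frac{L}{2\mu}\,\|\mathbf{r}(\mathbf{s}^{(i)})\|_2^2 .
\end{equation*}
Multiplying by $L/(2\mu)$ gives $q_{i+1}\le q_i^2$ for $q_i := \frac{L}{2\mu}\|\mathbf{r}(\mathbf{s}^{(i)})\|_2$. Hence if $q_i<1$, which is exactly \cref{eq:basin_of_q_convergence_size}, then $q_{i+k}\le q_i^{2^k}\to 0$ doubly exponentially. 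The condition is also self-perpetuating, since $q_{i+1}\le q_i^2<q_i<1$. This is precisely what it means for $\mathbf{s}^{(i)}$ to lie in the basin of quadratic convergence, measured in residual.

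\textbf{Step 1: derive the one-step inequality.} The DEER step is $\Delta\mathbf{s} = -\mathbf{J}(\mathbf{s}^{(i)})^{-1}\mathbf{r}(\mathbf{s}^{(i)})$, so the linearized residual vanishes: $\mathbf{r}(\mathbf{s}^{(i)}) + \mathbf{J}(\mathbf{s}^{(i)})\Delta\mathbf{s} = \mathbf{0}$. By the fundamental theorem of calculus,
\begin{equation*}
    \mathbf{r}(\mathbf{s}^{(i+1)}) = \int_0^1 \bigl(\mathbf{J}(\mathbf{s}^{(i)}+\tau\Delta\mathbf{s}) - \mathbf{J}(\mathbf{s}^{(i)})\bigr)\Delta\mathbf{s}\, d\tau .
\end{equation*}
\Cref{theorem:Lipschitz} says $\mathbf{J}$ is $L$-Lipschitz, so the integrand has norm at most $L\tau\|\Delta\mathbf{s}\|_2^2$. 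Integrating gives $\|\mathbf{r}(\mathbf{s}^{(i+1)})\|_2\le \frac{L}{2}\|\Delta\mathbf{s}\|_2^2$.

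Next we bound the step. By \Cref{prop:DEER_is_PL}, $\sigma_{\min}(\mathbf{J}(\mathbf{s}))\ge\sqrt{\mu}$ for every $\mathbf{s}$, so $\|\mathbf{J}^{-1}\|_2\le 1/\sqrt{\mu}$. Therefore $\|\Delta\mathbf{s}\|_2^2\le \|\mathbf{r}(\mathbf{s}^{(i)})\|_2^2/\mu$, and combining the two bounds yields the one-step inequality above.

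\textbf{Step 2: express the bound via the LLE.} The lower bound in \Cref{theorem:PL-LLE} gives $\sqrt{\mu}\ge \frac{1}{a}\frac{e^\lambda-1}{e^{\lambda T}-1}$. Squaring gives $\mu \ge \frac{1}{a^2}\bigl(\frac{e^\lambda-1}{e^{\lambda T}-1}\bigr)^2$. Any residual below $\frac{2}{a^2L}\bigl(\frac{e^\lambda-1}{e^{\lambda T}-1}\bigr)^2$ is therefore below $2\mu/L$, and Step 1 applies.

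\textbf{Main obstacle.} Step 1 is the only delicate part, and the delicacy is in using \emph{global} bounds. The inverse-Jacobian bound must hold at the current iterate, which is why the infimum definition of $\mu$ in \cref{eq:mu_def} is needed rather than a local value at $\mathbf{s}^\star$. The Lipschitz bound must hold along the whole segment from $\mathbf{s}^{(i)}$ to $\mathbf{s}^{(i+1)}$, which \Cref{theorem:Lipschitz} supplies globally. With both in hand the argument is routine. A secondary point is to state clearly that the basin is defined in terms of the residual, not the error. Quadratic decay of the error follows from quadratic decay of the residual, because $\|\mathbf{e}\|_2\le\|\mathbf{r}\|_2/\sqrt{\mu}$ by the same inverse-Jacobian bound together with a mean value argument.
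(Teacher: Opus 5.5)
Your proposal is correct and follows essentially the same route as the paper's proof. Both arguments write $\mathbf{r}(\mathbf{s}^{(i+1)})$ as an integral of Jacobian differences along the step, use the Lipschitz bound from \Cref{theorem:Lipschitz} to get $\frac{L}{2}\|\Delta\mathbf{s}\|_2^2$, apply $\|\mathbf{J}^{-1}\|_2 \le 1/\sqrt{\mu}$ to reach $\|\mathbf{r}(\mathbf{s}^{(i+1)})\|_2 \le \frac{L}{2\mu}\|\mathbf{r}(\mathbf{s}^{(i)})\|_2^2$, note that the threshold $2\mu/L$ is self-perpetuating, and finally square the lower bound of \Cref{theorem:PL-LLE}; your normalization $q_i = \frac{L}{2\mu}\|\mathbf{r}(\mathbf{s}^{(i)})\|_2$ just makes the doubly exponential decay $q_{i+k}\le q_i^{2^k}$ explicit where the paper leaves it implicit.
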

\begin{proof}
Suppose we are at a point $\b s^{(i)} \in \mathbb{R}^{TD}$ (i.e. DEER iterate $i$), and we want to get to $\b s^{(i+1)}$.
The change in the trajectory is,
\begin{equation*}
    \Delta \mathbf{s}^{(i)} := -\b J(\b s^{(i)})^{-1} \b r(\b s^{(i)})
\end{equation*}
(where the iteration number will hopefully be clear from context).
The merit function is $\mathcal{L}(\mathbf{s}) = \frac{1}{2} \| \b r(\mathbf{s}) \|_2^2$, so if we can get some control over $\| \mathbf{r}(\mathbf{s}^{(i)}) \|_2$, we will be well on our way to proving a quadratic rate of convergence.

First, leveraging the form of the Gauss-Newton update, we can simply "add zero" to write
\begin{align*}
    \b r(\mathbf{s}^{(i+1)}) & = \mathbf{r}(\mathbf{s}^{(i)} + \Delta \b s^{(i)}) \\
    & = \mathbf{r}(\mathbf{s}^{(i)} + \Delta \b s^{(i)}) - \mathbf{r}(\mathbf{s}^{(i)}) - \b J(\mathbf{s}^{(i)}) \Delta \b s^{(i)}
\end{align*}
Next, we can write the difference $\mathbf{r}(\mathbf{s}^{(i)} + \Delta \b s^{(i)}) - \mathbf{r}(\mathbf{s}^{(i)})$ as the integral of the Jacobian, i.e.
\begin{equation*}
    \mathbf{r}(\mathbf{s}^{(i)} + \Delta \b s^{(i)}) - \mathbf{r}(\mathbf{s}^{(i)}) = \int_0^1 \b J\left(\b s^{(i)} + \tau \Delta \b s^{(i)}\right) \Delta \b s^{(i)} \, d\tau.
\end{equation*}
Therefore,
\begin{equation*}
    \b r(\mathbf{s}^{(i+1)}) = \int_0^1 \left( \b J\left(\b s^{(i)} + \tau \Delta \b s^{(i)}\right) - \b J( \mathbf{s}^{(i)}) \right) \Delta \b s^{(i)} \, d\tau
\end{equation*}
Taking $\ell_2$-norms and using the triangle inequality, it follows that
\begin{equation*}
    \| \b r(\mathbf{s}^{(i+1)}) \|_2 \leq \int_0^1 \left\| \left( \b J\left(\mathbf{s}^{(i)} + \tau \Delta \b s^{(i)}\right) - \b J(\mathbf{s}^{(i)}) \right) \Delta \b s^{(i)} \right\|_2 d\tau.
\end{equation*}
Now, if we assume that $\b J$ is $L$-Lipschitz and use the definition of spectral norm, it follows that
\begin{equation*}
    \left\| \left( \b J\left(\mathbf{s}^{(i)} + \tau \Delta \b s^{(i)}\right) - \b J(\mathbf{s}^{(i)}) \right) \Delta \b s^{(i)} \right\|_2 \leq \tau L \| \Delta \b s^{(i)} \|_2^2,
\end{equation*}
and so taking the integral we obtain
\begin{align*}
    \| \b r(\mathbf{s}^{(i+1)}) \|_2 & \leq \frac{L}{2} \| \Delta \b s^{(i)} \|_2^2 \\
    & = \frac{L}{2} \mathbf{r}(\mathbf{s}^{(i)})^{\top} \b J(\mathbf{s}^{(i)})^{-\top} \b J(\mathbf{s}^{(i)})^{-1} \mathbf{r}(\mathbf{s}^{(i)}).
\end{align*}
By definition, $\sqrt{\mu}$ is a lower bound on all singular values of $\mathbf{J}(\mathbf{s}(i))$, for all $i$.
Therefore, $ \| \b J(\mathbf{s}^{(i)})^{-1} \|_2 \leq \nicefrac{1}{\sqrt{\mu}}$ for all $i$, and it follows that
\begin{equation}\label{eq:GN_residual_shrink}
    \| \mathbf{r}(\mathbf{s}^{(i+1)}) \|_2 \leq \frac{L}{2 \mu} \| \mathbf{r}(\mathbf{s}^{(i)}) \|_2^2,
\end{equation}
which is the direct analogy of \citet[9.33]{boyd2004convex}. To reiterate, here $L$ is the Lipschitz constant of $\b J$, while $\mu:=\inf_{i \in \mathbb{N}} \sigma^2_{\mathrm{min}}\left( \mathbf{J}(\mathbf{s}^{(i)}) \right)$. 

While this is a quadratic convergence result for GN, this result is not useful unless $\| \mathbf{r}(\mathbf{s}^{(i+1)}) \|_2 \leq \| \mathbf{r}(\mathbf{s}^{(i)}) \|_2$ (i.e. would backtracking line search accept this update).
However, if we have $\| \mathbf{r}(\mathbf{s}^{(i)}) \|_2 < \frac{2 \mu}{L}$, then every step guarantees a reduction in $\mathbf{r}$ because in this case
\begin{equation*}
    \| \mathbf{r}(\mathbf{s}^{(i+1)}) \|_2 < \| \mathbf{r}(\mathbf{s}^{(i)}) \|_2.
\end{equation*}
Therefore, we have $\| \mathbf{r}(\mathbf{s}^{(j)}) \|_2 < \frac{2 \mu}{L}$ for all $j > i$.
Thus, we have related the size of the basin of quadratic convergence of GN on the DEER objective to the properties of $\b J$.
Note that with linear dynamics, each $\A_t$ is constant in $s$, and so each $\A_t$ is $0$-Lipschitz. Thus, the basin of quadratic convergence becomes infinite. Intuitively, if $\A_t$ doesn't change too quickly with $s$, then DEER becomes a more and more potent method.
\end{proof}

\section{Experiments}\label{sec:experiments}
We conduct experiments to support the theory developed above, demonstrating that predictability enables parallelization of nonlinear SSMs. To illustrate this point, we use Gauss-Newton optimization (aka DEER).
Our code is at~\url{https://github.com/lindermanlab/predictability_enables_parallelization}

\subsection{The Convergence Rate Exhibits a Threshold between Predictable and Chaotic Dynamics}\label{ssc:exp_thresh}

\begin{figure}[ht]
    \centering
    \includegraphics[width=1.0\textwidth]{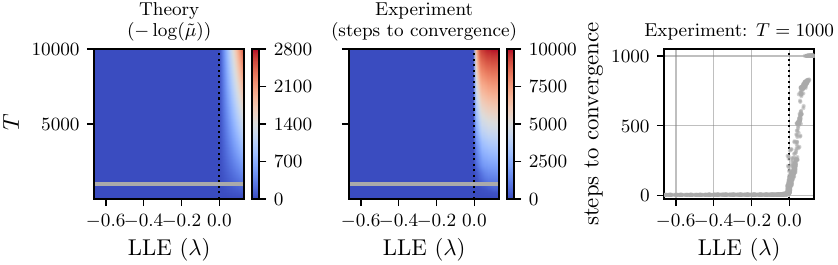}
    \caption{\textbf{Threshold phenomenon in DEER convergence based on system predictability.} In a family of RNNs, DEER has fast convergence for predictable systems and prohibitively slow convergence for chaotic systems. \textbf{Left (Theory):} We depict Theorem \ref{theorem:PL-LLE}, illustrating how the conditioning of the optimization problem degrades as $T$ and the LLE ($\lambda$) increase. \textbf{Center (Experiment):} We vary $\lambda$ across the family of RNNs, and observe a striking concordance in the number of DEER optimization steps empirically needed for convergence with our theoretical characterization of the conditioning of the optimization problem. \textbf{Right:} For 20 seeds, each with 50 different values of $\lambda$, we plot the relationship between $\lambda$ and the number of DEER steps needed for convergence for the sequence length $T=1000$ (gray line in left and center panels). We observe a sharp increase in the number of optimization steps at precisely the transition between predictability and unpredictability.}
    \label{fig:thresh}
\end{figure}

Theorem \ref{theorem:PL-LLE} predicts a sharp phase transition in the conditioning of the merit function at $\lambda = 0$, which should be reflected in the number of optimization steps required for convergence. To empirically validate this prediction, we vary both the LLE and sequence length $T$ within a parametric family of recurrent neural networks (RNNs), and measure the number of steps DEER takes to converge. We generate mean-field RNNs following \citet{engelken2023lyapunov}, scaling standard normal weight matrices by a single parameter that controls their variance and therefore the expected LLE. 

In more detail, we rolled out trajectories from a mean-field RNN with step size $1$ for $20$ different random seeds. The dynamics equations follow the form
\begin{equation*}
    s_{t+1} = W \mathrm{tanh}(s_t) + u_t,
\end{equation*}
for mild sinusoidal inputs $u_t$. We have $s_t \in \mathbb{R}^D$, where in our experiments $D=100$. Note that because of the placement of the saturating nonlinearity, here $s_t$ represents current, not voltage. We draw each entry $W_{ij} \stackrel{\mathrm{iid}}{\sim} \mathcal{N}(0,\nicefrac{g^2}{D})$, where $g$ is a scalar parameter. We then set $W_{ii} = 0$ for all $i$ (no self-coupling of the neurons). A key point of \citet{engelken2023lyapunov} is that by scaling the single parameter $g$, the resulting RNN goes from predictable to chaotic behavior. While \citet{engelken2023lyapunov} computes the full Lyapunov spectrum in the limit $D \to \infty$, for finite $D$ we can compute a very accurate numerical approximation to the LLE. In particular, we use \Cref{alg:LLE_estimate} to compute the LLE in a numerically stable way. Note that the algorithm nominally depends on the initial unit vector $u_0$. For this reason, we choose 3 different unit vectors (initialized at random on the unit sphere) and average over the 3 stochastic estimates. However, in practice we observe that the estimate is very stable with respect to choice $u_0$, and agrees with systems for which the true LLE is known, such as the Henon and logistic maps.

\begin{algorithm}
\caption{Numerically Stable Computation of Largest Lyapunov Exponent (LLE)}
\begin{algorithmic}[1]
    \State \textbf{Input:} Initial unit vector $u_0$, total iterations $T$
    \State \textbf{Initialize:} $\text{LLE} \gets 0$
    \For{$t = 1$ to $T$}
        \State Compute evolved vector: $u_t \gets J_t u_{t-1}$
        \State Compute stretch factor: $\lambda_t \gets \|u_t\|$
        \State Normalize vector: $u_t \gets u_t / \lambda_t$
        \State Accumulate logarithmic stretch: $\text{LLE} \gets \text{LLE} + \log \lambda_t$
    \EndFor
    \State \textbf{Output:} Estimated LLE $\lambda \gets \text{LLE} / T$
\end{algorithmic}
\label{alg:LLE_estimate}
\end{algorithm}

In Figure \ref{fig:g_vs_lle_thresh}, we verify numerically that there is a monotonic relationship between $g$ and the LLE of the resulting system, and that the min-max range for 20 seeds is small.
\begin{figure}
    \centering
    \includegraphics[width=0.5\linewidth]{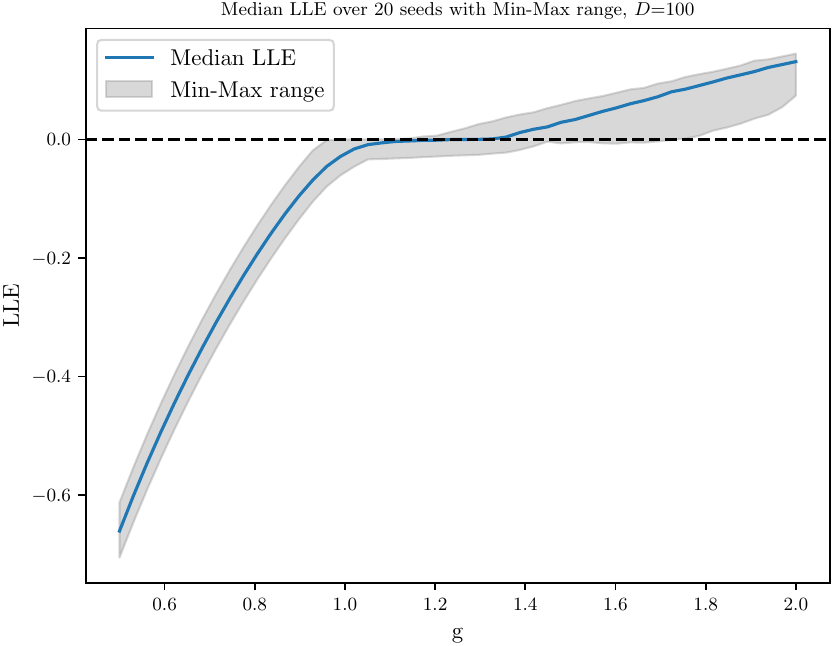}
    \caption{\textbf{Robust relationship in mean field RNN between variance parameter $g$ and LLE of the system.} For 20 seeds, we observe a robust and non-decreasing relationship between the scalar parameter $g$ and the LLE of the resulting mean-field RNN. The plot above is made for $50$ different values of $g$ from $0.5$ to $2.0$ (linearly spaced). We estimate the LLE over a sequence length of $T=9999$.}
    \label{fig:g_vs_lle_thresh}
\end{figure}
Accordingly, when making Figure \ref{fig:thresh} (Center), we use the monotonic relationship between $g$ and the LLE from Figure \ref{fig:g_vs_lle_thresh} to map the average number of DEER steps (over 20 different seeds) needed for convergence for different values of $g$ to the appropriate value of the LLE. We use 50 values of $T$ from 9 to 9999 (log spaced) to make Figure \ref{fig:thresh} (Center). We highlight $T=1000$ in Figure \ref{fig:thresh} (Right).

Overall, in \Cref{fig:thresh}, we observe a striking correspondence between the conditioning of the optimization problem (represented by~$-\log \tilde{\mu}$, where $\tilde{\mu}$ is the lower bound for $\mu$ from \Cref{theorem:PL-LLE}) and the number of steps DEER takes to converge.
This relationship holds across the range of LLEs, $\lambda$, and sequence lengths, $T$.
There is a rapid threshold phenomenon around $\lambda=0$, which divides predictable from unpredictable dynamics, precisely as expected from \Cref{theorem:PL-LLE}.
The correspondence between~$-\log \tilde{\mu}$ and the number of optimization steps needed for convergence can be explained by DEER iterates approaching the basin of quadratic convergence with linear rate.

\paragraph{Wallclock time and other optimizers}

Our findings about the conditioning of the merit landscape apply to any solver. To show the generality of \Cref{prop:global_convergence}, we parallelize the sequential rollout of the mean field RNN with other optimizers like quasi-Newton and gradient descent, and observe that the number of steps these optimizers take to converge also scales with the LLE. We also record wallclock times on an H100, and observe that DEER is faster than sequential by an order of magnitude in predictable settings, but slower by an order of magnitude in unpredictable settings. We summarize this experiment in \Cref{fig:gd_and_wallclock}.

\begin{figure}[ht]
    \centering
    \includegraphics[width=1.0\textwidth]{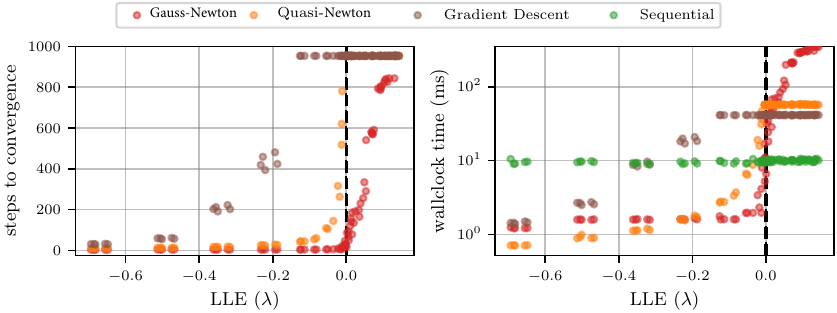}
    \caption{\textbf{Convergence rates and wallclock time for many optimizers.} We supplement the mean-field RNN experiment by also considering quasi-Newton and gradient descent methods \textbf{(top)}, and recording wallclock time, including for sequential evaluation \textbf{(bottom)}}
    \label{fig:gd_and_wallclock}
\end{figure}

 In the top panel of \Cref{fig:gd_and_wallclock}, we observe that the number of steps for gradient descent and quasi-DEER to converge also scales monotonically with the LLE, as we expect from \Cref{theorem:PL-LLE}. DEER (Gauss-Newton) converges in a small number of steps all the way up to the threshold between predictability and unpredictability ($\lambda=0$). Intuitively, the performance of the other optimizers degrades more quickly as unpredictability increases because quasi-Newton and gradient descent use less information about the curvature of the loss landscape.

Even though gradient descent was slower to converge in this setting, we only tried gradient descent with a fixed step size. An advantage of a first-order method like gradient descent over a second-order method like Gauss-Newton (DEER) is that the first-order method is embarrassingly parallel (and so with sufficient parallel processors, the update runs in constant time), while DEER and quasi-DEER use parallel scans (and so the update runs in $O(\log T)$ time). Exploring accelerated first-order methods like Adam \citep{adam}, or particularly Shampoo \citep{shampoo} or SOAP \citep{vyas2024soap} (which are often preferred in recurrent settings like \cref{eq:ssm})---or in general trying to remove the parallel scan---are therefore very interesting directions for future work.

Sequential evaluation of \cref{eq:ssm} can also be thought of as block coordinate descent on the merit function $\mathcal{L}(\mathbf{s})$, where the block $s_t \in \mathbb{R}^D$ is optimized at optimization step $(t)$. The optimization of each block is a convex problem: simply minimize $\| s_t - f(s_{t-1}^*) \|_2^2$, or equivalently set $s_t = f(s_{t-1}^*)$. As sequential evaluation will always take $T$ steps to converge, we do not include it in the top panel of \Cref{fig:gd_and_wallclock}.

In the bottom panel of \Cref{fig:gd_and_wallclock}, we also report the wallclock times for these algorithms to run (our experiments are run on an H100 with 80 GB onboard memory). We observe that the run time of sequential evaluation (green) is effectively constant with respect to $\lambda$. We observe that in the predictable setting, DEER is an order of magnitude faster than sequential evaluation, while in the unpredictable regime, DEER is 1-2 orders of magnitude slower than sequential evaluation. This importance of using parallel evaluation only in predictable settings is a core practical takeaway from our theoretical contributions.

We run the experiment in \Cref{fig:gd_and_wallclock} on a smaller scale than the experiment in \Cref{fig:thresh} (Right). In \Cref{fig:gd_and_wallclock}, we consider 5 random seeds for 16 values of $g$ equispaced between $0.5$ and $2.0$. Each wallclock time reported is the average of 5 runs for the same seed. We use a batch size of 1. While DEER (Gauss-Newton) and quasi-DEER effectively do not have a step size (they use a step size of $1$ always). For each value of $g$, we ran gradient descent with the following set of step sizes $\alpha$: $0.01, 0.1, 0.25, 0.5, 0.6, 0.7, 0.8, 0.9, \text{and } 1.0$. For each value of $g$, we then pick the step size $\alpha$ that results in the fastest convergence of gradient descent. For the smallest value of $g=0.5$, we use $\alpha=0.6$; for $g=0.6$, we use $\alpha=0.5$; and for all other values of $g$, we use $\alpha=0.25$. Future work may investigate more adaptive ways to tune the step size $\alpha$, or to use a learning rate schedule.

We use a larger tolerance of $\nicefrac{\mathcal{L}(\mathbf{s})}{T} \leq 10^{-4}$ to declare convergence than in the rest of the paper (where we use a tolerance of $10^{-10}$) because gradient descent often did not converge to the same degree of numerical precision as sequential, quasi-DEER, or DEER. However, this is a per time-step average error on the order of $10^{-4}$, in a system where $D=100$ and each state has current on the order of $1$. Nonetheless, it is an interesting direction for future work to investigate how to get gradient descent to converge to greater degrees of numerical precision in these settings; and, in general, how to improve the performance of all of these parallel sequence evaluators in lower numerical precision.

\subsection{DEER can converge quickly for predictable trajectories passing through unpredictable regions}\label{ssc:2well}

\begin{figure}[ht]
    \centering
    \includegraphics[width=1.0\textwidth]{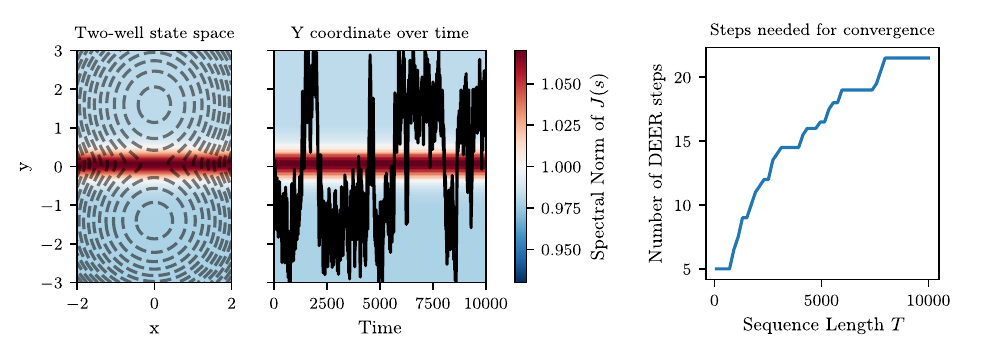}
    \caption{DEER converges quickly for Langevin dynamics in a two-well potential. \textbf{(Left)} An illustration of the two-well potential state space in $D=2$. We superimpose a contour plot of the potential on a color scheme showing the spectral norm of the dynamics Jacobian (blue indicates stability, red instability). \textbf{(Center)} A trace plot for the $y$-coordinate. The LLE of the system is $-0.0145$. \textbf{(Right)} We observe that this system, which has negative LLE, enjoys sublinear scaling in the sequence length $T$ in the number of DEER iterations needed to converge. We plot the median number of DEER steps to convergence over 20 seeds.}
    \label{fig:two_well}
\end{figure}

DEER may still converge quickly even if the system is unpredictable in certain regions.
As long as the system is predictable on average, as indicated by a negative LLE, DEER can still converge quickly.
This phenomenon is why we framed \Cref{theorem:PL-LLE} in terms of the LLE $\lambda$ and burn-in constants $a$, as opposed to a weaker result that assumes the system Jacobians have singular values less than one over the entire state space.

To illustrate, we apply DEER to Langevin dynamics in a two-well potential (visualized in~\Cref{fig:two_well} for $D = 2$). The dynamics are stable within each well but unstable in the region between them. Despite this local instability, the system’s overall behavior is governed by time spent in the wells, resulting in a negative LLE and sublinear growth in DEER’s convergence steps with sequence length~$T$ (Figure~\ref{fig:two_well}, right subplot). 

We form the two-well potential for our experiment in Section \ref{sec:experiments} as a sum of two quadratic potentials.
Concretely, we define the potential $\phi$ as the negative log probability of the mixture of two Gaussians, where one is centered at $(0,-1.4)$ and the other is centered at $(0,1.6)$, and they both have diagonal covariance.
In Langevin dynamics \citep{Langevin1908Eng, Friedman2022Langevin} for a potential $\phi$, the state $s_t$ evolves according to
\begin{equation}\label{eq:langevin}
    s_{t+1} = s_t - \epsilon \nabla \phi(s_t) + \sqrt{2 \epsilon} w_t,
\end{equation}
where $\epsilon$ is the step size and $w_t \stackrel{\mathrm{iid}}{\sim} \mathcal{N}(0,I_D)$. In our experiments, we use $\epsilon=0.01$.
\footnote{Notice that this is a discretization (with time step $\epsilon$) of the Langevin Diffusion SDE $ds(t) = - \nabla \phi(s(t)) dt + \sqrt{2} dw(t)$, where $w(t)$ is Brownian motion \citep{Higham2001AlgorithmicSDE}.}
Accordingly, the Jacobians of the dynamics (those used in DEER) take the form
\begin{equation*}
    \A_t = I_D - \epsilon \nabla^2 \phi (s_t).
\end{equation*}
As a result, the dynamics are contracting in regions where $\phi$ has positive curvature (inside of the wells, where the dynamics are robustly oriented towards one of the two basins) and unstable in regions where $\phi$ has negative curvature (in the region between the two wells, where the stochastic inputs can strongly influence which basin the trajectory heads towards). We observe that even though there are regions in state space where the dynamics are not contracting, the resulting trajectories have negative LLE. Accordingly, in \Cref{fig:two_well} (Right), we observe that the number of DEER iterations needed for convergence scales sublinearly, as the LLE of all the intermediate DEER trajectories after initialization are negative. These results demonstrate that if the DEER optimization path remains in contractive regions on average, we can still attain fast convergence rates as the sequence length grows.

Moreover, a further added benefit of our theory is demonstrated by our choice of initialization of DEER. Both \citep{deer2024} and \citep{gonzalez2024scalable} exclusively initialized all entries of $\mathbf{s}^{(0)}$ to zero.
However, such an initialization can be extremely pathological if the region of state space containing $\mathbf{0}$ is unstable, as is the case for the particular two well potential we consider.
For this reason, we initialize $\mathbf{s}^{(0)}$ at random (as iid standard normals).

An important consequence of this experiment is that it shows that there are systems that are not globally contracting that nonetheless enjoy fast rates of convergence with DEER. This fact is important because a globally contractive neural network may not be so interesting/useful for classification, while a locally contracting network could be.

Furthermore, in this experiment we show empirically that Langevin dynamics can have negative LLE (cf. \Cref{fig:two_well}). This result suggest that the Metropolis-adjusted Langevin algorithm (MALA), a workhorse of MCMC, may also be predictable in settings of interest, including multimodal distributions.
\citet{pmcmc} provides even stronger empirical evidence that MALA may be predictable for many target distributions of interest.

\begin{figure}
    \centering
    \includegraphics[width=0.95\linewidth]{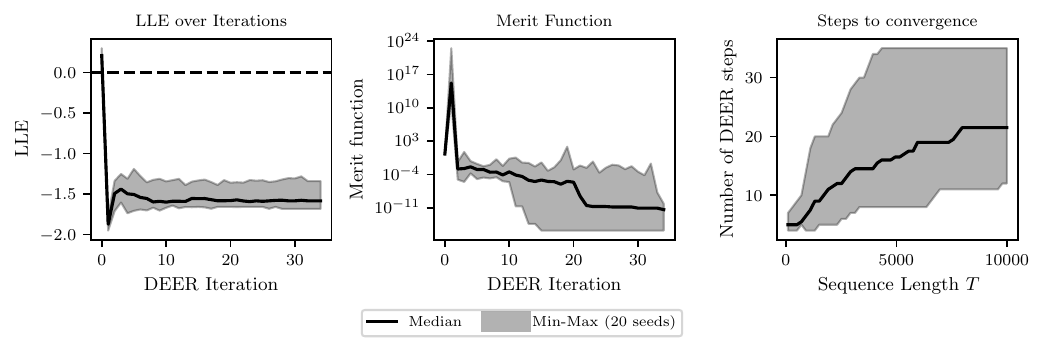}
    \caption{In this plot, we provide additional information about the behavior of DEER when rolling out Langevin dynamics on a two-well potential. \textbf{(Left)} We observe that across 20 random seeds (including different Langevin dynamics trajectories), the LLE for intermediate DEER iterations becomes negative after the first iteration. Consequently, we observe that the merit function \textbf{(Center)} experiences a spike on the very first DEER iteration (following initialization, which was the only trajectory with positive LLE), before trending towards convergence. As the system spends most of its time in contracting regions, we observe \textbf{(Right)} that the number of DEER iterations needed for convergence scales sublinearly with the sequence length $T$. We plot the min-max range for 20 seeds, and observe that even out of 20 seeds, the maximum number of DEER iterations needed to converge on a sequence length of $T=10,000$ is around $35$.}
    \label{fig:2well_app}
\end{figure}

\subsection{Application: Chaotic Observers}

\begin{table}[ht]
\centering
\footnotesize
\caption{Comparison of system and observer LLEs and number of DEER steps for $T=30,000$ and Euler discretization step size $\Delta t = 0.01$. }
\vspace{.5em}
\begin{tabular}{lcccc}
\toprule
\textbf{System} & \makecell{\textbf{LLE} \\ \textbf{(System)}} & \makecell{\textbf{LLE} \\ \textbf{(Observer)}} & \makecell{\textbf{DEER Steps} \\ \textbf{(System)}} & \makecell{\textbf{DEER Steps} \\ \textbf{(Observer)}} \\
\midrule
ABC & 0.16 & -0.08 & 4243 & 3 \\
Chua's Circuit & 0.02 & -1.37 & 697 & 14  \\
Kawczynski-Strizhak & 0.01 & -3.08 & 29396 & 2  \\
Lorenz & 1.02 & -6.28 & 30000 & 3 \\
Nosé--Hoover Thermostat & 0.02 & -0.13 & 29765 & 3 \\
Rössler & 0.01 & -0.07 & 29288 & 7  \\
SprottB & 0.20 & -0.39 & 29486 & 2 \\
Thomas & 0.01 & -3.07 & 12747 & 7  \\
Vallis El Niño & 0.58 & -2.48 & 30000 & 3  \\
\bottomrule
\end{tabular}
\label{tab:lle_deer_comparison}
\end{table}

Finally, we demonstrate a practical application of our theory in the efficient parallelization of chaotic observers. 
Observers are commonly used to reconstruct the full state of a system from partial measurements \citep{luenberger1979introduction,simon2006optimal}.
On nine chaotic flows from the \texttt{dysts} benchmark dataset \citep{dysts}, Table~\ref{tab:lle_deer_comparison} shows that while DEER converges prohibitively slowly on chaotic systems, it converges rapidly on stable observers of these systems, in accordance with our theory that predictability implies parallelizability.

We design observers for these systems using two standard approaches: (1) by directly substituting the observation into the observer dynamics, following \citet{pecora1990synchronization}, or (2) by incorporating the observation as feedback through a gain matrix, as in \citet{zemouche2006observer}. We then apply DEER to compute the trajectories of both the original chaotic systems and their corresponding stable observers. As anticipated by \Cref{theorem:PL-LLE}, the chaotic systems exhibit slow convergence—often requiring the full sequence length—whereas the stable observers converge rapidly.

As with the two-well experiment, we initialize our guess for $s_t^{(0)}$ as iid standard normals.

\section{Discussion}\label{sec:conclusion}
In this chapter, we provide the first precise characterization of the inherent difficulty of the optimization problem solved by parallel Newton methods.
The conditioning of the merit landscape determines if parallelization will
be faster in practice than sequential evaluation.
We show that the conditioning of the optimization problem is governed by the predictability of the underlying dynamics. We translate this insight into worst-case performance guarantees for specific optimizers, including Gauss–Newton (DEER). Our main takeaway is:
\emph{Predictable dynamics yield well-conditioned merit functions, enabling rapid convergence.
Unpredictable dynamics produce flat or ill-conditioned merit landscapes, resulting in slow convergence or numerical failure.}

\subsection{Related Work}

While \citet{deer2024} and \citet{deeppcr} introduced parallel Newton methods, they did not prove their global convergence. \Cref{prop:global_convergence} proves global convergence, though only with worst-case bounds of $T$ optimization steps. These prior works did not address the relationship between system dynamics and conditioning, or establish global linear convergence rates.

Global convergence rates for Gauss-Newton are rare, despite the breadth of optimization literature \citep{NocedalWright, boyd2004convex, zhao2024GN, Nesterov2018}. Theorem~\ref{theorem:deer_converges_globally} establishes global convergence with linear rate for Gauss-Newton by leveraging our specific problem structure, though similar results have existed for \emph{local} linear convergence \citep{ortega1970iterative}, most famously the Newton-Kantorovich theorem \citep{Kantorovich1948}. 

As discussed in \Cref{sec:longer_lit}, parallel-in-time methods, including multigrid methods, have a long history. Of particular relevance to this work, \citet{danieli2021multigrid} and \citet{de2025multigrid} study the CFL number for determining the usefulness of multigrid systems. More closely connecting the theory and practice of multigrid method with parallel Newton methods is a very interesting direction for future work. For example, \citet{jiang2026layer} uses multigrid methods to parallelize the evaluation and training of transformers over their layers.
More recently, several works have parallelized diffusion models via fixed-point iteration, including worst-case guarantees of $T$ steps \citep{shih2023parallel, tang2024accelerating, selvam2024selfrefining} as well as polylogarithmic rates in $T$ \citep{anari2024fast, chen2025accelerating}.
Crucially, prior work has not focused on the merit function, which we can define for any discrete-time dynamical system and optimizer.

To our knowledge, no prior work connects the LLE of a dynamical system to the conditioning of the corresponding optimization landscape, as established in \Cref{theorem:PL-LLE}. In particular, we showed that systems with high unpredictability yield poorly conditioned (i.e., flat) merit functions, linking dynamical instability to optimization difficulty in a geometrically appealing way.

The centrality of parallel sequence modeling architectures like transformers \citep{vaswani2017attention}, deep SSMs \citep{gu2022s4, smith2023s5, mamba}, and linear RNNs \citep{yang2024parallelizing} in modern machine learning underscores the need for our theoretical work. \citet{merrill2024illusion} explored the question of parallelizability through the lens of circuit complexity, analyzing when deep learning models can solve structured tasks in constant depth. Their focus complements ours, and suggests an opportunity for synthesis in future work \citep{liu2025serial}.

\subsection{Implications} 
Our work unlocks three key~implications for nonlinear state space models:
\begin{itemize}
    \item \emph{identifying} predictable systems as excellent candidates for parallelization;
    \item \emph{designing} sequence modeling architectures to be predictable if we want to parallelize them; and
    \item \emph{interpreting} predictable SSMs as an $\mathcal{O}(\log T)$ stack of LDSs, coupled nonlinearly in "depth".
\end{itemize}

\paragraph{Identifying predictable systems for parallelization} This chapter provides a principled way to determine, \textit{a priori}, whether optimization-based parallelization of a given model is practical. In many robotic or control systems, particularly ones that are strongly dissipative, this insight can enable orders-of-magnitude speed-ups on GPUs~\citep{kolter2019learning,beik2024neural,jaffe2024learning,fan2022learning,sindhwani2018learning,sun2021learning,tsukamoto2021contraction,revay2023recurrent, davydov2024perspectives}. 

For example, \citet{pmcmc} develops and leverages quasi-Newton methods to parallelize Markov Chain Monte Carlo over the sequence length, attaining order-of-magnitude speed-ups. These speed-ups occurred because the quasi-Newton methods converged quickly in the settings considered. Suggestively, MCMC chains are contractive in many settings \citep{BouRabeeEberleZimmer2020, MangoubiSmith2021, DiaconisFreedman1999IteratedRandomFunctions}. 
A precise characterization of what makes an MCMC algorithm and target distribution predictable would provide useful guidance for when one should aim to parallelize MCMC over the sequence length.
Providing precise theoretical justification for parallelizing MCMC over the sequence length is an exciting avenue for future work.

\paragraph{Designing predictable sequence mixers} Our results impact architecture design. When constructing nonlinear dynamical systems in machine learning—such as novel RNNs—parallelization benefits are maximized when the system is
made predictable. Given the large body of work on training stable RNNs \citep{hochreiter1991, lstm,sutskever2013training,miller2019stable,erichson2020lipschitz,kozachkov2022rnns,sashimi2022, krotov2023new,engelken2023gradient,orvieto2023resurrecting,zucchet2024stability, farsang2025scaling}, many effective techniques already exist for enforcing stability or predictability during training. A common approach is to \textit{parameterize} the model’s weights so
that the model is always stable.

For example, \citet{farsang2025scaling} and \citet{danieli2025pararnn} develop nonlinear SSMs and train them with DEER, with \citet{danieli2025pararnn} scaling to very strong performance as a 7B parameter language model. Both  highlight the fast convergence of DEER, which is a result of the contractivity of their architectures: \citet{farsang2025scaling} parameterizes their LrcSSM to be contractive, while \citet{danieli2025pararnn} clip the norms of their weight matrices. Ensuring a negative largest Lyapunov exponent through parameterization guarantees parallelizability for the entire training process, enabling faster and more scalable learning. Our contribution provides a theoretical foundation for why stability is essential in designing efficiently parallelizable nonlinear SSMs.

\paragraph{Interpreting SSMs as logarithmic-depth stacks of LDSs} Finally, our results have implications for the \emph{interpretation} of stable nSSMs.
Because each Gauss-Newton step in DEER is a linear dynamical system (LDS), and because we prove in \Cref{theorem:deer_converges_globally} that DEER converges in $\mathcal{O}(\log T)$ steps for a stable nSSM, we can interpret a stable nSSM as being equivalent to a "stack" of $\mathcal{O}(\log T)$ LDSs coupled by nonlinearities.
For example, if we have a nonlinear RNN as a sequence mixing layer, we can interpret this single layer with nonlinear dynamics as a hierarchical composition of linear state-space layers (SSMs), or equivalently, linear dynamical system (LDS) layers. Each layer can be evaluated in $\mathcal{O}(\log T)$ time with a parallel scan, and the total number of layers required scales as $\mathcal{O}(\log T)$. This perspective shows that nonlinear temporal dependencies can be captured through a logarithmic-depth stacking of linear dynamics. Figure \ref{fig:nssm_dssm_equivalence} provides a schematic illustration of this equivalence. 

More explicitly, each iteration of DEER is given by the LDS in \cref{eq:deer_lds}.
Therefore, we can interpret each "iteration" $(i)$ of DEER as a sequence-mixing "layer" $(i)$, where the sequence-mixing layer is an input-dependent switching linear dynamical system, like in Mamba \cite{mamba}. The input to "layer" $(i+1)$ is the state trajectory of the immediately preceding "iteration" or "layer" $(i)$.
Because we prove that DEER converges linearly in \Cref{theorem:deer_converges_globally}, it follows that a contractive nSSM can be simulated in $\mathcal{O}(\log T)$ LDS layers of the form shown in \cref{eq:deer_lds}, assuming the initial error grows polynomially in the sequence length.

\begin{figure}
    \centering
    \includegraphics[width=1.0\linewidth]{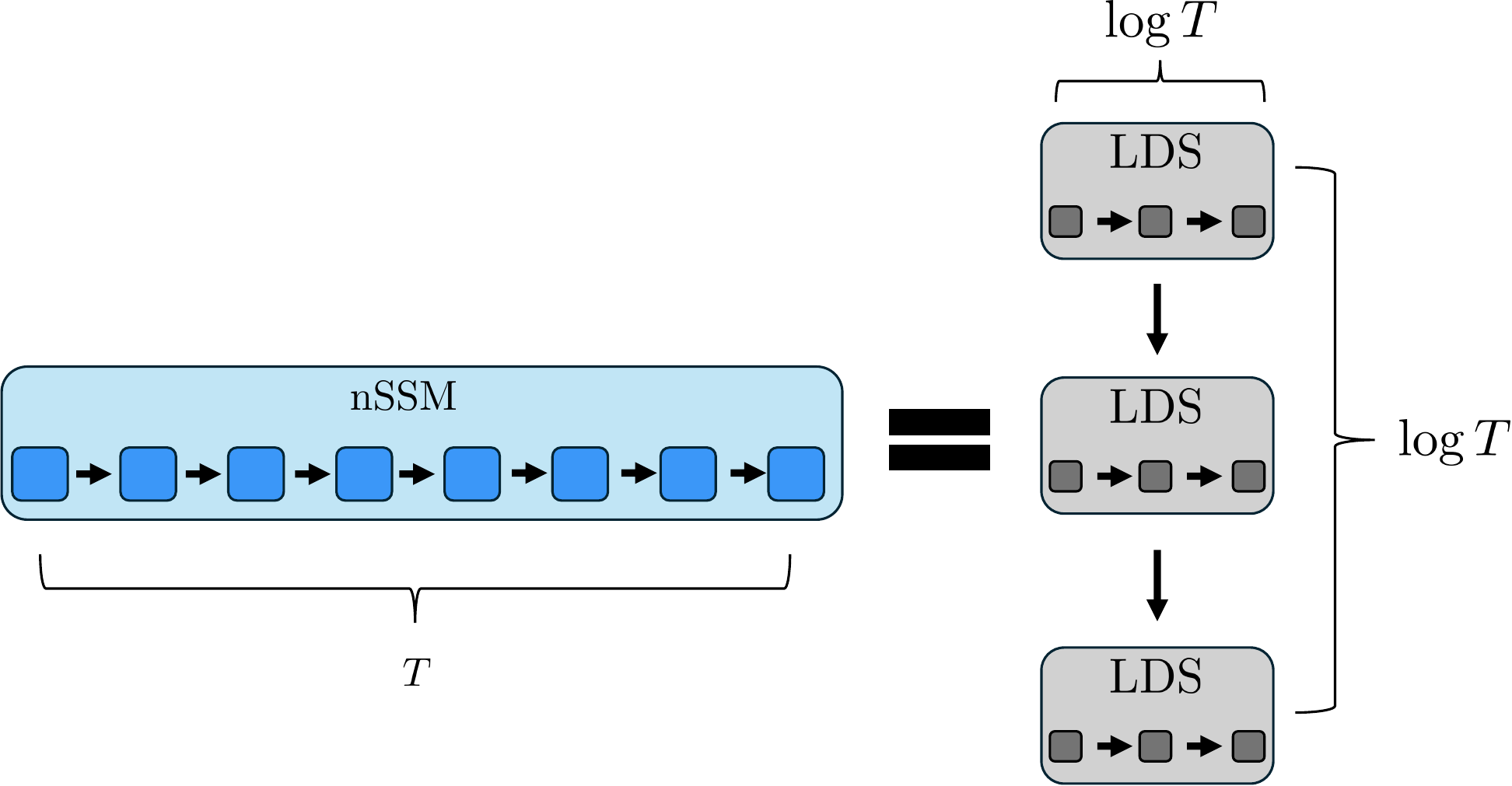}
    \caption{\textbf{Equivalence between a contractive nSSM and an $\mathcal{O}(\log T)$ stack of linear state-space models.} Contractivity implies that nonlinear dynamics can be decomposed into a hierarchy of $\mathcal{O}(\log T)$ layers of \emph{linear} SSMs, each of which can be evaluated in $\mathcal{O}(\log T)$ time by a parallel scan.}
    \label{fig:nssm_dssm_equivalence}
\end{figure}

\section{Extensions}

In this chapter, we focused primarily on the convergence rates of DEER, showing how the predictability of the dynamics affects the conditioning of $\J$.
However, as we discussed in \Cref{ssc:approx}, we can in general use any quasi- method that uses an approximate form of $\tilde{\A}_t$ to approximate the dynamics Jacobians $\A_t$.
A natural question is: how do such quasi approximations affect the convergence rates of these methods? 
Empirically, in the results presented in this thesis so far, such quasi approximations appear to slow convergence rates, but can we provide a quantitative and rigorous understanding of the quasi-convergence rates?

In the next chapter, we do just that---provide an analysis of the convergence rates of quasi-method---based on a combination of our work in \Cref{sec:cond} characterizing the conditioning of $\J$ and a convergence rate analysis presented in Proposition 4 of \citet{parasolver25}.
\chapter{Convergence Rates of quasi-Newton methods for Parallelizing SSMs}\label{ch:quasi_convergence}

In this last main chapter of this thesis, we tie up two loose ends relating to:
\begin{itemize}
    \item what do other members of the ungulate (quasi-Newton) family for parallelizing nSSMs look like and;
    \item what are their convergence rates?
\end{itemize}

In more detail, in \Cref{ssc:approx} we discussed how in principle any approximate Jacobians $\Tilde{A}_t$ could be substituted in for the dynamics Jacobians $\A_t$ in the LDS that comprises each DEER iteration (cf. \cref{eq:qdeer_lds}). Any such approximation still converges globally by \Cref{prop:global_convergence}, and forms a rich family of quasi-DEER methods. A natural question is: what updates do various Jacobian approximations $\Tilde{A}_t$ give rise to? 

We answer this question in \Cref{sec:lds_parr} by formulating a unifying framework of quasi-DEER updates, showing in particular that common fixed-point iterations like Jacobi \citep{song2021accelerating} and Picard \citep{shih2023parallel} arise from simple approximations to $\Tilde{A}_t$. While the general connections between Picard and Newton iterations and their convergence rates for solving nonlinear equations have long been known by the applied mathematics community~\citep{ortega1970iterative}, our contribution is to make these connections explicit in the setting of parallelizing nSSMs, a problem of central importance in machine learning.  This perspective clarifies the properties of each method and delineates their applicability across different problem regimes.

In \Cref{sec:conv_rates}, we further show the utility of this unifying framework by leveraging it to highlight the core properties controlling the convergence rates of these different methods. We do so by building on a nice decomposition of the convergence rates of Picard iterations proposed in Proposition 4 of \citet{parasolver25}. Our unifying framework shows that this result generalizes immediately over our ungulate family. Furthermore, we build on our work from \Cref{ch:predictability}, to show how the dynamical properties of the underlying nSSMs and the quasi-DEER approximation we use allow for further bounds and deeper analysis of convergence rates of the different fixed-point methods.

\section{Unifying fixed-point iterations as quasi-DEER methods}\label{sec:lds_parr}

In this section, we propose a unifying framework for parallelizing the evaluation of nonlinear SSMs (\cref{eq:ssm}) using linear dynamical systems (LDSs). 
In \Cref{tab:fxd_pt_sum} we show how both the parallel Newton and quasi-Newton methods we have discussed in this paper, as well as Picard and Jacobi iterations, all fit into this unifying framework. 

\begin{table}[t]
\caption{\textbf{Summary of fixed-point iteration schemes as linear dynamical systems}. We list the methods by the order of their approximation.
While higher order methods may converge in fewer iterations, each iteration may be more costly. For example, the prefix sum and parallel scan have $\mathcal{O}(\log T)$ depth, while a single Jacobi iteration has constant depth. For all the methods, each iteration is an LDS, i.e. they can be written in the form of \cref{eq:quasi_lds} where $\tilde{A}_{t+1}$ is the transition matrix.
By \Cref{prop:global_convergence}, these methods are guaranteed to converge in at most $T$ iterations. "Order" refers to the highest number of derivatives taken: Newton and quasi-Newton methods use first derivatives, while Picard and Jacobi methods do not use derivatives of $f_t$. 
}
\label{tab:fxd_pt_sum}
\centering
\small
\renewcommand{\arraystretch}{1.8}
\resizebox{\textwidth}{!}{
\begin{tabular}{@{} l c c c @ {}}
\toprule
\textbf{ Fixed-point method} & \textbf{Order} & \textbf{Transition matrix} $\boldsymbol{\tilde{A}_{t+1}}$ & \textbf{Parallelization} \\
\midrule
Newton & first-order & $\dfrac{\partial f_{t+1}}{\partial s_t} (s_t^{(i)})$ & \makecell{Parallel Scan \\ \small{(dense matrix multiplication)}} \\
Quasi-Newton & quasi first-order & $\operatorname{diag}\!\left[\dfrac{\partial f_{t+1}}{\partial s_t} (s_t^{(i)}) \right ]$ & \makecell{Parallel Scan \\ \small{(elementwise vector multiplication)}} \\
Picard & zeroth-order & $I_D$ & \makecell{Prefix Sum \\ \small{(vector addition)}} \\
Jacobi & zeroth-order & $0$ & \makecell{Map\\ \small{(embarrassingly parallel)}} \\
\bottomrule
\end{tabular}
}
\end{table}

Having discussed Newton and quasi-Newton methods at length in \Cref{sec:deer} and \Cref{ch:scalable}, we will introduce Picard and Jacobi iteration in this section.
Throughout, we will use the fixed-point operator notation $\mathcal{A}(\cdot): \mathbb{R}^{TD} \mapsto \mathbb{R}^{TD}$ introduced in \Cref{ssc:fxd_pts}.

\subsection{Picard iterations}

\citet{shih2023parallel} uses Picard iteration to parallelize sampling in diffusion models.
In fact, Picard iterations are often used in the context of evaluating differential equations, where
\begin{equation}\label{eq:ode}
    \dot{s} = g(s, t).
\end{equation}
After Euler discretization with step size $\Delta$, the continuous time \cref{eq:ode} becomes the discrete-time recursion,
\begin{equation}\label{eq:picard_setup}
    s_{t+1} = s_t + g(s_t, t) \cdot \Delta.
\end{equation}
The Picard fixed-point iteration, $\mathbf{s}_{1:T}^{(i+1)} = \mathcal{A}_{P}(\mathbf{s}_{1:T}^{(i)})$, is then given by,
\begin{equation}\label{eq:picard_shih}
    s_{t+1}^{(i+1)} = s_0 +  \sum_{\tau=0}^t g(s_\tau^{(i)}, \tau) \cdot \Delta.
\end{equation}
Because Picard iterations do not use any derivatives of the discrete-time recursion, we call them \emph{zeroth-order} fixed-point iterations.

\citet{shih2023parallel} proves by induction that for any dynamical system given by \cref{eq:picard_setup}, the fixed-point iterations given by \cref{eq:picard_shih} will converge to the true trajectory in at most $T$ iterations. The similarity of that proof and its techniques to \Cref{prop:global_convergence} begged the question as to how Picard and parallel Newton iterations related to each other. Our first result shows that Picard iterations are in fact a special case of quasi-DEER, where we approximate the Jacobian of the dynamics function by the identity matrix.

\begin{proposition}\label{prop:picard}
    The Picard iteration operator $\mathcal{A}_P$ given by \cref{eq:picard_shih} is a special case of an LDS, \cref{eq:qdeer_lds}, where the transition matrix is the identity,
    \begin{equation*}
        \tilde{A}_{t} = I_D.
    \end{equation*}
\end{proposition}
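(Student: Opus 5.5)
The plan is a direct computation. We identify the dynamics function $f_t$ for the Euler-discretized system, substitute $\tilde{A}_t = I_D$ into the general quasi-DEER update \cref{eq:qdeer_lds}, and show that unrolling the resulting linear recursion reproduces \cref{eq:picard_shih}.

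First, from \cref{eq:picard_setup} the dynamics in the SSM form \cref{eq:ssm} are $f_{t+1}(s_t) = s_t + g(s_t,t)\Delta$. With $\tilde{A}_{t+1} = I_D$, the update \cref{eq:qdeer_lds} (re-indexed so that $s_{t+1}$ is on the left) reads
\begin{equation*}
s_{t+1}^{(i+1)} = f_{t+1}(s_t^{(i)}) + \bigl(s_t^{(i+1)} - s_t^{(i)}\bigr) = s_t^{(i+1)} + g(s_t^{(i)}, t)\,\Delta .
\end{equation*}
The $s_t^{(i)}$ terms cancel. This cancellation is the only real content: the identity approximation exactly removes the dependence on the previous iterate's state, except through $g$.

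Second, this is an LDS with identity transition and input $b_{t+1} = g(s_t^{(i)},t)\Delta$. These inputs depend only on the previous iterate, so they can all be computed in parallel. Unrolling by induction on $t$ from the fixed initial condition $s_0^{(i+1)} = s_0$ gives
\begin{equation*}
s_{t+1}^{(i+1)} = s_0 + \sum_{\tau=0}^{t} g(s_\tau^{(i)},\tau)\,\Delta ,
\end{equation*}
which is precisely \cref{eq:picard_shih}. This also explains why the parallelization column in \Cref{tab:fxd_pt_sum} lists a prefix sum: a parallel scan whose matrix part is trivially $I_D$ reduces to cumulative vector addition.

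I expect no serious obstacle. The only care needed is index bookkeeping between the two conventions ($f_t$ acting on $s_{t-1}$ versus the $t \to t+1$ indexing of \cref{eq:picard_setup}) and treating $s_0$ as fixed across iterations. As a consequence, global convergence of Picard in at most $T$ iterations follows immediately from \Cref{prop:global_convergence}, recovering the result of \citet{shih2023parallel}.
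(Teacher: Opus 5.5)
Your proposal is correct and is essentially the paper's proof: both set $f_{t+1}(s_t) = s_t + g(s_t,t)\,\Delta$ and use the cancellation of $s_t^{(i)}$ to show that the Picard update coincides with \cref{eq:qdeer_lds} when $\tilde{A}_{t+1} = I_D$. The only difference is direction: the paper starts from \cref{eq:picard_shih}, implicitly differences it to get $s_{t+1}^{(i+1)} = s_t^{(i+1)} + g(s_t^{(i)},t)\,\Delta$, and then adds and subtracts $s_t^{(i)}$, whereas you start from the LDS and explicitly unroll it from the fixed $s_0$, which makes that implicit differencing step precise.
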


\begin{proof}
    Define $f_{t+1}(s_t) := s_t + g(s_t, t) \cdot \Delta$.
    Then, from \cref{eq:picard_shih} it follows that
    \begin{align*}
        s_{t+1}^{(i+1)} & = s_t^{(i+1)} + g(s_t^{(i)}, t) \cdot \Delta\\
        & = s_t^{(i+1)} - s_t^{(i)} + s_t^{(i)} + g(s_t^{(i)}, t) \cdot \Delta \\
        & = f_{t+1}(s_t^{(i)}) + (s_t^{(i+1)} - s_t^{(i)}).
    \end{align*}
    This is exactly of the form of the generic linear recursion shown in \cref{eq:quasi_lds}, with $\tilde{A}_{t} = I_D$.
\end{proof}

An important consequence of \Cref{prop:picard} is that like Newton iterations and quasi-Newton iterations, Picard iterations can also be cast as an LDS. In Newton iterations, the full Jacobian $\nicefrac{\partial f_{t}}{\partial s_{t-1}}$ is used in the LDS; in quasi-Newton iterations, the diagonal approximation $\mathrm{diag}[\nicefrac{\partial f_{t}}{\partial s_{t-1}}]$ is used; and in Picard iterations, the identity $I_D$ is used. The Picard iteration is more compute and memory efficient than even quasi-Newton, but is also generally a less faithful approximation and takes more iterations to converge, unless the Jacobian is well-approximated by the identity.

\subsection{Jacobi iterations}

Yet another seemingly different fixed-point method are Jacobi iterations \citep{ortega1970iterative}, which were used by \citet{song2021accelerating} to accelerate computation in a variety of settings in machine learning, such as feedforward networks with skip connections.
Jacobi iterations are also a zeroth-order fixed-point method, and are commonly used to solve systems of multivariate nonlinear equations of the form,
\begin{equation*}
    h_t(\mathbf{s}_{1:T}) = 0 \quad \forall t \in \{1, \ldots, T\}.
\end{equation*}
Instead, the Jacobi fixed-point operator,~$\mathbf{s}_{1:T}^{(i+1)}=\mathcal{A}_J(\mathbf{s}_{1:T}^{(i)})$, solves the following system of $T$ \emph{univariate} equations \emph{in parallel} to obtain $\mathbf{s}_{1:T}^{(i+1)}$,
\begin{align}
    h_t^{(i)} \big(x_1^{(i)}, \ldots, x_{t-1}^{(i)}, \, x_t, \, x_{t+1}^{(i)}, \ldots, x_T^{(i)}\big) = 0 
    \quad \forall t \in \{1, \ldots, T\}
    \label{eq:jacobi}
\end{align}

\citet{song2021accelerating} considers in particular the problem of solving recurrence relations of the form ${s_{t+1} = f_{t+1}(\mathbf{s}_{1:t})}$, and proves that, for such a system,
Jacobi iterations converge in at most $T$ iterations.
This result is directly analogous to \Cref{prop:global_convergence}. 
In fact, in the context of iteratively applying LDSs to parallelize Markovian state space models, we prove that Jacobi iterations are a type of degenerate quasi-Newton iterations, where we "approximate" the Jacobian of the dynamics function by zero.

\begin{proposition}\label{prop:jacobi}
    When applied to the Markovian state space model in \cref{eq:ssm}, the Jacobi iteration operator $\mathcal{A}_J$ specified by \cref{eq:jacobi} is a special case of the common form, \cref{eq:qdeer_lds}, where,
    \begin{equation*}
        \tilde{A}_{t+1} = 0.
    \end{equation*}
\end{proposition}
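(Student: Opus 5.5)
We specialize the generic Jacobi system \cref{eq:jacobi} to the Markovian setting and check that the resulting update has the form \cref{eq:qdeer_lds} with $\tilde{A}_{t+1}=0$.

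First, write the SSM \cref{eq:ssm} as a system of $T$ equations. The natural choice is the residual from \cref{eq:r_def}:
\begin{equation*}
h_t(\mathbf{s}_{1:T}) := r_t(\mathbf{s}_{1:T}) = s_t - f_t(s_{t-1}),
\end{equation*}
with $s_0$ fixed and known. The key structural observation is that $h_t$ depends only on the two blocks $s_t$ and $s_{t-1}$. The $t$-th Jacobi subproblem freezes every block except the $t$-th at its iteration-$(i)$ value. This gives
\begin{equation*}
s_t - f_t(s_{t-1}^{(i)}) = 0 .
\end{equation*}
The unknown $s_t$ appears linearly with identity coefficient, so the subproblem has the unique explicit solution
\begin{equation*}
s_t^{(i+1)} = f_t(s_{t-1}^{(i)}) \quad \text{for all } t \text{ in parallel},
\end{equation*}
with the convention $s_0^{(i)} = s_0$. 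This is exactly the \textsc{Map} listed in \Cref{tab:fxd_pt_sum}.

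Next, compare with \cref{eq:qdeer_lds}. Setting $\tilde{A}_t = 0$ there, the correction term $\tilde{A}_t\bigl(s_{t-1}^{(i+1)} - s_{t-1}^{(i)}\bigr)$ vanishes identically, and the update reduces to $s_t^{(i+1)} = f_t(s_{t-1}^{(i)})$. This is the same as the Jacobi update above, which proves the claim. As an immediate corollary, \Cref{prop:global_convergence} shows that Jacobi iterations converge in at most $T$ steps, recovering the result of \citet{song2021accelerating} in the Markovian case.

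The only real subtlety is the choice of the functions $h_t$. The univariate subproblem must be solved for the $t$-th block by the $t$-th equation, and the equations must be ordered and normalized so that this block enters as $s_t$ with identity coefficient. If one instead wrote $h_t = f_t(s_{t-1}) - s_t$, or permuted the equations, the Jacobi solves would no longer be trivial or well defined. So the step requiring care is to state explicitly that $h_t$ is the one-step residual $r_t$, paired with unknown $s_t$. Once that pairing is fixed, the remaining verification is a one-line computation, mirroring the proof of \Cref{prop:picard}. Indexing (using $\tilde{A}_{t+1}$ versus $\tilde{A}_t$) is purely cosmetic.
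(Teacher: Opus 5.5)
Your proof is correct and follows the same route as the paper. You specialize the Jacobi subproblems to the Markovian residual to get $s_t^{(i+1)} = f_t(s_{t-1}^{(i)})$, then observe that this is \cref{eq:qdeer_lds} with $\tilde{A}_t = 0$; you are only more explicit than the paper about pairing equation $t$ with unknown $s_t$.

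One correction to your closing aside: negating $h_t$, i.e.\ writing $f_t(s_{t-1}) - s_t$, changes nothing, since the subproblem $f_t(s_{t-1}^{(i)}) - s_t = 0$ has the same unique solution. Only the pairing of equations with unknowns matters, together with $s_t$ entering with an invertible coefficient.
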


\begin{proof}
    In a Markovian state space model, the recurrence relation always takes the form specified in \cref{eq:ssm}, i.e. $s_{t+1} = f_{t+1}(s_{t})$. Thus, Jacobi iterations take the simple form
    \begin{equation*}
        s_{t+1}^{(i+1)} = f_{t+1}(s_t^{(i)}).
    \end{equation*}
    Because $s_{t+1}^{(i+1)}$ does not depend on $s_t^{(i+1)}$, we see that the transition matrix is zero.
\end{proof}

\subsection{Summary}

We have shown how important parallel fixed-point iterations---Newton, quasi-Newton, Picard, and Jacobi iterations---can all be cast as LDSs when deployed for evaluating nonlinear recursions, as summarized in \Cref{tab:fxd_pt_sum}.
The regimes where these different methods excel are therefore dictated by the form of the Jacobians of their dynamics functions: if each $f_{t+1}$ is close to an identity update (as is the case in sampling from a diffusion model with small discretization parameter), then Picard will excel; if the dynamics are nearly uncoupled across state dimensions, then quasi-Newton using a diagonal approximation will excel; and if the dynamics have multiple dependencies across coordinates and the dimension $D$ is not too large, then Newton iterations will excel. 
Jacobi iterations are most useful if the dynamics are heavily contracting or predictable, i.e. their largest Lyapunov exponent is close to zero (\Cref{sec:lle}). Another interpretation of very contracting dynamics is dynamics that are primarily input driven, i.e. $\nicefrac{\partial f_t}{\partial s_{t-1}} \approx 0$.

An important corollary is that because all of these fixed-point iterations can be cast as LDSs, they are all guaranteed to converge in all problem settings in at most $T$ iterations by \Cref{prop:global_convergence}. However, as we noted above, the precise convergence rates of the different fixed-point methods will be problem dependent.
In our next section, we provide theoretical analysis showing how the difference between the approximate Jacobian $\tilde{\A}_{t}$ of a fixed-point method and the true dynamics Jacobian $\nicefrac{\partial f_{t}}{\partial s_{t-1}}$ impacts the rate of convergence of different methods in different problems.

\section{Convergence rates for quasi-DEER}\label{sec:conv_rates}

In this section, we analyze the convergence properties of the fixed-point methods introduced in \Cref{sec:lds_parr}. We show that the convergence rate of these fixed-point methods can be understood in terms of how well the transition matrix $\tilde{\A}_{t}$ approximates the true dynamics Jacobian $\A_t \coloneq \nicefrac{\partial f_{t}}{\partial s_{t-1}}$ (cf. \Cref{tab:fxd_pt_sum}) and the stability of the LDS the fixed-point method gives rise to (cf. \cref{eq:qdeer_lds}).

To begin, we can substitute in our approximations $\tilde{\A}_t$ for $\A_t$ in the definition of $\J$ in \cref{eq:big_j} to obtain an approximate residual Jacobian $\tilde{\J}(\mathbf{s}_{1:T}) \in \mathbb{R}^{TD \times TD}$ given by
\begin{equation}\label{eq:bold_A}
    \tilde{\mathbf{J}}(\mathbf{s}_{1:T}) := \begin{pmatrix}
        I_D & 0 & 0 &  \hdots & 0 & 0\\
        -\tilde{A}_2(s_1) & I_D & 0 & \hdots & 0 & 0\\
        0 & -\tilde{A}_3(s_2) & I_D & \hdots & 0 & 0\\
        \vdots & \vdots & \vdots & \ddots & \vdots & \vdots \\
        0 & 0 & 0 & \hdots & I_D & 0 \\
        0 & 0 & 0 & \hdots & -\tilde{A}_T(s_{T-1}) & I_D \\
    \end{pmatrix},
\end{equation}
The corresponding fixed point iteration $\mathcal{A}$ takes the form
\begin{equation}\label{eq:gen_newton}
    \mathcal{A}(\mathbf{s}_{1:T}^{(i)}) := \mathbf{s}_{1:T}^{(i)} - \tilde{\J}(\mathbf{s}_{1:T}^{(i)})^{-1} \mathbf{r}(\mathbf{s}_{1:T}^{(i)}).
\end{equation}
For example, for Jacobi iterations, $\tilde{\J}_J(\mathbf{s}_{1:T})$ is always the identity matrix $I_{TD}$.
For Picard iterations, $\tilde{\J}_P(\mathbf{s}_{1:T})$ takes the form
\begin{equation}\label{eq:A_P}
    \tilde{\J}_P(\mathbf{s}_{1:T}) = \begin{pmatrix}
        I_D & 0 & 0 &  \hdots & 0 & 0\\
        -I_D & I_D & 0 & \hdots & 0 & 0\\
        0 & -I_D & I_D & \hdots & 0 & 0\\
        \vdots & \vdots & \vdots & \ddots & \vdots & \vdots \\
        0 & 0 & 0 & \hdots & I_D & 0 \\
        0 & 0 & 0 & \hdots & -I_D & I_D \\
    \end{pmatrix}.
\end{equation}
Different fixed-point methods $\mathcal{A}$ give rise to different matrices $\tilde{\J}$, which impacts their convergence rates.

\subsection{Convergence rates of fixed-point iterations}\label{ssc:conv_rates}

In this section, we closely follow the proof of Proposition 4 of \citet{parasolver25} to derive convergence rates for all fixed-point operators discussed in this paper. \citet{parasolver25} focused on the special case of Picard iterations, but our unifying framework allows us to see that their analysis generalizes immediately.

For any of the fixed-point methods discussed in this paper, we can bound the convergence rate of the error $\mathbf{e}^{(i)}$ defined in \cref{eq:error}.
\begin{proposition}[Proposition 4 of \citet{parasolver25}]\label{prop:ConvRates}
    Consider a fixed-point solver with updates given by \cref{eq:gen_newton} for some matrix $\tilde{\J}(\mathbf{s}_{1:T}^{(i)})$ with form specified by \cref{eq:bold_A}. 
    Let $L$ be the maximum of the Lipschitz constants of $\nicefrac{\partial f_t}{\partial s_{t-1}}$.
    Then $\| \mathbf{e}^{(i+1)} \|_2$ satisfies
    \begin{equation}\label{eq:prop3}
        \| \mathbf{e}^{(i+1)} \|_2 \leq \left\| \tilde{\J}(\mathbf{s}_{1:T}^{(i)})^{-1} \right\|_2 \cdot \left( \left\| \tilde{\J}(\mathbf{s}_{1:T}) - \mathbf{J}(\mathbf{s}_{1:T})  \right\|_2  \| \mathbf{e}^{(i)} \|_2 +  \frac{L}{2} (\| \mathbf{e}^{(i)} \|_2^2) \right),
    \end{equation}
    where $\| \cdot \|_2$ denotes the spectral norm of a matrix and the $\ell_2$ norm of a vector.
\end{proposition}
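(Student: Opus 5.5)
The plan is to mirror the error-recursion argument used in the proof of \Cref{prop:Newton_quadratic} and in the proof of \Cref{theorem:deer_converges_globally}, but with the approximate Jacobian $\tilde{\J}$ in place of $\J$. I will abbreviate $\tilde{\J}^{(i)} := \tilde{\J}(\mathbf{s}_{1:T}^{(i)})$ and $\J^{(i)} := \J(\mathbf{s}_{1:T}^{(i)})$. I read the middle factor in \cref{eq:prop3} as evaluated at the current iterate, i.e.\ $\| \tilde{\J}^{(i)} - \J^{(i)} \|_2$.

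First I subtract $\mathbf{s}^\star$ from both sides of \cref{eq:gen_newton}, which gives
\begin{equation*}
\mathbf{e}^{(i+1)} = \mathbf{e}^{(i)} - (\tilde{\J}^{(i)})^{-1} \mathbf{r}(\mathbf{s}^{(i)}).
\end{equation*}
The inverse exists because $\tilde{\J}^{(i)}$ is block unit lower triangular by \cref{eq:bold_A}. Factoring out $(\tilde{\J}^{(i)})^{-1}$ yields
\begin{equation*}
\mathbf{e}^{(i+1)} = (\tilde{\J}^{(i)})^{-1}\big( \tilde{\J}^{(i)} \mathbf{e}^{(i)} - \mathbf{r}(\mathbf{s}^{(i)}) \big).
\end{equation*}
Next I add and subtract $\J^{(i)} \mathbf{e}^{(i)}$ inside the bracket, so that it becomes
\begin{equation*}
(\tilde{\J}^{(i)} - \J^{(i)})\,\mathbf{e}^{(i)} + \big(\J^{(i)}\mathbf{e}^{(i)} - \mathbf{r}(\mathbf{s}^{(i)})\big).
\end{equation*}

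For the second term I use $\mathbf{r}(\mathbf{s}^\star) = \mathbf{0}$ together with the Taylor/integral form of the remainder, exactly as in \Cref{prop:Newton_quadratic}. This gives
\begin{equation*}
\mathbf{r}(\mathbf{s}^{(i)}) - \J^{(i)}\mathbf{e}^{(i)} = \int_0^1 \big(\J(\mathbf{s}^\star + \tau \mathbf{e}^{(i)}) - \J(\mathbf{s}^{(i)})\big)\,\mathbf{e}^{(i)}\, d\tau .
\end{equation*}
To bound its norm I need $\J$ to be $L$-Lipschitz, which is precisely \Cref{theorem:Lipschitz} applied with $L$ the maximal Lipschitz constant of the $\nicefrac{\partial f_t}{\partial s_{t-1}}$. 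The difference of arguments inside the integral is $(1-\tau)\mathbf{e}^{(i)}$, so the integral is bounded by $L\|\mathbf{e}^{(i)}\|_2^2 \int_0^1 (1-\tau)\,d\tau = \frac{L}{2}\|\mathbf{e}^{(i)}\|_2^2$.

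Finally I take norms, apply submultiplicativity of the spectral norm to pull out $\|(\tilde{\J}^{(i)})^{-1}\|_2$, and use the triangle inequality on the two terms. This gives \cref{eq:prop3} directly.

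The argument is essentially routine. The only points needing care are getting the Lipschitz remainder constant to come out as $L/2$ (not $L$), which depends on the direction of integration, and making sure \Cref{theorem:Lipschitz} is invoked so that the bound on $\J$ inherits the dynamics' constant $L$.
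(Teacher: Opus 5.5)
Your proposal is correct and follows the paper's proof essentially step for step. You subtract $\mathbf{s}^\star$, split the bracket into the Jacobian-mismatch term $(\tilde{\J}^{(i)}-\J^{(i)})\mathbf{e}^{(i)}$ and the Taylor remainder, bound the remainder by $\frac{L}{2}\|\mathbf{e}^{(i)}\|_2^2$ using \Cref{theorem:Lipschitz}, and finish with submultiplicativity and the triangle inequality; your explicit integral form of the remainder simply makes precise the bound the paper asserts.
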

\begin{proof}
    Starting from \cref{eq:gen_newton}, we subtract $\mathbf{s}_{1:T}^{\star}$ from both sides to obtain
    \begin{equation*}
        \mathbf{e}^{(i+1)} = \mathbf{e}^{(i)} - \tilde{\J}(\mathbf{s}_{1:T}^{(i)})^{-1} \mathbf{r}(\mathbf{s}_{1:T}^{(i)}).
    \end{equation*}
    Next, we Taylor expand $\mathbf{r}(\cdot)$ around $\mathbf{s}_{1:T}^{(i)}$ to obtain
    \begin{equation*}
        \mathbf{r}(\mathbf{s}_{1:T}^{\star}) = \mathbf{r}(\mathbf{s}_{1:T}^{(i)}) - \mathbf{J}(\mathbf{s}_{1:T}^{(i)}) \mathbf{e}^{(i)} + \mathbf{R}(\mathbf{e}^{(i)}),
    \end{equation*}
    where $\mathbf{R}(\mathbf{e}^{(i)})$ is the second-order remainder function and has norm bounded by $\nicefrac{\| \mathbf{e}^{(i)} \|_2^2}{2}$ times the Lipschitz constant of $\mathbf{J}(\mathbf{s}_{1:T}^{(i)})$, which Theorem 3 of \citet{gonzalez2025predictability} shows is bounded by $L$.
    Since $\mathbf{r}(\mathbf{s}_{1:T}^{\star}) = \mathbf{0}$, it follows that
      \begin{equation}\label{eq:error_decomp}
        \mathbf{e}^{(i+1)} = \tilde{\J}(\mathbf{s}_{1:T}^{(i)})^{-1} \left( \underbrace{\left( \tilde{\J}(\mathbf{s}_{1:T}^{(i)}) - \mathbf{J}(\mathbf{s}_{1:T}^{(i)}) \right)}_{\text{Jacobian mismatch}} \mathbf{e}^{(i)} + \underbrace{\mathbf{R}(\mathbf{e}^{(i)})}_{\text{higher-order Taylor remainder}} \right).
    \end{equation}
    The result follows by taking norms on both sides and using the triangle inequality.
\end{proof}

\subsection{Limitations of this convergence analysis}\label{app:lims}
\Cref{prop:ConvRates} only guarantees a decrease in the error when the iterate $\mathbf{s}_{1:T}^{(i)}$ is already in a basin of decrease $\mathcal{B}_D$ given by
\begin{equation*}
    \mathcal{B}_D := \left\{ \mathbf{s}_{1:T} : \| \mathbf{e}(\mathbf{s}_{1:T}) \|_2 \leq 2 \cdot \frac{1 - \left\| \tilde{\J}(\mathbf{s}_{1:T})^{-1} \right\|_2 \left\| \tilde{\J}(\mathbf{s}_{1:T}) - \mathbf{J}(\mathbf{s}_{1:T})  \right\|_2}{L \left\| \tilde{\J}(\mathbf{s}_{1:T})^{-1} \right\|_2 }   \right\}.
\end{equation*}
However, since we know from Proposition 1 of \citet{gonzalez2024scalable} that all the fixed-point algorithms considered in this paper must eventually converge, we know that the iterates $\mathbf{s}_{1:T}^{(i)}$ must all eventually enter this basin of decrease $\mathcal{B}_D$ if $\mathcal{B}_D \neq \emptyset$. For this reason, \Cref{prop:ConvRates} provides helpful intuition about which fixed-point algorithms are useful for which dynamical systems.

For example, let us define the basin of linear rate $\mathcal{B}_L$ to comprise those $\mathbf{s}_{1:T}$ where $\left\| \tilde{\J}(\mathbf{s}_{1:T}) - \mathbf{J}(\mathbf{s}_{1:T})  \right\|_2  \| \mathbf{e}^{(i)} \|_2 > \frac{L}{2} (\| \mathbf{e}^{(i)} \|_2^2)$, i.e. the expression linear in $\| \mathbf{e}^{(i)} \|_2$ on the right side of \eqref{eq:prop3} dominates the expression quadratic in $\| \mathbf{e}^{(i)} \|_2$. It follows that $\mathcal{B}_L$ is given by
\begin{equation*}
    \mathcal{B}_L := \left\{ \mathbf{s}_{1:T} : \| \mathbf{e}(\mathbf{s}_{1:T}) \|_2 \leq \frac{2 \left\| \tilde{\J}(\mathbf{s}_{1:T}) - \mathbf{J}(\mathbf{s}_{1:T})  \right\|_2}{L}\right\}.
\end{equation*}
Therefore, when $\mathbf{s}_{1:T}^{(i)} \in \mathcal{B}_D \cap \mathcal{B}_L$, it follows that the norm of the error is guaranteed to decrease by a factor of $2 \left\| \tilde{\J}(\mathbf{s}_{1:T}^{(i)})^{-1} \right\|_2  \left\| \tilde{\J}(\mathbf{s}_{1:T}^{(i)}) - \mathbf{J}(\mathbf{s}_{1:T}^{(i)})  \right\|_2$. Moreover, as $\| \mathbf{e}^{(i)} \|_2$ approaches zero, the guaranteed factor of decrease approaches the value given by \cref{eq:lin_rate}.

Nonetheless, we can still extract very interesting intuitions about the convergence rates of different quasi-DEER approximations from \Cref{prop:ConvRates}, as we discuss in the next section.

\subsection{Intuitions about rates of convergence}\label{ssc:intuitions}

\Cref{eq:error_decomp} shows the error decomposes into two contributions.
The first term measures the discrepancy between the chosen linear operator $\tilde{\J}$ and the true Jacobian $\mathbf{J}$ of the residual.
The second term captures the effect of higher-order nonlinearities.

Moreover, from \cref{eq:prop3}, we see that as $\| \mathbf{e}^{(i)} \|_2$ approaches zero, the contribution from the first term, which is linear in $\| \mathbf{e}^{(i)} \|_2$, must eventually\footnote{under strong enough continuity assumptions.} dominate the contribution from the second term, which is quadratic in $\| \mathbf{e}^{(i)} \|_2$.
Typically, we would say the rate of decrease in $\| \mathbf{e}^{(i)} \|_2$ approaches an asymptotic linear rate $\gamma$ given by
\begin{equation}\label{eq:lin_rate}
    \gamma := \left\| \tilde{\J}(\mathbf{s}_{1:T}^{\star})^{-1} \right\|_2  \left\| \tilde{\J}(\mathbf{s}_{1:T}^{\star}) - \mathbf{J}(\mathbf{s}_{1:T}^{\star})  \right\|_2.
\end{equation}
Discussions of asymptotic linear rate are subtle in our setting, where all fixed-point methods are guaranteed to converge in $T$ iterations: see our discussion in \Cref{app:parallel_chord}.
Nonetheless, the functional form of $\gamma$ provides useful intuition about the convergence rates of different fixed-point methods.

In particular, we can study the two factors that make up the functional form of the asymptotic linear rate: $\left\| \tilde{\J}(\mathbf{s}_{1:T}^{\star}) - \mathbf{J}(\mathbf{s}_{1:T}^{\star})  \right\|_2$ and $\left\| \tilde{\J}(\mathbf{s}_{1:T}^{\star})^{-1} \right\|_2$.

\subsubsection{Intuitions from $\left\| \tilde{\J}(\mathbf{s}_{1:T}) - \mathbf{J}(\mathbf{s}_{1:T})  \right\|_2$}
We can control this quantity in terms of the spectral norms of the differences between the approximate and true dynamics Jacobians:
\begin{lemma}\label{lem:diff}
    If $\tilde{\J}(\mathbf{s}_{1:T})$ is given by \cref{eq:bold_A} and $\mathbf{J}(\mathbf{s}_{1:T})$ is given by \cref{eq:big_j}, then
    \begin{equation*}
        \left\| \tilde{\J}(\mathbf{s}_{1:T}) - \mathbf{J}(\mathbf{s}_{1:T})  \right\|_2 = \max_{2 \leq t \leq T} \left\| \tilde{A}_{t}(s_t) -\A_t(s_t) \right\|_2.
    \end{equation*}
\end{lemma}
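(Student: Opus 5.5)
The plan is to compute the difference matrix explicitly and then use the standard fact that a block matrix with at most one nonzero block in each block row and each block column has spectral norm equal to the largest spectral norm among its blocks. Subtracting \cref{eq:big_j} from \cref{eq:bold_A}, the identity blocks on the diagonal cancel. What remains is
\[
\mathbf{D} := \tilde{\J}(\mathbf{s}_{1:T}) - \mathbf{J}(\mathbf{s}_{1:T}),
\]
a block matrix supported only on the first block subdiagonal. Its $(t,t-1)$ block is $D_t := \A_t - \tilde{\A}_t$ for $2 \le t \le T$, and every other block is zero. This is exactly the structure already used in the proof of \Cref{theorem:Lipschitz}.

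For the upper bound, take any $\mathbf{x} = (x_1,\dots,x_T) \in \mathbb{R}^{TD}$. The block form gives $(\mathbf{D}\mathbf{x})_1 = 0$ and $(\mathbf{D}\mathbf{x})_t = D_t x_{t-1}$ for $t \ge 2$. Hence
\[
\|\mathbf{D}\mathbf{x}\|_2^2 = \sum_{t=2}^T \|D_t x_{t-1}\|_2^2 \le \Big(\max_t \|D_t\|_2\Big)^2 \sum_{t=1}^{T-1}\|x_t\|_2^2 \le \Big(\max_t \|D_t\|_2\Big)^2 \|\mathbf{x}\|_2^2 .
\]
The key point is that each input block $x_{t-1}$ feeds exactly one output block, so the squared norms add with no cross terms.

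For the matching lower bound, pick the index $t^\star$ attaining the maximum. Let $v \in \mathbb{R}^D$ be a unit top right singular vector of $D_{t^\star}$. Set $\mathbf{x}$ to have $x_{t^\star-1} = v$ and all other blocks zero. Then $\|\mathbf{D}\mathbf{x}\|_2 = \|D_{t^\star}v\|_2 = \|D_{t^\star}\|_2$, and combining the two bounds gives equality. Since $\|D_t\|_2 = \|\tilde{\A}_t - \A_t\|_2$, this is the claimed formula.

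No step here is a real obstacle. The only care needed is bookkeeping: getting the block indices right (block row $t$ pairs with block column $t-1$), and noting that the lemma's notation writes the argument of $\A_t$ as $s_t$ while \cref{eq:bold_A} writes $s_{t-1}$. Either way, both Jacobians are evaluated at the same state, so the identity holds pointwise.
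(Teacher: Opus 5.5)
Your proof is correct and is essentially the paper's argument. The paper forms $M = \tilde{\J} - \J$, notes that $M^\top M$ is block diagonal with blocks $E_t^\top E_t$, and reads the spectral norm off the largest eigenvalue; your no-cross-terms expansion of $\|\mathbf{D}\mathbf{x}\|_2^2$ is exactly the quadratic form of that block-diagonal matrix, your singular-vector test vector makes attainment explicit, and your remark about $s_t$ versus $s_{t-1}$ agrees with the paper's conclusion, which evaluates both Jacobians at $s_{t-1}$.
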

\begin{proof}
    Plugging in the functional forms of $\tilde{\J}(\cdot)$ and $\mathbf{J}(\cdot)$, if we define $E_{t} := A_{t}(s_{t-1}) - \tilde{\A}_{t}(s_{t-1})$, then
\begin{equation*}
    \tilde{\J}(\mathbf{s}_{1:T}) - \mathbf{J}(\mathbf{s}_{1:T}) = \begin{pmatrix}
        0 & 0 & 0 &  \hdots & 0 & 0\\
        E_2 & 0 & 0 & \hdots & 0 & 0\\
        0 & E_3 & 0 & \hdots & 0 & 0\\
        \vdots & \vdots & \vdots & \ddots & \vdots & \vdots \\
        0 & 0 & 0 & \hdots & 0 & 0 \\
        0 & 0 & 0 & \hdots & E_T & 0 \\
    \end{pmatrix}.
\end{equation*}
The spectral norm of a matrix $M$ is equal to the square root of the largest eigenvalue of $M^{\top} M$. Defining $M := \tilde{\J}(\mathbf{s}_{1:T}) - \mathbf{J}(\mathbf{s}_{1:T})$, we see that
\begin{equation*}
    M^{\top} M = \begin{pmatrix}
        0 & 0 & 0 &  \hdots & 0 & 0\\
        0 & E_2^{\top} E_2 & 0 & \hdots & 0 & 0\\
        0 & 0 & E_3^{\top} E_3 & \hdots & 0 & 0\\
        \vdots & \vdots & \vdots & \ddots & \vdots & \vdots \\
        0 & 0 & 0 & \hdots & E_{T-1}^{\top} E_{T-1} & 0 \\
        0 & 0 & 0 & \hdots & 0 & E_T^{\top} E_T \\
    \end{pmatrix}.
\end{equation*}
Since $M^{\top} M$ is a block-diagonal matrix, its eigenvalues are equal to the union of the eigenvalues of each of the blocks $E_t^{\top} E_t$. Thus, it follows that the maximum eigenvalue of $M^{\top} M$ is equal to the maximum of all the eigenvalues of all the matrices $E_t^{\top} E_t$, and so the maximum singular value of $\tilde{\J}(\mathbf{s}_{1:T}) - \mathbf{J}(\mathbf{s}_{1:T})$ is given by $\max_{2 \leq t \leq T} \left\| \tilde{A}_{t}(s_{t-1}) -\A_t(s_{t-1}) \right\|_2$.
\end{proof}
A resulting intuition is that, for the fixed-point methods considered in this paper, \emph{their rate of convergence will be faster if their approximate Jacobian $\tilde{\A}_t$ is closer to the true dynamics Jacobian $\A_t$ in spectral norm.} 

For the purposes of showing the utility of this intuition in experiment, we will use the notation $\mathrm{diff}(\mathcal{A})$ to indicate this Jacobian approximation error, i.e.
\begin{equation}\label{eq:diff}
    \mathrm{diff}(\mathcal{A}) := \max_{2 \leq t \leq T} \left\| \tilde{\A}_t(s_{t-1}) - \A_t(s_{t-1})\right\|_2,
\end{equation}
where the sequence length $T$ and dynamical system $f$ should be evident from context.

\subsubsection{Intuitions from $\left\| \tilde{\J}(\mathbf{s}_{1:T})^{-1} \right\|_2$}
Because $\tilde{\J}(\mathbf{s}_{1:T})$ as defined in \cref{eq:bold_A} is a block bidiagonal matrix, it has a block lower triangular structure of the form 
\begin{equation}\label{eq:A_inv}
    \tilde{\J}(\mathbf{s}_{1:4})^{-1} = \begin{pmatrix}
    I_D & 0 & 0 & 0 \\
    \tilde{A}_2 & I_D & 0 & 0 \\
    \tilde{A}_3 \tilde{A}_2 & \tilde{A}_3 & I_D & 0 \\
    \tilde{A}_4 \tilde{A}_3 \tilde{A}_2 & \tilde{A}_4 \tilde{A}_3 & \tilde{A}_4 & I_D
    \end{pmatrix},
\end{equation}
shown above for $T=4$.

From \cref{eq:A_inv}, we see that the blocks of $\left\| \tilde{\J}(\mathbf{s}_{1:T})^{-1} \right\|_2$ are products of the transition matrices $\tilde{A}_{t}$ from the chosen fixed-point method (cf. \Cref{tab:fxd_pt_sum}). In particular, if the chosen fixed point method results in an \emph{unstable} LDS with $\| A_{t+1}\|_2 > 1$ at many points $t$ in the sequence, we see that $\left\| \tilde{\J}(\mathbf{s}_{1:T})^{-1} \right\|_2$ can be much larger than one.
In fact, as we saw in \Cref{sec:cond}, under suitable assumptions, the norm of $\tilde{\J}^{-1}$ is related to the dynamical stability of the linear-time varying system with transition matrices $\tilde{\A}_t$ arising from the fixed-point iterations.
The larger the LLE $\tilde{\lambda}$ of the LDS arising from the fixed-point iteration, the larger the norm of $\tilde{\J}^{-1}$ will be. 

More precisely, if we apply the regularity conditions in \cref{eq:LLE_regularity} to the LDS arising from the fixed-point iteration, then by the techniques used to prove \Cref{theorem:PL-LLE} it follows that
\begin{equation*}
    \max\left(1, b e^{\tilde{\lambda} (T-1) }\right) \leq \| \tilde{\J}^{-1} \|_2 \leq a \frac{e^{\tilde{\lambda} T} - 1}{e^{\tilde{\lambda}} - 1}.
\end{equation*}
Therefore, we observe that the presence of this term $\left\| \tilde{\J}(\mathbf{s}_{1:T})^{-1} \right\|_2$ in $\gamma$ gives rise to the intuition that fixed-point methods resulting in unstable LDSs should have slower rates of convergence. 
One way to grasp this intuition is that unstable LDSs suffer from numerical blowup, especially for large $T$. 

Moreover, in the special cases of Jacobi and Picard iterations, we can compute $\left\| \tilde{\J}(\mathbf{s}_{1:T}^{(i)})^{-1} \right\|_2$ analytically. For Jacobi iterations, $\left\| \tilde{\J}_J^{-1} \right\|_2 = 1$. For Picard iterations, the expression for $\left\| \tilde{\J}_P^{-1} \right\|_2$ is more complicated, but it scales as $O(T)$:
\begin{lemma}\label{lem:PicardNorm}
    Let $\tilde{\J}_P$ be as in \cref{eq:A_P}. Then
    \begin{equation*}
        \| \tilde{\J}_P^{-1} \|_2 = \frac{1}{2 \sin\left(\frac{\pi}{2 ( 2T + 1)}\right)}
    \end{equation*}
    By the small-angle approximation for sine, $\| \tilde{\J}_P^{-1} \|_2$ scales as $\mathcal{O}(T)$.
\end{lemma}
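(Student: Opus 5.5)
The plan is to reduce the computation to the spectrum of an explicit tridiagonal matrix, then solve that eigenproblem with a sine ansatz.

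First, \cref{eq:A_P} gives $\tilde{\J}_P = B \otimes I_D$. Here $B \in \mathbb{R}^{T\times T}$ has ones on the diagonal and $-1$ on the subdiagonal. Hence $\tilde{\J}_P^{-1} = B^{-1}\otimes I_D$, and by \cref{eq:A_inv} with every $\tilde{A}_t = I_D$, $L := B^{-1}$ is the lower-triangular all-ones matrix. The Kronecker factor $I_D$ does not change singular values, so $\|\tilde{\J}_P^{-1}\|_2 = \sigma_{\max}(L) = 1/\sigma_{\min}(B)$. It therefore suffices to find $\lambda_{\min}(B^\top B)$.

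A direct computation shows that $M := B^\top B$ is the symmetric tridiagonal matrix with $-1$ on both off-diagonals and diagonal $(2,2,\ldots,2,1)$. The last diagonal entry is $1$ because column $T$ of $B$ has only one nonzero entry. I would diagonalize $M$ with the ansatz $v_j = \sin(j\theta)$ for $j=1,\dots,T$.

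For the interior rows and the first row, the relation $-v_{j-1}+2v_j-v_{j+1} = \lambda v_j$ holds with $\lambda = 2-2\cos\theta = 4\sin^2(\theta/2)$. This uses the convention $v_0 = \sin 0 = 0$. The last row reads $-v_{T-1}+v_T = \lambda v_T$. It is equivalent to the interior recurrence together with the boundary condition $v_{T+1}=v_T$, that is, $\sin((T+1)\theta) = \sin(T\theta)$. This holds when $\cos\big(\tfrac{(2T+1)\theta}{2}\big)=0$, giving $\theta_k = \frac{(2k-1)\pi}{2T+1}$ for $k=1,\dots,T$.

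These $T$ values of $\theta_k$ lie in $(0,\pi)$ and are distinct. They give $T$ distinct eigenvalues $4\sin^2\big(\frac{(2k-1)\pi}{2(2T+1)}\big)$, which is the full spectrum of $M$. The smallest is at $k=1$, namely $4\sin^2\big(\frac{\pi}{2(2T+1)}\big)$. Therefore $\sigma_{\max}(L) = 1/\big(2\sin\frac{\pi}{2(2T+1)}\big)$. A sanity check at $T=1$ gives $1/(2\sin(\pi/6)) = 1$, which is correct.

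For the scaling claim, $\sin x \approx x$ for small $x$ gives $\|\tilde{\J}_P^{-1}\|_2 \approx (2T+1)/\pi = \mathcal{O}(T)$. To make this rigorous, I would use $\sin x \ge 2x/\pi$ on $[0,\pi/2]$ for an upper bound and $\sin x \le x$ for a lower bound.

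The main obstacle is the boundary row of $M$, whose diagonal entry is $1$ rather than $2$. It is this mixed boundary condition (Dirichlet at the start, Neumann-type at the end) that produces the $2T+1$ denominator instead of the usual $T+1$. The step that needs care is verifying that this boundary condition is correctly encoded by $v_{T+1}=v_T$, and that the resulting $T$ sine eigenvectors exhaust the spectrum.
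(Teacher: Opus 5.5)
Your argument is correct, but it takes a different route from the paper's. The paper forms the Gram matrix of the inverse, which after stripping the Kronecker factor $I_D$ is the dense matrix $M$ with $M_{ij}=\min(i,j)$. It then cites Theorem 2.1 of \citet{daFonseca2007} for the eigenvalues $\frac14\sin^{-2}\bigl(\frac{(2k+1)\pi}{2(2T+1)}\bigr)$, $k=0,\dots,T-1$, and takes the largest.

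You work on the other side of the inversion. Since $L=B^{-1}$, your tridiagonal matrix $B^\top B$ is exactly $(LL^\top)^{-1}=M^{-1}$. Your eigenvalues $4\sin^2\bigl(\frac{(2k-1)\pi}{2(2T+1)}\bigr)$ are therefore the reciprocals of the paper's, with your smallest matching its largest.

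What your route buys:
\begin{itemize}
\item It is self-contained. The sine ansatz with the mixed boundary conditions $v_0=0$, $v_{T+1}=v_T$ derives the spectrum from scratch instead of outsourcing it.
\item It holds for every $T\ge 1$. The paper explicitly invokes the cited theorem only for $T\ge 3$, leaving $T=1,2$ to direct checks. Your $T=1$ check covers one case, and $T=2$ gives $\frac{1}{2\sin(\pi/10)}=\frac{1+\sqrt5}{2}$, which is indeed $\|L\|_2$.
\item Your Jordan-inequality step turns the paper's heuristic small-angle remark into a rigorous estimate, $\frac{2T+1}{\pi}\le\|\tilde{\J}_P^{-1}\|_2\le\frac{2T+1}{2}$.
\end{itemize}
The paper's proof is shorter only because the eigen-computation is cited.

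Incidentally, the matrix the paper displays with entries $\min(i,j)$ is $\tilde{\J}_P^{-1}\tilde{\J}_P^{-\top}$, not $\tilde{\J}_P^{-\top}\tilde{\J}_P^{-1}$, whose entries are $T-\max(i,j)+1$. The two are permutation-similar, so the slip is harmless, and working with $B^\top B$ avoids it.
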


\begin{proof}
    Consider
    \begin{equation*}
        K := \tilde{\J}_P^{-\top} \tilde{\J}_P^{-1} = \begin{pmatrix}
            I_D & I_D & I_D & \hdots & I_D \\
            I_D & 2 I_D & 2 I_D & \hdots & 2 I_D  \\
            I_D & 2I_D & 3 I_D & \hdots & 3 I_D \\
            \vdots & \vdots & \vdots & \ddots & \vdots \\
            I_D & 2 I_D & 3 I_D & \hdots & T I_D 
        \end{pmatrix}.
    \end{equation*}
    We know that $\lambda_{\max}(K)^{1/2} = \| \tilde{\J}_P^{-1} \|_2$. Since $K$ is a Kronecker product $M \otimes I_D$, where $M_{i,j} = \min(i,j)$, the spectrum of $K$ is equivalent to the spectrum of $M$ (just with all eigenvalues having multiplicity $D$).
    Therefore, we seek to find the spectrum of $M \in \mathbb{R}^{T \times T}$.

    However, the spectrum of $M$ is known in the literature. 
    For example, Theorem 2.1 of \citet{daFonseca2007} shows that if $T \geq 3$, then the eigenvalues $\{ \lambda_k \}_{k=0}^{T-1}$ of $M$ are given by
    \begin{align*}
        \lambda_k & = \frac{1}{2} \left( 1 - \cos\left( \frac{2k+1}{2T + 1} \pi \right) \right)^{-1} \\
        & = \frac{1}{4} \left( \sin\left( \frac{2k+1}{2 (2T+1)} \pi \right)  \right)^{-2}
    \end{align*}
    where the second equality comes from the half-angle formula. We observe that the largest eigenvalue is therefore $\lambda_0$, and so the result follows after we take a square root.
\end{proof}

Because $\| \tilde{\J}_P^{-1} \|_2 > \| \tilde{\J}_J^{-1} \|$ for large $T$, the formula for $\gamma$ given by \cref{eq:lin_rate} yields the following expectation:
\begin{quote}
    In settings where the $\tilde{\A}_t$ from Picard vs. Jacobi iterations approximates the true dynamics Jacobian $\A_t$ equally well, we expect Jacobi iterations to converge more quickly because $\| \tilde{\J}_J^{-1} \| < \| \tilde{\J}_P^{-1} \|_2$.
\end{quote}
We now test this hypothesis with a simple simulation designed to show how \Cref{prop:ConvRates} provides helpful intuition about the convergence rates of different fixed-point methods.

\subsubsection{How fixed-point stability distinguishes between Jacobi and Picard iterations}

We demonstrate the helpfulness of the intuitions stemming from \Cref{prop:ConvRates} in a simple simulation.
We consider the LDS $s_{t+1} = \alpha s_t$, for $s_t \in \mathbb{R}^2$.
Because this is an LDS with diagonal dynamics, both the Newton and quasi-Newton iterations considered in this paper converge in one iteration. However, this simulation is useful for comparing Jacobi versus Picard iterations. This comparison is particularly fruitful in light of the formula for $\gamma$ given by \cref{eq:lin_rate} and \Cref{lem:diff} because, in this setting,
\begin{align*}
    \| \tilde{\J}_J - \mathbf{J} \|_2 & = \alpha \\
    \| \tilde{\J}_P  - \mathbf{J }\|_2 & = 1 - \alpha.
\end{align*}
However, $\| \tilde{\J}_J^{-1} \|_2 = 1$, while $\| \tilde{\J}_P^{-1} \|_2$ scales linearly with $T$.
Therefore, when comparing the number of Jacobi iterations needed to converge when the dynamics are multiplication by $\alpha$ to the number of Picard iterations needed to converge when the dynamics are multiplication by $1 - \alpha$, we expect fewer Jacobi iterations should be needed than Picard iterations, as $\gamma_J < \gamma_P$.

For $\alpha=0.5$, when $\| \widetilde{\mathbf{J}}_J - \mathbf{J} \|_2 = \| \widetilde{\mathbf{J}}_P  - \mathbf{J }\|_2$ , we see that Jacobi iterations converge in far fewer iterations than Picard iterations. Moreover, when comparing the behavior of Jacobi for simulating $f_{t+1}(x_t) = \alpha x_t$ with Picard for simulating $f_{t+1}(x_t) = (1 - \alpha) x_t$, we observe that Jacobi iterations always converge faster. However, when comparing for the same value of $\alpha$, we see that Picard can be faster than Jacobi when $\alpha$ is closer to one. This behavior makes sense, because in those settings the true Jacobian $\nicefrac{\partial f_{t+1}}{\partial x_t}$ is closer to $I_D$ than to $\mathbf{0}$.

\begin{figure}
    \centering
     \includegraphics[width=\linewidth,height=0.42\textheight,keepaspectratio]
    {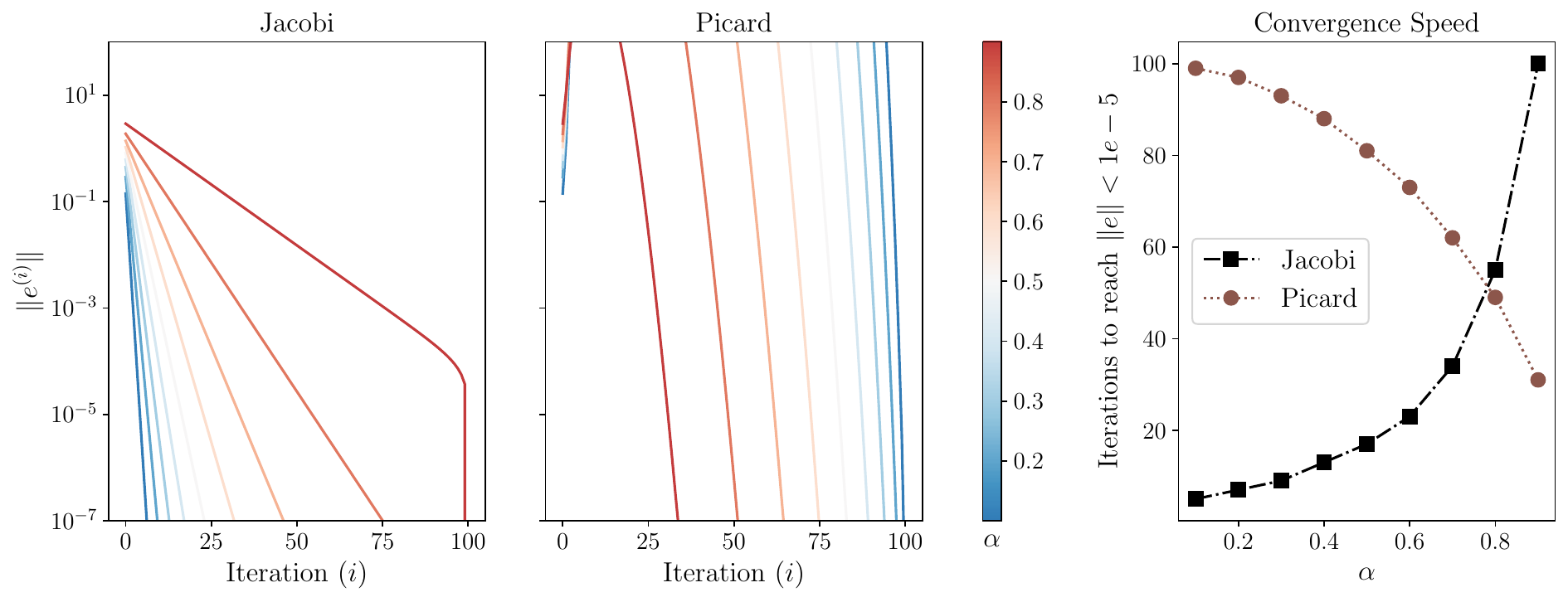}
        \caption{\textbf{Comparing Picard and Jacobi iterations on a diagonal LDS.} For the underlying dynamical system $s_{t+1} = \alpha s_t$, we plot the norm of the error $\mathbf{e}^{(i)}$ for Jacobi and Picard iterations. We denote the empirical slope of $\| \mathbf{e}^{(i)} \|_2$ for Jacobi iterations by $\gamma_J$.}
    \label{fig:JacobiPicard}
\end{figure}

Moreover, we observe that in this setting, the error $\mathbf{e}^{(i)}_{1:T}$ for Jacobi iteration shows a clear linear convergence rate, as predicted by \Cref{prop:ConvRates}. The slope of the norm of the errors of the Jacobi iterates should be $\log_{10}(\alpha)$ by \cref{eq:lin_rate} and \Cref{lem:diff}, and in fact those values are exactly the slopes of the lines in \Cref{fig:JacobiPicard} (Left panel).

\subsection{Summary of Convergence Analysis}

In \Cref{prop:ConvRates} we present an upper bound on the norm of the error of each fixed-point iterate.
As an upper bound, this result cannot always fully predict the precise trajectory of the norm of the error.
Nevertheless, we can extract pleasing intuitions from \Cref{prop:ConvRates}.
Furthermore, in our following section, we show how the resulting intuitions reflect the empirical behavior of these fixed-point methods in different settings. Most importantly, we show that the difference in spectral norm between the approximate Jacobians $\tilde{A}_{t}$ and the true dynamics Jacobians $\A_t$ provides a helpful perspective on where the fixed-point method will excel.

\section{Performance of the different fixed-point methods}\label{sec:tasks}

In this section, we consider three empirical case studies that illustrate how the unifying framework and convergence analysis presented in this paper provides guidance about which fixed-point schemes will excel in which settings.
This concordance is based on the structure of the Jacobian of $f_{t+1}$ and the relative computational cost of different fixed-point methods.

In a nutshell, we pay homage to Einstein and advise:
\begin{quote}
    \textit{Use the simplest approximate Jacobian as possible, but no simpler.}
\end{quote} 
To elaborate: simpler approximate Jacobians are less computationally expensive, meaning that each fixed-point iteration is more efficient. So, if the lower-order fixed-point method still converges in a small number of fixed-point iterations, it achieves the sequential roll-out $\mathbf{s}^{\star}$ in faster wall-clock times on GPUs than higher-order fixed point methods. However, if the higher-order fixed-point method (e.g. Newton or quasi-Newton) converges in far fewer iterations than the lower-order fixed-point method, then the increased computation of the higher-order fixed-point method is worthwhile. 
As supported by the theoretical analysis in \Cref{sec:conv_rates}, the number of iterations needed for a fixed-point method to converge is related to 
the difference in spectral norm between $\tilde{A}_t$ and $A_t := \nicefrac{\partial f_{t+1}}{\partial s_t}$.
We support this intuition with the following case studies.
All the experiments in this section were run on a single H100 with 80GB onboard memory, and the code is available at \url{https://github.com/lindermanlab/parallelizing_with_lds}

\subsection{Case study \#1: Solving the group word problem with Newton iterations}\label{sec:GrpWordProb}
Newton iterations should outperform quasi-Newton and Picard iterations in settings where the Jacobian of the recursion, $f_{t+1}$, is not well approximated by its diagonal, the identity matrix, or the zero matrix.
One example of such a recursion is the \textit{group word problem}, which has been used to theoretically and empirically assess the limits of sequential modeling architectures for state-tracking tasks~\citep{kim2023entity, liu2023transformers, merrill2024illusion, grazzi2024unlocking, schone2025implicit}. In the sequence-modeling community, the term "group word problem" is defined as follows.

\begin{definition}[Group Word Problem] Let $G$ be a finite group and let $g_1, g_2, \ldots, g_T$ be a sequence of group elements. The group word problem is to evaluate the product $g_{1} \cdot g_{2} \cdots g_{T}$. Since each $g_{t} \in G$, the product of these group elements belongs to $G$ as well.
\end{definition}

\citet{merrill2024illusion} emphasizes that nonlinear RNNs in both theory and practice are able to learn the group word problem in arbitrary groups to high accuracy with only a single layer, whereas compositions of popular linear RNNs linked by nonlinearities, such as S4 \citep{gu2022s4} and Mamba\footnote{Mamba allows input-dependent dynamics matrices but they must be diagonal, which prevents a single Mamba layer from implementing the particular LDS in~\Cref{prop:cayley}, which uses permutation matrices. \citet{merrill2024illusion} also demonstrate that a linear time-varying system with a dense transition matrix can learn the group word problem. }~\citep{mamba}, require a number of layers that grows with $T$.
\citet{merrill2024illusion} emphasizes that recurrent architectures with nonlinear transitions are well-suited for solving the group word problem, because in theory and practice, such architectures can learn the group word problem to high accuracy with a single layer. Other literature has explored the value of matrix-valued states \citep{beck2024xlstmextendedlongshortterm, grazzi2024unlocking}. However, in \Cref{prop:cayley} below, we show that neither nonlinearity nor matrix-valued states are needed to understand or solve the group word problem. Instead, the problem can be formulated as an LDS with vector-valued states and input-dependent transition matrices.

\begin{proposition}\label{prop:cayley}
    Let $G$ be a finite group. Then there exists some $D \leq |G|$ for which we can represent the group word problem as a time-varying LDS, $f_{t+1}(s_t) = A_{t+1} s_t$, with states $s_t \in \mathbb{R}^{D}$ denoting the running product of group elements and transition matrices $A_{t+1} \in \mathbb{R}^{D \times D}$ that depend on the input $g_{t+1}$.
\end{proposition}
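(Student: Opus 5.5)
The plan is to use Cayley's theorem: every finite group $G$ embeds into the symmetric group on $|G|$ letters via its left (or right) regular representation. Concretely, I would index the standard basis of $\mathbb{R}^{|G|}$ by the group elements, $\{e_h\}_{h \in G}$, and encode a group element $h$ as the one-hot vector $e_h$. The running product after $t$ steps, $p_t := g_1 g_2 \cdots g_t$, is then stored as the state $s_t := e_{p_t}$, with $s_0 := e_{\mathrm{id}}$. This gives $D = |G|$. The ``$D \leq |G|$'' in the statement is satisfied, and a smaller faithful representation could be used when one exists.

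The key step is choosing the transition matrices so that the recursion is linear. Since the product is accumulated on the right, $p_{t+1} = p_t \, g_{t+1}$, I would use the right regular representation. Define $A_{t+1} := R(g_{t+1})$, where $R(g)$ is the $|G| \times |G|$ permutation matrix with $R(g) e_h = e_{hg}$ for all $h \in G$. Because $h \mapsto hg$ is a bijection of $G$, $R(g)$ is a genuine permutation matrix. Then
\begin{equation*}
    A_{t+1} s_t = R(g_{t+1}) e_{p_t} = e_{p_t g_{t+1}} = e_{p_{t+1}} = s_{t+1},
\end{equation*}
so $f_{t+1}(s_t) = A_{t+1} s_t$ is a time-varying LDS whose transition matrix depends only on the input $g_{t+1}$. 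I would also check that $R$ is a homomorphism into matrices under the ordering used for left-multiplying matrices along the sequence. One has $R(g')R(g) e_h = e_{hgg'}$, so $R(g') R(g) = R(g g')$. Hence $s_T = A_T \cdots A_1 s_0 = R(g_1 \cdots g_T) e_{\mathrm{id}} = e_{g_1\cdots g_T}$, which agrees with the one-step induction.

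The last step is decoding: the answer $g_1 \cdots g_T$ is read off as the unique coordinate of $s_T$ equal to one. I expect no real obstacle. The only point needing care is the left-versus-right bookkeeping, namely making sure the permutation representation composes in the same order as the matrix product $A_T \cdots A_1$ used throughout the paper (cf.\ \cref{eq:J_inv}). I would handle this by verifying the single-step identity $R(g_{t+1}) e_{p_t} = e_{p_t g_{t+1}}$ directly and inducting on $t$, rather than relying on the homomorphism identity.
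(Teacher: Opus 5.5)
Your argument is correct, and its core is the same as the paper's. Both use Cayley's theorem to embed $G$ in a symmetric group and take $A_{t+1}$ to be the permutation matrix of $g_{t+1}$. The difference is how the state encodes the running product.

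\textbf{Your encoding.} You use the right regular action with one-hot states $e_{p_t}$, which forces $D=|G|$. That still satisfies $D \le |G|$, so your proof of the statement is complete.

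\textbf{The paper's encoding.} The paper allows any faithful embedding $G\hookrightarrow S_D$. It chooses $s_0$ with $D$ distinct entries, so $s_t$ is the ``tabular'' listing of the entire permutation. The permutation, and hence the group element, can then be read off by seeing where each entry of $s_0$ landed. This is what actually delivers $D<|G|$: for instance $D=5$ for the $S_5$ experiment, whereas your construction needs $D=120$.

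\textbf{Your aside on smaller representations.} Because of this, your remark that ``a smaller faithful representation could be used'' does not work with one-hot states. If a faithful action has $D<|G|$, every orbit has fewer than $|G|$ points, so by orbit--stabilizer every point has a nontrivial stabilizer. A one-hot vector records only the image of a single point and therefore cannot determine the group element.

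\textbf{What your version adds.} You explicitly check that $R(g')R(g)=R(gg')$, so $A_T\cdots A_1 s_0$ yields $g_1\cdots g_T$ in the correct order. This pins down the left/right bookkeeping, which matters for non-abelian $G$. The paper's proof leaves that convention implicit in its unstated choice of permutation-matrix convention.
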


\begin{proof}
    By Cayley's theorem, any finite group $G$ can be embedded in a symmetric group~$S_D$, for some $D \leq |G|$. Therefore, by choosing the initial state $s_0 \in \mathbb{R}^D$ to have $D$ distinct entries (a "vocabulary" of size $D$), we can use the tabular representation of permutations \citep[][eq. 1.5.2] {artin2011abstract} to represent an element of $S_D$ as $s_t$ (by a permutation of the elements of $s_0$).
    We can also choose $A_{t+1} \in \mathbb{R}^{D \times D}$ to be the permutation matrix corresponding to the embedding of $g_{t+1}$ in $S_D$, since any element of $S_D$ can be represented as a $D \times D$ permutation matrix (e.g., see~\Cref{fig:s5_results}B). Consequently $s_t = A_t A_{t-1} \hdots A_2 A_1 s_0$ is an embedding of an element of $G$ in $S_D$ in the tabular representation. In fact, $s_t \in \mathbb{R}^D$ represents the running product $g_1 g_2 \hdots g_{t-1} g_t$, which is precisely the goal of the group word problem.
\end{proof}

\begin{figure}[ht]
    \centering
\includegraphics[width=\linewidth,height=0.42\textheight,keepaspectratio, trim=1mm 2mm 1mm 3mm,clip]
    {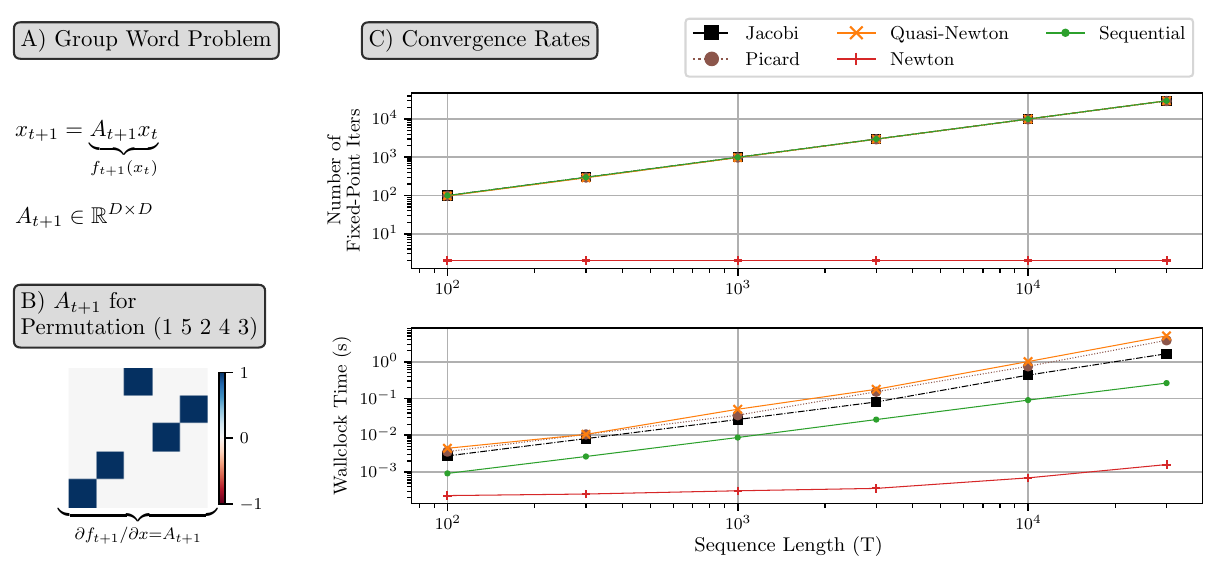}
    \caption{\textbf{A single Newton iteration solves the $S_5$ group word problem, whereas the number of iterations required for the other methods increases with sequence length.} We consider the task of evaluating the product of $S_5$ group elements. \textbf{A:} The group word problem can be expressed as an LDS with input-dependent state-transition matrices. \textbf{B:} An example input-dependent transition matrix $A_t$ for permutation $(1\ 5\ 2\ 4\ 3)$, in cycle notation. \textbf{C:} For each fixed-point method and a range of sequence lengths, $T$, we compute the median (over ten random seeds) number of fixed-point iterations to converge (top) and the median wall-clock time (bottom). While a single Newton iteration is sufficient to solve the $S_5$ problem, the number of iterations required for the other methods increases with the sequence length.}
    \label{fig:s5_results}
\end{figure}

Though we have cast the group word problem as a time-varying LDS with $f_{t+1}(s_t) = A_{t+1} s_t$, we can still evaluate this recursion with any of the fixed-point methods described above. 
Since the dynamics are linear, the Newton iteration corresponds to evaluating the LDS with a parallel scan, and it converges in one iteration.
While other methods would require more iterations to converge, they could still be more efficient in wall-clock time, since they use less memory and compute per iteration. 

However, we can use the Jacobian approximation error $\mathrm{diff}(\cdot)$ (defined in \cref{eq:diff}) of the different fixed-point methods to get a sense if the other fixed-point methods are likely to excel in this setting.

The state transition matrices of the group word problem are permutation matrices with spectral norm one, and so $\mathrm{diff}(\mathcal{A_J}) = 1$. Furthermore, since with high probability there will be a state transition matrix with diagonal all zero, it follows that $\mathrm{diff}(\mathcal{A_{QN}}) = 1$ while $\mathrm{diff}(\mathcal{A_{P}}) = 2$. Since we would expect to need $\mathrm{diff}(\mathcal{A}) < 1$ for a fixed-point method $\mathcal{A}$ to be effective, our theoretical analysis in \Cref{sec:conv_rates} suggests that none of the fixed-point methods other than Newton will be effective on the group word problem.
  
We test this hypothesis with a simple experiment simulating the $S_5$ word problem, a standard problem in the sequence modeling literature \citep{merrill2024illusion, grazzi2024unlocking}. In this setting, \Cref{fig:s5_results} shows that quasi-Newton, Picard, and Jacobi iterations require nearly $T$ iterations to converge.
On the other hand, we see that Newton's method solves the $S_5$ word problem with just one fixed-point iteration, as expected since the true dynamics are linear.
The speed-up is also apparent in the wall-clock time comparison, where we see that Newton is faster than other methods, regardless of $T$.

\subsection{Case Study \#2: Picard iterations struggle to parallelize RNNs}\label{sec:GruExp}
\begin{figure}[ht]
    \centering
    \includegraphics[width=\linewidth,height=0.42\textheight,keepaspectratio]
    {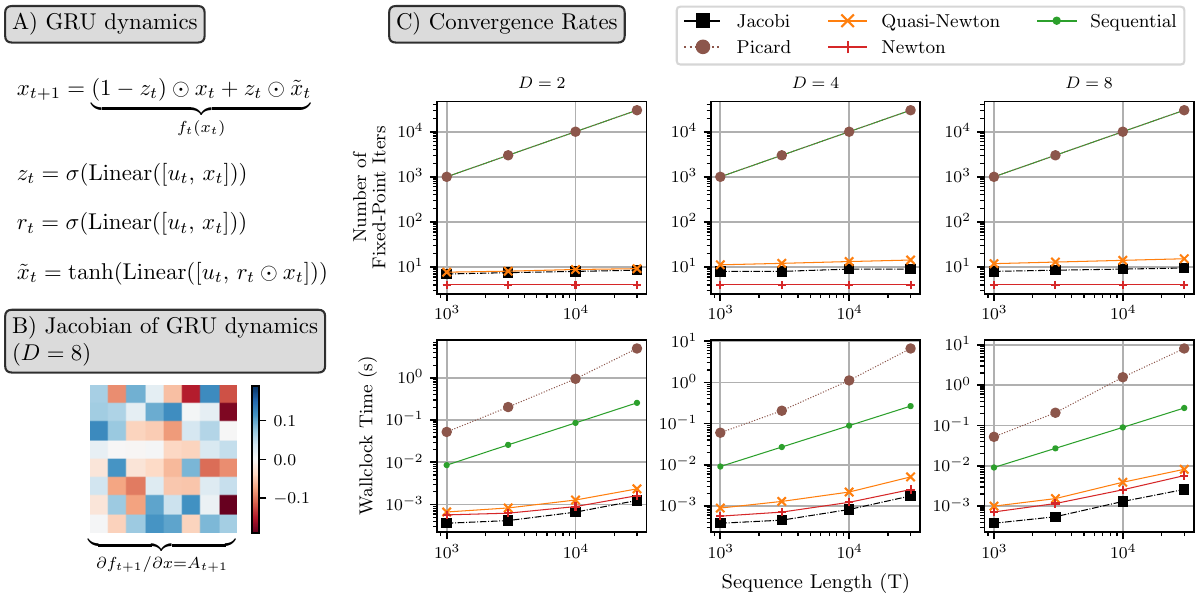}
    \caption{\textbf{Picard iterations struggle to parallelize RNNs.} We evaluate GRUs with random parameter initialization for different sequence lengths $T$ and hidden state sizes $D$. \textbf{A:} The nonlinear dynamics of a GRU, following \citet{Feng2024}, where $x_t$ is the hidden state, $u_t$ is the input, and the notation $\mathrm{Linear}[\cdot, \cdot]$ indicates a linear readout from the concatenation of two vectors. \textbf{B:} A representative Jacobian matrix $\nicefrac{\partial f_{t}}{\partial x}$ from a GRU trajectory, which is not well approximated by the identity matrix. \textbf{C:}  For each fixed-point method and a range of sequence lengths, $T$, and state sizes, $D$, we compute the median (over ten random seeds) number of fixed-point iterations to converge (top row) and the median wall-clock time (bottom row). Picard iterations take nearly $T$ iterations to converge, while the other fixed point methods yield order-of-magnitude speed-ups over sequential evaluation}
    \label{fig:gru_init}
\end{figure}

We next consider a task where Picard iterations struggle, while the other fixed-point methods excel. This task is parallelizing recurrent neural networks (RNNs), like the Gated Recurrent Unit or GRU \citep{gru}.

We show the results of a simple experiment in \Cref{fig:gru_init}. We evaluate GRUs with random parameter initialization for different hidden dimension sizes $D$ and sequence lengths $T$ using sequential evaluation as well as fixed-point iterations. This is the same experimental set up as that shown in \Cref{fig:untrained_gru}, except this time we are using H100s.
 
As we observe in Panel B of \Cref{fig:gru_init}, at initialization the Jacobian of the GRU has entries that are fairly small in value (on the order of $0.1$).
Therefore, it is intuitively plausible that $\mathrm{diff}(\mathcal{A}_J)$ and $\mathrm{diff}(\mathcal{A}_{QN})$ would both be less than one, while $\mathrm{diff}(\mathcal{A}_P)$ would be greater than one. 

To demonstrate the different values of the $\mathrm{diff}(\cdot)$ operator for quasi-Newton, Jacobi, and Picard iterations in this setting, we consider the setting $D=8$ and $T=1000$. For $10$ random seeds, we plot a variety of quantities relevant for $\gamma$ (cf. \cref{eq:lin_rate}) in \Cref{fig:gru_init}. We observe that lower values of $\gamma$ (i.e., faster rates of asymptotic linear convergence) coincide with fewer fixed-point iterations needed in \Cref{fig:gru_init}.
\begin{figure}[H]
    \centering
    \includegraphics[width=\linewidth]{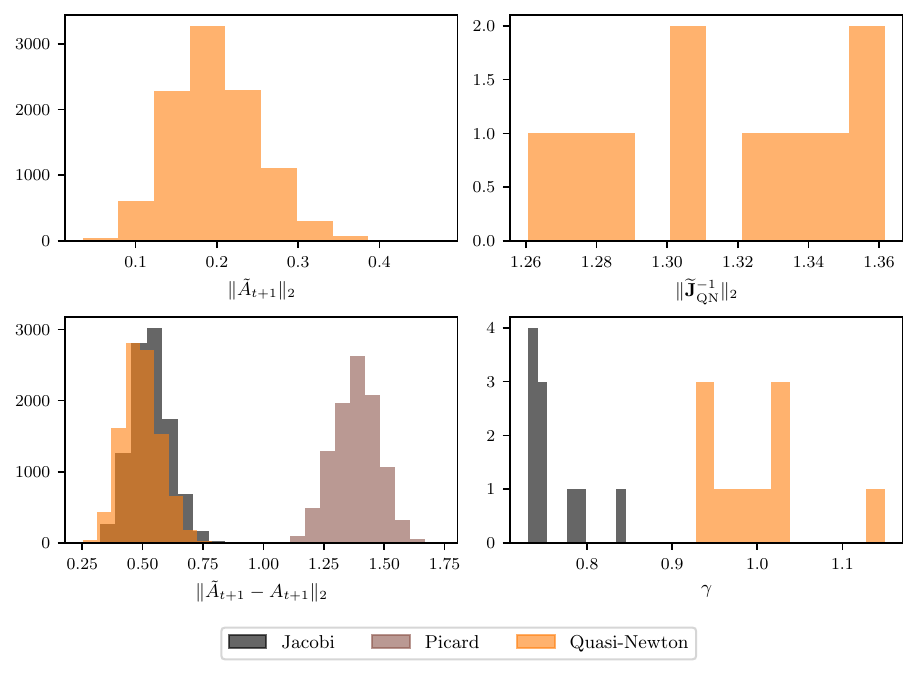}
    \caption{\textbf{Understanding the convergence rates in \Cref{fig:gru_init}.} In the setting of the GRU experiment for $D=8$ and $T=1000$, we plot relevant quantities for understanding the convergence rates of different methods over $10$ random seeds. \textbf{(Top left.)} We plot the spectral norm of the approximate Jacobian for the quasi-Newton iterations we consider in this paper, i.e. $\mathrm{diag}[\A_t(s_{t-1}^{\star})]$. \textbf{(Top right.)} For each of the $10$ random seeds, we plot $\| \tilde{\J}_{\mathrm{QN}}(\mathbf{s}_{1:T}^{\star})^{-1} \mathbf \|_2$. We observe that they are always larger than one. \textbf{(Bottom left.)} We plot the difference between approximate Jacobians and true dynamics Jacobians over all time steps and seeds for quasi-Newton, Jacobi, and Picard iterations. We observe that this difference for Picard iterations is always larger than one, and so we would intuitively expect Picard iteration to be very slow for parallelizing GRUs. This behavior is precisely what we see in \Cref{fig:gru_init}. \textbf{(Bottom right.)} Across the $10$ random seeds, we plot the value of $\gamma$ for Jacobi and quasi-Newton iterations (Picard would be $O(T)$ and so is not shown). Because $\| \tilde{\J}_{\mathrm{J}}(\mathbf{s}_{1:T}^{\star})^{-1} \mathbf \|_2 = 1$, the 10 $\gamma_J$'s are equivalent to the maximum values from the differences in (bottom left) over the 10 random seeds. However, since (top right) shows that $\| \tilde{\J}_{\mathrm{QN}}(\mathbf{s}_{1:T}^{\star})^{-1} \mathbf \|_2 > 1$, we observe that the values of $\gamma_{QN}$ are larger than in (bottom left). In summary, because the values of $\gamma_J$ are smaller than the values of $\gamma_{\mathrm{QN}}$, we would intuitively expect Jacobi to converge in fewer fixed-point iterations, which is exactly what we observe in \Cref{fig:gru_init}.   }
    \label{fig:diff}
\end{figure}
We observe that $\mathrm{diff}(\mathcal{A}_J)$ and $\mathrm{diff}(\mathcal{A}_{QN})$ are both below one always, which corresponds to their fast rates of convergence demonstrated in \Cref{fig:gru_init}. In contrast, $\mathrm{diff}(\mathcal{A}_P)$ is always greater than one, which corresponds to the slow rates of convergence of Picard iteration in the experiment depicted in \Cref{fig:gru_init}.

In conclusion, we expect quasi-Newton and Jacobi iterations to join Newton iterations in excelling in this setting, while we would expect Picard iterations to converge prohibitively slowly. This behavior is exactly what we observe in \Cref{fig:gru_init}. 

\subsection{Case Study \#3: Jacobi iterations struggle to parallelize discretized Langevin diffusion}\label{sec:WellExp}

\begin{figure}[H]
    \centering
    \includegraphics[width=0.9\linewidth]
    {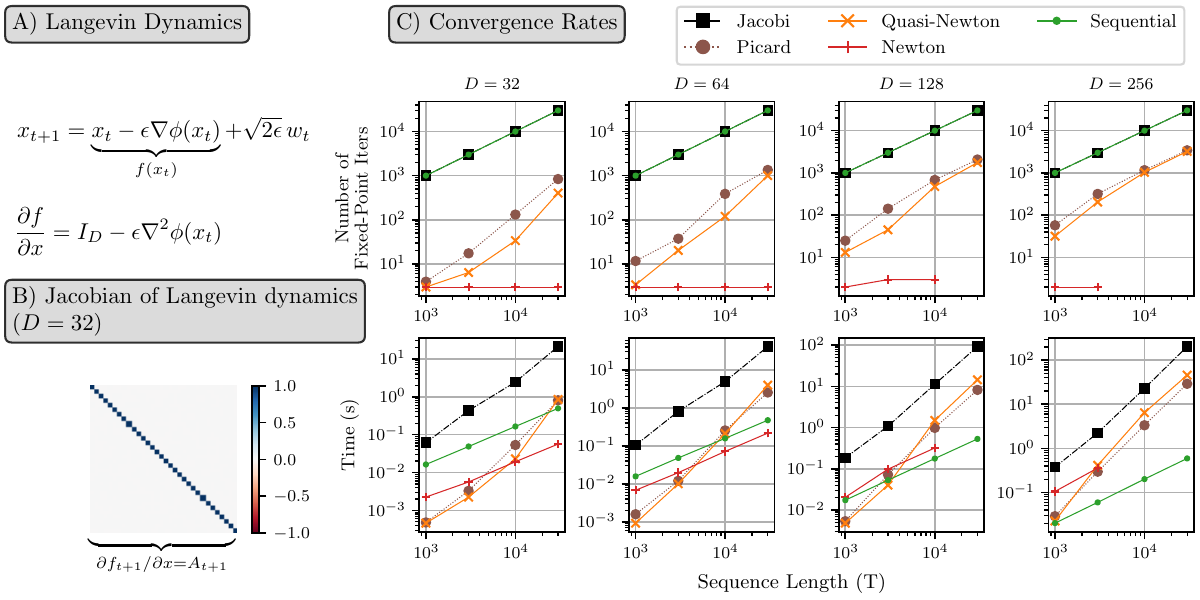}
    \caption{\textbf{Jacobi iterations struggle when the dynamics Jacobian is close to the identity.} We evaluate Langevin dynamics for a potential $\phi$.
    \textbf{A:} The nonlinear dynamics of Langevin dynamics for a potential $\phi$ and step size $\epsilon$, where $x_t$ is the state and $w_t$ is Gaussian noise.
    \textbf{B:} The Jacobian for Langevin dynamics is well-approximated by the identity matrix, especially for small step size $\epsilon=\num{1e-5}$.
    \textbf{C:} We evaluate Langevin dynamics for larger dimensions, plotting the median of 10 random seeds. Jacobi iteration consistently take $T$ steps and are always slower than sequential, while the other fixed-point methods converge in fewer $T$ steps and can be faster than sequential. The missing Newton iteration points indicate the GPU ran out of memory.
    }
    \label{fig:2well}
\end{figure}

Based on the theoretical analysis presented in \Cref{prop:ConvRates}, we expect that if the Jacobian of the dynamics function is well-approximated by the identity matrix, then Picard should converge relatively quickly and at considerably lower cost, especially when compared to the other zeroth-order method of Jacobi iterations. A canonical example of such a system where the dynamics are close to identity comes from a discretization of Langevin dynamics \citep{Langevin1908Eng, Friedman2022Langevin}. Langevin dynamics are a workhorse for MCMC \citep{besag1994comment} and motivated the development of score-matching methods \citep{song2019generative}, which are closely related to diffusion models~\citep{sohl2015deep,ho2020denoising,song2021score}.
As we discussed in \Cref{ssc:2well}, Langevin dynamics follows \cref{eq:langevin}, and consequently have a dynamics Jacobian that is well-approximated by the identity matrix for small step sizes $\epsilon$.
More generally, the identity approximation tends to be well-suited to problems where a differential equation is discretized with small step sizes, such as when sampling from diffusion models \citep{flowsanddiffusions2025}.
 
In fact, simply by observing the structure of the Jacobian in Panel B of \Cref{fig:2well}, we observe that the $\mathrm{diff}(\cdot)$ operator for Newton, quasi-Newton, and Picard iterations in this setting will be close to zero, while $\mathrm{diff}(\mathcal{A}_J)$ will be close to one. Therefore, based on our analysis in \Cref{prop:ConvRates}, we hypothesize that the other fixed-point methods should dramatically outperform Jacobi iterations in this setting.

We test this hypothesis with a simple experiment shown in \Cref{fig:2well}. We simulate Langevin dynamics on a potential $\phi$ given by the negative log probability of the mixture of two anisotropic Gaussians.
In this setting, Picard iterations take far fewer than $T$ iterations to converge and can be faster than sequential evaluation. We note that quasi-Newton iterations, which include information only about the diagonal of the Jacobian of the dynamics, appear to have comparable wall-clock time, by virtue of taking fewer iterations to converge (though each fixed-point iteration involves more work). 

Whether fixed-point iterations are faster than sequential evaluation also depends on memory utilization. For example, \citet{shih2023parallel} and \citet{parasolver25} demonstrated wall-clock speed-ups when using Picard iterations for sampling from a diffusion model using a "sliding window" to only evaluate chunks of the sequence length where the parallel scan algorithm can fit in memory. As we discuss in \Cref{sec:global_convergence}, using the sliding window is best practice for parallel Newton methods and should be used in all future work.

\section{Related Work}\label{sec:related_work}

In this chapter we unify prominent fixed-point methods for the parallel evaluation of sequences in the language of linear dynamical systems. While many papers have employed different fixed-point iterations for different problems in machine learning --- \citet{deer2024},  \citet{deeppcr}, and \citet{danieli2025pararnn} using Newton iterations, \citet{tang2024accelerating} and \citet{gonzalez2024scalable} using quasi-Newton iterations, \citet{shih2023parallel} using Picard iterations, and \citet{song2021accelerating} using Jacobi iterations, among other works --- to the best of our knowledge no one has explicitly unified these different methods in the language of linear dynamical systems. 

\paragraph{General unification of fixed-point methods: parallel-chord methods}

While connections between Newton's method and Picard iterations have been made before outside of the machine learning literature, our contribution is the tight coupling of these methods to LDSs in the context of parallel evaluation of nonlinear sequences. \citet[Ch. 7]{ortega1970iterative} considered the problem of solving a nonlinear equation $\mathbf{F}(\mathbf{s}) = \mathbf{0}$.  They showed that Newton and Picard iterations are special cases of general iterative methods where each iterate is given by
\begin{equation}\label{eq:or_pchord}
    \mathbf{s}^{(i+1)} = \mathbf{s}^{(i)} - \tilde{\J}(\mathbf{s}^{(i)})^{-1} \mathbf{F}(\mathbf{s}^{(i)}),
\end{equation}
for some matrix $\tilde{\J}(\mathbf{s}^{(i)})$. 
We discuss the relationship between the unifying frameworks put forward in \citet{ortega1970iterative} and in our paper at greater length in \Cref{app:parallel_chord}. The primary difference is that by focusing on the setting of nonlinear sequence evaluation, we bring into greater focus the role of the Jacobian of the dynamics function. Moreover, by unifying fixed-point iterations in the language of LDSs, we emphasize their parallelizability over the sequence length using the parallel scan \citep{blelloch1990prefix}.

\paragraph{Convergence rates of fixed-point methods} 
In the context of analysis of fixed-point methods in general, there is a broad literature \citep{ortega1970iterative, young2014iterative} on the convergence rates of different fixed-point methods. For example, \citet{ortega1970iterative} also proved convergence rates for iterative methods of the form in \cref{eq:or_pchord}. Though their methods have much in common with the proof techniques used to prove \Cref{prop:ConvRates} of this paper, their provided results are actually trivial in the setting considered in this paper. Part of the reason\footnote{We elaborate in \Cref{app:parallel_chord}.} for the inapplicability of the convergence results from \citet{ortega1970iterative} to our paper is that \citet{ortega1970iterative} consider the asymptotic setting, while it has been firmly established that in the particular setting considered in this paper, Jacobi, Picard, quasi-Newton, and Newton iterations all globally converge in at most $T$ iterations \citep{shih2023parallel, tang2024accelerating, gonzalez2024scalable}. For moving beyond this worst-case analysis, in \Cref{ch:predictability} we show that the difficulty of parallelizing a dynamical system is directly related to the stability of the system, which can be thought of as the "average" spectral norm of $\nicefrac{\partial f_{t+1}}{x_t}$. Proposition 4 of \citet{parasolver25} develops the foundations of the convergence analysis we present in \Cref{prop:ConvRates}. We extend their work by applying it to a wider variety of fixed-point methods, explicitly bounding many quantities of interest, and demonstrating its relevance in simulation.
  
\paragraph{Other fixed-point methods: mixing sequential with parallel}

In this chapter, we focus on Jacobi, Picard, and Newton iterations because of their prominence \citep{song2019mintnet, song2021accelerating, shih2023parallel, deeppcr, deer2024, gonzalez2024scalable, grazzi2025parallel, pmcmc, farsang2025scaling, danieli2025pararnn, iacob2025parallel} and their relationship to LDSs, as listed in \Cref{tab:fxd_pt_sum}. However, there is a wide literature on iterative solvers \citep{ortega1970iterative, young2014iterative}. Many of these other methods can also be parallelized over the sequence length, or provide a mixture of parallel and sequential computation.
For example, as we discussed in \Cref{sec:longer_lit}, \citet{naumov2017parallel} and \citet{song2021accelerating} consider using Gauss-Seidel iterations to accelerate computations in deep learning.
% For example, Naumov (2017) shows how evaluating Markov processes
% can be cast as a system of nonlinear equations and discussed many techniques from numerical analysis for
% solving them (though did not explicitly discuss Picard or Newton iterations).
% In a similar vein, \citet{song2021accelerating} considers Gauss-Seidel iterations. 
Although Gauss-Seidel iterations reduce to sequential evaluation when applied to Markovian processes, \citet{song2021accelerating} also emphasize how the structure of the problem and hardware considerations dictate the optimal mixture of parallel and sequential computation.
Parareal iterations mix parallel and sequential computation by applying parallelization at multiple length scales, and have also been used to parallelize diffusion models \citep{selvam2024selfrefining}. 
\citet{tang2024accelerating} also parallelized diffusion models using both a generalization of Jacobi iterations, as well as Anderson acceleration \citep{anderson1965iterative, walker2011anderson}, which they modify to be a form of quasi-Newton.

\section{Discussion}
\label{sec:discussion}

This work unified a variety of approaches for parallelizing recursions via fixed-point iterations---including zeroth-order methods like Jacobi and Picard iterations as well as first-order methods like Newton and quasi-Newton iterations---under a common framework.
In each case, the iterates reduce to evaluating an appropriately constructed linear dynamical system, which approximates the nonlinear recursion of interest. 
Moreover, we have demonstrated how this unifying framework provides insight into which different problems in machine learning are likely to benefit from which types of fixed-point iterations. In particular, we demonstrate that the structure of the Jacobian matrix of the dynamics function plays a key role in determining which fixed-point method to use.

For this reason, understanding the structure of the Jacobian of the dynamics function is important for using our framework. Fortunately, there are many problems where the structure of the Jacobian matrix is known in advance. As we showed in \Cref{sec:GrpWordProb}, the group word problem can always be simulated with permutation matrices for its dynamics. As we showed in \Cref{sec:WellExp}, discretized roll-outs from differential equations, used in sampling from diffusion models and rolling out neural ODEs, have $\nicefrac{\partial f}{\partial s}$ equal to the identity matrix plus a correction term equal to the discretization step-size. Moreover, as shown in \citet{pmcmc}, the dynamics of position and momenta variables in Hamiltonian Monte Carlo (HMC) results in banded matrices. Furthermore, in sequence modeling, one can \emph{design} a recurrent neural network to have Jacobians with desired structure, as we discussed in \Cref{ssc:bwds}. Finally, if there is truly no analytic information about the Jacobian in advance, its structure could be probed with finite-difference methods.

\paragraph{Future directions}

Clarifying the relationships and properties of these approaches through the lens of linear dynamical systems also suggests promising areas for future study. One clear direction of future work is to explore additional approaches for exploiting problem-specific structure, using our unifying framework to develop new fixed-point iterations. For example, an intermediate between Picard and quasi-Newton methods is a scaled identity approximation, $\tilde{A}_{t} = a_{t} I_D$. If we had prior knowledge on the appropriate scaling factors, $a_{t} \in \mathbb{R}$, we could avoid computing any Jacobian-vector product evaluations. More generally, there exist other groups of structured matrices with compact representations that are closed under composition such that a parallel evaluation of the LDS would be computationally efficient. Examples include permutation matrices, block-diagonal matrices, and block matrices where each sub-block is diagonal, among others. Future work should enumerate these use cases and investigate problem-specific applications where they are appropriate. One example application is for more efficient parallelization of the group word problem using a compact representation of permutation matrices, as was done by \citet{terzic2025permutation}. 

In conclusion, understanding the shared backbone of these fixed-point methods can also give practitioners guidance about which methods to use for which problems. As parallel evaluation of seemingly sequential processes becomes increasingly important in machine learning, these insights may provide valuable guidance to the field. 

\clearpage
\ctparttext{We conclude with a synthesis of our contributions and discuss
future research directions in the parallelization of sequential models.
\par
\begin{minipage}{\linewidth}
    \centering
    \captionsetup{hypcap=false}
    \includegraphics[width=0.8\linewidth]{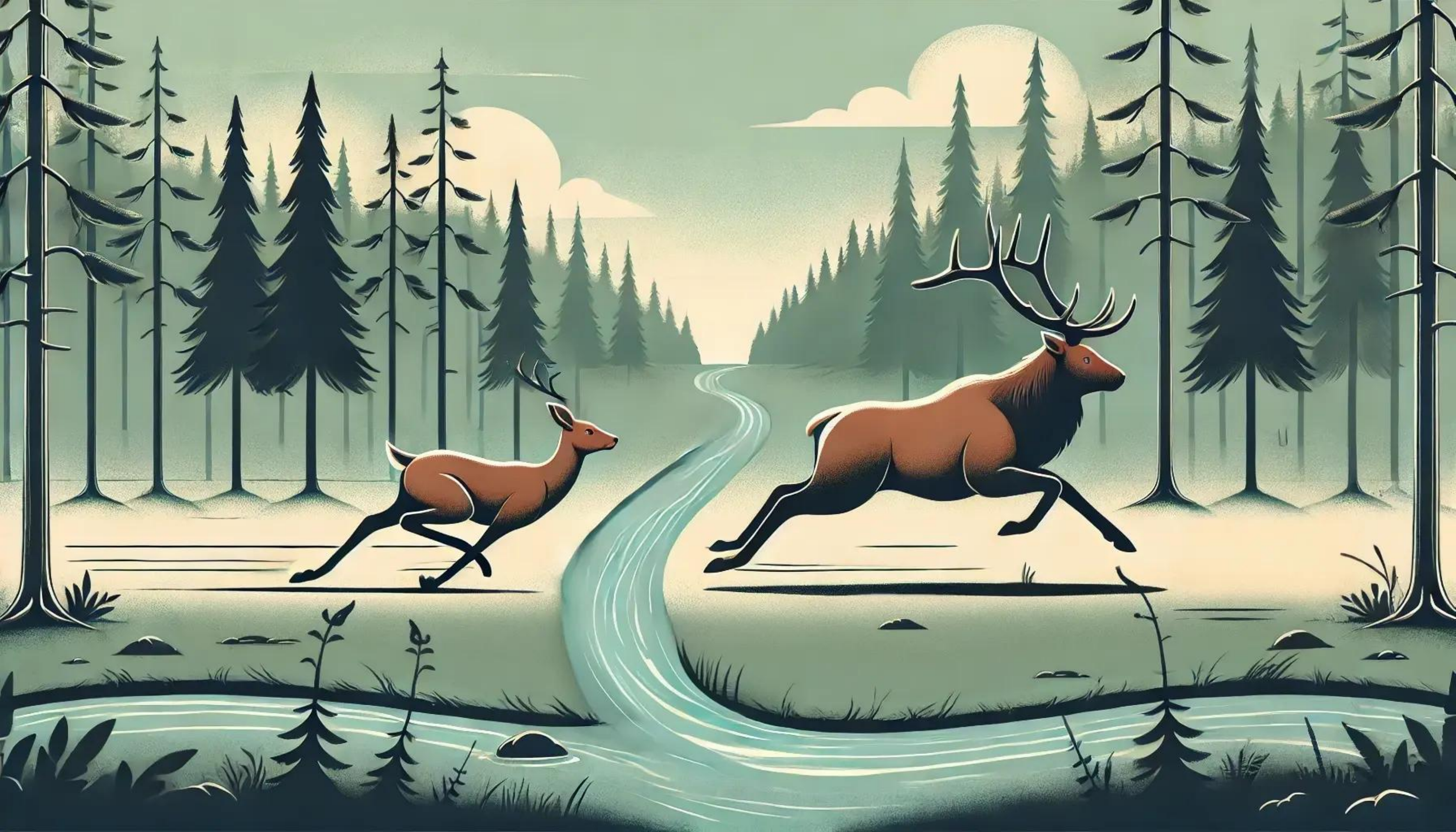}
    \captionof{figure}{What unexplored, verdant pastures await for the ungulate (parallel Newton) methods?}
    \label{fig:ungulates_art}
\end{minipage}
}
\part{Conclusion}
%************************************************
\chapter{Conclusion and Future Directions}\label{ch:conclusion}
%************************************************

This dissertation has challenged the conventional wisdom that recurrent neural networks and other state space models are "inherently sequential." Through a combination of algorithmic innovation and theoretical analysis, we have demonstrated that predictable state space models can be evaluated efficiently on parallel hardware, with computational depth scaling as $O((\log T)^2)$ rather than $O(T)$.

Parallel Newton methods are powerful tools to accelerate computation previously believed to be "inherently sequential."
This parallelization has the direct benefit of accelerating established methods like nonlinear RNNs \citep{deer2024, gonzalez2024scalable,farsang2025scaling, danieli2025pararnn}, Markov chain Monte Carlo \citep{pmcmc}, and the vast range of important applications of state space models in machine learning broadly (see \Cref{tab:ssm_examples}). 
Perhaps even more importantly, using parallel Newton methods allows researchers to explore alternative approaches using state space models more quickly, which may enable even more fundamental breakthroughs in the future.

In this conclusion, we briefly recapitulate the main contributions of this thesis, and highlight important directions for future work on parallel Newton methods.

\section{Summary of Contributions}

This thesis contributes to both the methodology and theoretical understanding of parallel Newton methods.

\Cref{part:methods} presents our methodological contributions. We extend parallel Newton methods by making connections to other canonical techniques from numerical analysis. In particular, we
\begin{itemize}
    \item improve the \emph{scalability} of parallel Newton methods by making connections to the quasi-Newton literature (\Cref{ch:scalable}); and 
    \item improve the \emph{stability} of parallel Newton methods by making connections to the trust-region literature (\Cref{ch:elk}).
\end{itemize}

\Cref{part:theory} presents our theoretical contributions. Driven by a desire to understand the limits of parallelizability, we conduct an in-depth analysis of the convergence rates of parallel Newton methods. In particular, we 
\begin{itemize}
    \item establish a novel connection between the \emph{predictability} of the SSM dynamics and the \emph{conditioning} of the merit function minimized by parallel Newton methods (\Cref{ch:predictability}). This connection allows us to derive convergence rates for DEER (the Gauss-Newton method for parallelizing nSSMs), and leads to the conclusion we can parallelize predictable dynamics, but should evaluate unpredictable dynamics sequentially. We also
    \item crystallize a \emph{unifying framework} that shows how other popular fixed-point methods, like Picard and Jacobi iterations, are also parallel Newton methods with different approaches to approximating the Jacobian (\Cref{ch:quasi_convergence}). This unifying framework allows for a general study of the convergence rates of many fixed-point methods, and highlights the settings where different methods excel.
\end{itemize}

These methodological and theoretical contributions provide a strong foundation for the deployment of parallel Newton methods. 
However, this research program is just beginning, and so we highlight exciting future directions in the next section.

\section{Future Directions}

We highlight two important directions for future work:
\begin{itemize}
    \item improving the methodology and implementation of parallel Newton methods; and
    \item finding the best application of parallel Newton methods across the wide range of state space models (\Cref{tab:ssm_examples}).
\end{itemize}

\subsection{Improving parallel Newton methods}

The growing excitement around parallel computation in machine learning has led to recent development of these parallel Newton methods across many different fields, including in the context of parallelizing nonlinear RNNs \citep{deer2024, gonzalez2024scalable,farsang2025scaling, danieli2025pararnn}, sampling from diffusion models \citep{deeppcr, shih2023parallel, tang2024accelerating, selvam2024selfrefining, parasolver25}, sampling from MCMC chains \citep{grazzi2025parallel, pmcmc}, and solving differential equations \citep{iacob2025parallel}. 
However, as all of these developments are recent and are scattered across different subfields, there is still much work to be done in optimizing and improving these methods, in terms of algorithmic innovation as well as efficient implementation.

\subsubsection{Broadening our use of numerical analysis}

A key contribution of \cref{part:methods} of this thesis was extending parallel Newton methods by drawing on the vast literature of numerical analysis, in our case quasi-Newton and trust-region methods. 
However, we have only scratched the surface of numerical analysis \citep{NocedalWright, ortega1970iterative, boyd2004convex, dennis1996numerical}.
We hope we can begin a wide-ranging research program to import useful techniques from numerical analysis to further improve parallel Newton methods. We discussed many extensions in \Cref{sec:quasi_future}.

Another example is broadening the range of targets for our parallel Newton methods. In this dissertation, we apply parallel Newton methods only to the goal of rolling out the dynamics in \cref{eq:ssm} from a fixed initial condition $s_0$. 
However, instead of considering only initial conditions, we could also consider \emph{boundary value} problems, where we may know the desired state at both the start ($t=0$) and the end ($t=T$) \citep{keller1968numerical, ascher1995numerical}.
Such a boundary problem arises, for example, in the E-step of a predictive coding network \citep{innocenti_thesis}.

This simple change adjusts certain aspects of the theory of parallel Newton methods. For example, in a boundary value problem, it is no longer required that there is a unique global minimizer, or that it results in a merit function with value $0$. Moreover, each parallel Newton step now requires not one but two parallel scans (one in the forward direction, one in the backward direction), which may enhance the appeal of smoothing-inspired approaches. 

Broadly speaking, expanding the richness of problems to which we apply parallel Newton methods will require a deeper usage of techniques from numerical analysis and possibly even further contributions to that field.

\subsubsection{Efficient implementation on parallel hardware}

As we discussed in \Cref{sec:pscan}, a fundamental ingredient of parallel Newton methods presented in this thesis is the parallel scan. 
However, there are a host of implementation details for using the parallel scan when programming on accelerated hardware like GPUs \citep{harris2007scan,gla,sarnthein2025blog}. For example, the presence of a general-purpose parallel scan is, as of the time of writing, a major difference between JAX \citep{jax2018github} and PyTorch \citep{paszke2019pytorch}, two leading Python libraries for deep learning. JAX has a general purpose parallel scan (\texttt{jax.lax.associative\_scan}) as a fundamental primitive, which allows for implementation of a wide range of parallel scans. For example, \texttt{dynamax}, a JAX library for probabilistic state space modeling \citep{linderman2025dynamax}, implements the parallel filtering and smoothing algorithms from \citet{parallel-kalman}. In contrast, PyTorch currently has only \texttt{torch.cumsum}, which is the parallel scan where the binary associative operator is addition\footnote{Although \citet{heinsen2023parallelization} shows that clever uses of \texttt{torch.cumsum} can parallelize scalar/diagonal LDSs, of the type that are used in quasi-DEER.}, and \texttt{torch.cumprod} (for scalar multiplication). This difference is why we implement the experiments in this dissertation in JAX. 

This lack of a general purpose parallel scan in PyTorch has also led to the custom development of highly-optimized, hardware-aware custom CUDA kernels for parallel scans \citep{sarnthein2025blog}. These custom parallel scans appear most prominently in Mamba \citep{mamba}, a leading SSM for language modeling, and ParaRNN \citep{danieli2025pararnn}, which applies parallel Newton iterations to 7B parameter nonlinear RNNs to achieve strong language modeling performance. There also exist useful implementations of parallel scans for scalar/diagonal LDSs in PyTorch such as \cite{proger}, which we used to implement quasi-Newton iterations in PyTorch in this repo: \url{https://github.com/lindermanlab/elk-torch}. 
Further improvements of the implementations of parallel scans will directly improve the performance of parallel Newton methods.

Moreover, in practice, when parallelizing over long sequences ($T \gg D$), the memory cost is often dominated by the size of intermediate state representations and the need to unroll computations over multiple fixed-point iterations. Chunking (dividing the sequence into smaller windows) and truncation (limiting the number of fixed-point iterations) are useful strategies to reduce memory usage in these settings \citep{dao2022flashattention, shih2023parallel, selvam2024selfrefining, geiping2025scaling, pmcmc}.

 \paragraph{Numerical Stability and Low Precision} A particularly important area for improvement of parallel Newton methods is their numerical stability and, in particular, their ability to handle lower precision. In particular, LDS matrices with spectral norm close to or greater than one can cause numerical instabilities in the parallel scan operation \citep{gonzalez2024scalable, gonzalez2025predictability}. This is especially critical in high-precision tasks or long sequences, and practitioners should monitor for numerical divergence or the accumulation of floating-point error.
 
 In practice, it has been extremely difficult to get parallel Newton methods to work reliably with lower precision than \texttt{float32}. Unfortunately, the tensor cores of modern GPUs are optimized to work best with lower precision (achieving much higher FLOPs per second in lower precision) \citep{markidis2018nvidia, micikevicius2018mixed}. Therefore, improving the robustness of parallel Newton methods (algorithmically or in their implementation) in lower precision is very important for deployment of these methods---especially as quantization and low-precision become increasingly important in industrial AI \citep{dettmers2022llm, gholami2022survey}.

 \paragraph{Fundamentally Different Approaches} Finally, we must be open to radically different and possibly transformational approaches to parallelizing over the sequence length. 
 For example, the parallel Newton methods presented in this thesis are predicated on the ease of parallelizing linear dynamical systems with a parallel scan, and the difficulty of directly parallelizing a nonlinear dynamical system with a parallel scan. However, as discussed in \Cref{ssc:nonlin_pscan}, composition of functions is inherently a binary associative operator---it is difficulties around intermediate storage that prevent us from directly using a parallel scan to parallelize nSSMs. We should be open to the existence of ingenious intermediate representations of the compositions of nonlinear functions that could remain expressive enough for a broad range of applications. There may also be useful connections to Koopman operator theory \citep{koopman1931hamiltonian, mezic2005spectral, williams2015data} that could allow us to (at least approximately) parallelize nonlinear dynamical systems in a constant number of iterations, even when the dynamics are marginal or unpredictable.

 We should even be open to eschewing the parallel scan entirely! For example, \citet{tang2024accelerating} builds up a structured matrix $\mathbf{G}$ that is an approximation to $\J^{-1}$. Thus, each application of their parallel Newton steps (called ParaTAA, where TAA stands for "Triangular Anderson Acceleration") is simply matrix multiplication by $\mathbf{G}$. Another approach that could eschew parallel scans is to use conjugate gradient methods \citep{ConjugateGradients} to evaluate the solve $\J^{-1} \mathbf{r}$. These approaches based on direct matrix multiplication could get around the $\mathcal{O}(\log T)$ depth of the parallel scan---as could approaches that truncate a work-inefficient parallel scan early.

 In short, there is a multitude of innovation yet to be discovered, both algorithmically and in the hardware-aware implementation of parallel Newton methods.

\subsection{Finding the best applications of parallel Newton methods}

Parallel Newton methods parallelize nonlinear SSMs over the sequence length. However, another (simpler) way to leverage parallel compute with SSMs is to evaluate many SSMs simultaneously. Moreover, parallel Newton methods as currently conceived simply evaluate and train SSMs---they do not change the underlying properties of the SSM. Thus, if the SSM has certain unfavorable properties itself, the parallel Newton method will not fix them.

For these reasons, it is important to consider the utility of parallel Newton methods for various SSMs (\Cref{tab:ssm_examples}). We highlight two important considerations.

\subsubsection{Latency vs. Throughput}

The benefit of parallel Newton methods is that they can decrease the \emph{latency} of evaluating a single nSSM over its sequence length. Sequential evaluation requires $\mathcal{O}(T)$ iterations to get from the start $s_0$ to the finish $s_T$.
In contrast, if the SSM is predictable, then parallel Newton methods take $\mathcal{O}((\log T)^2)$ iterations to evaluate the chain when there are $T$ processors, thus reducing the latency.

However, if we had simply launched $T$ sequential chains simultaneously on such a parallel machine, then each clock would generate $T$ new samples $s_t^{(b)}$, where $b$ indicates the batch id of these $T$ chains. 
In contrast, a parallel Newton method run on a single chain would use all of the $T$ processors, but would produce $T$ samples in $\mathcal{O}((\log T)^2)$ (one factor of $\log(T)$ for the parallel scan, and another factor of $\log(T)$ for the number of iterations needed for convergence). Therefore, batching sequential computation actually has better \emph{throughput} than parallel Newton methods by a factor of $\mathcal{O}((\log T)^2)$.

For this reason, \textbf{parallel Newton methods excel in settings where we care about latency and not throughput.}
Examples where latency is important include the training of nonlinear RNNs (where a forward pass must be completed before learning during the backwards pass can begin) and sampling from an MCMC chain (where there is an initial burn-in period at the beginning of the chain when the samples have not yet converged to the target distribution).
However, if throughput is more important for your application, then you will be better suited by employing sequential evaluation with large batch size.

\subsubsection{Expressive nonlinear RNNs}

Even in the setting of training nonlinear RNNs---where decreasing the latency of the forward pass is of vital importance---parallel Newton methods are only as good as the target they are evaluating.
In other words, DEER exactly evaluates the forward and backward passes of a nonlinear RNN---so if the underlying nonlinear RNN (say a GRU or LSTM) has undesirable properties, parallel Newton methods cannot fix those because they will replicate those undesirable properties as well. 

Stemming from their ability to simulate a Turing machine \citep{siegelmann1995computational}, RNNs have many desirable theoretical properties \emph{vis a vis} transformers, including an improved ability to track state \citep{merrill-sabharwal-2023-parallelism, merrill2024illusion, liu2025serial, siems2026learning} and the ability to express harder complexity classes \citep{merrill2026linear}. However, currently recurrent architectures struggle vs. transformers both during training and evaluation. During training, recurrent architectures continue to struggle with the problems of vanishing and exploding gradients \citep{hochreiter1991, lstm} and the curse of memory \citep{bengio1994learning, zucchet2024stability}. Because recurrent architectures arise from the repeated application of the same cell, small changes in the parameter can result in large changes in performance, resulting in a jagged loss landscape \citep{sutskever2013training, pascanu2013difficulty}. While research in improving gradient-based optimization of RNNs like BPTT \citep{werbos1990backpropagation} remains ongoing \citep{sofo}, the future of RNN training might even eschew backpropagation altogether \citep{chaubard2025scaling, eggroll}, perhaps one day unlocking more biologically plausible learning rules at scale \citep{williams1989learning, bellec2020solution}.

Moreover, recurrent architectures also struggle with memory-retrieval tasks during in-context learning vs. transformers \citep{arora2023zoology}. The hidden state of a transformer-its KV cache---scales linearly with the sequence length, while the hidden state of an RNN is constant size \citep{guo2025log}. Thus, the RNN enforces compression, at the cost of reducing its recall ability over long context. 

The predictability of an RNN---which we define and discuss in \Cref{sec:lle}---is an important concept for both the training and deployment of RNNs. Predictable RNNs will also enjoy stable backwards passes, thus mitigating any issues from exploding gradients. However, an overly contracting RNN will struggle with recall. Along these lines, works such as \citet{orvieto-resurrecting} suggest that the best performing RNNs will have LLE as close as possible to $0$ without becoming chaotic. Nonetheless, as demonstrated in \Cref{ch:scalable} and explained in \Cref{ch:predictability}, parallel Newton methods can struggle as we approach "the edge of stability" \citep{beggs2022cortex}. Therefore, while our theoretical work on predictability can help guide the design of nonlinear RNN architectures, fundamental work remains in the design, training and orchestration of RNNs towards the goal of achieving human-like intelligence.

% ********************************************************************
% Backmatter
%*******************************************************
\begin{appendices}
\crefalias{chapter}{appendix}   
\clearpage
\part{Appendix}
%********************************************************************
% Appendix
%*******************************************************

\chapter{Global Convergence of Parallel Newton Methods}\label{appendix:global_convergence}

This appendix contains an extended discussion of the relationship between \Cref{prop:global_convergence} of this thesis (Proposition 1 of \citet{gonzalez2024scalable}) and Theorem 3.6 of \citet{tang2024accelerating}.
At its core, \citet{tang2024accelerating} contains the fundamental ideas for global convergence with quasi-Newton methods, and their empirical results show that they had a correct understanding of this global convergence.  However, to the best of my abilities to understand the notation of \citet{tang2024accelerating}, their Theorem 3.6 is both incorrect as stated and weaker than necessary.
To this end, in this section we discuss the different thrusts of Proposition 1 of \citet{gonzalez2024scalable} and Theorem 3.6 of \citet{tang2024accelerating}, and present a cleaned statement and proof of Theorem 3.6 of \citet{tang2024accelerating}.

\section{Comparison of the two results}

In terms of superficial differences, one aspect to keep in mind is that \citet{gonzalez2024scalable} focused on RNNs while \citet{tang2024accelerating} focused on diffusion models. Both are nSSMs and the goal of parallelization is identical. However, the direction of time is different in both papers: for RNNs, time goes from $0$ to $T$, while in sampling from diffusion models, the convention is often for time to go backwards. To keep uniformity in notation throughout this thesis, we will standardize on time going forwards for parallelization over the sequence length.

Both \citet{gonzalez2024scalable} and \citet{tang2024accelerating} also use quasi-Newton approaches towards this goal of parallelizing over the sequence length.
However, \citet{gonzalez2024scalable} approximates $\J$ with $\tilde{\J}$ and then uses a parallel scan to invert $\tilde{J}$. In contrast, \citet{tang2024accelerating} uses a form of Broyden's bad update, i.e. they approximate $\J^{-1}$ directly with a matrix $\mathbf{G}$. \citet{tang2024accelerating} have the very good insight that as long as $\mathbf{G}$ satisfies certain conditions, then global convergence of their quasi-Newton method (which they call ParaTAA) is guaranteed.

Lightly massaging the notation of \citet{tang2024accelerating} to be in the format of this thesis, their statement of their Theorem 3.6 is
\begin{quote}
    Consider a general update rule: In the $(i)$th iteration, the update is
    \begin{equation}\label{eq:tang_update}
        \mathbf{s}_{1:T}^{(i+1)} = \mathbf{s}^{(i)}_{1:T} - \mathbf{G}^{(i)} \mathbf{r}(\mathbf{s}_{1:T}^{(i)}),
    \end{equation}
    with $\mathbf{G}^{(i)}$ being any {\color{red} arbitrary} matrix. If for any $j$ where $\mathbf{r}_k^{(i)} = 0$ for $k < j$, the matrix $\mathbf{G}^{(i)}$ satisfies { \color{blue} $\mathbf{G}^{(i)}[:jD, :jD] = \mathbf{I}_{jD}$ \color{blue}}, then the update rule will converge within $T$ steps.
\end{quote}
We put in red the part of this statement that is overly strong (i.e. rendering the statement incorrect); and in blue the part that is weaker than necessary. 
Again, note the effect of time reversal, and the fact that \citet{tang2024accelerating} defines their residual function to be the negative of our definition in \cref{eq:r_def}.
In \Cref{fig:tang_diagram}, we illustrate how the conditions placed on $\mathbf{G}$ in \citet{tang2024accelerating} interact with the update rule in \cref{eq:tang_update}. 
\begin{figure}[H]
    \centering
    \includegraphics[width=0.95\linewidth]{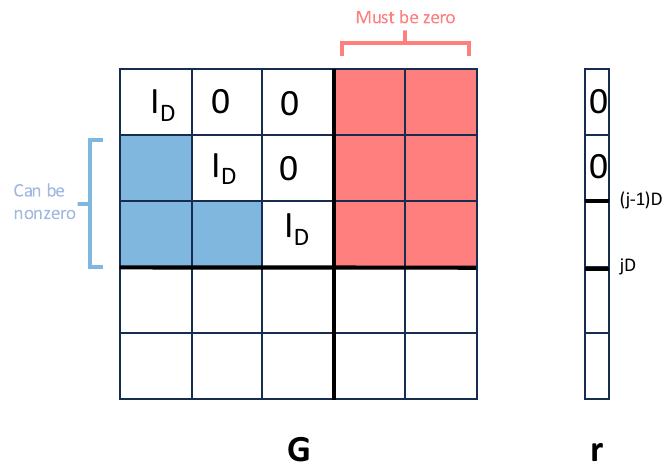}
    \caption{\textbf{Illustration of Theorem 3.6 of \citet{tang2024accelerating}}. In this illustration, $j=3$. The portion shaded red must be zero for the proof by induction to work, showing that $\mathbf{G}$ cannot be an { \color{red} arbitrary} matrix. The portion shaded blue can be nonzero, and the proof by induction will still hold.}
    \label{fig:tang_diagram}
\end{figure}
In particular, in \Cref{fig:tang_diagram}, we see that the blue-shaded blocks can be zero because they are always multiplied against residual entries that are zero. If the red shaded blocks are not zero, however, there is in general no guarantee that they will be multiplied against non-zero entries, and so can in general undo the causal filling in effect of this family of induction proofs. Since throughout their paper \citet{tang2024accelerating} consider $\mathbf{G}$ that are lower-triangular, this point is very minor.
Nonetheless, for clarity we reiterate that $\mathbf{G}$ cannot be an { \color{red} arbitrary} matrix for the proof by induction to hold (though lower triangular would certainly suffice).

Let us show a simple and concrete counterexample showing that if $\mathbf{G}$ is an { \color{red} arbitrary} matrix, then global convergence of \cref{eq:tang_update} is \emph{not} guaranteed. Let $f(s) = 2s$, and consider $s_0 = 2$. Then, if $T=2$, it follows that $s_1^{\star} = 4$ and  $s_2^{\star} = 8$. Consider the initialization $s_1^{(0)} = s_2^{(0)} = 2$, so that $r_1^{(0)} = -2$ and $r_2^{(0)} = -2$. Let the matrix $\mathbf{G}^{(i)}$ be determined by the following rule:
\begin{itemize}
    \item If $r_1^{(i)} = r_2^{(i)} = 0$, then $\mathbf{G}$ is the identity matrix
    \item otherwise, \begin{equation*}
        \mathbf{G} = \begin{pmatrix}
            1 & 1 \\
            0 & 1
        \end{pmatrix}.
    \end{equation*}
\end{itemize}
This rule satisfies the conditions of Theorem 3.6 of \citet{tang2024accelerating}.
Applying \cref{eq:tang_update}, the first update gives $s_1^{(1)} = 6, s_2^{(1)} = 4$ and the second update gives $s_1^{(2)} = 12, s_2^{(2)} = 12$. Therefore, we see that we do not get convergence in $T=2$ iterations.

For an example showing that $\mathbf{G}^{(i)}[:jD, :jD]$ { \color{blue} need not be the identity matrix for global convergence to hold}, simply consider $\mathbf{G}^{(i)} = \mathbf{J}^{-1}$, as shown in \cref{eq:J_inv}. While $\mathbf{J}^{-1}$ is block lower triangular, it is not exclusively the identity matrix in its blocks of the form $\mathbf{J}^{-1}[:jD, :jD]$.

\section{Corrected version of Theorem 3.6 of Tang et al.}

For the purpose of maintaining the literature, we now present a corrected version of Theorem 3.6 of \citet{tang2024accelerating}:
\begin{theorem}\label{thm:tang_correct}
    Consider a general quasi-Newton update of the form in \cref{eq:tang_update}.
    Assume $\mathbf{G}^{(i)}$ is {\color{red} lower triangular} and satisfies 
    \begin{equation*}
        \color{blue} \mathbf{G}^{(i)}[iD:(i+1)D-1, iD:(i+1)D-1] = I_D
    \end{equation*}
    Then the update rule will converge to $\mathbf{s}^\star$ within $T$ steps.
\end{theorem}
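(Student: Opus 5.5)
The plan is a proof by induction on the iteration counter, in the same causal ``filling in'' style as the proof of \Cref{prop:global_convergence}. The point of the hypothesis is that at iteration $(i)$ only the one diagonal block that is about to be ``filled in'' needs to be the identity. I index the block rows of $\mathbf{G}^{(i)}$ so that rows $iD$ through $(i+1)D-1$ (zero-based) correspond to the state $s_{i+1}$. The hypothesis then says that at iteration $(i)$ the diagonal block of $\mathbf{G}^{(i)}$ acting on $r_{i+1}$ is $I_D$, while all other diagonal blocks are unconstrained. I will write $G^{(i)}_{t,k}$ for the $D\times D$ block of $\mathbf{G}^{(i)}$ in block row $t$ and block column $k$, with $t,k$ indexing states $s_1,\dots,s_T$. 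Lower triangularity gives $G^{(i)}_{t,k}=0$ for $k>t$.

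The induction hypothesis at iteration $(i)$ is that $s^{(i)}_{1:i}=s^\star_{1:i}$. The base case $i=0$ is vacuous, and $s_0$ is fixed and known.

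For the inductive step, I first use the structure of the residual \cref{eq:r_def}. Since $r_t(\mathbf{s})=s_t-f_t(s_{t-1})$ depends only on $s_{t-1}$ and $s_t$, the hypothesis together with the fixed $s_0$ gives $r_t(\mathbf{s}^{(i)})=0$ for every $t\le i$.

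Next I write out block row $t$ of the update \cref{eq:tang_update}:
\begin{equation*}
s_t^{(i+1)} = s_t^{(i)} - \sum_{k\le t} G^{(i)}_{t,k}\, r_k(\mathbf{s}^{(i)}).
\end{equation*}
Only $k\le t$ appears because $\mathbf{G}^{(i)}$ is lower triangular. Two cases follow.
\begin{itemize}
\item For $t\le i$, every residual in the sum is zero, so $s_t^{(i+1)}=s_t^{(i)}=s_t^\star$. The already-correct prefix is not disturbed. This is exactly where the lower-triangular assumption is used: it rules out the counterexample above, in which an upper-triangular entry couples a nonzero later residual into an already-correct state.
\item For $t=i+1$, the terms with $k\le i$ vanish, and the diagonal block is $G^{(i)}_{i+1,i+1}=I_D$ by hypothesis. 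Hence
\begin{equation*}
s_{i+1}^{(i+1)}=s_{i+1}^{(i)}-r_{i+1}(\mathbf{s}^{(i)})=f_{i+1}(s_i^{(i)})=f_{i+1}(s_i^\star)=s_{i+1}^\star .
\end{equation*}
\end{itemize}
This gives $s^{(i+1)}_{1:i+1}=s^\star_{1:i+1}$ and closes the induction. The identity block is needed only for the single new index, and the off-diagonal blocks of row $i+1$ are harmless because they multiply zero residuals.

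Applying the induction up to $i=T$ gives $\mathbf{s}^{(T)}=\mathbf{s}^\star$. For $i\ge T$ the block condition refers to indices beyond the matrix and is vacuous. It is also unnecessary there, because $\mathbf{r}(\mathbf{s}^\star)=\mathbf{0}$ makes every later update $\mathbf{G}^{(i)}\mathbf{0}=\mathbf{0}$, so the iterates stay at $\mathbf{s}^\star$.

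I expect the main obstacle to be index bookkeeping. The zero-based block range $[iD,(i+1)D-1]$ in the statement must be matched with the state $s_{i+1}$ and with the iteration counter $i$. The argument depends on the identity block sitting exactly at the position that becomes correct at that iteration, and an off-by-one error here would break it.
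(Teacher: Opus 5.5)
Your proposal is correct and is essentially the paper's own induction: a correct prefix $s^{(i)}_{1:i}$ forces $r_t(\mathbf{s}^{(i)})=0$ for $t\le i$, lower triangularity leaves that prefix unchanged, and the identity diagonal block in block row $i+1$ yields $s^{(i+1)}_{i+1}=f_{i+1}(s_i^\star)=s_{i+1}^\star$. You also spell out the block-row computation and the uses of lower triangularity that the paper's proof leaves implicit.
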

\begin{proof}
        By induction. 
\begin{itemize}
    \item Induction hypothesis: Assume that at iteration $(i)$, we have that $s_t^{(i)} = s_t^\star$ for all $t \leq i$. 
    \item Base case: at iteration $0$, $s_0 = s_0^\star$ by construction.
    \item Induction step: We also have $r_t^{(i)} = 0$ for all $t \leq i$.  So $\mathbf{G}^{(i)} \mathbf{r}^{(i)}_{1:T}$ has the first $i$ blocks equal to zero (i.e. $iD$ entries). Moreover, because the $(i+1)$st block of $\mathbf{G}^{(i)}$ is identity, it follows that the $(i+1)$st block entry of $\mathbf{G}^{(i)} \mathbf{r}^{(i)}_{1:T}$ is equal to $s_{i+1}^{(i)} - f_{i+1}(s_{i}^\star)$, so that $s_{i+1}^{(i+1)} = s_{i+1}^{*}$. Thus, we have shown that assuming the induction hypothesis at iteration $(i)$ leads to the induction hypothesis holding at iteration $(i+1).$
\end{itemize}
\end{proof}
Note that all quasi-DEER updates (i.e. of the form shown in \Cref{tab:fxd_pt_sum}) satisfy the assumption of  \Cref{thm:tang_correct}, as $\tilde{\J}$ is lower triangular and has all identities on its block diagonal. Thus, \Cref{thm:tang_correct} is a generalization of \Cref{prop:global_convergence}. \Cref{prop:global_convergence} discusses only approximations to the dynamics Jacobians $\{ \A_t \}$, while \Cref{thm:tang_correct} allows for approximations to the inverse of the Jacobian of the residual function, i.e. $\J^{-1}$.

\chapter{Predictability and Conditioning}\label{appendix:pl_from_lyapunov_appendix}
This appendix provides the proof of \Cref{theorem:PL-LLE} and an extended discussion of its assumptions and implications.

\section{Theorem statement and proof}

\begin{theorem*}[\Cref{theorem:PL-LLE}]
Assume that the LLE regularity condition from \cref{eq:LLE_regularity} holds. Then if $\lambda \neq 0$ the PL constant $\mu$ of the merit function in \eqref{eq:PL} satisfies
\begin{equation}\label{eq:recap_mu_LLE}
     \dfrac{1}{a} \cdot \dfrac{e^{\lambda} - 1}{e^{\lambda T} - 1} \ \leq \ \sqrt{\mu} \ \leq \ \min\left( \frac{1}{b} \cdot \frac{1}{e^{\lambda (T-1)}}, 1 \right).
\end{equation}
If $\lambda=0$, then the bounds are instead
\begin{equation*}
     \frac{1}{a T}\ \leq \ \sqrt{\mu} \ \leq \ \min\left( \frac{1}{b} \sqrt{\frac{2 D}{T +1}}, 1\right).
\end{equation*}
\end{theorem*}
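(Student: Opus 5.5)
By \Cref{prop:DEER_is_PL}, $\sqrt{\mu} = \inf_{\mathbf{s}} \sigma_{\min}(\J(\mathbf{s})) = \inf_{\mathbf{s}} 1/\|\J(\mathbf{s})^{-1}\|_2$. So the plan is to bound $\|\J^{-1}\|_2$ above and below, uniformly in $\mathbf{s}$, and then invert. The object to work with is the block lower-triangular inverse in \cref{eq:J_inv}. Write $\J = \mathbf{I} - \mathbf{N}$, where $\mathbf{N}$ holds the $\A_t$ on the first block subdiagonal. Then $\mathbf{N}$ is nilpotent with $\mathbf{N}^T=0$, so $\J^{-1} = \sum_{k=0}^{T-1}\mathbf{N}^k$. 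The $k$-th power $\mathbf{N}^k$ is supported on the $k$-th block subdiagonal, with blocks the length-$k$ products $\A_{t+k-1}\cdots\A_t$.

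\textbf{Lower bound on $\sqrt{\mu}$.} A matrix supported on a single block subdiagonal has spectral norm equal to the maximum spectral norm of its blocks. This is the same argument as in \Cref{lem:diff}: $(\mathbf{N}^k)^\top \mathbf{N}^k$ is block diagonal. Hence $\|\mathbf{N}^k\|_2 \le a e^{\lambda k}$ by \cref{eq:LLE_regularity}. The triangle inequality then gives
\[
\|\J^{-1}\|_2 \le a\sum_{k=0}^{T-1} e^{\lambda k} = a\,\frac{e^{\lambda T}-1}{e^{\lambda}-1}
\]
for $\lambda\neq 0$, and $\le aT$ for $\lambda=0$. Inverting gives the left inequalities.

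\textbf{Upper bound on $\sqrt{\mu}$.} The trivial bound $\sqrt{\mu}\le 1$ should follow because $\J^{-1}$ has identity diagonal blocks. Any principal block (or a unit vector $e_1$ placed in the last block) gives $\|\J^{-1}\|_2\ge 1$. For $\lambda\neq0$, test $\J^{-1}$ on a unit vector supported in the first block. The last output block is then $\A_T\cdots\A_2 v$, so $\|\J^{-1}\|_2 \ge \|\A_T\cdots\A_2\|_2 \ge b e^{\lambda(T-1)}$, using the lower half of \cref{eq:LLE_regularity} with $k=T-1$. The lower-left block of a matrix has norm at most the matrix norm. Inverting gives $\sqrt{\mu}\le 1/(b e^{\lambda(T-1)})$. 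Combining with $\le 1$ yields the min.

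\textbf{The $\lambda=0$ case is the main obstacle.} Here the corner-block argument only gives $\|\J^{-1}\|\ge b$, which is useless. The claimed $\sqrt{2D/(T+1)}$ suggests a Frobenius-norm argument. I would use $\|\J^{-1}\|_2^2 \ge \|\J^{-1}\|_F^2/\mathrm{rank} = \|\J^{-1}\|_F^2/(TD)$. Every block $\A_{t+k-1}\cdots\A_t$ of $\mathbf{N}^k$ has Frobenius norm at least its spectral norm, which is $\ge b$ when $\lambda=0$. There are $T(T+1)/2$ such blocks counting the diagonal identities. So $\|\J^{-1}\|_F^2 \ge b^2 T(T+1)/2$, and hence
\[
\|\J^{-1}\|_2^2 \ge \frac{b^2 (T+1)}{2D}.
\]
Inverting gives exactly $\sqrt{\mu}\le \frac1b\sqrt{2D/(T+1)}$.

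The delicate points are checking that the regularity hypothesis is applied to blocks starting at $t\ge2$ consistently with the indexing of $\J$, and that the diagonal identity blocks satisfy the $k=0$ case of the bound (they do, since $b\le 1$).
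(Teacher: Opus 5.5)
Your proposal is correct and follows the paper's proof essentially step for step. The steps match: the finite Neumann series $\J^{-1}=\sum_{k=0}^{T-1}\mathbf{N}^k$ with the triangle inequality gives the lower bound on $\sqrt{\mu}$; the corner block $\A_T\cdots\A_2$ together with the identity blocks gives the upper bound; and the Frobenius-versus-spectral-norm count over the $T(T+1)/2$ blocks handles $\lambda=0$. Your departures are only small refinements. You justify that $\|\mathbf{N}^k\|_2$ equals its largest block norm via the block-diagonal structure of $(\mathbf{N}^k)^\top\mathbf{N}^k$, which the paper calls ``straightforward linear algebra.'' You also sum $\sum_{k=0}^{T-1} a = aT$ directly rather than invoking L'H\^opital's rule.
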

\begin{proof}

Notice that the residual function Jacobian $\b J$  can be written as the difference of the identity and a $T$-nilpotent matrix $\b N$, as 
\[\b J = \b I_{TD} - \b N \quad \text{with} \quad \b N^T = \b 0_{TD} \]
Because $\b N$ is nilpotent, 
the Neumann series for $\b J^{-1}$ is a finite sum:
\begin{equation}
 \b J^{-1} = (\b I_{TD} - \b N)^{-1} 
\;=\;
\sum_{k=0}^{T-1} \,\b N^k.
\end{equation}
Straightforward linear algebra also shows that the norms of the powers of this nilpotent matrix are bounded, which enables one to upper bound the inverse of the Jacobian
\begin{equation}\label{eq:final_upper_bound}
    \|\b N^k \|_2 \leq a \, e^{\lambda k} \quad \text{and therefore} \quad \|\b J^{-1} \|_2 \ \leq \ \sum_{k=0}^{T-1} \,\b \| \b N^k \|_2 \ \leq \ \sum_{k=0}^{T-1} a \, e^{\lambda k} = a \dfrac{1-e^{\lambda T}}{1-e^{\lambda}}.
\end{equation}
The powers of $\b N$ are closely related to the dynamics of the nonlinear state space model. We provide a dynamical interpretation in \Cref{sec:ill_conditioned}.

To lower bound $\| \mathbf{J}^{-1} \|_2$, we observe that by the SVD, a property of the spectral norm is that
\begin{equation}\label{eq:spectral_sup}
    \|\mathbf{J}^{-1}\|_2 
    = \sup_{\substack{\|x\|_2 = 1 \\ \|y\|_2 = 1}} 
      \, x^{\top}\mathbf{J}^{-1}y.
\end{equation}
We pick two unit vectors $u$ and $v$, both in $\mathbb{R}^{TD}$, that are zero everywhere other than where they need to be to pull out the bottom-left block of $\mathbf{J}^{-1}$ (i.e., the only non-zero block in $\mathbf{N}^{T-1}$, which is equal to $\A_t \A_{T-1} \hdots \A_2$). Doing so, we get
\begin{equation*}
    u^T \mathbf{J}^{-1} v = \Tilde{u}^T (\A_t \A_{T-1} \hdots \A_2) \Tilde{v},
\end{equation*}
where $\Tilde{u}$ and $\Tilde{v}$ are unit vectors in $\mathbb{R}^D$, and are equal to the nonzero entries of $u$ and $v$.

Note, therefore, that because of \cref{eq:spectral_sup}, it follows that
\begin{equation}\label{eq:first_lower_bound}
    \Tilde{u}^T \left(\A_t \A_{T-1} \hdots \A_2\right) \Tilde{v} \ \leq \ \| \mathbf{J}^{-1} \|_2,
\end{equation}
i.e. we also have a \textbf{lower bound} on $\| \mathbf{J}^{-1} \|_2$.

Furthermore, choosing $\Tilde{u}$ and $\Tilde{v}$ to make
\begin{equation*}
    \Tilde{u}^T \left(\A_t \A_{T-1} \hdots \A_2\right) \Tilde{v} = \| \A_t \A_{T-1} \hdots \A_2 \|_2,
\end{equation*}
we can plug in this choice of $\Tilde{u}$ and $\Tilde{v}$ into \cref{eq:first_lower_bound}, to obtain
\begin{equation*}
    \| \A_t \A_{T-1} \hdots \A_2 \|_2 \leq \| \mathbf{J}^{-1} \|_2.
\end{equation*}
Applying the regularity conditions \eqref{eq:LLE_regularity} for $k = T-1$ and $t=2$ we obtain
\begin{equation}\label{eq:final_lower_bound}
    b \ e^{\lambda (T-1) } \leq \| \mathbf{J}^{-1} \|_2.
\end{equation}

Because
\[ \lambda_{\min}\left(\b J \b J^\top\right) \ = \ \frac{1}{\|\b J^{-1} \|_2^2},  \]
the result for $\lambda \neq 0$ follows by applying \cref{eq:final_upper_bound} and \cref{eq:final_lower_bound} at all $\mathbf{s}^{(i)}$ along the optimization trajectory.

Note that any choice of $\Tilde{u}$ and $\Tilde{v}$ results in a lower bound, i.e. we could also have targeted the block identity matrices. So, it also follows that $1 \leq \| \mathbf{J}^{-1} \|_2$, and so
\begin{equation*}
    \max\left( b \ e^{\lambda (T-1) }  , 1\right) \leq \| \mathbf{J}^{-1} \|_2.
\end{equation*}

Finally, let us conclude by considering the case $\lambda = 0$. In this setting, the lower bound on $\sqrt{\mu}$ follows from L'H\^opital's rule. For the upper bound, we again must lower bound $\| \mathbf{J}^{-1} \|_2$. 
To do so, we leverage the relationship between spectral and Frobenius norms, namely that for an $n \times n$ matrix $M$,
\begin{equation}\label{eq:spec_frob}
    \frac{\| M \|_F}{\sqrt{n}} \leq \| M \|_2 \leq \| M \|_F.
\end{equation}

We can find the squared Frobenius norm, i.e. $\| \mathbf{J}^{-1} \|_F^2$, which is the sum of the squares of all of the entries. The squared Frobenius norm factors over the block structure of the matrix, i.e. $\| \mathbf{J}^{-1} \|_F^2$ is the sum of the squared Frobenius norms of the blocks. We know that each block has spectral norm lower bounded by $b$, so each block also has Frobenius norm lower bounded by $b$. Therefore, summing up over all of the blocks, it follows that
\begin{equation*}
    b^2 \frac{T (T+1)}{2} \leq \| \mathbf{J}^{-1} \|_F^2
\end{equation*}
and
\begin{equation*}
    \| \mathbf{J}^{-1} \|_F \leq \sqrt{TD} \| \mathbf{J}^{-1} \|_2.
\end{equation*}
Putting these equations together, it follows that
\begin{equation*}
    b \sqrt{\frac{T (T+1)}{2}} \leq \sqrt{T D} \| \mathbf{J}^{-1} \|_2
\end{equation*}
or
\begin{equation*}
    b \sqrt{\frac{T+1}{2 D}} \leq \| \mathbf{J}^{-1} \|_2,
\end{equation*}
and so the upper bound on $\sqrt{\mu}$ when $\lambda=0$ follows from taking reciprocals.
\end{proof}

The above proof sheds light on how many dynamical system properties fall out of the structure of $\mathbf{J}(\mathbf{s})$, which we now discuss further. 

\section{Discussion of why small singular values leads to ill-conditioning}\label{sec:ill_conditioned}

Recall that our goal is to find a lower bound on the smallest singular value of $\mathbf{J}(\mathbf{s})$, which we denote by $\sigma_{\mathrm{min}}(\mathbf{J}(\mathbf{s}))$. This quantity controls the difficulty of optimizing $\mathcal{L}$. For example, the Gauss-Newton update is given by $\mathbf{J}(\mathbf{s})^{-1} \mathbf{r}(\mathbf{s})$. Recall that
\begin{align*}
    \sigma_{\mathrm{max}}\left(\mathbf{J}(\mathbf{s})^{-1}\right) & = \nicefrac{1}{\sigma_{\mathrm{min}}\left(\mathbf{J}(\mathbf{s})\right)} \\
    & = \| \mathbf{J}(\mathbf{s})^{-1} \|_2.
\end{align*}
Recall that an interpretation of the spectral norm $\| \mathbf{J}(\mathbf{s}) \|_2$ is how much multiplication by $\mathbf{J}(\mathbf{s})$ can increase the length of a vector. 
Therefore, we see that very small values of $\sigma_{\mathrm{min}}(\mathbf{J}(\mathbf{s}))$ result in large values of  $\| \mathbf{J}(\mathbf{s})^{-1} \|_2$, which means that $ \| \mathbf{J}(\mathbf{s})^{-1} \mathbf{r}(\mathbf{s}) \|_2$ can become extremely large as well, and small perturbations in $\mathbf{r}$ can lead to very different Gauss-Newton updates (i.e. the problem is ill-conditioned, cf. \citet{NocedalWright} Appendix A.1).

Furthermore, we observe that in the $\lambda > 0$ (unpredictable) setting and the large $T$ limit, the upper and lower bounds in \eqref{eq:recap_mu_LLE} are tight, as they are both $\mathcal{O}(e^{\lambda (T-1)})$. Thus, the upper and lower bounds together ensure that unpredictable dynamics will suffer from degrading conditioning.

In contrast, in the $\lambda < 0$ (predictable) setting, the lower bound on $\sqrt{\mu}$ converges to $\frac{1 - e^{\lambda}}{a}$, which is bounded away from zero and \emph{independent of the sequence length}. Thus, in predictable dynamics, there is a lower bound on $\sigma_{\min}(\mathbf{J})$ or, equivalently, an upper bound on $\sigma_{\max}(\mathbf{J}^{-1})$.

\section{The dynamical interpretation of the inverse Jacobian}

As shown in the above proof, 
\begin{equation*}
 \b J(\mathbf{s})^{-1} = (\b I_{TD} - \b N(\mathbf{s}))^{-1} 
\;=\;
\sum_{k=0}^{T-1} \,\b N(\mathbf{s})^k.
\end{equation*}

It is worth noting explicitly that
\begin{align}\label{eq:N_definition}
    \mathbf{N}(\mathbf{s}) = \begin{pmatrix}
        0 & 0 & \hdots & 0 & 0\\
        A_2 & 0 & \hdots & 0 & 0\\
        \vdots & \vdots & \ddots & \vdots & \vdots \\
        0 & 0 & \hdots & 0 & 0 \\
        0 & 0 & \hdots & \A_T & 0 \\
    \end{pmatrix} \quad \text{where} \quad \A_t &\coloneq \frac{\partial f_t}{\partial s_{t-1}}(s_{t-1}),
\end{align}
i.e. $\mathbf{N}(\mathbf{s})$ collects the Jacobians of the dynamics function along the first lower diagonal.
Each matrix power $\mathbf{N}^k$ therefore collects length $k$ products along the $k$th lower diagonal.
Thus, multiplication by $ \b J(\mathbf{s})^{-1} = 
\sum_{k=0}^{T-1} \,\b N(\mathbf{s})^k$ recovers running forward a linearized form of the dynamics, which is one of the core insights of DeepPCR and DEER \citep{deeppcr, deer2024}.

Concretely, in the setting where $T=4$, we have
\begin{align*}
\mathbf{N}^{0} & = 
    \begin{pmatrix}
    I_D & 0 & 0 & 0 \\
    0 & I_D & 0 & 0 \\
    0 & 0 & I_D & 0 \\
    0 & 0 & 0 & I_D
    \end{pmatrix} \\
    \mathbf{N} & = 
    \begin{pmatrix}
    0 & 0 & 0 & 0 \\
    \A_2 & 0 & 0 & 0 \\
    0 & \A_3 & 0 & 0 \\
    0 & 0 & \A_4 & 0
    \end{pmatrix} \\
    \mathbf{N}^2 & = \begin{pmatrix}
    0 & 0 & 0 & 0 \\
    0 & 0 & 0 & 0 \\
    \A_3 \A_2 & 0 & 0 & 0 \\
    0 & \A_4 \A_3 & 0 & 0
    \end{pmatrix} \\
    \mathbf{N}^3 & = \begin{pmatrix}
    0 & 0 & 0 & 0 \\
    0 & 0 & 0 & 0 \\
    0 & 0 & 0 & 0 \\
    \A_4 \A_3 \A_2 & 0 & 0 & 0
    \end{pmatrix} \\
    \mathbf{J}^{-1} & = \begin{pmatrix}
    I_D & 0 & 0 & 0 \\
    \A_2 & I_D & 0 & 0 \\
    \A_3 \A_2 & \A_3 & I_D & 0 \\
    \A_4 \A_3 \A_2 & \A_4 \A_3 & \A_4 & I_D
    \end{pmatrix} \\
\end{align*}

\subsection{Connection to semiseparable matrices and Mamba2}

Having depicted the structure of $\mathbf{J}^{-1}$, we note the connection between $\mathbf{J}^{-1}$ in this paper and the attention or sequence mixer matrix $M$ in \citet{mamba2}, which introduced the Mamba2 architecture (see equation 6 or Figure 2 of \citet{mamba2} for the form of $M$, and compare with $\mathbf{J}^{-1}$ above).

Mamba2 is a deep learning sequence modeling architecture. Its sequence mixer in each layer has at its core a linear dynamical system. \citet{mamba2} observe that while a linear dynamical system (LDS) can be evaluated recurrently (sequentially) or in parallel (for example, with a parallel scan), it can also be evaluated multiplying the inputs to the LDS by the matrix $M$. Since each DEER iteration is also a linear dynamical system, with the transition matrices given by $\{ \A_t \}_{t=2}^T$, it follows that $M$ in \citet{mamba2} and $\mathbf{J}^{-1}$ in our paper are the same object, and so results about these objects from these two papers transfer.

In particular, we observe that, in the language from \citet{mamba2}, the $\mathbf{J}^{-1}$ we consider in this paper is \emph{$D$-semiseparable} (see Definition 3.1 from \citet{mamba2}). Thus, any efficient, hardware-aware algorithms and implementations developed for $D$-semiseparable matrices could also be applied to accelerating each iteration of DEER, though we note that \citet{mamba2} focus on the 1-semiseparable setting, which they call a \emph{state space dual} or \emph{SSD} layer. In any case, using these connections to accelerate each iteration of DEER and related parallel Newton algorithms from a systems implementation perspective would be an interesting direction for future work.

\section{Framing based on global bounds}

We chose to prove Theorem \ref{theorem:PL-LLE} using condition \eqref{eq:LLE_regularity} in order to highlight the natural connection between the smallest singular value of $\mathbf{J}$ and system stability (as measured by its LLE). However, an assumption with a different framing would be to impose a uniform bound on the spectral norm of the dynamics Jacobian over the entire state space:
\begin{equation}\label{eq:bounded_spectral_norm}
    \sup_{s \in \mathbb{R}^D} \| \A(s) \|_2 \leq \rho .
\end{equation}
For $\rho < 1$, this assumption corresponds to global contraction of the dynamics \citep{lohmiller1998contraction}.

If we replace the LLE regularity condition \eqref{eq:LLE_regularity} with the global spectral norm bound \eqref{eq:bounded_spectral_norm} in the proof of Theorem \ref{theorem:PL-LLE}, we obtain that the PL constant is bounded away from zero, i.e.
\begin{equation*}
    \frac{1}{a} \cdot \frac{\rho - 1}{\rho^T - 1} \leq 
    \sqrt{\inf_{\mathbf{s} \in \mathbb{R}^{TD}} \sigma^2_{\min}(\mathbf{J}(\mathbf{s}))}.
\end{equation*}
In particular, if the dynamics are contracting everywhere (i.e., $\rho < 1$), the condition \eqref{eq:bounded_spectral_norm} guarantees good conditioning of $\mathbf{J}$ throughout the entire state space.

\section{Discussion of the LLE regularity conditions}

The LLE regularity conditions in \cref{eq:LLE_regularity} highlight the more natural "average case" behavior experienced along actual trajectories $\mathbf{s} \in \mathbb{R}^{TD}$. 
This "average case" behavior is highlighted, for example, by our experiments with the two-well system (cf. \Cref{ssc:2well}, where even though a global upper bound on $\| \A_t(s_t) \|_2$ over all of state space would be greater than $1$ (i.e., there are unstable regions of state space), we observe fast convergence of DEER because the system as a whole has negative LLE (its trajectories are stable on average). 

We also note the pleasing relationship the LLE regularity conditions have with the definition of the LLE given in \cref{eq:LLE_definition}.
Note that in the LLE regularity conditions in \cref{eq:LLE_regularity}, the variable $k$ denotes the sequence length under consideration. Taking logs and dividing by $k$, we therefore obtain
\begin{equation*}
    \frac{\log b}{k} + \lambda \leq \frac{1}{k} \log\left( \left\| \A_{t+k -1} \A_{t+k-2} \cdots \A_t \right\| \right) \leq \frac{\log a}{k} + \lambda.
\end{equation*}

Therefore, as $k \to T$, and as $T \to \infty$ (i.e., we consider longer and longer sequences), we observe that the finite-time estimates of the LLE converge to the true LLE $\lambda$.

We observe that as $\mathbf{s}^{(i)}$ approaches the true solution $\mathbf{s}^{*}$, the regularity conditions in \cref{eq:LLE_regularity} become increasingly reasonable. Since any successful optimization trajectory must eventually enter a neighborhood of $\mathbf{s}^{*}$, it is natural to expect these conditions to hold there. In fact, rather than requiring the regularity conditions over all of state space or along the entire optimization trajectory, one could alternatively assume that they hold within a neighborhood of $\mathbf{s}^{*}$, and prove a corresponding version of \Cref{theorem:PL-LLE}.

We now do so, using the additional assumption that $\mathbf{J}$ is $L$-Lipschitz.

\begin{theorem}\label{theorem:local_PL_LLE}
    If $\mathbf{J}$ is $L$-Lipschitz, then there exists a ball of radius $R$ around the solution $\b s^*$, denoted $B(\b s^*,R)$, such that
    \begin{equation*}
        \forall \b s \ \in \ B(\b s^*,R) \qquad 
        | \sigma_{\min}(\mathbf{J}(\mathbf{s})) - \sigma_{\min}(\mathbf{J}(\mathbf{s^*})) | \ \leq \ LR
    \end{equation*}
\end{theorem}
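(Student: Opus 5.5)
The plan is to derive the statement from Weyl's perturbation inequality for singular values together with the Lipschitz hypothesis on $\mathbf{J}$; in fact it should hold for every $R>0$, so no special choice of radius is needed. The key fact is that, for any two matrices $M, M' \in \mathbb{R}^{TD \times TD}$, the map $M \mapsto \sigma_{\min}(M)$ is $1$-Lipschitz in spectral norm:
\begin{equation*}
    |\sigma_{\min}(M) - \sigma_{\min}(M')| \;\leq\; \|M - M'\|_2 .
\end{equation*}
I would prove this from the variational characterization $\sigma_{\min}(M) = \min_{\|x\|_2=1} \|Mx\|_2$, which holds because $M$ is square. For any unit vector $x$ we have $\|Mx\|_2 \leq \|M'x\|_2 + \|(M-M')x\|_2 \leq \|M'x\|_2 + \|M-M'\|_2$. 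Minimizing over $x$ gives $\sigma_{\min}(M) \leq \sigma_{\min}(M') + \|M-M'\|_2$, and exchanging the roles of $M$ and $M'$ gives the absolute value.

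Next, I would apply this with $M = \mathbf{J}(\mathbf{s})$ and $M' = \mathbf{J}(\mathbf{s}^*)$. The Lipschitz hypothesis, which \Cref{theorem:Lipschitz} supplies with the same constant $L$ as the dynamics Jacobians, gives $\|\mathbf{J}(\mathbf{s}) - \mathbf{J}(\mathbf{s}^*)\|_2 \leq L\|\mathbf{s}-\mathbf{s}^*\|_2$. For $\mathbf{s} \in B(\mathbf{s}^*,R)$ this is at most $LR$, which is exactly the claimed bound.

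I would close by remarking why this is useful. \Cref{theorem:PL-LLE}, applied only at $\mathbf{s}^*$ where the LLE regularity conditions are most natural, lower bounds $\sigma_{\min}(\mathbf{J}(\mathbf{s}^*))$. Choosing $R$ smaller than half that lower bound divided by $L$ then gives a local PL constant in the ball of at least a quarter of $\sigma_{\min}^2(\mathbf{J}(\mathbf{s}^*))$.

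No step looks like a real obstacle. The only point needing care is stating the singular-value perturbation inequality correctly for square matrices via the min-norm characterization, so that the non-smoothness of $\sigma_{\min}$ at repeated singular values causes no trouble.
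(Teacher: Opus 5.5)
Your proposal is correct and follows essentially the same route as the paper, which also applies the perturbation bound $\sigma_{\min}(\mathbf{A}) \geq \sigma_{\min}(\mathbf{B}) - \|\mathbf{A}-\mathbf{B}\|$ in both directions with $\mathbf{A}=\mathbf{J}(\mathbf{s})$, $\mathbf{B}=\mathbf{J}(\mathbf{s}^*)$, and then uses the $L$-Lipschitz hypothesis. The only difference is that you prove the perturbation bound from the min-norm characterization of $\sigma_{\min}$, while the paper cites it as a consequence of the reverse triangle inequality; your observation that the bound holds for every $R>0$ is also implicit in the paper's argument.
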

\begin{proof}
    The argument parallels the proof of Theorem 2 in \citet{liu2022loss}.

A fact stemming from the reverse triangle inequality is that for any two matrices $\b A$ and $\b B$,
\[
\sigma_{\min}(\b A) \;\geq\; \sigma_{\min}(\b B) - \|\b A - \b B\|\,.
\]

Applying this with $\b A = \b J(\b s)$ and $\b B = \b J(\b s^*)$, we obtain
\[
\sigma_{\min}(\b J(\b s)) \;\geq\; \sigma_{\min}(\b J(\b s^*)) - \|\b J(\b s) - \b J(\b s^*)\|\,.
\]

If the Jacobian $\b J(\cdot)$ is $L$-Lipschitz, then
\[
\|\b J(\b s) - \b J(\b s^*)\| \;\leq\; L \|\b s - \b s^*\|\,.
\]

Combining, we get
\[
\sigma_{\min}(\b J(\b s)) 
\;\geq\; \sigma_{\min}(\b J(\b s^*)) - L \|\b s - \b s^*\|\,
\]
and 
\[
\sigma_{\min}(\b J(\b s^*)) 
\;\geq\; \sigma_{\min}(\b J(\b s)) - L \|\b s - \b s^*\|\,,
\]
which gives
\begin{equation*}
    \sigma_{\min}(\b J(\b s^*)) - L \|\b s - \b s^*\| \leq \sigma_{\min} (\mathbf{J}(\mathbf{s})) \leq \sigma_{\min}(\b J(\b s^*)) + L \|\b s - \b s^*\|. 
\end{equation*}
Ensuring that $\|\b s - \b s^*\| \leq R$ completes the proof.
\end{proof}
A consequence of \Cref{theorem:local_PL_LLE} is that if the system is unpredictable, then there exists a finite ball around $\mathbf{s}^*$ where the conditioning of the merit function landscape is provably bad. 

As a concrete example, suppose that $\sigma_{\min}(\b J(\b s^*)) = \epsilon$ and $L = 1$. Then \textit{at best}, the PL constant of the loss function inside the ball $B(\b s^*,R)$ is $\epsilon + R$. If $\epsilon$ is small (bad conditioning) then $R$ can be chosen such that the PL constant inside the ball $B(\b s^*,R)$ is also small. 

\section{Controlling the maximum singular value}

In our proof of \Cref{theorem:PL-LLE}, we proved upper and lower bounds for $\sigma_{\min}(\mathbf{J}(\mathbf{s}))$ that depended on the sequence length $T$. We can also prove upper and lower bounds for $\sigma_{\max}(\mathbf{J}(\mathbf{s}))$, but these do not depend on the sequence length.

Assuming condition \eqref{eq:bounded_spectral_norm}, an upper bound on $\sigma_{\mathrm{max}}(\mathbf{J})$ is straightforward to compute via the triangle inequality, 
\begin{align*}
    \sigma_{\max}(\mathbf{J}) & = \| \mathbf{J} \|_2 \\
    & = \| \mathbf{I} - \mathbf{N} \|_2 \\
    & \leq 1 + \| \mathbf{N} \|_2.
\end{align*}
Recalling the definition of $\mathbf{N}$ in \eqref{eq:N_definition}, we observe that it is composed of $\{ \A_t \}$ along its lower block diagonal, and so we have
\begin{align*}
    \| \mathbf{N}(\mathbf{s}) \|_2 & = \sup_t \| \A_t(s_t) \| \\
    \sup_{\mathbf{s} \in \mathbb{R}^{TD}} \| \mathbf{N} (\b s) \|_2 & = \sup_{s \in \mathbb{R}^D} \| \A(s) \| \\
\end{align*}
Elaborating, for a particular choice of trajectory $\mathbf{s} \in \mathbb{R}^{TD}$, $\| \mathbf{N}(\mathbf{s}) \|_2$ is controlled by the maximum spectral norm of the Jacobians $\A_t(s_t)$ along this trajectory.
Analogously, $\sup_{\mathbf{s} \in \mathbb{R}^{TD}} \| \mathbf{N}(\mathbf{s}) \|_2$---i.e., the supremum of the spectral norm of $\mathbf{N}(\mathbf{s})$ over all possible trajectories $\mathbf{s} \in \mathbb{R}^{TD}$, i.e. the optimization space---is upper bounded by $\sup_{s \in \mathbb{R}^D } \| J(s) \|_2$, i.e. the supremum of the spectral norm of the system Jacobians over the state space $\mathbb{R}^D$.

Thus, it follows that
\begin{equation}\label{eq:ub_sigma_max}
    \sigma_{\max}(\mathbf{J}) \leq 1 + \rho.
\end{equation}
Importantly, the upper bound on $\sigma_{\max}(\mathbf{J})$ does not scale with the sequence length $T$.

To obtain the lower bound on $\sigma_{\max}(\mathbf{J})$, we notice that it has all ones along its main diagonal, and so simply by using the unit vector $\mathbf{e}_1$, we obtain
\begin{equation}\label{eq:lb_sigma_max}
    \mathbf{e}_1^{\top} \mathbf{J} \mathbf{e}_1 = 1 \leq \sigma_{\max}(\mathbf{J}).
\end{equation}

\section{Condition number of the Jacobian}

Note that the condition number $\kappa$ of a matrix is defined as the ratio of its maximum and minimum singular values, i.e.
\begin{equation*}
    \kappa(\mathbf{J}) = \frac{\sigma_{\max}(\mathbf{J})}{\sigma_{\min}(\mathbf{J})}.
\end{equation*}

However, because our bounds in \cref{eq:ub_sigma_max} and \cref{eq:lb_sigma_max} on $\sigma_{\max}(\mathbf{J})$ do not scale with the sequence length $T$, it follows that the scaling with $T$ of an upper bound on $\kappa(\mathbf{J})$---the conditioning of the optimization problem---is controlled solely by the bounds on $\sigma_{\min}(\mathbf{J})$ that we provided in Theorem \ref{theorem:PL-LLE}.
The importance of studying how the conditioning scales with $T$ stems from the fact that we would like to understand if there are regimes---particularly involving large sequence lengths and parallel computers---where parallel evaluation can be faster than sequential evaluation.

\chapter{Discussion of parallel chord methods}\label{app:parallel_chord}

\citet{ortega1970iterative} discuss at length iterative methods for solving arbitrary systems of nonlinear equations $\mathbf{F}(\mathbf{x}) = 0$ using iterations of the form
\begin{equation}\label{eq:parr_chord}
    \mathbf{x}^{(i+1)} = \mathbf{x}^{(i)} - \tilde{\J}(\mathbf{x}^{(i)})^{-1} \mathbf{F}(\mathbf{x}^{(i)})
\end{equation}
for some matrix $\tilde{\J}(\mathbf{x}^{(i)})$.
In general, $\tilde{\J}$ can be a function of the current iterate $\mathbf{x}^{(i)}$ or a fixed and constant matrix.
Newton's method corresponds to
\begin{equation*}
    \tilde{\J}(\mathbf{x}^{(i)}) = \J(\mathbf{x}^{(i)} \coloneq \dfrac{\partial \mathbf{F}}{\partial \mathbf{x}}(\mathbf{x}^{(i)}).
\end{equation*}
When $\tilde{\J}$ is fixed and constant, \cite{ortega1970iterative} describe the resulting family of fixed-point iterations as \emph{parallel-chord methods}. However, we will use this term for \emph{all} iterative methods with updates of the form in \cref{eq:parr_chord}, which includes both Newton and Picard iterations.

The term "parallel" in this context does not have to do with applying a parallel scan over the sequence length (which is the focus of this thesis). Instead, "parallel" in "parallel-chord methods" refers to the way in which Newton's method finds the zero of a function by making a guess for the zero, and then forming a chord that is parallel to the function at the current guess (\Cref{fig:pchord}). In one-dimension, the linearization is a line (a chord), while in higher-dimensions the linearization is in general a hyperplane. In Newton's method, the chord/hyperplane is tangent to the function at the current guess, while for other parallel-chord methods the approximate linearization is in general not tangent.

The equation $\mathbf{F}(\mathbf{x}) = \mathbf{0}$ is a fully general way to represent a system of nonlinear equations. However, in this paper, we focus on parallelizing Markovian state space models, as discussed in \Cref{ch:background}.

In their treatment of Picard iterations, \citet{ortega1970iterative} consider a more general formulation than that presented in \citet{shih2023parallel} or in \cref{eq:picard_shih}. Instead, similar to the definition presented in Appendix C.2.3 of \citet{gu2021combining}, \citet{ortega1970iterative} define Picard iterations in the setting where we have removed a linear component of $\mathbf{F}$, namely we have written
\begin{equation}\label{eq:picard_refactor}
    \mathbf{F} (\mathbf{s}) =: \tilde{\J} \mathbf{s} - \mathbf{G}(\mathbf{s}),
\end{equation}
for some constant, nonsingular matrix $\tilde{\J}$ and nonlinear function $\mathbf{G}(\cdot)$. Note that such redefinition of $\mathbf{F}(\cdot)$ in terms of $\tilde{\J}$ and $\mathbf{G}(\cdot)$ is always possible and not uniquely determined. After making such a redefinition, \citet{ortega1970iterative} define a Picard iteration as an update of the form
\begin{equation}\label{eq:picard_or}
    \mathbf{s}^{(i+1)} = \tilde{\J}^{-1} \mathbf{G}(\mathbf{s}^{(i)}).
\end{equation}
However, by multiplying both sides of \cref{eq:picard_refactor} by $\tilde{\J}^{-1}$, it follows that 
\begin{equation*}
    \tilde{\J}^{-1} \mathbf{G}(\mathbf{s}^{(i)}) = \mathbf{s}^{(i)} - \tilde{\J}^{-1} \mathbf{F}(\mathbf{s}^{(i)}),
\end{equation*}
showing that the Picard iterations as defined in \cref{eq:picard_or} fit into the parallel-chord framework set out in \cref{eq:parr_chord}.
Note that Picard iterations as defined by \citet{shih2023parallel} or in \cref{eq:picard_shih} of this paper also fit into the framework of \cref{eq:picard_refactor}: in the context of evaluating discretized ODEs, the residual becomes
\begin{equation*}
    F_{t+1}(\mathbf{s}) = x_{t+1} - x_t - \epsilon g_t(s_t).
\end{equation*}
Thus, in the context of \cref{eq:picard_refactor}, we have that the resulting $G_t(\mathbf{s}) = \epsilon g_{t-1}(x_{t-1})$, while the resulting $\tilde{\J}$ operator is given by \cref{eq:A_P}.

When we plug this $\tilde{\J}$ into \cref{eq:parr_chord} and simplify, we obtain the linear dynamical system in the "Picard" row of \Cref{tab:fxd_pt_sum}. In general, the fixed-point methods of the common form given by \cref{eq:qdeer_lds} all give rise to $\tilde{\J} \in \mathbb{R}^{TD \times TD}$ matrices of the form shown in \cref{eq:bold_A}.

Thus, \citet{ortega1970iterative} unites Newton and Picard iterations for the general root finding problem $F(\mathbf{s}) = 0$ under the umbrella of parallel-chord methods, which are iterative updates of the form of \cref{eq:parr_chord}. The framework we provide in \Cref{tab:fxd_pt_sum} can be understood as a specialization of parallel-chord methods for the particular problem of sequential evaluation discussed in \cref{eq:ssm}. Nonetheless, we focus on how in the specific problem of sequential evaluation, which is of great interest in many areas of machine learning, a wide variety of fixed-point methods become iterative application of LDSs, allowing them to be parallelized over the sequence length with an associative scan. This important perspective about parallelizability, which is of great interest in machine learning, is not discussed in \citet{ortega1970iterative} because they are considering a more general problem. 

\citet{ortega1970iterative} also discuss in their Chapters 7 and 10 how the closeness of the "parallel chord" (in general and in higher dimensions, the "approximating hyperplane") to the true linearization of the function (Newton's method) affects the number of iterations needed for the parallel-chord method to converge.
This analysis is directly analogous to our study of the effect of $\left \| \tilde{\J}(\mathbf{s}_{1:T}) - \mathbf{J}(\mathbf{s}_{1:T}) \right \|_2$ on the rate of convergence of fixed-point methods, see \Cref{lem:diff}.
In particular, in Chapter 10 of \cite{ortega1970iterative}, they consider the rates of convergence of fixed-point methods with updates taking the form of
\begin{equation}\label{eq:one_step_stationary}
    \mathbf{s}^{(i+1)} = \mathbf{U}(\mathbf{s}^{(i)}),
\end{equation}
for some function $\mathbf{U}(\cdot)$. \citet{ortega1970iterative} use the name \emph{one-step stationary methods} for such fixed-point methods with updates of the form \cref{eq:one_step_stationary}.

For parallel-chord methods of the form given in \cref{eq:parr_chord}, it follows that
\begin{equation}\label{eq:u_pchord}
    \mathbf{U}(\mathbf{s}^{(i)}) = \mathbf{s}^{(i)} - \tilde{\J}(\mathbf{s}^{(i)})^{-1} \mathbf{F}(\mathbf{s}^{(i)}).
\end{equation}

In particular, in their Chapters 7 and 10, \cite{ortega1970iterative} introduce and study $\sigma(\mathbf{U}, \mathbf{F}, \mathbf{s}^{\star})$, which determines the rate of convergence of iterative methods with updates of the form given by \cref{eq:one_step_stationary} to the solution $\mathbf{s}^{\star}$ of $\mathbf{F}(\mathbf{s})=\mathbf{0}$. They define $\sigma$ as
\begin{equation}\label{eq:sigma_or}
    \sigma(\mathbf{U}, \mathbf{F}, \mathbf{s}^{\star}) := \rho\left( \dfrac{\partial \mathbf{U}}{\partial \mathbf{s}}(\mathbf{s}^{\star}) \right),
\end{equation}
where $\rho(M)$ denotes the spectral radius of a matrix $M$.

In the context of parallel-chord methods where $\mathbf{U}(\cdot)$ is given by \cref{eq:u_pchord}, it follows that
\begin{equation*}
    \dfrac{\partial \mathbf{U}}{\partial \mathbf{s}}(\mathbf{s}^{\star}) = \mathbf{I} - \tilde{\J}(\mathbf{s}^{\star})^{-1} \dfrac{\partial \mathbf{F}}{\partial \mathbf{s}}(\mathbf{s}^{\star}),
\end{equation*}
because $\mathbf{F}(\mathbf{s}^{\star}) = 0$.
Thus it follows that if  $\tilde{\J} = \nicefrac{\partial \mathbf{F}}{\partial \mathbf{s}}(\mathbf{s}^{\star})$, then $\sigma=0$. Thus, lower values of $\sigma$ indicates that $\tilde{\J}$ is good approximation of the Jacobian matrix $\nicefrac{\partial \mathbf{F}}{\partial \mathbf{s}}$ evaluated at the zero $\mathbf{s}^{\star}$ of $\mathbf{F}$, while higher values of $\sigma$ indicate that $\tilde{\J}$ is a poor approximation for $\nicefrac{\partial \mathbf{F}}{\partial \mathbf{s}}$. \cite{ortega1970iterative} then use $\sigma$ in their Chapter 10 (in particular, their Theorem 10.1.4) to prove linear rates of convergence\footnote{where the rate is given by $\sigma$} for one-step stationary methods within a neighborhood of the solution $\mathbf{s}^{\star}$.

Thus, a takeaway from \cite{ortega1970iterative} (as paraphrased from \citet{Gasilov1981ParallelChord}) is that the closer $\tilde{\J}$ is to $\nicefrac{\partial \mathbf{F}}{\partial \mathbf{s}}$, the fewer iterations are needed for convergence to $\mathbf{s}^{\star}$. This takeaway is extremely similar to our guidance, though we specialize to the particular system of equations given by \cref{eq:r_def} that results from the goal of rolling out the Markov process given by \cref{eq:ssm}.

However, in the setting we consider in this paper---using fixed-point iterations of the form \cref{eq:qdeer_lds} to solve nonlinear equations of the form \cref{eq:r_def}---Theorem 10.1.4 of \citet{ortega1970iterative} is actually \emph{trivial}. By "trivial," we mean that it does not distinguish between the convergence rates of any of the fixed-point iterations we focus on in this paper.

To make this point more precisely, we review\footnote{We follow the presentation of Chapter 9 of \citet{ortega1970iterative}, in particular Definition 9.2.1.} the notion of \emph{root-convergence}, more commonly known as \emph{$R$-convergence}.
\begin{definition}[$R$-convergence]\label{def:r_convergence}
    Let $\mathcal{A}$ be a fixed-point operator with fixed-point $\mathbf{s}^{\star}$.
    Let $C(\mathcal{A}, \mathbf{s}^{\star})$ be the set of all sequences generated by $\mathcal{A}$ which converge to $\mathbf{s}^{\star}$.
    Then the $R_1$-factors of $\mathcal{A}$ at $\mathbf{s}^{\star}$ are given by
    \begin{equation}\label{eq:r_convergence}
    R_1(\mathcal{A}, \mathbf{s}^{\star})
    := \sup \left\{
      \limsup_{i\to\infty}\|\mathbf{s}^{(i)}-\mathbf{s}^{\star}\|^{1/i}
      \,\middle|\,
      \{\mathbf{s}^{(i)}\}_{i\ge 0}\in C(\mathcal{A},\mathbf{s}^{\star})
    \right\}.
    \end{equation}
\end{definition}
Intuitively, $R_1(\mathcal{A}, \mathbf{s}^{\star})$ gives the rate of linear convergence of a fixed-point operator $\mathcal{A}$ to its fixed-point $\mathbf{s}^{\star}$.
Theorem 10.1.4 of \citet{ortega1970iterative} implies that if $\mathcal{A}$ is a one-step stationary method with update given by $\mathbf{U}(\cdot)$, then $R_1(\mathcal{A}, \mathbf{s}^{\star}) = \sigma(\mathbf{U}, \mathbf{F}, \mathbf{s}^{\star})$. Therefore, if $\sigma > 0$, then $\sigma$ is the rate of $R$-linear convergence of $\mathcal{A}$ to $\mathbf{s}^{\star}$, while if $\sigma=0$, we say that $\mathcal{A}$ converges \emph{$R$-superlinearly}.
However, it is important to note that these definitions are \emph{asymptotic} in nature.

The fixed-point iterations considered in this paper, i.e. following the common form \cref{eq:qdeer_lds}, all have $\sigma=0$, and therefore can be said to converge $R$-superlinearly.
\begin{proposition}\label{prop:or_trivial}
    Let $\mathbf{F}(\mathbf{s}) = \mathbf{0}$ be a nonlinear equation of the form \cref{eq:r_def} with solution $\mathbf{s}^{\star}$. Let $\mathcal{A}$ be a parallel-chord method with fixed-point $\mathbf{s}^{\star}$. Then
    \begin{equation*}
        \sigma\left(\mathbf{U}, \mathbf{F}, \mathbf{s}^{\star}\right) = 0.
    \end{equation*}
\end{proposition}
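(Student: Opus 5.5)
We need to show that the spectral radius of $\partial \mathbf{U}/\partial \mathbf{s}(\mathbf{s}^\star) = \mathbf{I} - \tilde{\J}(\mathbf{s}^\star)^{-1}\mathbf{J}(\mathbf{s}^\star)$ is zero. The plan is to show that this matrix is \emph{nilpotent}, exploiting the fact that every parallel-chord method of the common form \cref{eq:qdeer_lds} has $\tilde{\J}$ of the shape \cref{eq:bold_A}. Nilpotency forces all eigenvalues to vanish, so the spectral radius is zero.

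\textbf{Step 1: the derivative of $\mathbf{U}$ at $\mathbf{s}^\star$.} Since $\mathbf{F}(\mathbf{s}^\star) = \mathbf{r}(\mathbf{s}^\star) = \mathbf{0}$, the product rule gives
\[
\dfrac{\partial \mathbf{U}}{\partial \mathbf{s}}(\mathbf{s}^\star) = \mathbf{I} - \tilde{\J}(\mathbf{s}^\star)^{-1}\mathbf{J}(\mathbf{s}^\star),
\]
as already noted in the text. The term involving the derivative of $\tilde{\J}(\cdot)^{-1}$ is multiplied by $\mathbf{F}(\mathbf{s}^\star) = \mathbf{0}$ and drops out. This step needs only mild differentiability of $\tilde{\A}_t(\cdot)$, which I would assume; if $\tilde{\J}$ is merely continuous, I would instead argue directly via $\mathbf{U}(\mathbf{s}^\star+\mathbf{h})-\mathbf{s}^\star$.

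\textbf{Step 2: rewrite the matrix.} I rewrite it as
\[
\mathbf{I} - \tilde{\J}^{-1}\mathbf{J} = \tilde{\J}^{-1}(\tilde{\J}-\mathbf{J}).
\]
By \cref{eq:bold_A} and \cref{eq:big_j}, $\tilde{\J}-\mathbf{J}$ has nonzero blocks $E_t = \A_t - \tilde{\A}_t$ only on the first block subdiagonal (see the proof of \Cref{lem:diff}), so it is strictly block lower triangular. By \cref{eq:A_inv}, $\tilde{\J}^{-1}$ is block lower triangular with identity blocks on the diagonal.

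\textbf{Step 3: conclude.} The product of a block lower triangular matrix and a strictly block lower triangular matrix is strictly block lower triangular. Any strictly block lower triangular $TD\times TD$ matrix $\mathbf{M}$ satisfies $\mathbf{M}^T = \mathbf{0}$, since each multiplication shifts the nonzero blocks at least one further below the diagonal. Its only eigenvalue is therefore $0$, so
\[
\sigma(\mathbf{U},\mathbf{F},\mathbf{s}^\star) = \rho\big(\tilde{\J}^{-1}(\tilde{\J}-\mathbf{J})\big) = 0.
\]

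The only step I expect to need care is Step 1: the definition of a parallel-chord method with iterate-dependent $\tilde{\J}$ requires justifying the derivative formula. The structural nilpotency argument itself is routine. It also mirrors \Cref{prop:global_convergence}, since nilpotency of order $T$ of the error-propagation matrix is the linearized version of convergence within $T$ steps.
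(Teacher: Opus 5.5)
Your proposal is correct and is essentially the paper's argument: both rest on $\tilde{\J}(\mathbf{s}^\star)^{-1}$ and $\mathbf{J}(\mathbf{s}^\star)$ being block lower triangular with identity diagonal blocks. It follows that $\mathbf{I} - \tilde{\J}^{-1}\mathbf{J}$ is strictly block lower triangular and has only the eigenvalue zero. Your factorization $\tilde{\J}^{-1}(\tilde{\J}-\mathbf{J})$, the explicit nilpotency, and the Step 1 justification of the derivative formula (which the paper simply asserts from $\mathbf{F}(\mathbf{s}^\star)=\mathbf{0}$) are only cosmetic refinements of the same idea.
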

\begin{proof}
    Both $\nicefrac{\partial \mathbf{F}}{\partial \mathbf{s}}(\mathbf{s}^{\star})$ and $\tilde{\J}(\mathbf{s}^{\star})$ are lower-triangular matrices with all $D \times D$ identity matrices on their main block-diagonal. In particular, $\tilde{\J}^{-1}$ is also a lower-triangular matrix with all $D \times D$ identity matrices on its main block-diagonal. Consequently, the product $\tilde{\J}^{-1} \dfrac{\partial \mathbf{F}}{\partial \mathbf{s}}$ is also a lower-triangular matrix with all $D \times D$ identity matrices on its main block-diagonal. As a result, $\mathbf{I} - \tilde{\J}^{-1} \dfrac{\partial \mathbf{F}}{\partial \mathbf{s}}$ is a lower-triangular matrix with all zeros on its main block-diagonal, and so has all its eigenvalues equal to $0$. Consequently, its spectral radius is equal to zero.
\end{proof}
It may seem counterintuitive that even Jacobi iterations technically enjoy $R$-superlinear convergence in the context of parallelizing Markov processes. However, this seemingly strange result stems from the asymptotic nature of \Cref{def:r_convergence} of $R$-convergence, and the fact that Proposition 1 of \citet{gonzalez2024scalable} guarantees that all fixed-point iterations of the form given by \cref{eq:qdeer_lds} will converge to $\mathbf{s}^{\star}$ in a finite number of iterations ($T$, to be exact). Therefore, for any LDS fixed-point scheme, we always have $\lim_{i \to \infty} \| \mathbf{s}^{(i)} - \mathbf{s}^{\star} \|=0$.

However, in both Proposition 4 of \citet{parasolver25} and \Cref{prop:ConvRates} of this paper, we effectively get around this difficulty by considering the \emph{spectral norm} instead of the \emph{spectral radius}.
The spectral norm always bounds the spectral radius, and so by focusing on spectral radius, \citet{ortega1970iterative} could get tighter bounds (faster rates of convergence). However, in our setting the spectral radius cannot distinguish between any of the fixed-point methods, and so we instead use the looser bound provided by the spectral norm, which can distinguish between the different fixed-point methods.
Note that the core entities are effectively the same, as $\gamma$ defined in \cref{eq:lin_rate} is equal to $\| \nicefrac{\partial \mathbf{U}}{\partial \mathbf{s}}(\mathbf{s}^{\star}) \|_2$.

Finally, again, because all of our fixed-point methods converge in at most $T$ iterations, asymptotic notions of linear convergence are not suitable to fully capture the behavior of these fixed point methods. 
For this reason, we use empirical case studies in \Cref{sec:tasks} to show that efficacy of the intuition, inspired by \Cref{prop:ConvRates}, that the closeness of $\tilde{A}_t$ to $\A_t$ impacts the number of iterations needed for $\mathcal{A}$ to converge. This empirical approach also highlights how the increased computational cost of higher-order fixed-point methods affects wall-clock time on GPUs.
\end{appendices}
%********************************************************************
% Other Stuff in the Back
%*******************************************************
\clearpage%********************************************************************
% Bibliography
%*******************************************************
% work-around to have small caps also here in the headline
\manualmark
\markboth{\spacedlowsmallcaps{\bibname}}{\spacedlowsmallcaps{\bibname}} % work-around to have small caps also
%\phantomsection 
\refstepcounter{dummy}
\addtocontents{toc}{\protect\vspace{\beforebibskip}} % to have the bib a bit from the rest in the toc
\addcontentsline{toc}{chapter}{\tocEntry{\bibname}}
\label{app:bibliography}
\printbibliography

\end{document}